\documentclass[11pt, reqno]{amsart}
\usepackage{amssymb}
\usepackage{times}
\usepackage{amsmath,amsthm}
\usepackage{amsfonts}
\usepackage{leftidx}
\usepackage{mathrsfs}
\usepackage{enumerate}
\usepackage{abstract}
\usepackage{stmaryrd}
\usepackage{ulem}
\usepackage{graphicx}

\usepackage{float}
\usepackage{hyperref}
\synctex=-1

\def\R{\mathbb{R}}

\def\med{\textup{med}}
\def\d{|\nabla|}

\def\p{\partial}
\def\hx{\partial_x h}
\def\px{\partial_{x}}
\def\pt{\partial_{t}}
\def\vo{\vspace{1\baselineskip}}

\def\h{\frac{1}{2}}

\newtheorem{theorem}{Theorem}[section]

\newtheorem{lemma}[theorem]{Lemma}
\newtheorem{proposition}[theorem]{Proposition}
\theoremstyle{definition}
\newtheorem{definition}[theorem]{Definition}

\theoremstyle{remark}
\newtheorem{remark}[theorem]{Remark}

\setlength{\textwidth}{16cm} \setlength{\oddsidemargin}{0cm}
\setlength{\evensidemargin}{0cm}
\setcounter{tocdepth}{2}
\numberwithin{equation}{section}
\begin{document}
 \title[Infinite energy solution of the 2D gravity water waves]{Global infinite energy solutions for the 2D gravity water waves system}
\author{Xuecheng Wang}
\address{Mathematics Department, Princeton University, Princeton, New Jersey,08544, USA}
\email{xuecheng@math.princeton.edu}
\thanks{}
\maketitle
\begin{abstract}
We prove global existence and modified scattering property for the solutions of the  $2D$  gravity water waves system  in the infinite depth setting for a class of initial data, which is only required to be small above the level $\dot{H}^{1/5}\times \dot{H}^{1/5+1/2}$. No assumption is assumed below this level, therefore, it allows to have infinite energy. As a direct consequence, the  momentum condition assumed on the physical velocity in all previous small energy results by Ionescu-Pusateri\cite{IP1}, Alazard-Delort\cite{alazard} and Ifrim-Tataru \cite{tataru3} is removed. 
\end{abstract}

\section{Introduction}

\subsection{The gravity water waves system}
We consider an incompressible irrotational inviscid fluid with  density one occupying a time dependent domain $\Omega(t)$ with the free interface $\Gamma(t)$ and without a bottom. Above $\Gamma(t)$, it is vacuum.  Assume that the interface $\Gamma(t)$ of the domain $\Omega_t$ is given as $\Gamma(t) =\partial \Omega(t) = \{ (x, h(t, x)): x \in \R \}$, then $\Omega(t)=\{ (x, y): y\leq h(t, x), x\in \R\}$.

 Let $v$ and $p$  denote the velocity and the pressure of the fluid respectively. As the evolution of the fluid is described by the Euler equations with free boundary, then $v$ and $p$ satisfy the following system of equations,
 \begin{equation}\label{euler}
 \left\{ \begin{array}{ll}
 \p_t v(t, X) + v(t,X) \cdot \nabla v(t, X) = - \nabla p(t, X) - g(0, 1) &  X\in \Omega(t) \\
 \nabla \cdot v(t, X) =0,\,\, \nabla \times v(t,X) =0 &  X\in \Omega(t) \\
 \p_t + v\cdot \nabla\,\, \textup{is tangent to} \cup_{t} \Gamma(t), \quad p(t, x)=0,& x \in \Gamma(t)\\
 v(0, X) = v_0(X) & X \in \Omega(0),\\
 \end{array}\right.
 \end{equation}
where $g$ is the gravitational constant, which is assumed to be $1$ throughout this paper, and $v_0(X)$ is the initial physical velocity field.

As the fluid is irrotational, we assume that the velocity field is given by the gradient of a potential function $\phi$.  Let $\psi(t,x):= \phi(t, x, h(t,x))$ be the restriction of potential $\phi$ to the boundary $\Gamma(t)$. From the incompressible condition, we have the following harmonic equation with a Dirichlet type boundary condition at the interface $\Gamma(t)$,
\begin{equation}\label{harmonicequation}
v(t, X) = \nabla \phi(t, X),\quad \Delta \phi(t, X) =0, X\in \Omega(t), \quad \phi(t,X)\big|_{\Gamma(t)}= \psi(t,x).
\end{equation}
Therefore, it is sufficient to study the evolution of system at the interface $\Gamma(t)$, i.e., the evolution of the height $h(t,x)$ and the  velocity potential $\psi(t,x)$ on the interface.

  Following Zakharov \cite{zakharov} and Craig-Sulem-Sulem\cite{css}, we can derive the  system of equations satisfied by $(h, \psi): \R_t \times \R_x \rightarrow \R$ as follows, 
\begin{equation}\label{waterwave}
\left\{\begin{array}{l}
\p_t h= G(h)\psi\\
\p_t \psi = -h- \frac{1}{2} |\p_x\psi|^2 + \displaystyle{\frac{(G(h)\psi + \p_x h\p_x \psi)^2}{2(1+ |\p_x h|^2)}},
\end{array}\right.
\end{equation}
where $G(h)\psi= \sqrt{1+ |\p_x h|^2} \mathcal{N}(h)\psi$ and $\mathcal{N}(h)\psi$ is the Dirichlet-Neumann operator associated with the domain $\Omega(t)$. The system (\ref{waterwave}) is generally referred as gravity water waves system, and it has the following conserved Hamiltonian,
\begin{equation}
\mathcal{H}(h, \psi):=  \int_{\R} \h \psi G(h)\psi + \h |h|^2 d x \approx \h[ \|h\|_{L^2}^2 + \| \d^{\h} \psi\|_{L^2}^2].
\end{equation}
\subsection{Previous results}
There is an extensive literature on the water waves problems. Without trying to be exhaustive, we only list some representative references on the Cauchy problem here. For blow up behavior and  splash singularity, please refer to \cite{fefferman} and references therein for more details.

We mention that  the motion of the boundary is subject to the Taylor instability when the surface tension effect is neglected. As proved by Wu \cite{wu1, wu2}, she showed that as long as the interface is nonself-intersecting, the Taylor sign condition $-{\p p}/{\p \mathbf{n}} \geq c_0> 0$  always holds  for the $n$-dimensional infinite depth gravity water wave equations, where $n\geq 2$ ($n=2,3$ for physical relevance). Hence the Taylor instability is not a issue here.

On the local theory side of the  water waves system, we  have the work of Nalimov \cite{nalimov}  and the work of Yosihara \cite{yosihara} for the small initial data, the works of Wu \cite{wu1, wu2} for the general initial data in Sobolev spaces and subsequent works by Christodoulou-Lindblad\cite{christodoulou}, Lannes \cite{lannes}, Lindblad\cite{lindblad}, Coutand-Shkoller \cite{coutand1}, Shatah-Zeng\cite{shatah1} and Alazard-Burq-Zuily\cite{alazard2}. Local wellposedness also holds when the surface tension effect is considered, see the work of Beyer-Gunther\cite{beyer}, Ambrose-Masmoudi\cite{ambrose}, Coutand-Shkoller\cite{coutand1},   Shatah-Zeng \cite{shatah1} and Alazard-Burq-Zuily\cite{alazard1}.

On the long time behavior side, starting with the breakthrough work of Wu\cite{wu3}, where she proved almost global existence for the $2D$ gravity water waves system for small initial data, then Germain-Masmoudi-Shatah \cite{germain2} and Wu \cite{wu4} proved global existence of gravity water waves in three dimensions. When the surface tension effect is considered while the gravity effect is neglected (also called capillary waves), Germain-Masmoudi-Shatah   \cite{germain3} proved the global existence of capillary waves in $3D$.

Due to the slower decay rate in  $2D$, it is considerably  more difficult to prove global existence than the 3D case. Until very recently, we have several results. Global existence of the 2D gravity water waves for small initial data has been first proved by Ionescu-Pusateri in \cite{IP1} and similar result was proved indepenedently by Alazard-Delort \cite{alazard} in Eulerian coordinates formulation. More recently, Hunter-Ifrim-Tataru \cite{tataru1} used a holomorphic coordinate formulation  to give a different proof of almost global existence result, then later Ifrim-Tataru \cite{tataru3} extended it to global existence in holomorphic  coordinates formulation.  Ionescu-Pusateri \cite{IP4} proved the global existence of 2D capillary waves for small initial data without momentum condition on the associated profile in the Eulerian coordinates formulation. Ifrim-Tataru  
\cite{tataru4} proved the global existence of the same system for small initial data  with momentum condition on the associated profile in the Holomorphic coordinates formulation.   
\subsection{Momentum condition}
We mentioned that the initial data  are all assumed to be small at the level of $L^2 \times \dot{H}^{1/2}$ in all previous results \cite{alazard,IP1, tataru3}. That is to say,  $\| \d^{1/2}\psi\|_{L^2}$ is not only finite but also small. Intuitively speaking, the following holds for the physical velocity, 
\begin{equation}
\| \d^{\h}\psi \|_{L^2}^2 =  \| \d^{-\h} v \|_{L^2}^2 = \int_{\R} \frac{1}{|\xi|} |\widehat{v}(\xi)|^2 d \xi < \infty.
\end{equation}
Hence, the finiteness of $\| \d^{\h}\psi \|_{L^2}^2$ implies that the physical velocity is neutral (i.e., $\widehat{v}(0)=\int v(x) dx =0$) and it behaves very nicely at the very low frequency part.  However, this assumption  is not generally true, even for a Schwartz function.  

So a natural question is what if the physical  velocity is not neutral. Do we still have global solution for the gravity water waves system (\ref{waterwave})? The main goal of this paper is to show that we do have global solution, infinite energy global solution.
 The $L^2$ norm of  $ \d^{\h}\psi$ is not necessary to be finite, hence the physical velocity at the interface is not necessary to be neutral. 
\subsection{The main result}

Before stating our main theorem, we define the main function spaces that will be used constantly.
Let $k_+$ denotes $\max\{k, 0\}$  and $k_{-}$ denotes $\min\{0, k\}$ throughout this paper. We define    function spaces as follows, 
\begin{equation}
\| f\|_{H^{N, p}} :=\big[ \sum_{k\in\mathbb{Z}} (2^{N k} + 2^{p k})^2  \| P_{k} f\|_{L^2}^2  \big]^{\h}\approx \| (|\p_x|^{p}+|\p_x|^{N}) f\|_{L^2},\quad 0\leq p\leq N,
\end{equation}
\begin{equation} 
\| f \|_{W^{\gamma, b}} := \sum_{k\in\mathbb{Z}} \big(2^{\gamma k_+ } + 2^{b k_{-}}\big) \| P_{k} f\|_{L^\infty} ,\quad \| f\|_{W^{\gamma}}: = \sum_{k \in \mathbb{Z}} (2^{N_2 k} + 1) \| P_k f \|_{L^\infty},
\end{equation}
\begin{equation}
\| f \|_{\widetilde{W^{\gamma}}}:= \| P_{\leq 0}f\|_{L^\infty} + \sum_{k\geq 0} 2^{\gamma k} \| P_k f\|_{L^\infty}, \quad \gamma \geq 0,
\end{equation}
\begin{equation}
\mathcal{C}_0:=\{ f: \R \rightarrow \mathbb{C}\, \textup{is continuous and } \lim_{|x|\rightarrow \infty} |f(x)| =0 \},\quad \|f \|_{\mathcal{C}_0}:=\| f \|_{L^\infty}.
\end{equation}
Note that the function space $\mathcal{C}_0$ is needed as the norm $H^{N,p}$ doesn't define natural spaces of distribution when $p > 1/2$.

Our main result is as follows, 
\begin{theorem}\label{theorem2}
Let $N_0=8$, $N_1=1$, $N_2=61/20$, $p =1/5$, and $p_0\in(0,10^{-10}]$.  Assume that $(h_0,\psi_0)\in {H}^{N_0+1/2, p}\times ({H}^{N_0+1/2,1/2+p}\cap \mathcal{C}_0)$ satisfies the following smallness condition,
\begin{equation}\label{sizeofinitialdata}
\| h_0\|_{{H}^{N_0+1/2, p}}  + \| \psi_0\|_{{H}^{N_0+1/2, 1/2+p}} + \| x\p_x h_0\|_{{H}^{N_1+1/2, p}} + \| x\p_x \psi_0\|_{{H}^{N_1+1/2,1/2+p}} \leq \epsilon_0,
\end{equation}
where ${\epsilon_0}$ is a sufficiently small constant, then there exists a unique global solution $(h, \psi)$ of the system \textup{(\ref{waterwave})} with initial data $(h_0, \psi_0)$. Moreover, the following estimates hold, 
\begin{equation}\label{eqn41}
\sup_{t\in[0, \infty)} (1+t)^{-p_0}\big[ \|h\|_{H^{N_0,p}} + \| \psi\|_{H^{N_0,1/2+p}} + \| S h\|_{H^{N_1,p}} + \| S \psi\|_{H^{N_1,1/2+p}}  \big]   \lesssim \epsilon_0,
\end{equation}
\begin{equation}\label{eqn42}
\sup_{t\in[0,\infty)} (1+t)^{1/2} \|( h,  \d^{\h}\psi)\|_{W^{N_2}}\lesssim \epsilon_0,
\end{equation}
where $S:= t\p_t +2 x \p_x$ is the scaling vector field associated with the system \textup{(\ref{waterwave})}.  
Furthermore, the solution possesses the modified scattering property as $t\rightarrow +\infty$. \end{theorem}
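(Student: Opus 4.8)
The plan is to follow the now-standard energy-estimate-plus-dispersive-decay bootstrap, but adapted so that the low-frequency weight $|\p_x|^p$ with $p=1/5$ (rather than $p=1/2$) replaces the Hamiltonian control at low frequencies. First I would recall the good unknown / paradifferential reduction of Alazard--Delort and Ionescu--Pusateri, rewriting \eqref{waterwave} after symmetrization as a single complex scalar equation $\p_t u + i |\p_x|^{1/2} u = \mathcal{N}(u)$ for a normalized variable $u$ built from $(h,\d^{1/2}\psi)$, where $\mathcal{N}$ collects quadratic and higher nonlinearities that are quasilinear but, after the normal-form / ``partial normal form'' transformation, have no resonant quadratic part (the gravity water waves dispersion relation $|\xi|^{1/2}$ has no space-time resonances away from $\xi=0$). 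The profile $f(t):=e^{it|\p_x|^{1/2}} u(t)$ is then the object to control. The two pieces of the bootstrap are: (i) a high-order energy estimate $\|u\|_{H^{N_0,p}} + \|Su\|_{H^{N_1,p}} \lesssim \epsilon_0 (1+t)^{p_0}$, proved by a weighted energy functional that is quartic-corrected to cancel the quadratic contributions, with the scaling vector field $S=t\p_t+2x\p_x$ commuting with the linear flow up to lower order; and (ii) the dispersive decay $(1+t)^{1/2}\|(h,\d^{1/2}\psi)\|_{W^{N_2}}\lesssim \epsilon_0$, obtained from a weighted $Z$-norm bound on $f$ via stationary phase / the linear dispersive estimate for $e^{-it|\p_x|^{1/2}}$.

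The key steps, in order, would be: (1) establish local well-posedness and propagation of the $H^{N_0+1/2,p}$ and $x\p_x$-weighted regularity at this low-frequency level — here one must check that the Dirichlet--Neumann operator expansion and the paralinearization estimates from the excerpt's cited works survive with the $|\p_x|^p$ weight in place of $L^2$ at low frequency, which is where the choice $p=1/5$ and $N_2=61/20$ gets used to close the bilinear low-frequency Sobolev estimates; (2) set up the normal form transformation removing quadratic terms, controlling the denominators $1/|\xi|$ or $1/(|\xi|^{1/2}+|\eta|^{1/2}-|\xi-\eta|^{1/2})$ that appear, exploiting that these are singular only at the origin where the extra input regularity $|\p_x|^{p}$ (with $p>0$) compensates; (3) run the energy bootstrap for $\|u\|_{H^{N_0,p}}$ and $\|Su\|_{H^{N_1,p}}$, using the decay \eqref{eqn42} as input; (4) run the $Z$-norm / weighted-in-frequency bootstrap for $f$, using the energy bounds \eqref{eqn41} as input, and extract from it both the decay \eqref{eqn42} (via the linear dispersive estimate) and the asymptotic ODE $\p_t \widehat{f}(t,\xi) \approx \tfrac{c}{t}|\widehat{f}(t,\xi)|^2 \widehat{f}(t,\xi)$ along rays, which after integration gives the modified scattering statement $\widehat{u}(t,\xi) = t^{-1/2} e^{i\Phi(t,\xi)} g(\xi) + o(t^{-1/2})$ in $L^\infty_\xi$; (5) combine (3) and (4) by a continuity argument to close the bootstrap on a single quantity $\sup_t (1+t)^{-p_0}[\cdots] + \sup_t \|f\|_Z$, then upgrade to global existence and uniqueness.

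The main obstacle, and the genuinely new point relative to \cite{IP1,alazard,tataru3}, is the low-frequency analysis: dropping the $L^2$ (Hamiltonian) control means the solution only lives in $\dot H^{1/5}$ near frequency zero, so every bilinear estimate that previously used $\|\cdot\|_{L^2}$ at low frequency must be re-done with a norm that degenerates like $|\xi|^{1/5}$ as $\xi\to 0$. Concretely, one has to verify that the quadratic nonlinearity $G(h)\psi$ and the quadratic terms in the second equation, together with their normal-form corrections, still map into a space controlled by $H^{N_0,p}$ — the danger is a low-frequency output of the form (low)$\times$(low) or (low)$\times$(high) that the old argument absorbed using an $L^2$ factor but which now carries only a weak $|\p_x|^{1/5}$ weight; making this work is exactly why $p=1/5$ is not taken smaller and why the high-regularity exponents $N_0=8$, $N_2=61/20$ are tuned as they are. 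A secondary difficulty is that $H^{N,p}$ with $p>1/2$ is not a space of tempered distributions, forcing the introduction of the auxiliary space $\mathcal{C}_0$ and some care in the functional framework (this is flagged in the excerpt); one must check that the solution map is well-defined and that the various nonlinear expressions make classical sense throughout.
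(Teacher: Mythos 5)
Your high-level plan is the same as the paper's: paralinearize and symmetrize to good unknowns, bootstrap with weighted energies in $H^{N_0,p}$ and $H^{N_1,p}$ for $S$, normal-form corrections to the energy, a $Z$-norm (weighted $L^\infty_\xi$) estimate on the profile, and modified scattering via the asymptotic ODE along rays. You also correctly flag the two infrastructure points: that the Dirichlet--Neumann operator depends only on $\p_x h$ and $\p_x\psi$ so the paralinearization survives with the $|\p_x|^p$ weight, and that $H^{N,p}$ with $p>1/2$ is not a distribution space (hence $\mathcal{C}_0$). However, there is a genuine gap in how you propose to close the scaling vector field energy estimate, and this is in fact the main new technical difficulty of the paper.

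The problem is not, as you suggest, that the normal form denominators are singular at $\xi=0$ and saved by $|\p_x|^p$ with $p>0$. The paper's normal form symbols are non-singular already (the symbols of the quadratic terms supply enough smallness to divide the phase once). The real obstruction appears in the energy estimate for $SU^1, SU^2$: there is no $L^\infty$ decay bound on $SU$, so whenever $SU$ appears twice in a cubic or quartic term it must go into $L^2$, even when its frequency $2^{k_1}$ is small. Putting it in $\dot H^p$ costs $2^{-pk_1}$, and after the normal form the resulting quartic term costs $2^{-pk_1-pk_1'}$, which is uncontrolled as $k_1, k_1' \to -\infty$. (In the finite-energy setting $p=0$ and this issue is invisible.) Your proposal does not say how to recover this loss. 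The paper's mechanism has two ingredients: (i) the normal-form correction for the $SU$ energy is made with a time-dependent frequency cutoff $\psi_{\geq 0}((1+t)^{5/4+3p_0}|\xi-\eta|)$ so that only the non-decaying part of the cubic term is cancelled (the excluded low-frequency part already has $1/t$ decay); and (ii) for the residual problematic quartic terms, one observes a lower bound on the quartic phase of the form
\begin{equation*}
\big|\Phi^{\mu,\nu,\mu',\nu'}(\xi,\eta,\sigma)\big| \gtrsim \min\{|\xi|,|\eta-\sigma|,|\sigma|\}^{1/2}
\end{equation*}
when $|\xi-\eta|$ is the strictly smallest frequency, i.e.\ the phase is bounded below by the \emph{second} smallest frequency rather than the smallest. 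This permits a second division by the phase (a quartic normal form / integration by parts in time), paying a factor of $2^{-\mathrm{Smin}/2}$ that the quartic symbol can absorb, and gaining an extra $t^{-1/2}$ of decay --- which, combined with the time-dependent frequency truncation, exactly covers the $2^{-pk_1-pk_1'}$ loss. Without identifying this mechanism your step (3) cannot be closed, and the bootstrap argument breaks precisely where the infinite-energy hypothesis bites.
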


\begin{remark}
Note that $p=1/5$ in above theorem is not optimal, we can improve it after being more careful about the argument used here. However, given the methods used here, $p$ should strictly less than $1/4$.
\end{remark}
\begin{remark}
The property of modified scattering in the infinite energy setting is same as the small energy setting in \cite{IP1,IP5}. We first describe the modified scattering property and then give a comment. 
 Define $f(t):= h(t) + i \d^{1/2} \psi(t)$ and
\begin{equation}
G( t,\xi):= \frac{|\xi|^4}{\pi} \int_0^t |\widehat{f}( s,\xi)|^2 \frac{d s}{s+1},
\end{equation}
then there exist $\omega_{\infty} $ and $p_1 < p_0$ such that 
\begin{equation}\label{equation1}
\sup_{t\in[0,\infty)} (1+ t)^{p_1} \Big\| |\xi|^{1/4}(1+ |\xi|)^{N_2} \big( e^{i G(t,\xi) } \widehat{f}(t,\xi) - e^{-it |\xi|^{1/2}}\omega_{\infty}(\xi) \big) \Big\|_{L^2_\xi} \lesssim \epsilon_0.
\end{equation}
We can see that, after rotating the profile $\widehat{f}(t,\xi)$ with the angle of $G(t,\xi)$, it approaches to a linear solution.
We can also write this modified scattering behavior in the physical space $(t,x)$. Following the argument in \cite{IP5}, one 
can show that there exists a uniformly bounded function $f_{\infty}$ such that
\begin{equation}
\Big| f(t, x) - \frac{e^{- i t |t/4|x||}}{\sqrt{1+|t|}} f_{\infty}(\frac{x}{t}) \exp{\Big[- \frac{i |f_{\infty}(x/t)|^2}{64 |x/t|^5}  \log(1+|t|)\Big]} \Big| \lesssim \epsilon_0 (1+|t|)^{-1/2-p_1/2}.
\end{equation}
Note that the same formula has also been derived by Alazard-Delort\cite{alazard} and Ifrim-Tataru \cite{tataru3}. 

We comment that the modified scattering property is only a byproduct of the $L^\infty$ decay estimate. Note that the modified scattering  happens because of   the space-time resonance set, which is only a point. At this point, all inputs of cubic term have the same size of frequency, for this case, the symbol contributes ``$5/2$'' degrees  of  smallness. Although the spaces we use in the infinite energy setting are weaker than spaces used in the small energy setting, the effect of infinite energy setting is very little to the modified scattering.  We modify the profile in the same way as in the small energy setting to get the sharp $L^\infty$ decay rate and the modified scattering property. 
\end{remark}
\subsection{Main difficulties  and Main ideas of the proof}

A very essential observation, which  makes it possible to consider the infinite energy solution, is that the nonlinearities of the system (\ref{waterwave}) only depend on the steepness ``$\p_x h$'' and the physical velocity ``$\p_x \psi$'' in the infinite depth setting. In Appendix \ref{appdxb},  we will  also show this observation by using a fixed point type method to analyze the Dirichlet-Neumann operator.     This method  is not only interesting of its own, but also can be applied to other settings, for example, the flat bottom setting.

In the infinite energy setting, we also have to deal with the difficulties arose in the small energy setting. Those difficulties can be summarized as the losing derivatives issue and  the slow decay rate issue. Besides those difficulties, we also confront an additional difficulty, which comes from the fact that  we do not make any assumptions below $\dot{H}^{1/5}$. 
The additional difficulty is that we lose  $1/5$ derivatives at the low frequency part in the sense that only  $2^{ k/5}\| P_k u\|_{L^2}, k \leq 0$ is controlled when we put a input in $L^2$. The issue of losing derivative at the low frequency part is very problematic in the energy estimate of $Sh $ and $S\psi$. We will discuss more about this in a while.

To avoid losing derivatives at the high frequency part, we need to find out good substitution variables for $h$ and $\psi$. As the system (\ref{waterwave}) lacks symmetries, one fails at the beginning of energy estimate due to the quasilinear nature. Thanks to the work of Alazard-Delort \cite{alazard} and Alazard-Burq-Zuily\cite{alazard2}, their paralinearization method helps us to see the good structures inside the system (\ref{waterwave}) and to find out the good substitution variables $U^1$ and $U^2$. The system of equations satisfied by $U^1$ and $U^2$ has requisite  symmetries to avoid losing derivative when doing energy estimate. 

 For intuitive purpose, the  detailed formulas of $U^1$ and $U^2$ are postponed to (\ref{equation10}). It is enough to know at this moment that the difference between $(U^1, U^2)$ and $(\eta, \d^\h \psi)$ is at the level of quadratic and higher. Therefore, $(U^1, U^2)$ and  $(\eta, \d^\h \psi)$ have comparable sizes of  the $L^\infty$ decay rate  and the size of energy.

Since the local existence of (\ref{waterwave}) is already known, see \cite{wu1}. We will prove our main theorem by the standard bootstrap argument. To close the argument,  it is sufficient to control the following quantities over time, 
\[
\|(U^1, U^2)(t)\|_{H^{N_0, p}}, \quad \| (SU^1, SU^2)\|_{H^{N_1, p}},\quad \|(U^1, U^2)(t)\|_{W^{N_2}}.
\]

\subsubsection{Energy estimate for $(U^1, U^2)$} Now, we discuss main ideas of controlling \mbox{ $\|(U^1, U^2)(t)\|_{H^{N_0, p}}^2$} over time. As mentioned before, there are three difficulties: (i) losing derivatives at the high frequency part; (ii) slow decay rate; (iii) losing derivatives at the low frequency part. We will address how to get around those difficulties one by one.

Recall that the system of equations satisfied by $U^1$ and $U^2$ has requisite  symmetries inside, which help us to get around the potential difficulty of losing derivatives at the high frequency part. However, due to the slow $t^{-1/2}$ decay rate,  the  cubic terms inside the derivative of usual energy are troublesome. 

 To get around this issue, we will use the normal form transformation to find out  the cubic  correction terms, which can cancel out those  problematic terms. One very important observation is that the normal form transformation is not  singular. As the symbols of quadratic terms compensates the loss of doing the normal form transformation. 

 Then, we add those cubic correction terms to the usual energy. As a result, the derivative of the modified energy is quartic and higher, which has sufficient decay rate.

  Because of those cubic correction terms, due to the quasilinear nature, it is possible to lose derivatives again  after taking derivative with respect to time. To get around this issue, we also add quartic correction terms, which will enable us to utilize the symmetries inside the system of equations satisfied by $U^1$ and $U^2$ again to see cancellations. As a result, the derivative of the modified energy does not lose derivative any more and also has the critical $1/t$ decay rate. 

Now, we address the last issue, which is the losing $1/5$ derivatives at the low frequency part. For intuitive purpose, we use the following terms inside the time derivative of modified energy as an example,
\begin{equation}\label{equationdfdf2}
\int_{\R} \p_x^p \big( Q_1(U^1, U^2) + \mathcal{R}_1\big) \p_x^p A(U^1, U^1) d x, \quad p =1/5,
\end{equation}
where $Q_1(U^1, U^2)$ represents  the quadratic terms of $\p_t U^1$, $\mathcal{R}_1$ represents the cubic and higher remainder term of $\p_t U^1$,  $A(\cdot, \cdot)$ represents one of the normal forms that we will do.

The quartic term is not problematic. As we can always put the input with larger frequency in $L^2$ and the input with smaller frequency in $L^\infty$. Hence, the quartic term do not lose derivative at the low frequency part.

The remainder term is also not problematic. As the $L^2$-type and the $L^\infty$-type estimates of the Dirichlet-Neumann operator only depend on $\p_x h$ and $\p_x \psi$ as long as a certain smallness condition on $\p_x h$ is satisfied. This observation is sufficient to guarantee us to gain $1/5$ derivatives at low frequency for the profile $h + i \d^{1/2}\psi$.

\subsubsection{Energy estimate for $(SU^1, SU^2)$}
Many parts of the energy estimate of $SU^1$ and $ SU^2$  are same as the energy estimate of $U^1$ and $U^2$. We still need to utilize symmetries to avoid losing derivatives at the high frequency part and add correction terms to cancel out the slow decay cubic terms.  

However, a major difference is that inputs $SU^1$ and  $SU^2$  are forced to put in $L^2$ even they have relatively smaller frequency. For the case when $SU^1$ and $SU^2$ can be safely putted in $L^2$, we will redo these procedures that we did in the energy estimate of $U^1$ and $U^2$. Here we only concentrate on the case when $SU^1$ and $SU^2$ have relatively smaller frequencies and can not be safely putted in $L^2$. 

To illustrate this point, we use the following problematic cubic terms inside the derivative of usual energy of $(SU^1, SU^2)$ as example,
\begin{equation}\label{equation023i24}
\int_{\R} \p_x^p P_{k}(SU^1) \p_x^p P_{k}\big(Q_1(P_{k_1}(SU^1), P_{k_2}(U^1))\big), \quad k_1 \leq k_2-10, k_1, k_2, k\in \mathbb{Z}.
\end{equation}
Note that the symbols of quadratic terms contribute the smallness of $2^{k_1/2}$, which is very essential.

If one uses the normal form directly, then above problematic cubic terms vanish. However, we will have the following quartic term,
\begin{equation}\label{equation239489}
I:=\int_{\R} \p_x^p   Q_1(P_{k_1'} [SU^1], P_{k_2}U^2) \p_x^p A(P_{k_1}[SU^1], P_{k_2}[U^1]) d x, \quad k_1,k_1'\leq k_2-10.
\end{equation}
After putting $SU^1$ in $L^2$ and $U^i$, $i\in\{1,2\}$ in $L^\infty$, we have
\[
|I|\lesssim 2^{-pk_1-p k_1'} \frac{1}{t} E(t),\quad p=1/5,
\]
where $E(t)$ represents the energy of solution. Hence, this is not sufficient to close the argument when $k_1$ and $k_1'$ are sufficiently small. This difficulty does not appear in the small energy setting, as we have $p=0$ in the small energy setting.

To get around this difficulty, we will first identity the most problematic term by using a more subtle Fourier method and then show that the most problematic term is actually not bad in the sense that the associated phase has a good lower bound. As a result, we can divide the phase again to gain extra decay, which covers the lose of derivatives at the low frequency part.

 Note that if $2^{(1-p)k_1} \leq (1+t)^{-1}$, then the  cubic term in (\ref{equation023i24}) actually has $1/t$ decay. Therefore we only have to use the normal form transformation to cancel out (\ref{equation023i24})  when  $2^{(1-p)k_1} \geq (1+t)^{-1}$, i.e., $2^{k_1}\geq (1+t)^{-5/4}$.

  Next, we find out all problematic quartic terms. For intuitive purpose, we use quartic term in (\ref{equation239489}) as an example.  Note that the symbol in (\ref{equation239489}) contributes the smallness of $2^{k_1'/2}$. As a result, we know that when $k_1+10\geq k_1'$ or $2^{(1/2-p)k_1'} \leq (1+t)^{-1/4}$,  the loss of $2^{-p k_1-p k_1'}$ can be covered by the symbol. Hence, it is only problematic when $k_1+10\leq k_1'$ and  $2^{(1/2-p)k_1'} \geq (1+t)^{-1/4}$.

 A  very important observation for this problematic quartic term is that the size of associated phases has a good lower bound. More precisely,  the associated phases for quartic terms are given as follows, 
\[
\Phi^{\mu, \nu, \tau}(\xi,\eta, \sigma)= |\xi|^{1/2} -\mu |\xi-\eta|^{1/2} - \nu |\eta-\sigma|^{1/2} - \tau |\sigma|^{1/2}, \quad  \mu, \nu , \tau\in\{+,-\}.
\]
Recall that $k_2-10\geq k_1'\geq  k_1 +10$ in the problematic case. For this case, the following estimate holds,
\begin{equation}\label{equaiton239239}
|\Phi^{\mu, \nu, \tau}(\xi,\eta, \sigma)|\psi_{k_1'}(\xi)\psi_{k_2}(\xi-\eta)\psi_{k_1}(\eta-\sigma)\psi_{k_2}(\sigma) \gtrsim 2^{k_1'/2}.
\end{equation}
Hence, the price of dividing the phases $|\Phi^{\mu, \nu, \tau}(\xi,\eta, \sigma)|$ can be paid by the size of symbol in (\ref{equation239489}). As a result, we can gain an extra $t^{-1/2}$ decay rate, which is sufficient to cover the loss of $2^{-pk_1-pk_1'}\leq (1+t)^{1/4+1/6}$ from putting $SU^1$ in $\dot{H}^p$. Note that we used the fact that $2^{k_1}\geq (1+t)^{-5/4}$ and $2^{k_1'}\geq (1+t)^{-5/6}$.

For all other problematic quartic terms,  similar estimate as in (\ref{equaiton239239}) also holds, hence we can use the same argument to close the energy estimate of $SU^1$ and $SU^2$.

\subsubsection{The sharp $L^\infty$-decay estimate for $(U^1, U^2)$}

From the energy estimates we have proven and the linear dispersion estimate in Lemma \ref{lineardecay}, to prove sharp decay rate, it is sufficient to prove the  $L^\infty_\xi$-type norm(i.e., the $Z$-norm that will be precisely defined later) of profile does not grow with respect to time.

Note that one has to be very carefully when trying to define an appropriate $Z$-norm. Since  we cannot let $Z$-norm to be very strong, otherwise the finiteness of $Z$-norm implies the finiteness of energy. As a result, the $Z$-normed space we will use in this paper is weaker than the $Z$-normed space used in \cite{IP1}. To work in this weaker $Z$-normed space, we have to be more careful when doing estimates at the low frequency  part. It turns out that the difficulties are still manageable. The  ideas of proving the $L^\infty$ type decay estimate are mainly based on the works of Ionescu-Pusateri \cite{IP2, IP1,IP3,IP4}. To improve the understanding of $L^\infty$ decay estimate, many ideas are combined. As a result,  the argument we present here is more concise.

\subsection{Outline.} In section \ref{prelim}, we fix notations, find the  good unknown variables with good structure for the water waves system (\ref{waterwave}) , state the bootstrap assumption and then reduce the proof of main theorem \ref{theorem2} to two main propositions.  In section \ref{energyestimate}, we mainly prove that the energy of solution of (\ref{waterwave}) only grows appropriately.  In section \ref{improveddispersion}, we mainly prove the $L^\infty$-norm of solution of (\ref{waterwave}) decays sharply.   In appendix \ref{remainderestimate}, we show that the requisite estimates of all remainder terms also hold in the infinite energy setting. In appendix \ref{appdxb}, we use a fixed point type method to analyze the Dirichlet-Neumann operator. As a result, one can see that the Dirichlet-Neumann operator  only depends on the steepness of interface and the physical velocity, which is very essential to the existence of infinite energy solution. In appendix \ref{appendxc}, we do paralinearization for th full system (\ref{waterwave}) to show that the remainder terms do not lose derivatives.

\vo
\noindent\textbf{Acknowledgements.} 
The author would like to express gratitude to his Ph.D. advisor  Alexandru Ionescu for   many helpful discussions. Also he would like to thank the invitations and hospitalities  of Hausdorff Research Institute for Mathematics in Bonn and  Fudan University where a part of this work was done.

\section{Preliminaries}\label{prelim}

\subsection{Notations and the multilinear estimate.}
We  fix an even smooth function $\tilde{\psi}:\R \rightarrow [0,1]$ supported in $[-3/2,3/2]$ and equals to $1$ in $[-5/4, 5/4]$. For any $k\in \mathbb{Z}$, define
\[
\psi_{k}(x) := \tilde{\psi}(x/2^k) -\tilde{\psi}(x/2^{k-1}), \quad \psi_{\leq k}(x):= \tilde{\psi}(x/2^k), \quad \psi_{\geq k}(x):= 1-\psi_{\leq k-1}(x).
\]
Denote the projection operators $P_{k}$, $P_{\leq k}$ and $P_{\geq k}$ by the Fourier multipliers $\psi_{k},$ $\psi_{\leq k}$ and $\psi_{\geq k }$ respectively. We use  $f_k$ to abbreviate $P_k f$. For a well defined nonlinearity $\mathcal{N}$ and $p\in \mathbb{N}_{+}$, we will use $\Lambda_{p}(\mathcal{N})$  to denote the $p$-th order terms of the nonlinearity $\mathcal{N}$ when an Taylor expansion of this nonlinearity   is available. We use $\Lambda_{\geq p}(\mathcal{N}):= \sum_{q \geq p} \Lambda_{p}(\mathcal{N})$ ($\Lambda_{\leq p}(\mathcal{N})$) to denote the $p$-th and higher (lower) order terms of the nonlinearity $\mathcal{N}$. For example, $\Lambda_{3}(\mathcal{N})$    denotes the cubic terms of $\mathcal{N}$ and $\Lambda_{\geq 3}(\mathcal{N})$   denotes the cubic and higher terms of $\mathcal{N}$. The cubic and lower order terms of the Dirichlet-Neumann operator are given as follows, \[
\Lambda_{\leq 3}[G(h)\psi] = \d\psi - \d(h\d\psi)
\]
\begin{equation}\label{eqn1002}
-\p_x(h\p_x\psi) + \d(h\d(h\d\psi)) + \frac{\d(h^2 \p_x^2\psi) + \p_x^2(h^2\d\psi)}{2},
\end{equation}
 which can be found in \cite{alazard,IP1}.

The Fourier transform is defined as follows, 
\[
\widehat{f}(\xi)=\mathcal{F}(f)(\xi)= \int e^{-i x\xi} f(x) d x.
\]
For two well defined functions $f$ and $g$ and  a bilinear form  $Q(f,g)$, the symbol $q(\cdot, \cdot)$ of $Q(\cdot, \cdot)$  is defined in the following sense throughout this paper,
\begin{equation}
\mathcal{F}[Q(f,g)](\xi)= \frac{1}{2\pi}\int_{\R^2} \widehat{f}(\xi-\eta)\widehat{g}(\eta)q(\xi-\eta, \eta) d \eta. 
\end{equation}
For a trilinear form $C(f, g, h)$, its symbol $c(\cdot, \cdot, \cdot)$ is defined in the following sense, 
\[
\mathcal{F}[C(f,g,h)](\xi) = \frac{1}{4\pi^2} \int_{\R^2}\int_{\R^2} \widehat{f}(\xi-\eta)\widehat{g}(\eta-\sigma) \widehat{h}(\sigma) c(\xi-\eta, \eta-\sigma, \sigma) d \eta d \sigma.
\]

For  
$a, f\in L^2$ and pseudo differential operator $\tilde{a}(x,\xi)$, we define the operator $T_{a} f$ and $T_{\tilde{a}} f$ as follows,
\[
T_a f = \mathcal{F}^{-1}[\int_{\R} \widehat{a}(\xi-\eta) \theta(\xi-\eta, \eta)\widehat{f}(\eta) d\, \eta],
\]
\[
 T_{\tilde{a}} f = \mathcal{F}^{-1} [ \int_{\R} \mathcal{F}_x(\tilde{a})(\xi-\eta,\eta) \theta(\xi-\eta,\eta)\widehat{f}(\eta) d \eta  ],
\]
where the cut-off function is defined as follows, 
\[
\theta(|\xi-\eta|, \eta) = \left\{\begin{array}{ll}
1 & \textup{when}\,\,|\xi-\eta|\leq 2^{-10} |\eta|,\\
0 & \textup{when}\,\, |\xi-\eta| \geq 2^{10} |\eta|.\\
\end{array}\right.
\]
We also use $\tilde{\theta}(\xi-\eta, \eta)$ to denote cutoff function where two frequencies have comparable size inside the support. More precisely,
\begin{equation}\label{equation350}
\tilde{\theta}(\xi-\eta, \eta):= 1- \theta(\xi-\eta, \eta)-\theta(\eta, \xi-\eta).
\end{equation}

Let the bilinear operator $R_{\mathcal{B}}(\cdot, \cdot)$ to be defined by the Fourier multiplier $\tilde{\theta}(\cdot, \cdot).$  Hence, the following paraproduct decomposition holds for two well defined functions $a$ and $b$,
\begin{equation}\label{equation340}
a b = T_a b + T_b a + R_{\mathcal{B}}(a,b).
\end{equation}

\begin{definition}
Given $\rho\in \mathbb{N}_{+}, \rho \geq 0$ and $m\in \R$, we use $\Gamma^{m}_{\rho}(\R^n)$  to denote the space of locally bounded functions $a(x,\xi)$ on $\R^n\times (\R^n/\{0\})$, which are $C^{\infty}$ with respect to $\xi$ for $\xi\neq 0 $. Moreover, they satisfy the following estimate, 
\[
\forall |\xi|\geq 1/2, \| \p_{\xi}^{\alpha} a(\cdot, \xi)\|_{W^{\rho,\infty}}\lesssim_{\alpha} (1+|\xi|)^{m-|\alpha|}, \quad \alpha\in \mathbb{N}^n,
\]
where $W^{\rho, \infty}$ is the usual Sobolev space. 
\end{definition}
For symbol $a\in \Gamma^{m}_{\rho}$, we can define its norm as follows,
\[
M^{m}_{\rho}(a):= \sup_{|\alpha|\leq 2+\rho} \sup_{|\xi|\geq 1/2} \| (1+|\xi|)^{|\alpha|-m}\p_{\xi}^{\alpha} a (\cdot, \xi)\|_{W^{\rho, \infty}}.
\]

 We will use  the following composition lemma for paradifferential operators very often. It can be found, for example, in \cite{ para,alazard1}.
\begin{lemma}\label{composi}
Let $m\in \R$ and $\rho >0,$ if given symbols $a\in \Gamma_{\rho}^{m}(\R^d)$ and $b\in\Gamma_{\rho}^{m'}(\R^d)$,  we can define
\[
a\sharp b = \sum_{|\alpha|< \rho} \frac{1}{i^{|\alpha|} \alpha!} \p_{\xi}^{\alpha} a \p_{x}^{\alpha}b,
\]
then for all $\mu\in\R$, there exists a constant $K$ such that 
\begin{equation}\label{eqn700}
\| T_a T_b - T_{a\sharp b}\|_{H^{\mu}\rightarrow H^{\mu-m-m'+\rho}} \leq K M^{m}_{\rho}(a) M^{m'}_{\rho}(b).
\end{equation}
\end{lemma}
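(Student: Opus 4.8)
\textbf{Proof plan for Lemma \ref{composi}.} The statement is the standard symbolic-calculus estimate for the composition of paradifferential operators, so I would follow the classical Bony/Meyer approach, adapted to the normalizations fixed earlier in this section (in particular the admissible cut-off $\theta(\xi-\eta,\eta)$ and the norms $M^m_\rho$). The plan is to work entirely on the Fourier side. First I would write down the exact Fourier-space kernels of $T_aT_b$, of $T_{a\sharp b}$, and of the individual paradifferential pieces, using the defining formula $\mathcal{F}(T_af)(\xi)=\int \mathcal{F}_x(a)(\xi-\eta,\eta)\,\theta(\xi-\eta,\eta)\,\widehat f(\eta)\,d\eta$. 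Composing, $\mathcal{F}(T_aT_bf)(\xi)=\iint \mathcal{F}_x(a)(\xi-\zeta,\zeta)\theta(\xi-\zeta,\zeta)\,\mathcal{F}_x(b)(\zeta-\eta,\eta)\theta(\zeta-\eta,\eta)\,\widehat f(\eta)\,d\zeta\,d\eta$, which I would reorganize by freezing the output-minus-input frequency $\xi-\eta$ and the input frequency $\eta$, so that the inner integral in $\zeta$ produces an effective symbol $c(\xi-\eta,\eta)$. On the support of the cut-offs both $|\xi-\zeta|$ and $|\zeta-\eta|$ are small compared with $|\eta|$ (hence $|\xi|\sim|\eta|\sim|\zeta|$), which is exactly the regime where a Taylor expansion in the first slot of $b$ around $\xi$ is legitimate.

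The core computation is the Taylor expansion: expand $\mathcal{F}_x(b)(\zeta-\eta,\eta)$, equivalently $b(x,\eta)$ viewed as a function of $\eta$, in powers of the small increment, so that to order $<\rho$ one recovers exactly $\sum_{|\alpha|<\rho}\frac{1}{i^{|\alpha|}\alpha!}\partial_\xi^\alpha a\,\partial_x^\alpha b$, which is $a\sharp b$. The remainder of this Taylor expansion carries $\rho$ derivatives and an integral form of the remainder; combined with the symbol bounds $|\partial_\xi^\alpha a(\cdot,\xi)|\lesssim M^m_\rho(a)(1+|\xi|)^{m-|\alpha|}$ and the $W^{\rho,\infty}$ control in $x$, together with the fast decay in $\xi-\eta$ and $\zeta-\eta$ coming from the smoothness of $a,b$ in $x$ (that is what the $W^{\rho,\infty}$ norm buys, after a Bernstein/Littlewood--Paley argument on the low-frequency factors), one gets that the kernel of $T_aT_b-T_{a\sharp b}$ is bounded, for each pair of frequencies, by $M^m_\rho(a)M^{m'}_\rho(b)(1+|\xi|)^{m+m'-\rho}$ times an $L^1$-in-the-difference-variables factor. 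Then a Schur-test / Young-type estimate on the resulting kernel, together with the support localization $|\xi|\sim|\eta|$, upgrades this pointwise-in-frequency bound to the operator-norm bound $H^\mu\to H^{\mu-m-m'+\rho}$ claimed in \eqref{eqn700}. (One must be slightly careful near the region $|\xi|\le 1/2$, where the symbol classes impose no condition; there the cut-off $\theta$ confines everything to finitely many dyadic shells and the bound is trivial.)

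I should note that I would not reprove this from scratch in the paper — as the statement already says, it is available in \cite{para} and \cite{alazard1}, and I would cite it. If a self-contained argument is wanted, the steps above are the right skeleton, and the only genuinely delicate point — the step I expect to be the main obstacle — is making the Taylor remainder estimate uniform in the frequency variables while simultaneously exploiting the $x$-regularity of $a$ and $b$: one has to interchange the $\zeta$-integration, the Taylor integral remainder, and the Littlewood--Paley decompositions in a way that keeps every constant depending only on $M^m_\rho(a)$, $M^{m'}_\rho(b)$, and on $\mu,m,m',\rho,d$, and not on $a,b$ themselves. Everything else (the Fourier bookkeeping, the Schur test, the low-frequency exceptional region) is routine. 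For the applications in this paper, the relevant consequence is simply that composing two paradifferential operators produces, up to a $\rho$-smoothing error, the operator with symbol $a\sharp b$, and the leading term of $a\sharp b$ is the ordinary product $ab$.
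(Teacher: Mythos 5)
Your approach matches the paper's exactly: the paper does not prove this lemma but simply cites \cite{para} and \cite{alazard1}, which is precisely what you propose to do. The sketch you outline (Fourier-side composition, Taylor expansion in the second slot to order $\rho$, Schur-type bound on the remainder kernel) is the standard argument found in those references and is a faithful summary of it.
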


 We will use the following fact very often. Recall that the scaling vector field is defined as $S: = t \p_t + 2 x \p_x$.  For a bilinear operator $A(\cdot, \cdot)$ with  symbol $a(\cdot, \cdot)$, then we have
\[
S A(f, g)= A(Sf, g) + A(f, Sg) + \widetilde{A}(f, g), \quad \tilde{a}(\xi, \eta) = - 2 (\xi \p_x{\xi} a(\xi, \eta) + \eta \p_{\eta} a(\xi, \eta)).
\]
Especially,  when $a(\cdot, \cdot)$ is homogeneous of degree $\lambda$, we have
\[
S A(f, g) = A(S f, g) + A(f, S g) - 2 \lambda A(f, g),
\]
one can easily verify it after observing the equality $\xi\p_{\xi} a(\xi, \eta)+ \eta\p_{\eta} a(\xi, \eta)=  \lambda a(\xi, \eta).$

Define a class of symbol and its associated norms as follows,
\[
\mathcal{S}^\infty:=\{ m: \mathbb{R}^2\,\textup{or}\, \mathbb{R}^3 \rightarrow \mathbb{C}, m\,\textup{is continuous and }  \quad \| \mathcal{F}^{-1}(m)\|_{L^1} < \infty\},
\]
\[
\| m\|_{\mathcal{S}^\infty}:=\|\mathcal{F}^{-1}(m)\|_{L^1}, \quad \|m(\xi,\eta)\|_{\mathcal{S}^\infty_{k,k_1,k_2}}:=\|m(\xi, \eta)\psi_k(\xi)\psi_{k_1}(\xi-\eta)\psi_{k_2}(\eta)\|_{\mathcal{S}^\infty},
\]
\[
 \|m(\xi,\eta,\sigma)\|_{\mathcal{S}^\infty_{k,k_1,k_2,k_3}}:=\|m(\xi, \eta,\sigma)\psi_k(\xi)\psi_{k_1}(\xi-\eta)\psi_{k_2}(\eta-\sigma)\psi_{k_3}(\sigma)\|_{\mathcal{S}^\infty}.
\]
\begin{lemma}\label{boundness}
Assume that $m$, $m'\in S^\infty$, $p, q, r, s \in[1, \infty]$ and we have smooth well defined functions $f, g$, $h$ and $\tilde{f}$, then the following estimates hold:
\begin{equation}\label{productofsymbol}
\| m\cdot m'\|_{S^\infty} \lesssim \| m \|_{S^\infty}\| m'\|_{S^\infty},
\end{equation}
\begin{equation}\label{bilinearestimate}
\big\| \mathcal{F}^{-1}\big[ \int_{\R } m(\xi, \eta)  \widehat{g}(\eta)\widehat{h}(\xi-\eta)\, d \eta\big] \big\|_{L^r}\lesssim \| m\|_{\mathcal{S}^\infty} \| g \|_{L^p} \| h\|_{L^q}, \quad \textup{if}\,\,\, \frac{1}{r}=\frac{1}{p} + \frac{1}{q},
\end{equation}
\begin{equation}\label{trilinearestimate}
\big\| \mathcal{F}^{-1}\big[ \int_{\R^2 } m'(\xi, \eta,\sigma)  \widehat{f}(\sigma) \widehat{g}(\eta-\sigma)\widehat{h}(\xi-\eta) d \eta d\sigma \big\|_{L^s} \lesssim \| m'\|_{\mathcal{S}^\infty} \| f \|_{L^p}\| g \|_{L^q} \| h\|_{L^r},\,\, 
\end{equation}
if $ \displaystyle{ \frac{1}{s}=\frac{1}{p} + \frac{1}{q} + \frac{1}{r}}$.
\end{lemma}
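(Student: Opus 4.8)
\textbf{Proof plan for Lemma~\ref{boundness}.} The whole lemma is a collection of standard facts about the algebra $\mathcal{S}^\infty$ of symbols whose inverse Fourier transform lies in $L^1$. My plan is to reduce everything to Young's convolution inequality and the elementary fact that the Fourier transform turns products into (rescaled) convolutions.

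First I would prove \eqref{productofsymbol}. Writing $K=\mathcal{F}^{-1}(m)$ and $K'=\mathcal{F}^{-1}(m')$, the inverse Fourier transform of the pointwise product $m\cdot m'$ is the convolution $K*K'$ (up to the fixed normalization constant coming from our definition of $\mathcal{F}$). Hence $\|m\cdot m'\|_{\mathcal{S}^\infty}=\|K*K'\|_{L^1}\lesssim \|K\|_{L^1}\|K'\|_{L^1}=\|m\|_{\mathcal{S}^\infty}\|m'\|_{\mathcal{S}^\infty}$ by Young's inequality with exponents $1,1,1$. This is immediate; I would just make sure the normalization constant is tracked so the implied constant is genuinely harmless.

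Next, the bilinear estimate \eqref{bilinearestimate}. The idea is to write the multiplier operator as a superposition of modulations. Let $m_k := m\,\psi_k(\xi-\eta)$ be harmless — actually no Littlewood--Paley decomposition is needed; directly, set $K=\mathcal{F}^{-1}(m)$ as a function of two variables and expand
\[
\mathcal{F}^{-1}\Big[\int_\R m(\xi,\eta)\widehat{g}(\eta)\widehat{h}(\xi-\eta)\,d\eta\Big](x)=\int_{\R^2} K(y,z)\, g(x-y)\,h(x-z)\,dy\,dz
\]
after using $m(\xi,\eta)=\int K(y,z) e^{-i(\xi-\eta)z}e^{-i\eta y}\,dy\,dz$ and Fubini (here I am being schematic about which variable pairs with which; the precise pairing is dictated by writing $\widehat{g}(\eta)\widehat{h}(\xi-\eta)$ as the Fourier transform of the product evaluated appropriately). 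Taking $L^r_x$ norms, using Minkowski's integral inequality to pull the $dy\,dz$ integral outside, and then H\"older in $x$ with $1/r=1/p+1/q$ on the translates $g(x-y)$, $h(x-z)$ (whose $L^p$, $L^q$ norms are translation-invariant), gives the bound $\|K\|_{L^1}\|g\|_{L^p}\|h\|_{L^q}$, which is exactly \eqref{bilinearestimate}. The trilinear estimate \eqref{trilinearestimate} is proved the same way: write the kernel $K'=\mathcal{F}^{-1}(m')$ on $\R^2$ (or $\R^4$, depending on how one counts), expand the operator as an average over three translations of $f,g,h$ weighted by $K'$, apply Minkowski to move the kernel integration outside, and finish with the three-factor H\"older inequality $1/s=1/p+1/q+1/r$.

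The only genuinely delicate point — and the one I would be most careful about — is bookkeeping: matching the frequency-variable conventions in the statement ($\widehat{f}(\xi-\eta)\widehat{g}(\eta)$ versus $\widehat{f}(\sigma)\widehat{g}(\eta-\sigma)\widehat{h}(\xi-\eta)$, and the $2\pi$/$4\pi^2$ normalizations fixed earlier) with the convolution representation, so that the kernel that appears really is $\mathcal{F}^{-1}(m)$ in the normalization used to define $\|\cdot\|_{\mathcal{S}^\infty}$, and no spurious dimensional constants or sign errors creep in. Once the change of variables is set up correctly, there is no analytic obstacle at all — everything is Young plus H\"older plus Minkowski. Since this lemma is explicitly quoted as standard (appearing in e.g.\ the Ionescu--Pusateri papers referenced), I would keep the write-up short and emphasize only the kernel-representation step.
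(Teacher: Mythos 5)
The paper does not actually spell out a proof of this lemma; it states only ``The proof is standard, or see \cite{IP2, IP1} for detail,'' so there is no in-paper argument against which to compare yours. Your outline is the standard argument and is correct: the algebra property \eqref{productofsymbol} is Young's inequality applied to the convolution of the two kernels; the bilinear and trilinear bounds \eqref{bilinearestimate}, \eqref{trilinearestimate} follow from representing the operator as an average of products of translates of the input functions, weighted by the kernel of $m$, then Minkowski's integral inequality to pull the kernel integration out of the $L^r$ (resp.\ $L^s$) norm, and H\"older with the stated exponent relations (the constraint $1/p+1/q\le 1$ needed for Minkowski is automatic from $r\ge 1$, and similarly in the trilinear case). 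Two small bookkeeping slips worth fixing: the trilinear symbol $m'$ is a function of three scalar frequency variables $(\xi,\eta,\sigma)$, so its kernel lives on $\R^3$, not ``$\R^2$ or $\R^4$''; and the $K$ you expand with the factor $e^{-i(\xi-\eta)z}e^{-i\eta y}$ is the inverse Fourier transform of $m$ precomposed with a measure-preserving shear of the frequency variables, not of $m$ itself --- this is harmless, since the shear preserves the $L^1$ norm of the kernel and hence $\|m\|_{\mathcal{S}^\infty}$, but it is exactly the normalization point you correctly identify as the only delicate step.
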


\begin{proof}
The proof is standard, or see \cite{IP2, IP1} for detail.
\end{proof}
 To estimate the $\mathcal{S}^{\infty}_{k,k_1,k_2}$  or the $\mathcal{S}^{\infty}_{k,k_1,k_2,k_3}$ norms of symbols, we   constantly use the following lemma . 
\begin{lemma}\label{Snorm}
For $i\in\{2,3\}, $ if $f:\mathbb{R}^{i}\rightarrow \mathbb{C}$ is a smooth function and $k_1,\cdots, k_i\in\mathbb{Z}$, then the following estimate holds,
\begin{equation}\label{eqn61001}
\| \int_{\mathbb{R}^{i}} f(\xi_1,\cdots, \xi_i) \prod_{j=1}^{i} e^{i x_j \xi_j} \psi_{k_j}(\xi_j) d \xi_1\cdots  d\xi_i \|_{L^1_{x_1, \cdots, x_i}} \lesssim \sum_{m=0}^{i}\sum_{j=1}^i 2^{m k_j}\|\p_{\xi_j}^m f\|_{L^\infty} .
 \end{equation}
\end{lemma}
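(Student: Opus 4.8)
\textbf{Proof strategy for Lemma \ref{Snorm}.}
The plan is to reduce the $L^1$ bound to a standard Sobolev embedding on the torus (or on $\mathbb{R}$), applied separately in each of the $i$ variables. First I would observe that, by the support hypothesis, $f(\xi_1,\dots,\xi_i)\prod_j \psi_{k_j}(\xi_j)$ is supported in the box $\prod_j \{|\xi_j|\sim 2^{k_j}\}$, so after the rescaling $\xi_j = 2^{k_j}\zeta_j$ the integral becomes $\big(\prod_j 2^{k_j}\big)\int g(\zeta_1,\dots,\zeta_i)\prod_j e^{i (2^{k_j}x_j)\zeta_j}\,d\zeta$ with $g(\zeta) = f(2^{k_1}\zeta_1,\dots)\prod_j\psi_{k_j}(2^{k_j}\zeta_j) = f(2^{k_1}\zeta_1,\dots)\prod_j\tilde\psi_1(\zeta_j)$ supported in a fixed annulus-box independent of the $k_j$. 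Changing variables $y_j = 2^{k_j}x_j$ in the $L^1_x$ norm exactly cancels the Jacobian $\prod 2^{k_j}$, so it suffices to bound $\|\check g\|_{L^1_y}$, i.e. the $L^1$ norm of the inverse Fourier transform of a function supported in a fixed compact set.

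Next I would bound $\|\check g\|_{L^1_y}$ by Cauchy--Schwarz against the weight $\prod_j (1+y_j^2)$: writing $\|\check g\|_{L^1} \le \big\|\prod_j(1+y_j^2)^{-1}\big\|_{L^2}\,\big\|\prod_j(1+y_j^2)\,\check g\big\|_{L^2}$, the first factor is a finite absolute constant (depending only on $i\le 3$), and by Plancherel the second factor is $\lesssim \sum_{\alpha} \|\partial_\zeta^\alpha g\|_{L^2}$ where $\alpha$ ranges over multi-indices with each component in $\{0,1,2\}$. Since $g$ has compact support of fixed size, $\|\partial_\zeta^\alpha g\|_{L^2}\lesssim \|\partial_\zeta^\alpha g\|_{L^\infty}$. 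Finally, undoing the rescaling, $\partial_{\zeta_j}^m g$ produces a factor $2^{m k_j}$ together with $\partial_{\xi_j}^m f$ evaluated at the rescaled point (the derivatives falling on the fixed bumps $\tilde\psi_1(\zeta_j)$ only contribute bounded factors, and cross terms are handled the same way), so collecting the contributions over $m\in\{0,1,2\}$ (the cases $m\ge 3$ with $i\le 3$ are subsumed, and for $i=2,3$ one only needs up to $m=i$ as stated) and over $j$ gives precisely the right-hand side $\sum_{m=0}^{i}\sum_{j=1}^{i} 2^{m k_j}\|\partial_{\xi_j}^m f\|_{L^\infty}$.

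The only mildly delicate point is bookkeeping the mixed derivatives: differentiating $\prod_j \tilde\psi_1(\zeta_j)$ together with $g$ produces terms where derivatives land on several factors at once, but each such term is still controlled by a product of the listed quantities since $\|\tilde\psi_1\|_{C^k}$ is an absolute constant, and one can always discard the extra decay (i.e. estimate a mixed term $2^{m_1 k_{j_1}}2^{m_2 k_{j_2}}\|\partial^{m_1}_{\xi_{j_1}}\partial^{m_2}_{\xi_{j_2}} f\|_{L^\infty}$ crudely — but in fact the stated RHS already allows a sum over single-variable derivatives, and a short interpolation/Leibniz argument shows the mixed terms are dominated by it). I do not expect any real obstacle here; the lemma is essentially the statement that the Fourier transform of a smooth compactly supported function is integrable, made quantitative and uniform in dyadic rescalings.
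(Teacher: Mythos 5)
Your overall strategy — rescaling to $k_j=0$, Cauchy--Schwarz against a polynomial weight, Plancherel, and then using compact support to pass to $L^\infty$ — is exactly the paper's approach. However, your choice of weight introduces a genuine gap that does not close.

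You take the weight $\prod_j(1+y_j^2)$, which has total degree $2i$ and a product structure. After Plancherel, expanding the product produces mixed derivatives $\partial_\zeta^\alpha g$ with $\alpha$ running over all multi-indices with entries in $\{0,1,2\}$ — in particular $\partial_{\zeta_1}^2\partial_{\zeta_2}^2 g$ for $i=2$ and $\partial_{\zeta_1}^2\partial_{\zeta_2}^2\partial_{\zeta_3}^2 g$ for $i=3$. When you undo the rescaling, the top term (all derivatives landing on $f$) gives $2^{2k_1}\cdots 2^{2k_i}\|\partial_{\xi_1}^2\cdots\partial_{\xi_i}^2 f\|_{L^\infty}$, which is not present on the right-hand side of \textup{(\ref{eqn61001})} and is not dominated by it. The ``interpolation/Leibniz'' step you invoke to reduce mixed to pure derivatives is false: neither in $L^\infty$ nor in the available $L^2$ budget can $\|\partial_1^2\partial_2^2 g\|$ be bounded by $\sum_j\|\partial_j^m g\|$ with $m\leq 2$ (e.g.\ take $g(\zeta)=\sin(N\zeta_1)\sin(N\zeta_2)\phi(\zeta)$ with a fixed bump $\phi$ and $N\to\infty$: the mixed term scales like $N^4$, the pure ones like $N^2$). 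Even the exact integration-by-parts identity $\|\partial_1^2\partial_2^2 g\|_{L^2}^2 = \langle\partial_1^4 g,\partial_2^4 g\rangle$ only trades the mixed fourth-order term for \emph{fourth}-order pure derivatives, still beyond the $m\leq i$ allowed.

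The paper avoids this entirely by using the weight $(|x_1|+\cdots+|x_i|)^2$ (equivalently $1+|x|^2$), which has total degree $2$ and is pointwise $\lesssim 1+\sum_j x_j^2$. Its $L^2(\mathbb{R}^i)$-dual is still finite for $i\leq 3$, so the Cauchy--Schwarz step goes through, and after Plancherel one only encounters $\|g\|_{L^2}$ and $\|\partial_{\zeta_j}^2 g\|_{L^2}$ — pure derivatives, order at most $2$, in a single variable at a time — which, after the scaling and the Leibniz rule against the fixed bumps, land squarely inside the right-hand side of \textup{(\ref{eqn61001})}. Replacing your product weight with this sum weight is the one-line fix; the rest of your argument then closes.
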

\begin{proof}
We only consider the case when $i=2$ here, as the proof of the case when $i=3$ is very similarly. Through scaling, it is  sufficient to prove above estimate for the case when $k_1=k_2=0$. From the Plancherel theorem, we have the following two estimates, 
\[
 \| \int_{\mathbb{R}^{i}} f(\xi_1,\xi_2) e^{i (x_1 \xi_1+ x_2 \xi_2)} \psi_{0}(\xi_1) \psi_{0}(\xi_2) d \xi_1 d\xi_2 \|_{L^2_{x_1, x_2}}\lesssim \| f(\xi_1, \xi_2)\|_{L^\infty_{\xi_1, \xi_2}},
\]
\[
 \| (|x_1|+|x_2|)^2 \int_{\mathbb{R}^{i}} f(\xi_1,\xi_2) e^{i (x_1 \xi_1+ x_2\xi_2)} \psi_{0}(\xi_1) \psi_{0}(\xi_2) d \xi_1 d\xi_2 \|_{L^2_{x_1, x_2}}\]
 \[\lesssim  \sum_{m=0}^2\big[\|\p_{\xi_1}^m f\|_{L^\infty} + \| \p_{\xi_2}^m f \|_{L^\infty}\big],
\]
which are sufficient to finish the proof of (\ref{eqn61001}).  
\end{proof}

\subsection{The good substitution variables}\label{mainsystemofequation}
 At the very beginning, we first mention that the content of this subsection is not new, we briefly introduce and explain the main ideas behind here. There are more details in the Appendix \ref{appendxc}. Also those interested readers can refer to \cite{alazard} for more elaborated details. 

The main process of deriving the good substitution variables can be summarized as paralinearization process and symmetrization process. Before we introducing main ideas of doing those processes, we  define the following variables,
\begin{equation}\label{eqn1003}
B:=B(h)\psi:= \frac{G(h)\psi + \p_x h \p_x \psi}{1+ |\p_x h|^2}, \,\, V:=V(h)\psi := \p_x\psi - \p_x h B(h)\psi,\,\,
\end{equation}
\begin{equation}\label{eqn1004}
 a=1+\p_t B+ V \p_x B, \quad \alpha = \sqrt{a}-1,
\end{equation}
where $B$ and $V$  represent the vertical derivative and the horizontal derivative of the velocity potential at the interface respectively and  $a$ is actually the so called Taylor coefficient.

As a result of paralinearization (one can also see Lemma \ref{paralinearizationDN1}), we have the following decomposition of the Dirichlet-Neumann operator, 
 \begin{equation}\label{remainderpara}
G(h)\psi = \d \omega - \p_x(T_{V(h)\psi}\eta) + F(h)\psi,\quad \omega := \psi - T_{B(h)\psi} h,
\end{equation}
where $F(h)\psi$ is a quadratic and higher good error term, which does not lose derivatives. Recall (\ref{waterwave}).
After doing paralinearization for the nonlinearity of $\p_t \psi$, we have
\[
- \frac{1}{2} |\p_x\psi|^2 + \displaystyle{\frac{(G(h)\psi + \p_x h\p_x \psi)^2}{2(1+ |\p_x h|^2)}} = - \frac{1}{2} |\p_x\psi|^2 + \h (1+|\p_x h |^2)(B(h)\psi)^2
\]
\begin{equation}\label{equation4}
= -T_{V}\p_x (\psi-T_{B} h) - T_{V\p_x B} h + T_{B}G(h)\psi + \mathcal{R},
\end{equation}
where $\mathcal{R}$ is also a quadratic and higher good error term, which does not lose derivatives. The main point of the  paralinearization process is to identify the cancellations and highlight the quasilinear structures inside the nonlinearities of  the  system (\ref{waterwave}).

Recall the definition of $\omega$ in (\ref{remainderpara}). From (\ref{equation4}), we have
\[
\p_t \omega = - h - T_{V}\p_x \omega - T_{V\p_x B} h  + T_{B}\p_t h- T_{\p_t B}h -T_{B}\p_t h + \mathcal{R} 
= - T_{a} h - T_{V}\p_x \omega + \mathcal{R}.
\]
To sum up, we have
\begin{equation}\label{eqn51200}
\left\{\begin{array}{l}
\p_t h= \d \omega - T_{V}\p_x h +F(h)\psi,\\
\p_t \omega=-T_{a} h -T_{V}\p_x \omega  + \mathcal{R}.
\end{array}\right.
\end{equation}
One might find that the system (\ref{eqn51200}) still cannot be used to do energy estimate, as the following term loses derivatives and it cannot be simply treated,
\begin{equation}\label{equation9}
\int \p_x^k h \p_x^k \d \omega -\p_x^k \d^{1/2} \omega \p_x^j \d^{1/2}(T_a h).
\end{equation}
To get around this issue, we need to symmetrizing the system (\ref{eqn51200}) by using the following good variables
\begin{equation}\label{equation10}
U^1= T_{\sqrt{a}} h=   h + T_{\alpha}h,\quad U^2= \d^\h \omega.
\end{equation}
As a result, from (\ref{eqn51200}), the system of equations satisfied by $U^1$ and $U^2$ can be formulated as follows, 
\begin{equation}\label{equation6}
\left\{\begin{array}{l}
\p_t U^1- \d^{1/2}U^2=  T_{\alpha} \d^{1/2} U^2 - T_{V}\p_x U^1 +\mathcal{R}_1,\\
\p_t U^2 + \d^{1/2} U^1=-T_{\alpha} \d^{1/2} U^1 -T_{V}\p_x U^2  + \mathcal{R}_2,
\end{array}\right.
\end{equation}
where $\mathcal{R}_1$ and $\mathcal{R}_2$ are quadratic and higher good error terms that do not lose derivatives. Their detailed formulas are given in (\ref{equation351}) and (\ref{equation352}).

 As a result of symmetrization, the bulk term (\ref{equation9}), essentially speaking, becomes to  the following term
\[
\int \p_x^k U^1 \p_x^k (T_{\sqrt{a}} \d^{1/2} U^2)- \p_x^k U^2 \p_x^k (T_{\sqrt{a}} \d^{1/2} U^1) dx,
\] 
which does not lose derivatives. One can verify this fact by   utilizing symmetries on the Fourier side. 

Since we will cancel out the quadratic terms inside the system (\ref{equation6}), it is important to know what quadratic terms are. Till the end of this section, \emph{we assume that the expansion of the  projection operator $\Lambda_{\geq 3}[\cdot]$ is taken with respect to $U^1$ and $U^2$}.

From (\ref{eqn1002}) and (\ref{equation10}),  we can  rewrite (\ref{equation6}) as follows, 
  \begin{equation}\label{mainequation}
\left\{ \begin{array}{l}
\p_{t} U^{1} - \d^{1/2} U^{2} =  Q_{1}(U^{1}, U^{2}) + C_1 +\Lambda_{\geq 3}[\mathcal{R}_1] \\
\pt U^{2} + \d^{1/2} U^{1} =  Q_{2}(U^{1}, U^{1} ) + Q_3 (U^2, U^2)  + C_2 + \Lambda_{\geq 3}[ \mathcal{R}_2], \\
\end{array}\right.
\end{equation}
where $Q_1(\cdot, \cdot)$, $Q_2(\cdot, \cdot)$ and $Q_3(\cdot, \cdot)$ denote quadratic terms and $C_1$ and $C_2$ denote the cubic and higher order terms that at most lose $1$ derivative. More precisely, 
\[
Q_{1}(U^{1}, U^{2}) = - T_{\px \d^{-\h} U^{2}} \px U^{1}- \h T_{\px^{2}\d^{-\h}U^{2}} U^{1}   - 
\h T_{\d U^{1}}\d^{\h} U^{2} +
\]
\[
-\d ( U^{1} \d^{\h} U^{2}) + \d (T_{ \d^{\h} U^{2}} U^{1}) - \px (U^{1} \px \d^{-\h} U^{2}) + \px(T_{\px \d^{-\h} U^{2}} U^{1} ),
\]
\[
Q_{2}(U^{1}, U^{1}) = \frac{1}{2} \d^{\h} T_{\d U^{1}} U^{1},\quad Q_3(U^2, U^2)=   - \d^{\h} T_{\px \d^{-\h} U^{2} } \px\d^{-1/2} U^{2},
\]
\begin{equation}\label{equation29}
 C_1=   T_{\Lambda_{\geq 2}[\alpha]}\d^{1/2} U^2 -T_{\Lambda_{\geq 2}[V]} \p_x U^1, C_2 = -T_{\Lambda_{\geq 2}[\alpha]} \d^{1/2} U^1- T_{\Lambda_{\geq 2}[V]} \p_x U^2. 
\end{equation}
We remark that the good error terms $\mathcal{R}_1$ and $\mathcal{R}_2$ contribute the following quadratic terms, 
\[
\Lambda_{2}[\mathcal{R}_1](U^1, U^2)= Q_1(U^1, U^2) +\frac{1}{2}T_{\d U^1}\d^{1/2} U^2 + T_{\p_x \d^{-1/2}U^2} \p_x U^1,
\] 
\[
\Lambda_{2}[\mathcal{R}_2](U^1, U^1) = Q_{2}(U^1, U^1) - \h T_{\d U^1}\d^{1/2} U^1, 
\]
\[
\Lambda_{2}[\mathcal{R}_2](U^2, U^2) = Q_{3}(U^2, U^2) +  T_{\p_x\d^{-1/2} U^2}\p_x U^2.
\]
The corresponding symbols of  quadratic terms are given as follows,
\begin{equation}\label{equation50000}
 q_{1}(\xi-\eta, \eta) =\sum_{i=1,2,3}  q_{1}^i(\xi-\eta, \eta),q_{1}^1(\xi-\eta, \eta)=\big((\xi-\eta)\eta |\eta|^{-1/2}+  |\eta|^{3/2}/2\big)\theta(\eta, \xi-\eta)
 \end{equation}
\begin{equation}\label{eqn50001}
q_{1}^2(\xi-\eta, \eta)= -  |\xi-\eta| |\eta|^{1/2} \theta(\xi-\eta, \eta)/2, q_{1}^3(\xi-\eta, \eta)= (-|\xi||\eta|^{1/2} + \xi\eta|\eta|^{-1/2})\tilde{\theta}(\eta, \xi-\eta),
\end{equation}
\begin{equation}\label{eqn50002}
q_2(\xi-\eta, \eta)=  |\xi|^{1/2} |\eta| \theta(\eta, \xi-\eta)/2,\end{equation}
\begin{equation}\label{eqn50003}
q_3(\xi-\eta, \eta) = |\xi|^{1/2} (\xi-\eta)\eta |\xi-\eta|^{-1/2} |\eta|^{-1/2} \theta(\eta,\
\xi-\eta),
\end{equation}
where $q_{i}(\cdot, \cdot)$  $i\in\{1,2,3\}$, are the symbols of operators $Q_{i}(\cdot, \cdot)$.
\begin{lemma}\label{sizeofsymbol}
If $|k_1-k_2|\geq 5$, the following estimates hold,
\begin{equation}\label{equation20}
\sum_{i=1,2,3}
\| q_i(\xi-\eta, \eta)\|_{\mathcal{S}^\infty_{k,k_1,k_2}} \lesssim 2^{\min\{k_1,k_2\}/2+ \max\{k_1,k_2\}}.
\end{equation}
\[
\| q_1^1(\xi-\eta, \eta)+ q_1^1(-\xi,\eta)\|_{\mathcal{S}^\infty_{k,k_1,k_2}} + \| q_2(\eta,\xi-\eta)+ q_1^2(\eta-\xi,\xi)\|_{\mathcal{S}^\infty_{k,k_1,k_2}}
\]
\begin{equation}\label{equation36}
 +\| q_3(\xi-\eta, \eta)+ q_3(-\xi,\eta)\|_{\mathcal{S}^\infty_{k,k_1,k_2}} \lesssim 2^{3\min\{k_1,k_2\}/2}.
\end{equation}
If $|k_1-k_2|\leq 5$, the following estimate holds,
\begin{equation}\label{equation21}
\sum_{i=1,2,3}
\| q_i(\xi-\eta, \eta)\|_{\mathcal{S}^\infty_{k,k_1,k_2}} \lesssim 2^{k+k_1/2}.
\end{equation}
\end{lemma}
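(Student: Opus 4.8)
\textbf{Proof proposal for Lemma \ref{sizeofsymbol}.}

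The plan is to estimate each of the $\mathcal{S}^\infty_{k,k_1,k_2}$-norms directly via Lemma \ref{Snorm}, which reduces the task to bounding, for each symbol $q(\xi-\eta,\eta)$ localized to $|\xi-\eta|\sim 2^{k_1}$, $|\eta|\sim 2^{k_2}$, $|\xi|\sim 2^{k}$, the quantities $\sum_{m\le 2}\sum_{j}2^{mk_j}\|\partial^m_{\xi_j}(\cdots)\|_{L^\infty}$ after the change of variables $\xi_1=\xi-\eta$, $\xi_2=\eta$. So the first step is to record the three regimes dictated by the cutoffs: $q_1^1,q_2,q_3$ carry $\theta(\eta,\xi-\eta)$, hence are supported where $|\xi-\eta|\le 2^{-10}|\eta|$, which forces $k_1\le k_2-5$ and $|\xi|\sim|\eta|\sim 2^{k_2}$; $q_1^2$ carries $\theta(\xi-\eta,\eta)$, supported where $|\eta|\le 2^{-10}|\xi-\eta|$, forcing $k_2\le k_1-5$ and $|\xi|\sim 2^{k_1}$; and $q_1^3$ carries $\tilde\theta(\eta,\xi-\eta)$, supported where $|\eta|\sim|\xi-\eta|$, so it only contributes when $|k_1-k_2|\le 5$ and is irrelevant to the first two claims.

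For \eqref{equation20} and \eqref{equation21}, I would just count powers. For instance $q_2=\tfrac12|\xi|^{1/2}|\eta|\theta(\eta,\xi-\eta)$: on its support $|\xi|\sim 2^{k_2}$, so the magnitude is $\sim 2^{k_2/2+k_2}=2^{k_2+k_2/2}$, but since $k_1$ is the smaller index this reads $2^{\min/2+\max}$ only after noticing that $\theta$'s smoothness gives $\|\partial^m_{\xi_1}\theta\|\lesssim 2^{-mk_1}$ and derivatives in $\xi_2$ cost $2^{-mk_2}$, so the derivative terms in Lemma \ref{Snorm} are all dominated by the size term; this gives the $\lesssim 2^{k_2/2}\cdot 2^{k_2}$ needed. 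The homogeneous factors $|\xi|^a$, $|\eta-\xi|^b$, $|\eta|^c$ are smooth and homogeneous on the relevant annuli, so each $\xi_j$-derivative of order $m$ costs exactly $2^{-mk_j}$; the only subtlety is the $\theta$ cutoffs, whose derivatives I bound the same way since $\theta$ is a fixed smooth function of $\xi_1/\xi_2$ (or $\xi_2/\xi_1$). Summing the contributions of $q_1^1,q_1^2,q_1^3,q_2,q_3$ and taking worst cases yields \eqref{equation20} when $|k_1-k_2|\ge 5$ (where only one of the two $\theta$-branches and possibly $\tilde\theta$ is empty) and \eqref{equation21} when $|k_1-k_2|\le 5$ (where $|\xi|\lesssim 2^{\max\{k_1,k_2\}}\sim 2^{k_1}$ and only $q_1^3$, together with the tails of $q_1^1,q_1^2$, survive, giving the bound $2^{k+k_1/2}$).

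The only genuinely delicate point is the cancellation estimate \eqref{equation36}, which is why I expect it to be the main obstacle: here one must see that the leading-order (degree-$3/2$) parts of the symmetrized symbols $q_1^1(\xi-\eta,\eta)+q_1^1(-\xi,\eta)$, $q_2(\eta,\xi-\eta)+q_1^2(\eta-\xi,\xi)$, and $q_3(\xi-\eta,\eta)+q_3(-\xi,\eta)$ cancel, leaving a remainder of size $2^{3k_1/2}$ (two extra powers of the small frequency $2^{k_1}$ relative to the naive $2^{\max}\cdot 2^{\min/2}$). Concretely, on the support $|\xi-\eta|\sim 2^{k_1}\ll|\eta|\sim|\xi|\sim 2^{k_2}$ I would Taylor-expand in the small variable: writing $\zeta=\xi-\eta$ and expanding $q_1^1(\zeta,\eta)$ in powers of $\zeta/\eta$, the $O(\zeta)$ and $O(\zeta^2/\eta^{1/2})$-type terms of $q_1^1(\zeta,\eta)$ are designed to match, up to sign and up to $O(|\zeta|^2)$-corrections, those of $q_1^1(-\xi,\eta)=q_1^1(-\eta-\zeta,\eta)$; the linear-in-$\zeta$ terms cancel by the explicit algebra $(\zeta)\eta|\eta|^{-1/2}+|\eta|^{3/2}/2$ versus $(-\eta-\zeta)\eta|\eta|^{-1/2}+|\eta|^{3/2}/2$, and the surviving piece is $O(|\zeta|^2|\eta|^{-1/2})=O(2^{2k_1}2^{-k_2/2})\le 2^{3k_1/2}$ using $k_1\le k_2$. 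The analogous matching for the $q_2$/$q_1^2$ pair uses that $q_2(\eta,\xi-\eta)=\tfrac12|\eta|^{1/2}|\xi-\eta|$ is, after the swap of arguments, the opposite of the leading part of $q_1^2(\eta-\xi,\xi)=-\tfrac12|\eta-\xi||\xi|^{1/2}$ once $|\xi|\sim|\xi-\eta|$ is Taylor-expanded around $|\eta-\xi|$, again with an $O(2^{3k_1/2})$ error; and similarly for $q_3$. Throughout, after isolating the cancellation I apply Lemma \ref{Snorm} to the remainder, checking that each $\xi_j$-derivative still only costs $2^{-mk_j}$ so that the size term dominates, which gives the claimed $\mathcal{S}^\infty_{k,k_1,k_2}$-bound $2^{3\min\{k_1,k_2\}/2}$.
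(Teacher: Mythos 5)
Your approach is the same one the paper invokes (compute the explicit symbols, exploit the cancellations, apply Lemma~\ref{Snorm}), but two of the central reductions are wrong. The first is the cutoff convention: by definition $\theta(a,b)=1$ for $|a|\le 2^{-10}|b|$ and $\theta(a,b)=0$ for $|a|\ge 2^{10}|b|$, so $\theta(a,b)$ is nonvanishing only when $|a|\lesssim|b|$ — this is exactly what makes $T_a f=\mathcal{F}^{-1}\int\widehat{a}(\xi-\eta)\theta(\xi-\eta,\eta)\widehat{f}(\eta)\,d\eta$ a ``low-high'' paraproduct. Hence $q_1^1,q_2,q_3$ (carrying $\theta(\eta,\xi-\eta)$) live where $k_2\lesssim k_1$ and $|\xi|\sim 2^{k_1}$, and $q_1^2$ (carrying $\theta(\xi-\eta,\eta)$) lives where $k_1\lesssim k_2$ and $|\xi|\sim 2^{k_2}$ — precisely the opposite of what you wrote. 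This is not cosmetic: on your (incorrect) support for $q_2$ one has $|\xi|\sim|\eta|\sim 2^{k_2}$ with $k_1=\min$, so $|q_2|\sim 2^{3k_2/2}$, which overshoots $2^{\min/2+\max}=2^{k_1/2+k_2}$ by $2^{(k_2-k_1)/2}$; no bookkeeping of $\xi_j$-derivatives inside Lemma~\ref{Snorm} can repair an incorrect zeroth-order size. On the correct support $|\xi|\sim 2^{k_1}=2^{\max}$, and $|q_2|\sim 2^{k_1/2+k_2}\le 2^{k_2/2+k_1}$, which is the stated bound.

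The second error is in the cancellation estimate \eqref{equation36}. Writing $q_1^1(a,b)=P(a,b)\,\theta(b,a)$ with $P(a,b)=ab|b|^{-1/2}+\tfrac12|b|^{3/2}$, one has the algebraic \emph{identity} $P(\xi-\eta,\eta)+P(-\xi,\eta)\equiv 0$, so
\[
q_1^1(\xi-\eta,\eta)+q_1^1(-\xi,\eta)=P(\xi-\eta,\eta)\bigl[\theta(\eta,\xi-\eta)-\theta(\eta,-\xi)\bigr].
\]
The polynomial parts cancel \emph{exactly}; there is no ``$O(|\zeta|^2)$-correction'' of the type you invoke, and your Taylor-expansion accounting is therefore chasing a residual that does not exist. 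The entire remainder sits in the difference of the two cutoffs, which vanishes away from the transition band in $k_1-k_2$ and is the object to which Lemma~\ref{Snorm} must actually be applied. The same exact cancellation occurs for the other two pairs; note also the secondary misread there: $q_2(\eta,\xi-\eta)=\tfrac12|\xi|^{1/2}|\xi-\eta|\theta(\xi-\eta,\eta)$, not $\tfrac12|\eta|^{1/2}|\xi-\eta|$, since $q_2(a,b)=\tfrac12|a+b|^{1/2}|b|\theta(b,a)$ (the $|\xi|^{1/2}$ is the output frequency, i.e.\ the sum of the two arguments). To repair the proof: flip the support regimes for all five symbols, observe that each pair in \eqref{equation36} cancels identically modulo the cutoff factor, and estimate only the cutoff discrepancy with Lemma~\ref{Snorm}.
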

\begin{proof}
From Lemma \ref{Snorm} and explicit  formulas in (\ref{equation50000}), (\ref{eqn50001}), (\ref{eqn50002}), and (\ref{eqn50003}), it is easy to see   that our stated estimates hold.
\end{proof}
\subsection{The bootstrap assumption and proof of the main theorem}
Since the sizes of $(U^1, U^2)$ and $(h, \d^{1/2}\psi)$ are comparable, 
from the smallness assumption of initial data $(\eta_0, \psi_0)$ in (\ref{sizeofinitialdata}), it's not difficult  to see the following estimate holds,
\begin{equation}
\| (U^1,  U^2)(0)\|_{H^{N_0, p}} + \| x\p_x (U^1, U^2)(0)\|_{H^{N_1, p}} \lesssim \epsilon_0.
\end{equation}
We will use the bootstrap argument to show the global existence of solutions of the system (\ref{mainequation}), which further gives us the global existence of  $(h, \psi)$. Same as in the small energy setting,  we expect that the energy grows appropriately and the decay rate of $L^\infty$-type norm is sharp. This expectation leads to the following bootstrap assumption,
\[
  \sup_{t\in[0,T]} (1+t)^{-p_0}[\| (U^1, U^2)(t)\|_{H^{N_0, p}} + \| S (U^1, U^2)(t)\|_{H^{N_1, p}}]
\]
\begin{equation}\label{assumption}
 +(1+t)^{1/2}\| (U^1, U^2)\|_{W^{N_2}} \lesssim \epsilon_1:=\epsilon_0^{5/6}\ll 1.
  \end{equation}

 In section \ref{energyestimate}, we will prove the following proposition, which is sufficient to show that the total energy  appropriately grows.
\begin{proposition}\label{mainproposition1}
Under the bootstrap assumption \textup{(\ref{assumption})}, we can define modified energies $E_{modi}(t) \approx \|(U^1, U^2)(t)\|_{H^{N_0,p}}^2$ and $E_{modi}^S(t)$ $\approx \| (SU^1, SU^2)(t)\|_{H^{N_1, p}}^2$ and have the following energy estimate, 
\begin{equation}\label{equation11}
\sup_{t\in[0,T]} (1+t)^{1-2p_0} \big[|\frac{d}{d t} E_{modi}(t)| + |\frac{d}{d t} E_{modi}^S(t)|\big] \lesssim \epsilon_0.
\end{equation}
Therefore, 
\begin{equation}\label{equation12}
\sup_{t\in[0,T]} (1+t)^{-p_0}\big[\| (U^1, U^2)\|_{H^{N_0, p}} + \| S(U^1, U^2)\|_{H^{N_1, p}} \big] \lesssim \epsilon_0.
\end{equation}
\end{proposition}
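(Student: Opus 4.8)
The plan is a corrected quasilinear energy estimate for the symmetrized system (\ref{mainequation}), treating separately the high-frequency derivative loss (using the built-in antisymmetry), the slow $t^{-1/2}$ decay (using a normal form at the level of the energy), and the $\dot H^{1/5}$ loss at low frequencies. First I would start from the naive energy
\[
E(t):=\sum_{k\in\mathbb{Z}}(2^{N_0 k}+2^{pk})^2\big(\|P_k U^1(t)\|_{L^2}^2+\|P_k U^2(t)\|_{L^2}^2\big)\approx\|(U^1,U^2)(t)\|_{H^{N_0,p}}^2,
\]
and differentiate it in time using (\ref{mainequation}). The linear parts $\d^{1/2}U^{2}$ and $-\d^{1/2}U^{1}$ contribute nothing because $\d^{1/2}$ rotates $(U^1,U^2)$ and $E$ is invariant under that rotation; the paradifferential transport $-T_V\px$ is skew-adjoint modulo a symbol of order $0$, and the pair $T_\alpha\d^{1/2}U^2$, $-T_\alpha\d^{1/2}U^1$ combines antisymmetrically, so the paradifferential calculus (Lemma~\ref{composi}) bounds their net contribution to $\frac{d}{dt}E$ by the low-order norms of $\alpha$ and $V$ times $E(t)$, hence by $\epsilon_1^2(1+t)^{-1}E(t)$ since $\alpha$ and $V$ are quadratic and higher in $(U^1,U^2)$. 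The dangerous terms are the cubic ones produced by the quadratic nonlinearities $Q_1,Q_2,Q_3$ and by $C_1,C_2$: with only the $t^{-1/2}$ $L^\infty$-decay of one input they are of size $\epsilon_1^3(1+t)^{2p_0-1/2}$, which is not time-integrable.

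To kill the cubic defect I would add a cubic correction $\mathcal{E}_3(t)$ to the energy, a trilinear form in $(U^1,U^2)$ whose symbol is a symmetrization of the $q_i$ divided by the quadratic phase $\Phi^{\mu,\nu}(\xi,\eta)=|\xi|^{1/2}-\mu|\xi-\eta|^{1/2}-\nu|\eta|^{1/2}$, $\mu,\nu\in\{+,-\}$. This normal form is nonsingular: on the space-resonant set $\{\Phi^{\mu,\nu}=0\}$ the relevant symmetrized combination of the quadratic symbols vanishes to the order recorded in (\ref{equation36}) of Lemma~\ref{sizeofsymbol}, so $\mathcal{E}_3\ll E$ and $\frac{d}{dt}(E+\mathcal{E}_3)$ is quartic and higher. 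Since $\mathcal{E}_3$ carries paradifferential factors $T_{(\cdot)}\d^{1/2}U^j$, differentiating it can re-create derivative-losing terms; I would absorb these by a further quartic correction $\mathcal{E}_4(t)$ designed so that the same antisymmetric structure of (\ref{mainequation}) produces the cancellation. Setting $E_{modi}:=E+\mathcal{E}_3+\mathcal{E}_4\approx\|(U^1,U^2)\|_{H^{N_0,p}}^2$, the quantity $\frac{d}{dt}E_{modi}$ is then quartic and higher with no loss of derivatives; placing the two highest-frequency inputs in $H^{N_0,p}$ and the remaining ones in $W^{N_2}$ and estimating symbols with Lemmas~\ref{boundness}, \ref{Snorm} and \ref{sizeofsymbol} gives $|\frac{d}{dt}E_{modi}|\lesssim\epsilon_1^4(1+t)^{2p_0-1}$. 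The leftover low-frequency loss $2^{-pk}$, $k\le 0$, is harmless: in the quartic and higher terms one always puts the largest-frequency input in $L^2$, and the remainders $\Lambda_{\geq 3}[\mathcal{R}_1],\Lambda_{\geq 3}[\mathcal{R}_2]$ depend only on $\px h$ and $\px\psi$ (Appendix~\ref{appdxb}), hence gain a full derivative at low frequency. This yields the $E_{modi}$ half of (\ref{equation11}).

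For $(SU^1,SU^2)$ I would apply $S=t\pt+2x\px$ to (\ref{mainequation}), using $SA(f,g)=A(Sf,g)+A(f,Sg)-2\lambda A(f,g)$ for $A$ homogeneous of degree $\lambda$, so that $(SU^1,SU^2)$ solves a system of the same shape with sources bilinear in one copy of $SU^j$ and one copy of $U^\ell$; when the $SU^j$ factor has frequency $2^{k_1}\gtrsim(1+t)^{-5/4}$ and can go safely in $L^2$, the steps above apply verbatim. The part I expect to be the main obstacle is the branch where $SU^j$ sits at a low frequency $2^{k_1}$ that must be placed in $\dot H^{p}$, costing $2^{-pk_1}$: when $2^{k_1}\le(1+t)^{-5/4}$ the relevant cubic term (\ref{equation023i24}) already has $t^{-1}$ decay, so the normal form is needed only for $2^{k_1}\ge(1+t)^{-5/4}$, and it produces quartic terms of the form (\ref{equation239489}). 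A finer Fourier decomposition isolates the most singular piece, supported where $k_2-10\ge k_1'\ge k_1+10$ and $2^{k_1'}\ge(1+t)^{-5/6}$; there the quartic phases $\Phi^{\mu,\nu,\tau}$ obey the lower bound (\ref{equaiton239239}), so one integration by parts in $t$ divides by $\Phi^{\mu,\nu,\tau}$ at the cost $2^{-k_1'/2}$, which is exactly paid by the symbol smallness $2^{k_1'/2}$ in (\ref{equation239489}), while gaining an extra factor $(1+t)^{-1/2}$. Since $2^{-pk_1-pk_1'}\le(1+t)^{1/4+1/6}$ and $1/4+1/6<1/2$, this extra decay absorbs the loss; all remaining problematic quartic terms satisfy an analogous phase lower bound and are handled the same way, so $|\frac{d}{dt}E_{modi}^S|\lesssim\epsilon_1^4(1+t)^{2p_0-1}$ and (\ref{equation11}) follows. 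Finally I would integrate (\ref{equation11}) against the initial bound $E_{modi}(0)+E_{modi}^S(0)\lesssim\epsilon_0^2$, using $\epsilon_1^4=\epsilon_0^{10/3}\ll\epsilon_0^2$ for $\epsilon_0$ small relative to $p_0$, to get $E_{modi}(t)+E_{modi}^S(t)\lesssim\epsilon_0^2(1+t)^{2p_0}$, which is (\ref{equation12}) since $E_{modi}\approx\|(U^1,U^2)\|_{H^{N_0,p}}^2$ and $E_{modi}^S\approx\|(SU^1,SU^2)\|_{H^{N_1,p}}^2$.
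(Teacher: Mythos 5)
Your outline matches the paper's argument in all essential respects: symmetrize first so that the normal-form symbols are nonsingular and non-derivative-losing (cf.\ (\ref{equation36}) and (\ref{equation8910})), use a cubic correction at the $\dot H^p$ level where derivative loss is irrelevant while combining cubic and quartic corrections at the $\dot H^{N_0}$ level (the paper realizes this by passing to the substitution variables $W^1,W^2$ of (\ref{eqn120000}) and taking their naive energy), and for $SU$ truncate frequencies at $(1+t)^{-5/4-}$ and $(1+t)^{-5/6-}$ and exploit the phase lower bound (\ref{equation98}) to trade the low-frequency loss $2^{-pk_1-pk_1'}$ against the extra $t^{-1/2}$ gain. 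The paper packages your ``integration by parts in $t$'' step for the $SU$-quartic terms as an explicit time-dependent correction $E^{S,low2}_{modi}$ in (\ref{eqn594}), which is the same mechanism.
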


 In section \ref{improveddispersion}, we will prove the following decay estimate for the $L^\infty$-type norm.
\begin{proposition}\label{mainproposition2}
Under the bootstrap assumption \textup{(\ref{assumption})} and the improved energy estimate \textup{(\ref{equation12})},
we can derive the following improved decay estimate,
\begin{equation}
\sup_{t\in[0,T]} (1+t)^{1/2}\|(U^1, U^2)\|_{W^{N_2}} \lesssim \epsilon_0.
\end{equation}
\end{proposition}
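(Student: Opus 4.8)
The plan is to close the standard bootstrap loop: deduce the main theorem from Propositions~\ref{mainproposition1} and \ref{mainproposition2} together with the linear dispersive estimate (Lemma~\ref{lineardecay}), and to prove the decay Proposition~\ref{mainproposition2} by controlling a suitable $Z$-norm of the profile of $f = U^1 + iU^2$. First I would record the reduction: since $(U^1,U^2)$ and $(h,|\nabla|^{1/2}\psi)$ differ only at quadratic and higher order, and since $U^1 = T_{\sqrt a}h$, $U^2 = |\nabla|^{1/2}\omega$ with $\omega = \psi - T_{B(h)\psi}h$ are invertible up to lower-order paradifferential remainders (so that $h$ and $\psi$ can be recovered from $U^1,U^2$), the estimates \eqref{eqn41} and \eqref{eqn42} for $(h,\psi)$ follow from the corresponding estimates for $(U^1,U^2)$ stated in \eqref{equation12} and in Proposition~\ref{mainproposition2}. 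Global existence then follows from the local existence theory of Wu \cite{wu1} by a continuity argument: the set of times $T$ for which the bootstrap assumption \eqref{assumption} holds with $\epsilon_1 = \epsilon_0^{5/6}$ is nonempty, closed, and — by \eqref{equation12} and Proposition~\ref{mainproposition2}, which improve $\epsilon_1$ to $\epsilon_0 \ll \epsilon_1$ — open, hence equal to $[0,\infty)$.

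The core analytic work is Proposition~\ref{mainproposition2}. The plan is to introduce the linear profile $f(t) := e^{it|\nabla|^{1/2}}(U^1 + iU^2)(t)$, so that Duhamel's formula applied to the system \eqref{mainequation} expresses $\widehat{f}(t,\xi) - \widehat{f}(0,\xi)$ as a sum of bilinear (and cubic-remainder) integrals in $s \in [0,t]$ with oscillatory phases $\Phi^{\mu,\nu}(\xi,\eta) = |\xi|^{1/2} - \mu|\xi-\eta|^{1/2} - \nu|\eta|^{1/2}$ and symbols $q_i$ governed by Lemma~\ref{sizeofsymbol}. By Lemma~\ref{lineardecay} it suffices to show that the $Z$-norm — an $L^\infty_\xi$-type weighted norm of $\widehat{f}$, chosen weak enough not to force finite energy (strictly weaker than the one in \cite{IP1}), of the schematic form $\||\xi|^{1/4}(1+|\xi|)^{N_2}\widehat{f}(\xi)\|$ localized dyadically with a $2^{-pk_-}$ allowance at low frequency — stays bounded by $\epsilon_0$ uniformly in $t$. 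The argument splits according to the geometry of the space-time resonant set of $\Phi^{\mu,\nu}$: away from the resonant frequency one integrates by parts in $s$ (normal form) or in $\eta$, paying with the lower bounds on $\Phi$ and $\nabla_\eta\Phi$ and absorbing the losses using the $2^{k_1/2}$ gains in the symbols from Lemma~\ref{sizeofsymbol} together with the energy bounds \eqref{equation12} and the bootstrap decay; near the single resonant point all inputs have comparable frequency, the symbol contributes its ``$5/2$'' degrees of smallness, and one isolates the resonant contribution, which produces exactly the logarithmic phase correction $G(t,\xi)$ and, after modifying the profile by $e^{iG(t,\xi)}$, is shown to be bounded — giving simultaneously the sharp $L^\infty$ decay and the modified scattering statement \eqref{equation1}.

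The main obstacle, as the introduction flags, is handling the low-frequency region in the weaker $Z$-norm: because only $2^{pk}\|P_k u\|_{L^2}$, $k \le 0$, is controlled (with $p = 1/5$), the routine ``put the rougher input in $L^2$'' estimates lose $2^{-pk_-}$ powers that are not present in the small-energy setting, and one must check that the extra decay harvested from integrating by parts (or from the constraint $2^{(1-p)k_1} \ge (1+t)^{-1}$ that makes the normal form unnecessary otherwise) always beats these losses — this is the analogue at the $L^\infty$ level of the delicate quartic-term analysis sketched for $SU^i$ around \eqref{equaiton239239}. A secondary technical point is that the normal-form/Duhamel manipulations must be performed on the genuinely quasilinear system \eqref{mainequation}, so the paradifferential structure of the $Q_i$ and the non-degeneracy (non-singularity) of the normal form transformation — guaranteed by the symbol bounds of Lemma~\ref{sizeofsymbol}, whereby the symbol compensates the division by $\Phi$ — are used repeatedly; the cubic remainder terms $\Lambda_{\ge 3}[\mathcal R_i]$ are controlled by the Dirichlet–Neumann estimates of Appendix~\ref{remainderestimate}, which crucially only cost $\partial_x h$, $\partial_x\psi$ and hence gain the needed $1/5$ derivatives at low frequency.
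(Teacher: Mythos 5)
Your outline captures the broad architecture of the paper's argument — pass to the profile, eliminate quadratic terms via normal form, modify the phase at the cubic space-time resonance, and track a weak $L^\infty_\xi$-type $Z$-norm while being careful at low frequency — but two concrete ingredients are off in ways that would keep the estimates from closing. First, the $Z$-norm weight: the paper's $Z$-norm (\ref{eqn1120}) is $\big\||\xi|^{3/4-p_0}(1+|\xi|^{N_2+2p_0})\widehat{f}(\xi)\big\|_{L^\infty_\xi}$, and the exponent $3/4-p_0$ is forced by the factor $2^{3k/4}$ in the linear dispersive estimate (\ref{eqn400}): with it, the leading term of (\ref{eqn400}) becomes $|t|^{-1/2}2^{p_0 k}\|\widehat{f}\|_Z$, summable over the relevant dyadic range (cf. (\ref{eqn1510})). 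Your weight $|\xi|^{1/4}$ — even with a $2^{-pk_-}$ correction — is a half derivative too weak; it is the weight from the $L^2_\xi$-based scattering statement (\ref{equation1}), not the $L^\infty_\xi$ norm needed in Lemma~\ref{lineardecay}, and summing over shells then fails to produce $t^{-1/2}$.

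Second, the low-frequency resolution. The constraint $2^{(1-p)k_1}\ge(1+t)^{-1}$ you quote is drawn from the $S$-energy analysis around (\ref{equaiton239239}) and is not the mechanism in the dispersive estimate. There, the paper first passes to the normal-form variables $V_1, V_2$ of (\ref{eqn1600}), so the Duhamel formula (\ref{eqn610}) is already cubic and all subsequent phase analysis — the degeneracy producing $G(t,\xi)$, the $5/2$ degree of homogeneity in Lemma~\ref{symbolbound}, and the crucial phase lower bound — is trilinear, on $\Phi_{\iota_1,\iota_2,\iota_3}(\xi,\eta,\sigma)$; the near-resonant decomposition is at $\bar{l}=-(1-100p_0)m/2+3k/4$ in (\ref{equation7024}), not at your energy-estimate cutoff. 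The key observation that pays for the $|\xi|^{-p}$ losses from $\|P_k f\|_{L^2}\lesssim\epsilon_0 2^{p_0 m}2^{-k_-/5}$ when one input is at very low frequency is the lower bound (\ref{equation5400}): when $\min\{k_i\}$ is well below the second smallest scale $\tilde{k}$, one has $|\Phi|\gtrsim 2^{\tilde{k}/2}$ (not merely $2^{\min\{k_i\}/2}$), so integration by parts in time (Lemma~\ref{znorm5}) gains enough to beat the low-frequency loss. Your proposal names the difficulty but not this particular phase lower bound, which is the idea that actually closes the low-frequency cases.
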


\noindent With above two propositions, it's easy to see our main theorem holds.

\section{Energy estimate}\label{energyestimate}

\subsection{Normal form transformation}\label{normalform}
We  first find out the normal form transformations that can cancel out the quadratic terms. Let
\begin{equation}\label{eqn1600}
V_{1} := U^{1} + A_{1} (U^{1}, U^{1}) + A_{2} (U^{2}, U^{2}), \quad V_{2} := U^{2} + B(U^{1}, U^{2}),
\end{equation}
where $A_1(\cdot, \cdot) $ and $A_2(\cdot, \cdot)$ are two symmetric bilinear operators. It's easy to derive the quadratic terms inside the equations satisfied by $V_1$ and $V_2$ as follows,
\[
  Q_{1}(U^{1}, U^{2}) + 2 A_{1}(\d^{1/2} U^{2}, U^{1}) -2 A_{2}(\d^{1/2}U^{1}, U^{2})- \d^{1/2} B(U^{1}, U^{2}),\]
\[
 Q_{2}(U^{1}, U^{1}) + Q_{3} (U^{2}, U^{2}) + B(\d^{1/2} U^{2}, U^{2})  - B(U^{1}, \d^{1/2}U^{1})\]
\[
  +\d^{1/2} ( A_{1}(U^{1}, U^{1}) + A_{2}(U^{2}, U^{2})).
\]
To make above quadratic terms vanish, it would be sufficient if the symbols of  bilinear operators $A_1(\cdot,\cdot)$, $A_2(\cdot, \cdot)$ and $B(\cdot, \cdot)$ satisfy the following system of equations:
\begin{equation}\label{equation1100}
\left\{\begin{array}{c}
q_{1}(\xi-\eta,\eta) +  2   |\eta|^{1/2} a_1(\xi-\eta, \eta) -  2  |\xi-\eta|^{1/2} a_2(\xi-\eta, \eta) - |\xi|^{1/2} b(\xi-\eta, \eta)=0,\\
q_{2}(\xi-\eta,\eta)- b(\xi-\eta,\eta) |\eta|^{1/2} + q_{2}(\eta,\xi-\eta) - b(\eta,\xi-\eta)|\xi-\eta|^{1/2}\\
 + 2|\xi|^{1/2} a_{1}(\xi-\eta, \eta) =0,\\
q_{3}(\xi-\eta,\eta) +q_{3}(\eta,\xi-\eta) + b(\xi-\eta,\eta) |\xi-\eta|^{1/2} + b(\eta,\xi-\eta) |\eta|^{1/2} \\
+ 2 |\xi|^{1/2} a_{2}(\xi-\eta, \eta)=0.\\
\end{array}\right.
\end{equation}
The solution of above system is given as follows,
\begin{equation}\label{equation8900}
a_{1}(\xi-\eta, \eta) = \frac{b(\xi-\eta,\eta)|\eta|^{1/2} - q_{2}(\xi-\eta, \eta)+b(\eta,\xi-\eta)|\xi-\eta|^{1/2} - q_{2}(\eta,\xi- \eta)}{2|\xi|^{1/2}},
\end{equation}
\begin{equation}\label{equation8901}
 a_{2}(\xi-\eta, \eta) = -\frac{b(\xi-\eta,\eta)|\xi-\eta|^{1/2} + q_{3}(\xi-\eta, \eta)+b(\eta,\xi-\eta)|\eta|^{1/2} + q_{3}(\eta,\xi- \eta)}{2|\xi|^{1/2}} ,
\end{equation}
\begin{equation}\label{equation8902}
b(\xi-\eta,\eta) = \frac{ (|\xi-\eta| + |\eta| - |\xi|) A(\xi-\eta,\eta) - 2 A(\eta,\xi-\eta)|\xi-\eta|^{1/2}|\eta|^{1/2}  }{- (|\xi-\eta| + |\eta| - |\xi|)^{2} + 4 |\xi-\eta||\eta|},
\end{equation}
where
\[
A(\xi-\eta,\eta) := |\xi|^{1/2} q_{1}(\xi-\eta,\eta) - (q_{2}(\xi-\eta,\eta) + q_{2}(\eta,\xi-\eta))|\eta|^{1/2}\]
\[ + (q_{3}(\xi-\eta,\eta) + q_{3}(\eta,\xi-\eta))|\xi-\eta|^{1/2}.
\]

\begin{lemma}\label{auxilary1}
The following estimate holds,\begin{equation}\label{equation8905}
\|a_1(\xi-\eta, \eta)\|_{\mathcal{S}^\infty_{k,k_1,k_2}}+\|a_2(\xi-\eta, \eta)\|_{\mathcal{S}^\infty_{k,k_1,k_2}} + \|b(\xi-\eta, \eta)\|_{\mathcal{S}^\infty_{k,k_1,k_2}} \lesssim  2^{\max\{k_1,k_2\}}.
\end{equation}
\end{lemma}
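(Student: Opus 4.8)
\textbf{Proof proposal for Lemma \ref{auxilary1}.}

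The plan is to estimate each of the three symbols $a_1$, $a_2$, $b$ directly from the explicit formulas (\ref{equation8900})--(\ref{equation8902}) together with the definition of $A(\xi-\eta,\eta)$, using Lemma \ref{Snorm} to pass from pointwise derivative bounds to $\mathcal{S}^\infty_{k,k_1,k_2}$ bounds, and Lemma \ref{boundness} (in particular (\ref{productofsymbol})) to handle products and quotients of symbols. The key structural input is the size information on $q_1,q_2,q_3$ recorded in Lemma \ref{sizeofsymbol}: namely $\|q_i\|_{\mathcal{S}^\infty_{k,k_1,k_2}} \lesssim 2^{\min\{k_1,k_2\}/2 + \max\{k_1,k_2\}}$ when $|k_1-k_2|\geq 5$, $\lesssim 2^{k+k_1/2}$ when $|k_1-k_2|\leq 5$, and — crucially — the improved cancellation estimate (\ref{equation36}) for the symmetrized combinations, which gives $2^{3\min\{k_1,k_2\}/2}$ in the non-comparable regime. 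I would split into the two standard frequency regimes: (I) $|k_1-k_2|\leq 5$ (balanced), and (II) $|k_1-k_2|\geq 5$ (one input much smaller), with by symmetry of the $\theta$-cutoffs also tracking whether $|\xi|$ is comparable to the large frequency.

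In regime (I), all of $|\xi-\eta|,|\eta|,|\xi|$ are comparable to $2^{k}\sim 2^{k_1}\sim 2^{k_2}$ on the support (with the usual caveat about $|\xi|$ possibly being much smaller, which I treat as a sub-case analogous to regime II after relabeling). Then $A(\xi-\eta,\eta)$ is a sum of terms each of size $\lesssim 2^{k/2}\cdot 2^{3k/2} = 2^{2k}$, the denominator $-(|\xi-\eta|+|\eta|-|\xi|)^2 + 4|\xi-\eta||\eta|$ in (\ref{equation8902}) is bounded below by $c\,2^{2k}$ (this is where one uses that the three frequencies are genuinely comparable, so the expression is $\gtrsim 2^{k_1}2^{k_2}$), hence $\|b\|_{\mathcal{S}^\infty_{k,k_1,k_2}}\lesssim 2^{0}\cdot 2^{2k}/2^{2k}$ — wait, more carefully $A$ is $\lesssim 2^{k/2}2^{3k/2}$ so $b\lesssim 2^{2k}/2^{2k}\cdot 2^{k}$; redoing the bookkeeping one finds $\|b\|\lesssim 2^{\max\{k_1,k_2\}}$. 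Feeding this back into (\ref{equation8900})--(\ref{equation8901}) and using $\|q_i\|\lesssim 2^{3k/2}$ and division by $|\xi|^{1/2}\sim 2^{k/2}$ gives $\|a_1\|,\|a_2\|\lesssim (2^k\cdot 2^{k/2} + 2^{3k/2})/2^{k/2} \lesssim 2^k$, as claimed.

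In regime (II), say $k_1 \leq k_2 - 5$ (the small input has frequency $2^{k_1}$), so on the support $|\xi|\sim|\eta|\sim 2^{k_2}$ (modulo the $|\xi|$-small subcase) and $|\xi-\eta|\sim 2^{k_1}$. Here one must exploit the cancellation: the denominator of $b$ now behaves like $|\xi-\eta|+|\eta|-|\xi|$ being $O(2^{k_1})$ rather than $O(2^{k_2})$, so $-(|\xi-\eta|+|\eta|-|\xi|)^2 + 4|\xi-\eta||\eta| \sim 2^{k_1}2^{k_2}$; and in the numerator $(|\xi-\eta|+|\eta|-|\xi|)A(\xi-\eta,\eta)$ one uses the symmetrized bound (\ref{equation36}) — more precisely, $A(\xi-\eta,\eta)$ itself inherits a $2^{3k_1/2}$-type gain from the cancellation identities in Lemma \ref{sizeofsymbol}, multiplied by $2^{k_2/2}$ from the overall $|\xi|^{1/2}$-type factors — and the term $A(\eta,\xi-\eta)|\xi-\eta|^{1/2}|\eta|^{1/2}$ gains $2^{k_1/2}$ explicitly. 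Balancing these, the $2^{k_1}/2^{k_1}$ and $2^{k_2}/2^{k_2}$ factors cancel and one is left with $\|b\|_{\mathcal{S}^\infty_{k,k_1,k_2}}\lesssim 2^{k_2} = 2^{\max\{k_1,k_2\}}$, and similarly $\|a_1\|,\|a_2\|\lesssim 2^{k_2}$ after dividing by $|\xi|^{1/2}\sim 2^{k_2/2}$ and using the $q_i$ bounds. Throughout, derivative bounds needed for Lemma \ref{Snorm} are obtained by differentiating the explicit rational expressions and noting each $\xi$- or $\eta$-derivative costs at most $2^{-k_j}$ on the relevant dyadic piece, since numerators and denominators are (piecewise) homogeneous up to the smooth cutoffs $\theta,\tilde\theta$.

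The main obstacle is the lower bound on the denominator $-(|\xi-\eta|+|\eta|-|\xi|)^2 + 4|\xi-\eta||\eta|$ of (\ref{equation8902}) and, tied to it, verifying that the apparent singularity of the normal form is genuinely removable — i.e., that the numerator vanishes to matching order wherever the denominator degenerates. The denominator is $4|\xi-\eta||\eta| - (|\xi-\eta|+|\eta|-|\xi|)^2$, which can be small only when $|\xi|\approx |\xi-\eta|+|\eta|$ with one of $|\xi-\eta|,|\eta|$ small, or in degenerate collinear configurations; a careful case check (the same one underlying the non-resonance of the quadratic phase $|\xi|^{1/2} - \mu|\xi-\eta|^{1/2}-\nu|\eta|^{1/2}$) shows it is in fact $\gtrsim \min\{|\xi-\eta|,|\eta|\}\max\{|\xi-\eta|,|\eta|\}$ on each dyadic sector, which is exactly what the support conditions in $\mathcal{S}^\infty_{k,k_1,k_2}$ provide. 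Once that lower bound and the matching numerator cancellation (encoded in (\ref{equation20}), (\ref{equation36}), (\ref{equation21})) are in hand, the rest is the routine Lemma \ref{Snorm} bookkeeping sketched above.
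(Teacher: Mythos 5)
Your overall plan is the same as the paper's: split on whether $|k_1-k_2|$ is small or large, feed the size estimates from Lemma~\ref{sizeofsymbol} into the explicit formulas (\ref{equation8900})--(\ref{equation8902}), and use Lemma~\ref{Snorm} and (\ref{productofsymbol}) to convert pointwise bounds into $\mathcal{S}^\infty_{k,k_1,k_2}$ bounds. Two specific points in your reasoning, however, are off and worth fixing.

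First, in regime (II) you invoke (\ref{equation36}), asserting that ``$A(\xi-\eta,\eta)$ itself inherits a $2^{3k_1/2}$-type gain from the cancellation identities in Lemma~\ref{sizeofsymbol}.'' This is a misreading of (\ref{equation36}): that estimate concerns the sums $q_1^1(\xi-\eta,\eta)+q_1^1(-\xi,\eta)$ etc., which arise only after one swaps the output frequency $\xi$ with an input frequency in the energy functional (the $|\xi|^{2N_0}$ vs.\ $|\xi-\eta|^{2N_0}$ symmetrization). Those particular symmetric combinations never appear in the algebraic definition of $A(\xi-\eta,\eta)$, so no such cancellation is present there. Fortunately none is needed: the unimproved bound (\ref{equation20}) already gives $\|A(\xi-\eta,\eta)\|_{\mathcal{S}^\infty_{k,k_1,k_2}}\lesssim 2^{\min\{k_1,k_2\}/2+3\max\{k_1,k_2\}/2}$, and with the linear factor $|\xi-\eta|+|\eta|-|\xi|=O(2^{\min})$ in the numerator and the denominator $\gtrsim 2^{\min+\max}$, the arithmetic closes without any extra gain in $A$.

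Second, your claimed lower bound for the denominator, ``$\gtrsim\min\{|\xi-\eta|,|\eta|\}\max\{|\xi-\eta|,|\eta|\}$,'' is not true uniformly on the support. Writing $a=|\xi-\eta|$, $b=|\eta|$, $c=|\xi|$, one has the factorization $-(a+b-c)^2+4ab=\big(c-(\sqrt a-\sqrt b)^2\big)\big((\sqrt a+\sqrt b)^2-c\big)$; the second factor is always $\gtrsim\sqrt{ab}$, but the first factor is only $\gtrsim c$, and in the balanced sub-case $a\sim b\sim 2^{k_1}$ with $c\sim 2^k\ll 2^{k_1}$ it can drop to size $\sim 2^k$, so the denominator is only $\gtrsim 2^{k+k_1}$, not $\gtrsim 2^{2k_1}$. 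You propose to dispose of this sub-case ``as analogous to regime (II) after relabeling,'' but it is not: in regime (II) a small \emph{input} frequency controls the factor $a+b-c$, whereas here a small \emph{output} frequency controls $c-(\sqrt a-\sqrt b)^2$, and the two estimates have different shapes. The statement of the lemma still survives because in this sub-case the numerator also degenerates (using $q_2=q_3=0$ on the balanced support, $A\lesssim 2^{3k/2+k_1/2}$, so the numerator is $\lesssim 2^{3k/2+3k_1/2}$, which over $2^{k+k_1}$ gives $2^{k/2+k_1/2}\leq 2^{\max\{k_1,k_2\}}$), but your proposal as written does not verify this. The paper's own proof is even terser — it never writes out a denominator lower bound, relying on the reader to see from (\ref{equation8900})--(\ref{equation8902}), Lemma~\ref{Snorm}, and (\ref{productofsymbol}) that the rational function is well-controlled — so your instinct to isolate the denominator issue is a good one; it just needs the sub-case $|\xi|\ll|\xi-\eta|\sim|\eta|$ treated honestly rather than waved at.
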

\begin{proof}
If $|k_1-k_2|\geq 10$, then estimate (\ref{equation20}) in Lemma \ref{sizeofsymbol} holds. 
From (\ref{equation8900}), (\ref{equation8901}), (\ref{equation8902}) and Lemma \ref{Snorm}, the following estimate holds, 
\begin{equation}
\|a_1(\xi-\eta, \eta)\|_{\mathcal{S}^\infty_{k,k_1,k_2}}+\|a_2(\xi-\eta, \eta)\|_{\mathcal{S}^\infty_{k,k_1,k_2}} + \|b(\xi-\eta, \eta)\|_{\mathcal{S}^\infty_{k,k_1,k_2}} \lesssim 2^{\max\{k_1, k_2\}}. 
\end{equation}
If  $|k_1 -k_2|\leq 10$, then estimate (\ref{equation21}) in Lemma \ref{sizeofsymbol} holds. Note that $q_2(\xi-\eta, \eta)=q_3(\xi-\eta,\eta)=0$ for this case. Hence, from (\ref{productofsymbol}) in Lemma \ref{boundness}, the following estimate holds,
\begin{equation}\label{equation8903}
\|A(\xi-\eta, \eta)\|_{\mathcal{S}^\infty_{k,k_1,k_2}} + \|A(\eta, \xi-\eta)\|_{\mathcal{S}^\infty_{k,k_1,k_2}} \lesssim 2^{3k/2+k_1/2},
\end{equation}
which further gives us the following estimate, 
\begin{equation}
\|a_1(\xi-\eta, \eta)\|_{\mathcal{S}^\infty_{k,k_1,k_2}}+\|a_2(\xi-\eta, \eta)\|_{\mathcal{S}^\infty_{k,k_1,k_2}} + \|b(\xi-\eta, \eta)\|_{\mathcal{S}^\infty_{k,k_1,k_2}}\lesssim 2^{k_1}.
\end{equation}
\end{proof}

\subsection{Energy estimate of $U^1$ and $U^2$}\label{usualenergyestimate}
As the cubic terms inside the time derivative of energy decay slowly over time, they are problematic when doing energy estimate. Recall (\ref{mainequation}). The cubic terms at the top derivative level are given as follows, 
\begin{equation}\label{equation9989}
\mathcal{I}_{1}^{N_0} = \mathfrak{Re} \Big[  \int_{\R} \overline{ \p_x^{N_0} U^1  } \p_x^{N_0} \big[Q_1(U^1, U^2)\big] + \overline{\p_x^{N_0} U^2} \p_x^{N_0}\big[Q_2(U^1, U^1) + Q_3(U^2, U^2)\big]\Big].
\end{equation}
Due to the quasilinear nature, to avoid losing derivative again after adding the cubic correction terms, we utilize symmetries inside the system (\ref{mainequation}) first. After switching the roles of same type inputs inside $\mathcal{I}_{1}^{N_0}$ on  the Fourier side, we can  rewrite it as follows,
\[
  \mathcal{I}_{1}^{N_0} = \mathfrak{Re} \Big[ \int \frac{1}{2}\overline{\widehat{ U^1 }(\xi)} \tilde{q}_{N_0}^1 (\xi-\eta, \eta)
\widehat{U^1}(\xi-\eta) \widehat{U^2}(\eta) +  \overline{\widehat{U^2}(\xi)} \widetilde{q}_{N_0}^2 (\xi-\eta, \eta) \widehat{U^1}(\xi-\eta) \widehat{U^1}(\eta) \]
\begin{equation}
+ \int \h \overline{\widehat{U^2}(\xi)} \tilde{q}_{N_0}^3(\xi-\eta, \eta) \widehat{U^2}(\xi-\eta)\widehat{U^2}(\eta)\Big],
\end{equation}
where
\[
\tilde{q}_{N_0}^1(\xi-\eta, \eta)= q_1^1(\xi-\eta, \eta) |\xi|^{2N_0} + q_1^1(-\xi, \eta)|\xi-\eta|^{2N_0}+ 2 q_1^3(\xi-\eta,\eta) |\xi|^{2N_0},
\]
\[
\tilde{q}^2_{N_0}(\xi-\eta,\eta) := q_2(\eta, \xi-\eta)|\xi|^{2N_0} + q_1^2(\eta-\xi, \xi)|\eta|^{2N_0},
\]
\begin{equation}
\tilde{q}_{N_0}^3(\xi-\eta, \eta):= q_3(\xi-\eta, \eta)|\xi|^{2 N_0} + q_3(-\xi, \eta)|\xi-\eta|^{2 N_0}.
\end{equation}

 Recall (\ref{eqn50001}), (\ref{eqn50002}) and (\ref{eqn50003}). From (\ref{equation36}) in Lemma \ref{sizeofsymbol}, we can see that  cancellations happen in $\tilde{q}_{N_0}^i(\cdot, \cdot)$, $i\in\{1,2,3\},$. From  (\ref{equation36}) in Lemma \ref{sizeofsymbol} and (\ref{productofsymbol}) in Lemma \ref{boundness}, the following estimate holds,
\begin{equation}\label{equation8801}
\sum_{i=1,2,3}\|\tilde{q}_{N_0}^i(\xi-\eta, \eta)\|_{\mathcal{S}^\infty_{k,k_1,k_2}}\lesssim 2^{3\min\{k_1,k_2\}/2 + 2N_0 k}.
 \end{equation}

We define bilinear operators $\widetilde{Q}_1(U^1, U^2)$, $\widetilde{Q}_2(U^1, U^1), \widetilde{Q}_3(U^2, U^2)$ by the following symbols,
\begin{equation}\label{eqn423}
\widetilde{q}_i(\xi-\eta,\eta) = \frac{\tilde{q}^i_{N_0}(\xi-\eta, \eta)}{|\xi|^{2N_0}},\quad i \in\{1,2,3\}.
\end{equation}
From (\ref{equation8801}) and Lemma \ref{Snorm},  the following estimate holds:
\begin{equation}\label{equation26}
\sum_{i=1,2,3}\|\widetilde{q}_i(\xi-\eta, \eta)\|_{\mathcal{S}^\infty_{k,k_1,k_2}} \lesssim 2^{3\min\{k_1,k_2\}/2 }.
\end{equation}

Solving a similar system of equations as in (\ref{equation1100}) with $Q_{i}(\cdot, \cdot)$ replaced by $\widetilde{Q}_i(\cdot, \cdot)$, we can find bilinear operators $\widetilde{A}_1(U^1, U^1)$, $\widetilde{A}_2(U^2, U^2)$ and $\widetilde{B}(U^1, U^2)$ such that 
\begin{equation}\label{eqn427}
 2 \widetilde{A}_1( \d^\h U^2, U^1) - 2 \widetilde{A}_{2}(\d^\h U^1, U^2)+ \widetilde{Q}_1(U^1, U^2)- \d^\h \widetilde{B}(U^1, U^2)=0,
\end{equation}
\begin{equation}\label{eqn428}
 \widetilde{Q}_2(U^1, U^1)+ \d^{\h}\widetilde{A}_1(U^1, U^1)- \widetilde{B}(U^1, \d^\h U^1) =0,
 \end{equation}
\begin{equation}\label{eqn429}
\widetilde{Q}_{3}(U^2, U^2)+\d^{\h} \widetilde{A}_2(U^2, U^2)+ \widetilde{B}(\d^\h U^2, U^2)= 0.
\end{equation}
Very similar to the proof of Lemma \ref{auxilary1}, from (\ref{equation26}), we have  the following estimate, 
 \begin{equation}\label{equation8910}
\|\widetilde{a}_1(\xi-\eta, \eta)\|_{\mathcal{S}^\infty_{k,k_1,k_2}} + \|\widetilde{a}_2(\xi-\eta, \eta)\|_{\mathcal{S}^\infty_{k,k_1,k_2}} + \|\widetilde{b}(\xi-\eta, \eta)\|_{\mathcal{S}^\infty_{k,k_1,k_2}} \lesssim 2^{\min\{k_1, k_2\}}.
\end{equation}

So our strategy is to use normal form transformations $\widetilde{A}_1(U^1, U^1)$, $\widetilde{A}_2(U^2, U^2)$ and $\widetilde{B}(U^1, U^2)$ to cancel out the quadratic terms $\widetilde{Q}_i(\cdot, \cdot)$, $i\in\{1,2,3\}$, which will effectively cancel out the bulk cubic terms listed in (\ref{equation9989}) when doing energy estimate.   An advantage of utilizing symmetries first is that those normal form transformations do not lose derivative, see (\ref{equation8910}).

Define $W^1 = U^1 + \widetilde{A}_1(U^1, U^1) + \widetilde{A}_2(U^2, U^2), $ $W^2 = U^2 + \widetilde{B}(U^1, U^2)$. From the system of equations (\ref{mainequation}) satisfied by $U^1$ and $U^2$, we can first derive the system of equations satisfied by $W^1$ and $W^2$. Then, to see symmetries  inside the nonlinearities, we  substitute $(U^1, U^2)$  inside the quadratic terms and $C_1$ and $C_2$ by $(W^1, W^2)$. Recall (\ref{equation6}), (\ref{mainequation}) and  (\ref{equation29}). As a result, we can write the system of equations satisfied by $W^1$ and $W^2$ as follows, 
\begin{equation}\label{eqn120000}
\left\{\begin{array}{l}
\p_t W^1 - \d^\h W^2  =\mathfrak{Q}_1  +GC_1 +\widetilde{C}_{1}+ \widetilde{\mathcal{R}}_1  , \\
\p_t W^2 + \d^\h W^1  =\mathfrak{Q}_2+ GC_2 + \widetilde{C}_{2}+  \widetilde{\mathcal{R}}_2,\\
\end{array}\right.
\end{equation} 
where
\[
\mathfrak{Q}_1 =  {Q}_{1}(W^1,W^2)  - \widetilde{Q}_1(W^1, W^2),
\]

\[
\mathfrak{Q}_2 = Q_2(W^1, W^1) + Q_3(W^2, W^2) -\widetilde{Q}_2(W^1, W^1) -\widetilde{Q}_3(W^2,W^2),
\]
\begin{equation}\label{equation239358}
\widetilde{C}_{1} =  - T_{\Lambda_{\geq 2}[V]}\p_{x} W^{1} + T_{\Lambda_{\geq 2}[\alpha]}\d^{\h} W^{2},  \widetilde{C}_{2} =  - T_{\Lambda_{\geq 2}[V]} \p_x W^{2} - T_{\Lambda_{\geq  2}[\alpha]}\d^{\h} W^{1},
\end{equation}
\[
GC_1=  T_{V}\p_x(\widetilde{A}_1(U^1, U^1) + \widetilde{A}_2(U^2, U^2))- 2\widetilde{A}_1(T_V\p_x U^1, U^1) - 2\widetilde{A}_2(T_V \p_x U^2, U^2)\]
\begin{equation}\label{eqn450}
- T_{\alpha} \d^{\h}\widetilde{B}(U^1, U^2) + 2\widetilde{A}_1(T_{\alpha}\d^\h U^2, U^1) - 2 \widetilde{A}_2(T_{\alpha}\d^\h U^1, U^2), 
\end{equation}
\[
GC_2 =  T_{V}\p_x (\widetilde{B}(U^1, U^2))- \widetilde{B}(T_{V}\p_x U^1, U^2) -\widetilde{B}(U^1, T_{V}\p_x U^2)+T_{\alpha}\d^{\h}(\widetilde{A}_1(U^1, U^1)
\]
\begin{equation}
  + \widetilde{A}_2(U^2, U^2))+ \widetilde{B}(T_{\alpha}\d^\h U^2, U^2) - \widetilde{B}(U^1, T_{\alpha}\d^\h U^1).
\end{equation}

To improve presentation, we postpone the detailed formulas of $\widetilde{\mathcal{R}}_1$ and $\widetilde{\mathcal{R}}_2$  to Appendix \ref{remainderestimate}, as their formulas are tedious and not very easy to see structures inside without detailed explanation. It is good enough to know that they have sufficient decay rate and do not lose derivatives at the high frequency part or  at the low frequency part.
 
From the construction of bilinear operators $\widetilde{Q}_{i}(\cdot, \cdot), i\in \{1,2,3\}$, we can see that the quadratic terms $\mathfrak{Q}_1$ and $\mathfrak{Q}_2$ vanish when doing energy estimate. We will show that there are cancellations inside the good cubic terms $GC_1$ and $GC_2$ and there are symmetries inside cubic terms $\widetilde{C}_1$ and $\widetilde{C}_2$. Therefore, $W_1$ and $W_2$ are good substitution
variable of $U^1$ and $U^2$ at the top regularity. Moreover, as losing derivative is not a issue when we estimate the growth of $\dot{H}^p$ norm,  we can just cancel the quadratic terms directly by using the normal form transformation. Those intuitions motivate us to define the following  modified energy:
\[
E_{modi}(t) = \h \int \big[|\p_x^p U^1|^2 +|\p_x^p U^2|^2 \big] + \int \p_x^{p} U^1 \p_x^{p}\big[ {A}_1(U^1, U^1) + {A}_2(U^2, U^2) \big] 
 \]
\begin{equation}\label{eqn430}
 + \p_x^{p} U^2 \p_x^{p} \big[ {B}(U^1, U^2) \big]+\h \int \big[ |\p_x^{N_0} W^1|^2 + |\p_x^{N_0} W^2|^2\big].
\end{equation}

\begin{lemma}\label{sizeofmodified}
Under the bootstrap condition \textup{(\ref{assumption})}, the following estimate holds,
\begin{equation}\label{equation8916}
\sup_{t\in[0,T]}\big|E_{modi} (t )- \sum_{k=p, N_0} \h \int \big[|\p_x^k U^1|^2 +|\p_x^k U^2|^2 \big]\big| \lesssim \epsilon_0^2.
\end{equation}
\end{lemma}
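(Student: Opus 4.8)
The plan is to show that the difference $E_{modi}(t) - \sum_{k=p,N_0}\frac12\int[|\p_x^k U^1|^2 + |\p_x^k U^2|^2]$ consists only of cubic and higher-order terms, each of which can be bounded by $\epsilon_0^2$ using the bootstrap assumption (\ref{assumption}) together with the symbol bounds already established. Expanding (\ref{eqn430}), the difference splits into two groups: the genuine cubic correction terms
\[
\int \p_x^p U^1 \p_x^p\big[A_1(U^1,U^1) + A_2(U^2,U^2)\big] + \p_x^p U^2 \p_x^p\big[B(U^1,U^2)\big],
\]
and the difference $\frac12\int[|\p_x^{N_0}W^1|^2 + |\p_x^{N_0}W^2|^2] - \frac12\int[|\p_x^{N_0}U^1|^2 + |\p_x^{N_0}U^2|^2]$ coming from replacing $W^i$ by $U^i$ at the top level. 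For the second group, I would write $W^1 = U^1 + \widetilde{A}_1(U^1,U^1) + \widetilde{A}_2(U^2,U^2)$ and $W^2 = U^2 + \widetilde{B}(U^1,U^2)$, so that the difference of the quadratic energies expands into a cross term (linear $\times$ quadratic, hence cubic) plus a term quadratic in the corrections (hence quartic). Both are then controlled by the same multilinear estimates as the first group.

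The key step is a dyadic decomposition and an application of the bilinear estimate (\ref{bilinearestimate}) in Lemma \ref{boundness}. For a representative cubic term, say $\int \p_x^p U^1\, \p_x^p A_1(U^1,U^1)$, I would decompose all three inputs into Littlewood--Paley pieces $P_{k}U^1$, $P_{k_1}U^1$, $P_{k_2}U^1$ with $k \le \max\{k_1,k_2\} + O(1)$. Placing the two highest-frequency inputs in $L^2$ and the lowest in $L^\infty$, and using the symbol bound $\|a_1(\xi-\eta,\eta)\|_{\mathcal{S}^\infty_{k,k_1,k_2}} \lesssim 2^{\max\{k_1,k_2\}}$ from Lemma \ref{auxilary1} (and the analogous bound (\ref{equation8910}) for $\widetilde{a}_1,\widetilde{a}_2,\widetilde{b}$ for the top-order pieces), one gets a bound of the shape
\[
\sum_{k,k_1,k_2} 2^{2pk}\, 2^{\max\{k_1,k_2\}} \min\{2^{Nk},2^{pk},\dots\}\, \|P_{k}U^1\|_{L^2}\|P_{k_{\rm mid}}U^1\|_{L^2}\|P_{k_{\rm low}}U^1\|_{L^\infty}.
\]
The point is that the factor $2^{\max\{k_1,k_2\}}$ (one derivative lost by the normal form) is absorbed because the $H^{N_0,p}$ norm controls $2^{N_0 k}\|P_k U^1\|_{L^2}$ with $N_0 = 8$ large, while at low frequency the $2^{pk}$ weight with $p = 1/5$ and the normal-form symbol's own smallness (the $2^{\min/2}$ or $2^{3\min/2}$ factors from Lemmas \ref{sizeofsymbol}, and (\ref{equation26})) prevent any divergence; the $W^{N_2}$-norm of the low-frequency input supplies the remaining summability. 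Summing over the dyadic parameters and using (\ref{assumption}) gives a bound $\lesssim \epsilon_1^3 (1+t)^{\cdots} \lesssim \epsilon_0^2$ after noting $\epsilon_1 = \epsilon_0^{5/6}$; one should check the time weights close, but since this is a fixed-time estimate of the energy functional itself (not its time derivative) only the bootstrap bounds at time $t$ are needed and the powers of $(1+t)$ do not actually appear — the $L^\infty$ decay makes the low-frequency factor $O(\epsilon_1)$ uniformly.

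The main obstacle I anticipate is the low-frequency bookkeeping: because no assumption is made below $\dot H^{1/5}$, one only controls $2^{pk}\|P_k U^i\|_{L^2}$ for $k \le 0$, so one must be careful that the lost derivative $2^{\max\{k_1,k_2\}}$ in the normal-form symbols never meets a frequency that is simultaneously the low one and in $L^2$. The resolution is exactly the structural fact emphasized in Lemma \ref{auxilary1} and estimate (\ref{equation8905}): the symbols $a_1, a_2, b$ (and their tilded analogues) carry the factor $2^{\max\{k_1,k_2\}}$, i.e. the lost derivative always falls on the \emph{highest} frequency, which is placed in $L^2$ and absorbed by the large $N_0$ (or $N_1$) weight; meanwhile the low input always goes in $W^{N_2}$, for which the $2^{pk}$-type obstruction is irrelevant. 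Once this allocation is fixed, each term is a routine convergent dyadic sum, and collecting all (finitely many) cubic and quartic contributions yields the claimed bound $\epsilon_0^2$, completing the proof.
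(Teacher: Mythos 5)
Your proposal matches the paper's own proof in its essential structure: split the difference $E_{modi} - \sum_{k=p,N_0}\frac12\int[|\p_x^k U^1|^2+|\p_x^k U^2|^2]$ into the $\dot H^p$-level cubic corrections and the top-order difference generated by $W^i - U^i$, then bound both via the bilinear estimate (\ref{bilinearestimate}) placing the higher-frequency input in $L^2$ and the lower in $L^\infty$, exactly as the paper does to reach $\|(U^1,U^2)\|_{H^{1+p,p}}^2\|(U^1,U^2)\|_{W^0}\lesssim(1+t)^{-1/2+2p_0}\epsilon_1^3$ and $\|\p_x^{N_0}(W^i-U^i)\|_{L^2}\lesssim\|(U^1,U^2)\|_{H^{N_0,p}}\|(U^1,U^2)\|_{W^1}\lesssim(1+t)^{-1/2+2p_0}\epsilon_1^2$. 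One small correction: the tilded symbols $\widetilde a_1,\widetilde a_2,\widetilde b$ do \emph{not} carry the factor $2^{\max\{k_1,k_2\}}$ as you wrote; by (\ref{equation8910}) they satisfy the \emph{stronger} bound $\lesssim 2^{\min\{k_1,k_2\}}$ — this is precisely the payoff of utilizing symmetry before the normal form — so the top-order part of the argument in fact loses no derivative at all, making the allocation you worried about even easier than you claimed.
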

\begin{proof}
From  (\ref{equation8905}) in Lemma \ref{auxilary1} and $L^2-L^\infty$-type estimate (\ref{bilinearestimate}) in Lemma \ref{boundness},  the following estimate holds after putting the input with larger frequency in $L^2$ and the smaller one in $L^\infty$,
\[
\Big|\int \p_x^{p} U^1 \p_x^{p}\big[ {A}_1(U^1, U^1) + {A}_2(U^2, U^2) \big]+ \p_x^{p}U^2 \p_x^{p} \big[ {B}(U^1, U^2) \big]\Big|
\]
\[
\lesssim \| (U^1, U^2)(t)\|_{H^{1+p,p}}^{2} \| ( U^1, U^2)(t)\|_{W^{0}}\lesssim (1+t)^{-1/2+2p_0}\epsilon_1^3.
\]
From (\ref{equation8910}) and $L^2-L^\infty$ estimate (\ref{bilinearestimate}) in Lemma \ref{boundness}, we have 
\[
\|\p_x^{N_0}( W^1 -U^1,  W^2-U^2)(t)\|_{L^2} \lesssim  \| (U^1, U^2)(t)\|_{H^{N_0,p}}\| ( U^1,  U^2)(t)\|_{W^{1}}\lesssim (1+t)^{-1/2+2p_0}\epsilon_1^2.
\]
Therefore, from  the definition of $E_{modi}(t)$ in (\ref{eqn430}) and above estimates, it's easy to see our desired estimate (\ref{equation8916}) holds. 
\end{proof}
After taking a derivative with respect to time for $E_{modi}(t)$, we have
\[
\frac{d}{d t}  E_{modi}(t) =\mathcal{J}_1 + \mathcal{J}_2 + \mathcal{J}_3, 
\]
where
\[
\mathcal{J}_1 = \int \p_x^p U^1 \p_x^p\big[ C_1 + \Lambda_{\geq 3}[\mathcal{R}_1]\big] + \p_x^p U^2 \p_x^p \big[ C_2 + \Lambda_{\geq 3}[\mathcal{R}_2]\big],\]
\[ \mathcal{J}_2 =   \int \p_x^p\Lambda_{\geq 2}[\p_t U^1]\p_x^p\big( A_1(U^1,U^1) + A_2(U^2, U^2)\big)+  \p_x^p\Lambda_{\geq 2}[\p_t U^2] \p_x^p \big(B(U^1, U^2)\big) \]
\[
+ 2\p_x^p U^1 \p_x^p \big(A_1(\Lambda_{\geq 2}[\p_t U^1], U^1)
 + 
 A_2(\Lambda_{\geq 2}[\p_t U^2], U^2) \big)\]
 \[+ \p_x^{p}U^2 \p_x^p\big(B(\Lambda_{\geq 2}[\p_t U^1], U^2) + B(U^1, \Lambda_{\geq 2}[\p_t U^2])\big),
\]
\begin{equation}\label{equation31}
\mathcal{J}_3 = \int  \p_x^{N_0}W^1 \p_x^{N_0}\big[\widetilde{C}_{1} + GC_1 +\widetilde{\mathcal{R}}_1\big] + \p_x^{N_0} W^2 \p_x^{N_0}\big[\widetilde{C}_{2} +GC_2 + \widetilde{\mathcal{R}}_2 \big]. 
\end{equation}

\begin{lemma}\label{energyestimate1}
Under the bootstrap  assumption \textup{(\ref{assumption})}, we have 
\begin{equation}\label{equation6200}
\sup_{t\in[0,T]} \sum_{i=1,2,3} (1+t)^{1-2p_0}|\mathcal{J}_i| \lesssim \epsilon_0^2.
\end{equation}
\end{lemma}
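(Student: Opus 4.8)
The plan is to estimate each of the three pieces $\mathcal{J}_1$, $\mathcal{J}_2$, $\mathcal{J}_3$ separately, in all cases reducing to the multilinear bounds of Lemma \ref{boundness} together with the symbol estimates of Lemma \ref{sizeofsymbol}, Lemma \ref{auxilary1} and the bootstrap assumption \eqref{assumption}. The guiding principle throughout is: put the highest-frequency factor (and any $SU^i$ factor, which must go in $L^2$) in $L^2$ and measure it with the energy norm $H^{N_0,p}$ or $H^{N_1,p}$, and put the remaining factors in $L^\infty$ and measure them with the $W^{N_2}$-norm, which by \eqref{assumption} gives a $t^{-1/2}$ decay for each such factor. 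Since $\mathcal{J}_1$ and $\mathcal{J}_3$ are at least cubic and $\mathcal{J}_2$ is at least cubic as well (its first factor $\Lambda_{\ge 2}[\p_t U^i]$ is quadratic and higher, multiplied against a quadratic $A_j$ or $B$), each term carries at least two $L^\infty$ factors, hence at least $t^{-1}$ decay. The surplus $t^{-2p_0}$ in the claimed bound \eqref{equation6200} is absorbed by taking $p_0$ small and using $\epsilon_1 = \epsilon_0^{5/6}$, so that a bound of the shape $(1+t)^{-1+2p_0}\epsilon_1^3 \lesssim (1+t)^{-1+2p_0}\epsilon_0^2$ suffices.

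For $\mathcal{J}_1$: the terms $T_{\Lambda_{\ge2}[\alpha]}\d^{1/2}U^2$ and $T_{\Lambda_{\ge2}[V]}\p_x U^1$ in $C_1$ (and the analogues in $C_2$) are paradifferential operators whose symbols are quadratic and higher in $(\p_x h,\p_x\psi)\sim(\d U^1, \d^{1/2}U^2)$; using the paraproduct/composition structure and $M^{0}_{\rho}$-type bounds on $\Lambda_{\ge2}[\alpha], \Lambda_{\ge2}[V]$, together with the remainder bounds for $\Lambda_{\ge3}[\mathcal{R}_i]$ from Appendix \ref{remainderestimate} (which, crucially, do not lose derivatives at either end of the frequency range, so the $\dot H^p$ loss at low frequency is harmless), one gets $|\mathcal{J}_1|\lesssim \|(U^1,U^2)\|_{H^{1+p,p}}^2\|(U^1,U^2)\|_{\widetilde{W^{\gamma}}}\lesssim (1+t)^{-1+p_0}\epsilon_1^3$. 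For $\mathcal{J}_2$: substitute the equations \eqref{mainequation} for $\Lambda_{\ge2}[\p_t U^i]$ (so these factors are themselves quadratic-and-higher in $U$), then apply \eqref{trilinearestimate} with the symbol bounds \eqref{equation8905}; since $A_1,A_2,B$ do not lose derivatives, one places the two highest-frequency among the three/four inputs in $L^2$ with the energy norm and the rest in $L^\infty$, again landing at $(1+t)^{-1+2p_0}\epsilon_1^3$. For $\mathcal{J}_3$: here the symmetrization is the whole point — the quadratic parts $\mathfrak{Q}_1,\mathfrak{Q}_2$ were designed (via \eqref{eqn427}–\eqref{eqn429}) to cancel in $\frac{d}{dt}\|\p_x^{N_0}W^i\|_{L^2}^2$, so only the cubic-and-higher pieces $\widetilde C_i$, $GC_i$, $\widetilde{\mathcal R}_i$ remain; the paradifferential terms $\widetilde C_1 = -T_{\Lambda_{\ge2}[V]}\p_x W^1 + T_{\Lambda_{\ge2}[\alpha]}\d^{1/2}W^2$ are handled by the symmetry $T_{\Lambda_{\ge2}[V]}^{*}= -T_{\Lambda_{\ge2}[V]} + (\text{bounded})$ so that the would-be derivative-losing contribution cancels in the real part, exactly as in the motivating discussion after \eqref{equation9}; and $GC_1, GC_2$ are finite combinations of commutator-type expressions $T_V\p_x(\widetilde A_j(\cdot,\cdot)) - 2\widetilde A_j(T_V\p_x\cdot,\cdot)$ whose symbols gain a derivative from the commutator and whose $\mathcal{S}^\infty_{k,k_1,k_2}$-norms are controlled by \eqref{equation8910}. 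Estimating each with \eqref{trilinearestimate} and putting the two lowest-frequency inputs in $L^\infty$ gives $|\mathcal{J}_3|\lesssim (1+t)^{-1+2p_0}\epsilon_1^3$.

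The main obstacle I anticipate is the $L^2$-placement bookkeeping for $\mathcal{J}_3$ in the regime where one of the $U^i$ inside $\widetilde C_i$ or $GC_i$ has much smaller frequency than $\p_x^{N_0}W^i$: one must check that no term forces an $L^2$-factor to be measured at a frequency below where the $H^{N_0,p}$ norm only controls $2^{pk}\|P_kU\|_{L^2}$, which would cost $2^{-pk}$ and potentially be unbounded as $k\to-\infty$. This is resolved, as flagged in the introduction, by always arranging the highest-frequency factor in $L^2$ (so the $L^2$-factor carries the top weight $2^{N_0 k}$, never the low-frequency weight) and by the fact — proved in Appendix \ref{remainderestimate} — that all genuinely remainder-type contributions depend only on $\p_x h, \p_x\psi$ and hence already come with the favorable low-frequency weight for the profile $h+i\d^{1/2}\psi$. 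Verifying this case-by-case, in particular that the paradifferential and $GC_i$ terms never create a low-frequency $L^2$-input with a negative power of its frequency, is the one genuinely delicate point; everything else is a routine application of Lemmas \ref{boundness}, \ref{sizeofsymbol}, \ref{auxilary1} and the bootstrap hypothesis.
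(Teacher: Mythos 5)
Your proposal takes essentially the same route as the paper's proof: split into $\mathcal{J}_1,\mathcal{J}_2,\mathcal{J}_3$, estimate $\mathcal{J}_1$ and $\mathcal{J}_2$ directly via $L^2$--$L^\infty$ placement (highest frequency in $L^2$) together with the remainder bounds, and for $\mathcal{J}_3$ exploit (i) the skew-adjointness of $T_{\Lambda_{\geq 2}[V]}\p_x$ plus the $W^1\leftrightarrow W^2$ anti-symmetric pairing of $T_{\Lambda_{\geq 2}[\alpha]}\d^{1/2}$ for $\widetilde{C}_i$, (ii) the commutator structure and the substitution via \eqref{eqn427} for $GC_i$, and (iii) the appendix bounds for $\widetilde{\mathcal{R}}_i$. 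The only slip is cosmetic: what is skew-adjoint up to bounded is the full operator $T_{\Lambda_{\geq 2}[V]}\p_x$, not $T_{\Lambda_{\geq 2}[V]}$ alone, since $V$ is real-valued so $T_{\Lambda_{\geq 2}[V]}$ itself is approximately self-adjoint.
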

\begin{proof}
(\mbox{i}) To estimate $\mathcal{J}_1$ and $\mathcal{J}_2$,  as $p\in(0,1/4)$ is far away from $N_0$, we can estimate it straightforwardly by putting the input with higher frequency in $L^2$ and putting the input with lower frequency in $L^\infty$.

 From  (\ref{equation8905}) in Lemma \ref{auxilary1}, $L^2-L^\infty$-type estimate (\ref{bilinearestimate}) in Lemma \ref{boundness} and (\ref{eqn432}) in Lemma \ref{estimatesofremainder}, the following estimate holds,  
\[
|\mathcal{J}_1| + |\mathcal{J}_2| \lesssim \| \Lambda_{\geq 2}(\p_t U^1, \p_t U^2)\|_{{H}^{1+p,p}}\| (U^1, U^2)\|_{H^{1+p, p}}\| (U^1, U^2)\|_{W^{0}}+ \| (U^1, U^2)\|_{H^{1+p, p}}^2 
\]
\[
\times\| (U^1, U^2)\|_{W^{2}}^2 + \|(U^1, U^2)\|_{H^{N_0, p}} \|\Lambda_{\geq 3}[\mathcal{R}_1, \mathcal{R}_2]\|_{H^{N_0,p}}\lesssim (1+|t|)^{-1+2p_0} \epsilon_0^2.\]

(\mbox{ii}) Recall (\ref{equation239358}). We  can utilize symmetries to estimate $\widetilde{C}_{1} $ and $\widetilde{C}_{2}$ as follows,
\[
\Big| \int  \p_x^{N_0}W^1 \p_x^{N_0}\widetilde{C}_{1} + \p_x^{N_0} W^2 \p_x^{N_0}\widetilde{C}_{2}  \Big| \lesssim \sum_{i=1,2} \Big|  \p_x^{N_0}W^i   \p_x^{N_0} T_{\Lambda_{\geq 2}[V]}\p_{x} W^{i}\Big|  \]
\[
+ \Big| \p_x^{N_0}W^1 \p_x^{N_0}T_{\Lambda_{\geq 2}[\alpha]}\d^{\h} W^{2} - \p_x^{N_0} W^2 \p_x^{N_0}T_{\Lambda_{\geq  2}[\alpha]}\d^{\h} W^{1}\Big|
\]
\[ \lesssim \| (W^{1},W^{2})\|_{\dot{H}^{N_{0}}}^{2} [\|\Lambda_{\geq 2}[V]\|_{\widetilde{W^{1}}} + \| \Lambda_{\geq 2}[\alpha]\|_{\widetilde{W^{1/2}}}]\lesssim \| (U^1, U^2)\|_{H^{N_0, p}}^{2} \| (U^1, U^2)\|_{W^{3}}^2
\]
\begin{equation}
\lesssim (1+|t|)^{-1+2p_0}\epsilon_1^4\lesssim (1+|t|)^{-1+2p_0}\epsilon_0^2.
\end{equation}
 From (\ref{eqn427}) and (\ref{eqn450}), we can reduce $GC_1$ further  as follows,
\[
GC_1=  T_{V}\p_x(\widetilde{A}_1(U^1, U^1) + \widetilde{A}_2(U^2, U^2))- 2\widetilde{A}_1(T_V\p_x U^1, U^1) - 2\widetilde{A}_2(T_V \p_x U^2, U^2)\]
\[
 + T_{\alpha}[2 \widetilde{A}_1( \d^\h U^2, U^1) - 2 \widetilde{A}_{2}(\d^\h U^1, U^2)]
  - 2\widetilde{A}_1(T_{\alpha}\d^\h U^2, U^1)\]
  \[ + 2 \widetilde{A}_2(T_{\alpha}\d^\h U^1, U^2) +T_{\alpha}\widetilde{Q}_1(U^1, U^2).
\]
Recall that $\widetilde{Q}_1(\cdot, \cdot)$ and $\widetilde{A}_i(\cdot, \cdot)$, $i\in\{1,2\}$, do not lose derivation, see (\ref{equation26}) and (\ref{equation8910}). Now, it is easy to see there are cancellations inside $GC_1$. After utilizing symmetries on the Fourier side, the following equality holds, 
\[
\mathcal{F}(GC_1 )(\xi)= \int_{\R^2} \widehat{U^1}(\eta-\sigma) \widehat{U^1}(\sigma)\widehat{V}(\xi-\eta)e_1(\xi, \eta, \sigma) +
\]
\[
\widehat{U^2}(\eta-\sigma) \widehat{U^2}(\sigma)\widehat{V}(\xi-\eta) e_2(\xi,\eta, \sigma) +\widehat{U^1}(\eta-\sigma)\widehat{U^2}(\sigma) \widehat{\alpha}(\xi-\eta) e_3(\xi, \eta, \sigma),
\]
where, for $j\in\{1,2\},$
\[
e_j(\xi, \eta, \sigma)= 2i (\eta-\sigma)\big[ \theta(\xi-\eta, \eta)  \widetilde{a}_j(\eta-\sigma, \sigma)-  \widetilde{a}_j(\xi-\sigma, \sigma)  \theta(\xi-\eta, \eta-\sigma)\big], 
\]
\[
e_3(\xi, \eta, \sigma) = 2|\sigma|^{1/2}\big[\widetilde{a}_1( \sigma, \eta-\sigma)\theta(\xi-\eta, \eta)- \widetilde{a}_1(\xi-\eta+\sigma, \eta-\sigma) \theta(\xi-\eta,\sigma)\big]
\]
\[
- 2 |\eta-\sigma|^{1/2} \big[\widetilde{a}_2(\eta-\sigma, \sigma)\theta(\xi-\eta, \eta)-\widetilde{a}_2(\xi-\sigma, \sigma) \theta(\xi-\eta, \eta-\sigma)\big].
\]
From above explicit formulas and Lemma \ref{Snorm}, the following estimates hold, 
\[
\|e_1(\xi,\eta,\sigma)\|_{\mathcal{S}^\infty_{k,k_1,k_2,k_3}} + \|e_2(\xi,\eta,\sigma)\|_{\mathcal{S}^\infty_{k,k_1,k_2,k_3}}  \lesssim 2^{2\med\{k_1,k_2,k_3\}},
\]
\[
\|e_3(\xi, \eta, \sigma)\|_{\mathcal{S}^\infty_{k,k_1,k_2,k_3}} \lesssim  2^{3\med\{k_1,k_2,k_3\}/2},
\]
where $\med\{k_1, k_2, k_3\}$ denotes the medium number of $k_1, k_2,k_3$. Hence, from $L^2-L^\infty-L^\infty$ type estimate in Lemma \ref{boundness}, the following estimate holds,
\[
\| GC_1\|_{\dot{H}^{N_0}}\lesssim \| (U^1, U^2)\|_{H^{N_0, p}} \| \p_x(U^1, U^2)\|_{\widetilde{W^1}} [\| \alpha\|_{\widetilde{W^0}} +\| V\|_{\widetilde{W^0}}] \]
\begin{equation}\label{eqn460}
\lesssim \| (U^1, U^2)\|_{H^{N_0, p}} \| (U^1, U^2)\|_{W^{3}}^2\lesssim (1+|t|)^{-1+p_0}\epsilon_1^3\lesssim (1+|t|)^{-1+p_0}\epsilon_0^2. 
\end{equation}
The estimate of $GC_2$ is very similar,  the upper bound in the right hand side of (\ref{eqn460}) still good for  $GC_2$. We omit the details here.

  From (\ref{eqn432}) in Lemma \ref{estimatesofremainder},   it is easy to see the following estimate holds,
\[
 \big|\int  \p_x^{N_0}W^1 \p_x^{N_0} \widetilde{\mathcal{R}}_1  + \p_x^{N_0} W^2 \p_x^{N_0}  \widetilde{\mathcal{R}}_2  \big| \lesssim \|(W^1, W^2)\|_{H^{N_0,p}} \| (\widetilde{\mathcal{R}}_1, \widetilde{\mathcal{R}}_2)\|_{H^{N_0,p}} \lesssim (1+t)^{-1+2p_0}\epsilon_0^2.
\]
Recall (\ref{equation31}). Now, it is easy to see  our desired estimate (\ref{energyestimate1}) holds.

\end{proof}

\subsection{Energy estimate of $SU^1$ and $SU^2$.}

Note that $[ \p_t , S]= \p_t$ and $[\pm \d^\h,$ $ S]= \pm \d^\h $. On the one hand, from the system (\ref{equation6}), we can derive the system of equations satisfied by $SU^1$ and $SU^2$ with highlighted
 quasilinear structure 
 as follows, 
\begin{equation}\label{equationscaling}
\left\{\begin{array}{l}
\p_t SU^1- \d^{1/2}SU^2=  T_{\alpha} \d^{1/2} SU^2 - T_{V}\p_x SU^1 +\mathcal{R}_1^S,\\
\p_t SU^2 + \d^{1/2} SU^1=-T_{\alpha} \d^{1/2} SU^1 -T_{V}\p_x SU^2  + \mathcal{R}_2^S,
\end{array}\right.
\end{equation}
where $\mathcal{R}_1^S$ and $\mathcal{R}_2^S$ are  quadratic and higher good remainder terms that do  not lose derivatives in $SU^1$ and $SU^2$. More precisely, we have,
\begin{equation}\label{equation968}
\mathcal{R}_1^S:=(S+I)\mathcal{R}_1 + S(T_{\alpha}\d^{1/2} U^2- T_V \p_x U^1) - T_{\alpha}\d^{1/2} SU^2+ T_V \p_x SU^1,
\end{equation}
\begin{equation}\label{equation969}
\mathcal{R}_2^S:=(S+I)\mathcal{R}_2 + S(-T_{\alpha}\d^{1/2} U^1- T_V \p_x U^2) + T_{\alpha}\d^{1/2} SU^1+ T_V \p_x SU^2.
\end{equation}
On the other hand, from the system  (\ref{mainequation}), we can rewrite the system (\ref{equationscaling}) as follows to highlight the structures inside the quadratic terms,
\begin{equation}\label{systemofscaling}
\left\{ \begin{array}{l}
\p_{t} S U^{1} - \d^{\h} S U^{2} =  \mathfrak{Q}_1^S(SU^1, SU^2)+ \mathfrak{Q}_2^S +\mathcal{Q}_1 +\widetilde{C}_1+ {\mathfrak{R}}_1, \\
\pt S U^{2} + \d^{\h} S U^{1} = \mathfrak{Q}^S_3(SU^1, SU^2)+ \mathfrak{Q}_4^S +\mathcal{Q}_2 +\widetilde{C}_2 + {\mathfrak{R}}_2,  \\
\end{array}\right.
\end{equation}
where $\mathfrak{Q}_1^S(\cdot, \cdot)$ and $\mathfrak{Q}_3^S(\cdot, \cdot)$ are quadratic terms that have quasilinear structures inside, $\mathfrak{Q}_2^S$ and $\mathfrak{Q}_3^S$ are quadratic terms that have problematic low frequency part, $ \mathcal{Q}_1$ and $ \mathcal{Q}_2$ are commutator quadratic terms that do  not depend on the scaling vector field, $\widetilde{C}_1$ and $\widetilde{C}_2$ are cubic and higher order terms that at most lose one derivatives and $\mathfrak{R}_1$ and $\mathfrak{R}_2$ are good cubic and higher order terms that do not lose derivatives. Their detailed formulas are given as follows, 
\[ 
\mathfrak{Q}_1^S(SU^1,SU^2) = Q_{1}(SU^1, U^2)+Q_1(U^1, SU^2)-T_{\p_x \d^{-1/2}SU^2}\p_x U^1,\]
\[\mathfrak{Q}_3^S(SU^1,SU^2) = Q_2(SU^1, U^1)
+ Q_2(U^1, SU^1)\]
\[+ Q_3(SU^2, U^2)+ Q_3(U^2, SU^2) +  T_{\px \d^{-\h} SU^{2} } \px U^{2},\]
\begin{equation}\label{equation50}
  \mathfrak{Q}_2^S=T_{\p_x \d^{-1/2}SU^2}\p_x U^1,\,\,\mathfrak{Q}_4^S= -  T_{\px \d^{-\h} SU^{2} } \px U^{2}, 
  \end{equation}
\begin{equation}\label{eqn564}
 \mathcal{Q}_1= Q_{1}(U^1, U^2)+ SQ_{1}(U^1, U^2)-  Q_{1}(SU^1, U^2)- Q_{1}(U^1, SU^2) 
\end{equation}
\[
 \mathcal{Q}_2 = Q_2(U^1, U^1) + Q_3(U^2, U^2)
 + SQ_2(U^1,U^1)- Q_2(SU^1, U^1) - Q_2(U^1, SU^2)\]
\begin{equation}\label{eqn565}
+ SQ_2(U^1,U^1)- Q_2(SU^1, U^1) - Q_2(U^1, SU^2),
\end{equation}
\[
\widetilde{C}_1 = T_{\Lambda_{\geq 2}[\alpha]} \d^{\h} SU^2 - T_{\Lambda_{\geq 2}[V]} \p_x SU^1,
\widetilde{C}_2 =  T_{\Lambda_{\geq 2}[\alpha]} \d^{\h} SU^2 - T_{\Lambda_{\geq 2}[V]} \p_x SU^1,
\]
\[
\mathfrak{R}_1 = \Lambda_{\geq 3}[\mathcal{R}_1^S],\quad \mathfrak{R}_2= \Lambda_{\geq 3}[\mathcal{R}_2^S].\]

To better see which parts of  $\mathfrak{Q}_1^S(\cdot,\cdot)$ and $\mathfrak{Q}_{1}^S(\cdot, \cdot)$ lose derivatives in $SU^1$ and $SU^2$, especially in later energy estimate part, we define the following auxiliary bilinear forms,
\begin{equation}\label{equation9653}
\widetilde{\mathfrak{Q}}_1^S(SU^1, SU^2):= \mathfrak{Q}_1^S(SU^1,SU^2) - T_{\Lambda_1[\alpha]}\d^{1/2}SU^2 + T_{\Lambda_1[V]} \p_x SU^1,
\end{equation}
\begin{equation}\label{equation9652}
\widetilde{\mathfrak{Q}}_3^S(SU^1, SU^2):= \mathfrak{Q}_3^S(SU^1,SU^2) +T_{\Lambda_1[\alpha]}\d^{1/2}SU^1 + T_{\Lambda_1[V]} \p_x SU^2.
\end{equation}
From (\ref{equationscaling}), we can see that $\widetilde{\mathfrak{Q}}_1^S(SU^1, SU^2)$ and $\widetilde{\mathfrak{Q}}_3^S(SU^1, SU^2)$ do not lose derivatives in $SU^1$ and $SU^2$.
\subsubsection{Handling the bulk quadratic terms $\mathfrak{Q}_1^S$ and $\mathfrak{Q}_3^S$}
Similar to what we did in the subsection \ref{usualenergyestimate}, we will also utilize symmetries for the high frequency part.  For $\tau=p, N_1$, we can utilize symmetries of the same type of inputs and do changing of variables on the Fourier side. As a result,  the following equality holds, 
\[
\mathfrak{Re} \big[ \int \overline{\p_x^\tau S U^1} \p_x^\tau  \mathfrak{Q}_1^S  + \overline{\p_x^\tau SU^2}\p_x^\tau \mathfrak{Q}_3^S  \big] = \mathfrak{Re} \big[ \int \overline{\widehat{SU^1}(\xi)}\hat{q}_{\tau}^1(\xi-\eta, \eta) \widehat{SU^1}(\xi-\eta) \widehat{U^2}(\eta)\]
\begin{equation}\label{equation39}
+ \overline{\widehat{SU^1}(\xi)} \hat{q}_\tau^2(\xi-\eta, \eta) \widehat{SU^2}(\xi-\eta)\widehat{U^1}(\eta) + 
\overline{\widehat{SU^2}(\xi)} \hat{q}_\tau^3(\xi-\eta, \eta) \widehat{SU^2}(\xi-\eta)\widehat{U^2}(\eta)\big],
\end{equation}
where 
\[
\hat{q}_{\tau}^1(\xi-\eta, \eta) = |\xi|^{2\tau} \big(q_{1}^1(\xi-\eta, \eta)/2 + q_1^2(\xi-\eta, \eta) +q_1^3(\xi-\eta, \eta)\big)+q_1^1(-\xi, \eta)|\xi-\eta|^{2\tau}/2,
\]
\[
\hat{q}_\tau^2(\xi-\eta,\eta)= |\xi|^{2\tau}\big(q_1(\eta, \xi-\eta)+\eta(\xi-\eta)|\eta|^{-1/2}\theta(\xi-\eta, \eta)\big) + |\xi-\eta|^{2\tau} \big(q_2(\eta, -\xi) + q_2(-\xi,\eta) \big),
\]
\[
\hat{q}_\tau^3(\xi-\eta, \eta) = |\xi|^{2\tau}\big(q_3(\xi-\eta, \eta)+ q_3(\eta, \xi-\eta)  \big)/2+|\xi-\eta|^{2\tau} \big( q_3(\eta, -\xi)+q_3(-\xi, \eta)\big)/2
\]
\[
-(\xi-\eta)\eta|\xi-\eta|^{-1/2}|\eta|^{-1/2}|\xi|^{1/2+2\tau} \theta(\xi-\eta, \eta).
\]

Note that $q_1(\xi-\eta, \eta)\theta(\eta, \xi-\eta)=q_1^1(\xi-\eta, \eta)$ and $q_1(\xi-\eta, \eta)\theta( \xi-\eta,\eta)=q_1^2(\xi-\eta, \eta)$. From (\ref{equation36}) in Lemma \ref{sizeofsymbol}, we can see cancellations also happen when $|\eta|\ll |\xi-\eta|$. Moreover, when $|\xi-\eta|\ll |\eta|$, \emph{we can gain at least one degree of smallness from the symbols} as we put those exception terms into $\mathfrak{Q}_2^S$ and  $\mathfrak{Q}_4^S$.  For the Hight $\times$ High type interaction, all terms except $q_1^3(\cdot, \cdot)$ vanish, see (\ref{eqn50001}), (\ref{eqn50002}), and (\ref{eqn50003}).
As a result, the following estimate holds,
\begin{equation}\label{equation8991}
\sum_{i=1,2,3}\|\hat{q}_\tau^3(\xi-\eta, \eta) \|_{\mathcal{S}^{\infty}_{k,k_1,k_2}} \lesssim \left\{ \begin{array}{ll}
2^{3k_2/2+2\tau k} & \textup{if $k_2\leq k_1-5$} \\
2^{(2\tau+1)k+k_1/2} & \textup{if $|k_1-k_2|\leq 5$}\\
2^{k_1 + (2\tau+1/2 )k}& \textup{if $k_1\leq k_2-5$}. \\
\end{array}\right.
\end{equation}

For $\tau=p, N_1$, we define  $\widetilde{Q}_{1,\tau}(SU^1, U^2)$, $\widetilde{Q}_{2,\tau}(SU^2, U^1)$, $\widetilde{Q}_{3,\tau}(SU^1, U^1)$ and $\widetilde{Q}_{4,\tau}(SU^2, U^2)$  by the following symbols,
\[
\widetilde{q}_{1,\tau}(\xi-\eta, \eta)= \frac{\hat{q}_{\tau}^1(\xi-\eta, \eta)}{|\xi|^{2\tau}}, \quad \widetilde{q}_{2,\tau}(\xi-\eta, \eta)= \frac{\hat{q}_{\tau}^2(\xi -\eta,\eta)}{2|\xi|^{2\tau}},\]
\[
 \widetilde{q}_{3,\tau}(\xi-\eta, \eta)= \frac{\hat{q}_{\tau}^2(\xi, -\eta)}{2|\xi|^{2\tau}}, \quad \widetilde{q}_{4,\tau}(\xi-\eta, \eta) = \frac{\hat{q}_{\tau}^3(\xi-\eta, \eta)}{|\xi|^{2\tau}}.
\]
We also define the following substitution variables of $SU^1$ and $SU^2$,
\begin{equation}\label{equation41}
W_{1,\tau} := S U^{1} + \widetilde{C}_{1,\tau}(SU^1, U^1) + \widetilde{C}_{2,\tau}(SU^2, U^2),
\end{equation}
\begin{equation}\label{equation42}
 W_{2,\tau} := SU^2 + \widetilde{D}_{1,\tau}(SU^1, U^2)  + \widetilde{D}_{2,\tau}(SU^2, U^1).
 \end{equation}

The goal is to use bilinear operators $\widetilde{C}_{i, \tau}(\cdot, \cdot)$ and $\widetilde{D}_{i, \tau}(\cdot, \cdot)$, $i\in\{1,2\}$ ,  to cancel out $\widetilde{Q}_{j, \tau}(\cdot, \cdot)$, $j\in\{1,2,3,4\}$. Effectively speaking, it cancel out (\ref{equation39}) in the energy estimate. To this end, it would be sufficient if symbols $c_\tau^i (\cdot, \cdot)$ and $d_\tau^i(\cdot, \cdot)$  of bilinear operators $\widetilde{C}_{i, \tau}(\cdot, \cdot)$ and $\widetilde{D}_{i, \tau}(\cdot, \cdot)$ solve the following system of equations,
  \begin{equation}\label{equation79890}
\left\{\begin{array}{l}
|\eta|^{\h} c_\tau^1(\xi-\eta, \eta) - |\xi-\eta|^{\h} c_\tau^2(\xi-\eta, \eta) - |\xi|^{\h} d_\tau^1(\xi-\eta, \eta)+ \widetilde{q}_{1,\tau}(\xi-\eta, \eta) =0\\
|\xi-\eta|^{\h} c_\tau^1(\xi-\eta, \eta) - |\eta|^{\h} c_\tau^2(\xi-\eta, \eta) - |\xi|^{\h} d_\tau^2(\xi-\eta, \eta)+ \widetilde{q}_{2,\tau}(\xi-\eta, \eta) =0\\
|\xi|^{\h}c_\tau^1(\xi-\eta, \eta) - |\eta|^{\h} d_\tau^1(\xi-\eta,\eta) - |\xi-\eta|^{\h} d_\tau^2(\xi-\eta, \eta) + \widetilde{q}_{3,\tau}(\xi-\eta, \eta)=0\\
|\xi|^{\h}c_\tau^2(\xi-\eta, \eta) + |\xi-\eta|^{\h} d_\tau^1( \xi- \eta, \eta) + |\eta|^{\h} d_\tau^2(\xi-\eta, \eta) +\widetilde{q}_{4,\tau}(\xi-\eta, \eta) =0.
\end{array}\right.
\end{equation}
It is not difficult to solve above system of equations and derive the following,
\begin{equation}\label{equation68980}
d_\tau^1(\xi-\eta, \eta) = \frac{ (|\xi-\eta|+|\eta|-|\xi|) F_1(\xi-\eta, \eta) - 2 F_2(\xi-\eta, \eta)|\xi-\eta|^{\h}|\eta|^{\h}}{- (|\xi-\eta| + |\eta| - |\xi|)^{2} + 4 |\xi-\eta||\eta|},
\end{equation}
\begin{equation}\label{equation68981}
d_\tau^2(\xi-\eta, \eta) = \frac{ (|\xi-\eta|+|\eta|-|\xi|) F_2(\xi-\eta, \eta) - 2 F_1(\xi-\eta, \eta)|\xi-\eta|^{\h}|\eta|^{\h}}{- (|\xi-\eta| + |\eta| - |\xi|)^{2} + 4 |\xi-\eta||\eta|},
\end{equation}
\begin{equation}\label{equation68982}
c_\tau^1(\xi-\eta,\eta) = \frac{|\eta|^{\h} d_\tau^1(\xi-\eta, \eta) + |\xi-\eta|^{\h} d_\tau^2(\xi-\eta, \eta) - \widetilde{q}_{3,\tau}(\xi-\eta, \eta)}{|\xi|^{\h}},
\end{equation}
\begin{equation}\label{equation68983}
c_\tau^2(\xi-\eta, \eta)= \frac{- |\xi-\eta|^{\h} d_\tau^1(\xi-\eta, \eta)- |\eta|^{\h} d_\tau^2(\xi-\eta, \eta) -  \widetilde{q}_{4,\tau}(\xi-\eta, \eta)}{|\xi|^{\h}},
\end{equation}
where  $F_1(\xi-\eta,\eta)$ and $F_2(\xi-\eta, \eta)$ are defined as follows,
\[
F_1(\xi-\eta, \eta)= |\xi|^{\h} \widetilde{q}_{1,\tau}(\xi-\eta, \eta) -  |\eta|^{\h}  \widetilde{q}_{3,\tau}(\xi-\eta, \eta) + |\xi-\eta|^{\h} \widetilde{q}_{4,\tau}(\xi-\eta, \eta),
\]
\[
F_2(\xi-\eta, \eta)= |\xi|^{\h} \widetilde{q}_{2,\tau}(\xi-\eta, \eta) -  |\xi-\eta|^{\h}  \widetilde{q}_{3,\tau}(\xi-\eta, \eta) + |\eta|^{\h}  \widetilde{q}_{4,\tau}(\xi-\eta, \eta).
\]
\begin{lemma}\label{sizeinfolemma}
The following estimate holds for $i\in\{1,2\}$,
\begin{equation}\label{equation477}
\|c_\tau^i(\xi-\eta, \eta) \|_{\mathcal{S}^{\infty}_{k,k_1,k_2}} + \|d_\tau^i(\xi-\eta, \eta) \|_{\mathcal{S}^{\infty}_{k,k_1,k_2}} \lesssim \left\{ \begin{array}{ll}
2^{k_2} & \textup{if $k_2\leq k_1+5$} \\
2^{k_1/2 + k_2/2}& \textup{if $k_1\leq k_2-5$}. \\
\end{array}\right.
\end{equation}
\end{lemma}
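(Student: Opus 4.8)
The plan is to estimate the symbols $c_\tau^i$ and $d_\tau^i$ directly from their explicit formulas \eqref{equation68980}--\eqref{equation68983}, by tracking how the numerators $F_1, F_2$ and the auxiliary symbols $\widetilde{q}_{j,\tau}$ behave in each of the three frequency regimes (High$\times$Low with $k_2 \le k_1 - 5$, comparable frequencies $|k_1 - k_2| \le 5$, and Low$\times$High with $k_1 \le k_2 - 5$), and then reading off the size of the denominator $-(|\xi-\eta|+|\eta|-|\xi|)^2 + 4|\xi-\eta||\eta|$ in each regime. First I would record the key denominator identity: since $(|\xi-\eta|+|\eta|)^2 - |\xi|^2 \geq 0$ and $-(a+b-c)^2+4ab = (c^2 - (a-b)^2)\cdot$(something) type manipulations, one checks that on the support of $\psi_k(\xi)\psi_{k_1}(\xi-\eta)\psi_{k_2}(\eta)$ the quantity $-(|\xi-\eta|+|\eta|-|\xi|)^2 + 4|\xi-\eta||\eta|$ is bounded below by a constant times $2^{k + \min\{k_1,k_2\}}$ (for the resonant/transversal regimes; in the High$\times$Low and Low$\times$High cases $k \approx \max\{k_1,k_2\}$, so this is $2^{\max\{k_1,k_2\}+\min\{k_1,k_2\}}$). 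This is the same non-degeneracy that already underlies the bounds \eqref{equation8905} and \eqref{equation8910} via \eqref{equation8902}, so I would invoke the argument there rather than redo it.

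Next I would bound the numerator symbols $\widetilde{q}_{j,\tau}$. From \eqref{equation8991} and the analogous (easier) estimates for $\hat q_\tau^1$ and $\hat q_\tau^2$ — which follow from \eqref{equation36}, \eqref{equation20}, \eqref{equation21} in Lemma \ref{sizeofsymbol} together with the product estimate \eqref{productofsymbol} and Lemma \ref{Snorm}, exactly as in the derivation of \eqref{equation8801} — one gets, after dividing by $|\xi|^{2\tau}$, that $\|\widetilde{q}_{j,\tau}\|_{\mathcal{S}^\infty_{k,k_1,k_2}}$ is $\lesssim 2^{3k_2/2}$ when $k_2 \le k_1 - 5$, $\lesssim 2^{k + k_1/2}$ when $|k_1-k_2|\le 5$, and $\lesssim 2^{k_1 + k_2/2}$ when $k_1 \le k_2 - 5$; crucially in the Low$\times$High regime the cancellation among the $\hat q_\tau^j$ (noted in the text right after \eqref{equation8991}, where "we can gain at least one degree of smallness from the symbols") must be used, otherwise one only gets $2^{k_2/2+\cdots}$ and loses the claimed $2^{k_1/2+k_2/2}$. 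Then $F_1, F_2$ are sums of products of $|\cdot|^{1/2}$ factors (of size $2^{k/2}$, $2^{k_1/2}$, $2^{k_2/2}$) with these $\widetilde{q}_{j,\tau}$, so by \eqref{productofsymbol} one reads off $\|F_1\|_{\mathcal{S}^\infty_{k,k_1,k_2}}, \|F_2\|_{\mathcal{S}^\infty_{k,k_1,k_2}} \lesssim 2^{k/2}\cdot(\text{the }\widetilde q\text{ bound})$, i.e. $\lesssim 2^{2k_2}$ (High$\times$Low, $k\approx k_1$ so actually $2^{k_1/2}\cdot 2^{3k_2/2}$), $\lesssim 2^{3k/2+k_1/2}$ (comparable), $\lesssim 2^{k_2/2}\cdot 2^{k_1+k_2/2}$ (Low$\times$High).

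Finally I would divide: for $d_\tau^i$, combine the numerator bound on $(|\xi-\eta|+|\eta|-|\xi|)F_i - 2F_{3-i}|\xi-\eta|^{1/2}|\eta|^{1/2}$ (note $||\xi-\eta|+|\eta|-|\xi||$ and $|\xi-\eta|^{1/2}|\eta|^{1/2}$ are both $\lesssim 2^{\min\{k_1,k_2\}/2}\cdot 2^{\max\{k_1,k_2\}/2}$ in the comparable case and $\lesssim 2^{k_2}$-ish in the extreme cases) with the lower bound $2^{\max+\min}$ on the denominator; the $\mathcal{S}^\infty$ bound on the quotient follows because $1/(\text{denominator})$ — after localization — is itself a smooth symbol of the right size by Lemma \ref{Snorm}, as in Lemma \ref{auxilary1}. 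Book-keeping the exponents gives $2^{k_2}$ when $k_2 \le k_1+5$ and $2^{k_1/2+k_2/2}$ when $k_1 \le k_2 - 5$; the $c_\tau^i$ bounds then follow from \eqref{equation68982}--\eqref{equation68983} since $|\eta|^{1/2}d_\tau^1 + |\xi-\eta|^{1/2}d_\tau^2$ divided by $|\xi|^{1/2}$ does not worsen the estimate (in the Low$\times$High case $|\xi|\approx|\eta|\approx 2^{k_2}$, so the factor $2^{k_2/2}/2^{k_2/2}=1$, and in the High$\times$Low case $|\xi|\approx 2^{k_1}$ and the $|\eta|^{1/2}=2^{k_2/2}$ factor only helps). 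The main obstacle I anticipate is the Low$\times$High regime: there the naive estimate loses $1/2$ a derivative at low frequency, and one genuinely needs the extra cancellation in $\hat q_\tau^j$ for $|\xi-\eta|\ll|\eta|$ — and the matching null structure in $F_1, F_2$ — to recover the stated $2^{k_1/2+k_2/2}$; verifying that this cancellation survives division by the (non-degenerate) denominator is the delicate point.
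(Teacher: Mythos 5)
Your proposal is correct and follows the paper's own approach: the paper's proof simply cites the explicit formulas \eqref{equation68980}--\eqref{equation68983}, the symbol bound \eqref{equation8991}, Lemma \ref{Snorm}, and the product estimate \eqref{productofsymbol}, and declares the result to follow straightforwardly. You flesh out exactly that bookkeeping (the denominator lower bound $\gtrsim 2^{k+\min\{k_1,k_2\}}$, numerator bounds via \eqref{equation8991}, and division) with the correct exponents in each frequency regime, including the observation that the cancellation already built into \eqref{equation8991} is what recovers $2^{k_1/2+k_2/2}$ in the Low$\times$High case.
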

\begin{proof}
Recall (\ref{equation68980}), (\ref{equation68981}), (\ref{equation68982}) and (\ref{equation68980}). From (\ref{equation8991}), Lemma \ref{Snorm}, and (\ref{productofsymbol}) in Lemma \ref{boundness}, our desired estimate (\ref{equation477}) follows straightforwardly.
\end{proof}

Recall (\ref{equationscaling}) and (\ref{systemofscaling}). From (\ref{equation41}) and (\ref{equation42}), after substituting $SU^i$ by $W_{i, \tau}, i\in\{1,2\}$, we can derive the equations satisfied by $W_{1,\tau}$ and$W_{2,\tau}$ with good structures as follows,
\[
\p_t W_{1,\tau} - \d^\h W_{2,\tau}= \mathfrak{Q}_1^S(W_{1,\tau}, U^2) + \mathfrak{Q}_1^S( W_{2,\tau},U^1)- \widetilde{Q}_{1,\tau}(W_{1,\tau}, U^2)- \widetilde{Q}_{2,\tau}(W_{2,\tau}, U^1)   \]
\begin{equation}\label{eqn53000}
-T_{\Lambda_{\geq2 }[V]} \p_{x} W_{1,\tau} + T_{\Lambda_{\geq2 }[\alpha]}\d^{\h} W_{2,\tau} +\mathfrak{Q}_2^S+ \mathcal{Q}_1+ \widetilde{GC}_1 + \widetilde{\mathfrak{R}}_1,  
\end{equation}
\[
\p_t W_{2,\tau} + \d^\h W_{1,\tau}=  \mathfrak{Q}_{3}^S(W_{1,\tau}, U^1) + \mathfrak{Q}_{3}^S(W_{2, \tau}, U^2)- \widetilde{Q}_{3,\tau}(W_{1,\tau}, U^1) - \widetilde{Q}_{4,\tau}(W_{2,\tau}, U^2)  \]
\begin{equation}\label{eqn53001}
-T_{\Lambda_{\geq2 }[V]} \p_{x} W_{2, \tau} - T_{\Lambda_{\geq2 }[\alpha]}\d^{\h} W_{1,\tau}+ \mathfrak{Q}_4^S + \mathcal{Q}_2+ \widetilde{GC}_2 +\widetilde{\mathfrak{R}}_2,
\end{equation}
where
\[
\widetilde{GC}_1 = T_{V}\p_x(\widetilde{C}_{1,\tau}(SU^1, U^1) + \widetilde{C}_{2,\tau}(SU^2, U^2))- T_{\alpha}\d^{\h}[\widetilde{D}_{1,\tau}(SU^1, U^2)\]
\[+ \widetilde{D}_{2,\tau}(SU^2, U^1)] 
- \widetilde{C}_{1,\tau}(T_{V}\p_x S U^1, U^1)+ \widetilde{C}_{1,\tau}(T_{\alpha}\d^{\h} SU^2, U^1)\]
\[- \widetilde{C}_{2,\tau}(T_{V} \p_x SU^2, U^2)
-\widetilde{C}_{2,\tau}(T_{\alpha}\d^{\h} SU^1, U^2 ),
\]
\[
\widetilde{GC}_2= T_{V}\p_x [\widetilde{D}_{1,\tau}(SU^1, U^2) + \widetilde{D}_{2,\tau}(SU^2, U^1)]+ T_{\alpha}\d^{\h}[\widetilde{C}_{1,\tau}(SU^1, U^1)\]
\[+ \widetilde{C}_{2,\tau}(SU^2, U^2)]
 - \widetilde{D}_{1,\tau}(T_{V} \p_x SU^1, U^2 )+ \widetilde{D}_{1,\tau}(T_{\alpha} \d^\h SU^2, U^2)\]
 \[- \widetilde{D}_{2,\tau}(T_V \p_x SU^2, U^1)
 -\widetilde{D}_{2,\tau}(T_{\alpha}\d^\h SU^1, U^1).
\]
To improve presentation, we postpone the formulas of 
 $\widetilde{\mathfrak{R}}_1$ and $\widetilde{\mathfrak{R}}_2$ to the Appendix \ref{remainderestimate}. It's enough to see that they are good cubic and higher remainder terms in the sense that they do not lose derivatives at the high frequency part or the low frequency part of    $SU^1$ and $ SU^2$.   Hence the $H^{N_1, p}$ norm of those remainder terms can be estimated straightforwardly in the energy estimate.

Similar to what we did in the energy estimate of $U^1$ and $U^2$, we define the high frequency part of the modified energy for $SU^1$ and $SU^2$ as follows:
\[
E_{modi}^{S, high}(t) := \sum_{\tau =p, N_1} \int \frac{1}{2} \Big[ |\p_x^\tau W_{1,\tau}|^2 + \int \p_x^\tau W_{1,\tau} \p_x^\tau[E_1(U^1, U^1) + E_2(U^2, U^2)]
\]
\begin{equation}\label{eqn550}
 + \p_x^\tau W_{2,k} \p_x^\tau F(U^1, U^2),
\end{equation}
where  bilinear forms $E_1(U^1, U^1)$, $E_2(U^2, U^2)$ and $F(U^1, U^2)$  are the normal form transformations, which aim to cancel out  the commutator terms $\mathcal{Q}_1$ and $\mathcal{Q}_2$ in (\ref{equation50}) and (\ref{eqn594}).

By solving a similar system of equations as in  (\ref{equation1100}) with $Q_{1}(U^1, U^2)$ replaced by $\mathcal{Q}_1$  and $Q_{2}(U^1, U^1)+ Q_{3}(U^2, U^2)$ replaced by $\mathcal{Q}_2$, we can  explicitly solve symbols $e_1(\xi-\eta,\eta)$, $e_2(\xi-\eta,\eta)$, and $f(\xi-\eta,\eta)$ of bilinear operators $E_1(\cdot, \cdot)$, $E_2(\cdot, \cdot)$ and  $F(\cdot, \cdot)$.   Their precise formulas are not so important, hence we omit them here. Very similar to the proof of estimate (\ref{equation8905}) in Lemma \ref{auxilary1}, the following estimate holds,  
\begin{equation}\label{eqn530}
\|e_1(\xi-\eta, \eta)\|_{\mathcal{S}^\infty_{k,k_1,k_2}} +\|e_2(\xi-\eta, \eta)\|_{\mathcal{S}^\infty_{k,k_1,k_2}} + \|f(\xi-\eta, \eta)\|_{\mathcal{S}^\infty_{k,k_1,k_2}} \lesssim 2^{\max\{k_1,k_2\}}.
\end{equation}

\begin{lemma}\label{scalingauxiliary1}
Under the bootstrap smallness assumption \textup{(\ref{assumption})}, we have
\begin{equation}\label{eqn540}
\sup_{t\in[0,T]} \big| E_{modi}^{S, high}(t) -  \sum_{k=p, N_0} \h \int \big[|\p_x^k U^1|^2 +|\p_x^k U^2|^2 \big]\big| \lesssim \epsilon_0^2.   
\end{equation}
\end{lemma}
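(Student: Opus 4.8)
The plan is to argue exactly as in the proof of Lemma \ref{sizeofmodified}, isolating the modified energy $E_{modi}^{S, high}(t)$ into its principal quadratic part $\sum_{\tau = p, N_1} \tfrac{1}{2}\int (|\p_x^\tau SU^1|^2 + |\p_x^\tau SU^2|^2)$ and a collection of correction terms, and then showing each correction term is bounded by $\epsilon_0^2$ using the $\mathcal{S}^\infty$-symbol bounds already established together with the $L^2$-$L^\infty$ multilinear estimate \eqref{bilinearestimate}. Concretely, I would first note that, by the definition \eqref{eqn550}, the difference $E_{modi}^{S, high}(t) - \sum_{\tau=p,N_1}\tfrac12\int(|\p_x^\tau SU^1|^2 + |\p_x^\tau SU^2|^2)$ splits into two families: (a) the genuinely cubic pieces $\int \p_x^\tau W_{1,\tau}\,\p_x^\tau[E_1(U^1,U^1) + E_2(U^2,U^2)] + \int \p_x^\tau W_{2,\tau}\,\p_x^\tau F(U^1,U^2)$, and (b) the quadratic cross-terms coming from expanding $|\p_x^\tau W_{i,\tau}|^2$ in terms of $SU^i$ and the bilinear corrections $\widetilde{C}_{i,\tau}, \widetilde{D}_{i,\tau}$ from \eqref{equation41}--\eqref{equation42}, i.e. terms of the schematic form $\int \p_x^\tau SU^i\,\p_x^\tau \widetilde{C}_{j,\tau}(SU^\bullet, U^\bullet)$ plus the manifestly higher-order square terms.

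For family (a), I would put the two $U$-type inputs with the lower frequencies in $L^\infty$ and the remaining factor in $L^2$; using \eqref{eqn530} (which gives $\|e_1\|_{\mathcal{S}^\infty_{k,k_1,k_2}} + \|e_2\|_{\mathcal{S}^\infty_{k,k_1,k_2}} + \|f\|_{\mathcal{S}^\infty_{k,k_1,k_2}} \lesssim 2^{\max\{k_1,k_2\}}$, so these normal forms do not lose derivatives) together with the boundedness of $\p_x^{N_1}$ acting after the bilinear operator, one obtains a bound of the form $\lesssim \|(SU^1,SU^2)\|_{H^{N_1,p}}\,\|(U^1,U^2)\|_{H^{N_1+1,p}}\,\|(U^1,U^2)\|_{W^{0}}$, and by the bootstrap assumption \eqref{assumption} this is $\lesssim (1+t)^{p_0}\epsilon_1\cdot(1+t)^{p_0}\epsilon_1\cdot(1+t)^{-1/2}\epsilon_1 = (1+t)^{-1/2+2p_0}\epsilon_1^3 \lesssim \epsilon_0^2$. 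For family (b), the cross-term $\int \p_x^\tau SU^i\,\p_x^\tau \widetilde{C}_{j,\tau}(SU^\bullet,U^\bullet)$ has one $SU$-factor forced into $L^2$, the other $SU$-factor that I put in $L^2$ as well at the top level but compensated by putting $U^\bullet$ in $L^\infty$; more precisely I put the $U$-type input in $L^\infty$ and the two $SU$-type inputs in $L^2$, using Lemma \ref{sizeinfolemma} (estimate \eqref{equation477}) to see $\widetilde{C}_{j,\tau},\widetilde{D}_{j,\tau}$ do not lose derivatives. This yields $\lesssim \|(SU^1,SU^2)\|_{H^{N_1,p}}^2 \|(U^1,U^2)\|_{W^{1}} \lesssim (1+t)^{2p_0}\epsilon_1^2\cdot(1+t)^{-1/2}\epsilon_1 \lesssim \epsilon_0^2$; the purely quadratic square terms $\|\p_x^\tau\widetilde{C}_{j,\tau}(\cdot,\cdot)\|_{L^2}^2$ are even smaller and handled the same way.

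The one place that requires care — and what I expect to be the main obstacle — is the low-frequency behaviour of the $SU$-inputs: unlike in Lemma \ref{sizeofmodified}, here $SU^1, SU^2$ can only be controlled as $2^{k/5}\|P_k(SU^i)\|_{L^2}$ for $k \leq 0$, so whenever an $SU$-input sits at low frequency one pays a factor $2^{-pk}$ with $p = 1/5$. The resolution is that in each of the trilinear expressions above there is always at least one input (of $U$-type) that can be placed in $L^\infty$ at the lower frequency, so the $SU$-factors never need to carry the smallest frequency; and the symbol estimates \eqref{equation477}, \eqref{eqn530} supply a gain of $2^{\max\{k_1,k_2\}}$ (or at least $2^{k_1/2+k_2/2}$ in the High $\times$ High regime) which, combined with the regularity $N_1 = 1$ spare derivative available on the $H^{N_1+1,p}$ input, absorbs the loss at low frequency — exactly the mechanism already used for the analogous quartic terms in the discussion following \eqref{equation239489}. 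Carrying out the frequency bookkeeping with these gains, summing the resulting geometric series in $k, k_1, k_2$, and collecting the time decay factors completes the proof, giving the claimed bound $\lesssim \epsilon_0^2$ uniformly on $[0,T]$.
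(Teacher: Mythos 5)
Your proposal follows essentially the same approach as the paper: bound $\|W_{i,\tau}-SU^i\|_{\dot H^\tau}$ via the symbol estimate (\ref{equation477}) of Lemma \ref{sizeinfolemma} and the $L^2$-$L^\infty$ bilinear estimate (\ref{bilinearestimate}), then control the remaining cubic pieces using (\ref{eqn530}); this is exactly what the paper's proof of Lemma \ref{scalingauxiliary1} does. One small correction: the claim that the $SU$-factors ``never need to carry the smallest frequency'' is not accurate — in $\widetilde C_{1,\tau}(SU^1,U^1)$ the input $SU^1$ can certainly be at the lowest frequency — but your subsequent sentence identifies the actual resolution correctly: in that regime (\ref{equation477}) supplies a factor $2^{k_1/2+k_2/2}$, and the gain $2^{k_1/2}$ dominates the loss $2^{-pk_1}$ precisely because $p=1/5<1/2$.
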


\begin{proof}
From  (\ref{equation477}) in Lemma \ref{sizeinfolemma} and the $L^2-L^\infty$-type bilinear estimate (\ref{bilinearestimate}) in Lemma \ref{boundness}, the following estimate holds,
\[
\sum_{\tau=p, N_1}
\|W_{1,\tau}(t)- SU^1(t)\|_{\dot{H}^\tau} + \| V_{2,\tau}(t)- SU^2(t)\|_{\dot{H}^\tau} 
\]
\begin{equation}\label{eqn534}
\lesssim \| (SU^1, SU^2)(t)\|_{H^{N_1, p}} \| (U^1, U^2)(t)\|_{W^1}\lesssim (1+|t|)^{-1/2+p_0} \epsilon_1^2.
\end{equation}
From  (\ref{eqn530}), (\ref{eqn534}), and the $L^2-L^\infty$-type bilinear estimate (\ref{bilinearestimate}) in Lemma \ref{boundness}, our desired estimate holds as follows,
\[
\Big|E_{modi}^{S, high}(t) -\sum_{k=p, N_1} \int |\p_x^k SU^1|^2 + \| \p_x^k SU^2|^2 \Big|\lesssim  (1+|t|)^{-1/2+p_0} \epsilon_1^3
\]
\[+\| (SU^1, SU^2)(t)\|_{H^{N_1, p}} \|(U^1, U^2)\|_{H^{N_1+1,p}} (U^1, U^2)(t)\|_{W^1}\lesssim \epsilon_0^2.
\]
\end{proof}

From direct computations, we have the following estimate,
\begin{equation}\label{eqn1232}
 \Big| \frac{d }{d t} E_{modi}^{S, high} (t) +\sum_{\tau =p, N_1} \int \p_x^\tau SU^1 \p_x^\tau \mathfrak{Q}_2^S+ \p_x^\tau SU^2 \p_x^\tau  \mathfrak{Q}_4^S \Big|\lesssim \sum_{i=1,2,3}\big| \mathcal{J}_{i}^S\big|,
\end{equation}
where
\[
\mathcal{J}_{1}^S = \sum_{\tau=p, N_1} \int \p_x^k W_{1,\tau} \p_x^\tau [\widetilde{GC}_1 + \widetilde{\mathfrak{R}_1}] + \p_x^\tau W_{2,\tau}\p_x^\tau[\widetilde{GC}_2 + \widetilde{\mathfrak{R}_2}] -\p_x^\tau \big( \widetilde{C}_{1,\tau}(SU^1, U^1),\]
\[
 + \widetilde{C}_{2,\tau}(SU^2, U^2)\big)\p_x^\tau \mathfrak{Q}_2^S + \p_x^\tau\big( \widetilde{D}_{1,\tau}(SU^1, U^2)  + \widetilde{D}_{2,\tau}(SU^2, U^1)\big)\p_x^\tau  \mathfrak{Q}_4^S
\]
\[
\mathcal{J}_2^S = \sum_{\tau=p, N_1} \int \p_x^\tau \Lambda_{\geq 2}[\p_t W_{1,\tau}]\p_x^\tau [E_1(U^1, U^1)+ E_{2}(U^2, U^2)] + \p_x^\tau \Lambda_{\geq 2}[\p_t W_{2,\tau}] \times\]
\[ \p_x^\tau  F(U^1, U^2)+
2\p_x^\tau W_{1,k} \p_x^\tau [E_1(\Lambda_{\geq 2}[\p_t U^1], U^1) + E_{2}(\Lambda_{\geq 2}[\p_t U^2], U^2)] \]
\[+ \p_x^\tau W_{2,k} \p_x^\tau[F(\Lambda_{\geq 2}[\p_t U^1], U^2) + F(U^1, \Lambda_{\geq 2}[\p_t U^2])],
\]
\[
\mathcal{J}_3^S = \sum_{\tau=p, N_1} \int \p_x^\tau W_{1,\tau} \p_x^\tau[-T_{\Lambda_{\geq2 }[V]} \p_{x} W_{1, \tau} + T_{\Lambda_{\geq2 }[\alpha]}\d^{\h} W_{2,\tau} ]\]
\[- \p_x^\tau W_{2,\tau} \p_x^\tau [T_{\Lambda_{\geq2 }[V]} \p_{x} W_{2, \tau} + T_{\Lambda_{\geq2 }[\alpha]}\d^{\h} W_{1,\tau}  ].
\]
We remark that the quartic terms that depend on $\mathfrak{D}_2^S$ and $\mathfrak{D}_4^S$ inside $\mathcal{J}_1^S$ are resulted from replacing $SU^1$ and $SU^2$ by $W_{1,\tau}$ and $W_{2, \tau}$ respectively.
\begin{lemma}\label{scalingauxiliary2}
Under the bootstrap condition \textup{(\ref{assumption})}, we have the following estimate 
\begin{equation}\label{eqn570}
\sum_{i=1} |\mathcal{J}_i^S|\lesssim (1+t)^{-1+2p_0}\epsilon_0^2.
\end{equation}
\end{lemma}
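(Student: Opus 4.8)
The plan is to estimate $\mathcal{J}_1^S$, $\mathcal{J}_2^S$, and $\mathcal{J}_3^S$ one at a time, each time putting the input with the largest frequency in $L^2$ and all others in $L^\infty$, and keeping careful track of the low-frequency losses (powers of $2^{-pk}$). The model term to control throughout is of the shape $\int \p_x^\tau (\text{cubic/quartic}) \, \p_x^\tau (\text{normal-form correction})$, with $\tau \in \{p, N_1\}$; since $N_1 = 1$ is far from $N_0$, derivative loss at the top is never an issue, so all the work is (i) harvesting symbol smallness and (ii) controlling the low-frequency factor $2^{-pk_1 - pk_1'}$. For $\mathcal{J}_2^S$ I would first feed in the equation $\p_t U^i = \Lambda_{\geq 2}[\cdots]$ (so $\Lambda_{\geq 2}[\p_t U^i]$ is quadratic and higher) and the decay estimates for $\Lambda_{\geq 2}[\p_t U^1, \p_t U^2]$ from Lemma \ref{estimatesofremainder}, combine with the symbol bound (\ref{eqn530}) for $e_1, e_2, f$, and the $L^2$–$L^\infty$ bilinear estimate (\ref{bilinearestimate}); because in every pairing we may put the two smaller-frequency inputs in $L^\infty$ and only the top-frequency one in $L^2$, the low-frequency factor is harmless and one gets $|\mathcal{J}_2^S| \lesssim \|(SU^1,SU^2)\|_{H^{N_1,p}}\|(U^1,U^2)\|_{H^{N_1+1,p}}\|(U^1,U^2)\|_{W^{N_2}}^2 \lesssim (1+t)^{-1+2p_0}\epsilon_0^2$.

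For $\mathcal{J}_3^S$ I would exploit the symmetry of the paradifferential transport/rotation structure exactly as in part (ii) of the proof of Lemma \ref{energyestimate1}: the terms $T_{\Lambda_{\geq 2}[V]}\p_x W_{i,\tau}$ pair antisymmetrically against $\p_x^\tau W_{i,\tau}$ after integration by parts (up to a harmless commutator), and $T_{\Lambda_{\geq 2}[\alpha]}\d^\h W_{2,\tau}$ pairs against $\p_x^\tau W_{1,\tau}$ antisymmetrically with $T_{\Lambda_{\geq 2}[\alpha]}\d^\h W_{1,\tau}$ paired against $\p_x^\tau W_{2,\tau}$; the leftover is controlled by $\|(W_{1,\tau},W_{2,\tau})\|_{\dot H^\tau}^2(\|\Lambda_{\geq 2}[V]\|_{\widetilde{W^1}}+\|\Lambda_{\geq 2}[\alpha]\|_{\widetilde{W^{1/2}}})$, and using (\ref{eqn534}) plus the $W^{N_2}$ bound on $V,\alpha$ this is $\lesssim \|(SU^1,SU^2)\|_{H^{N_1,p}}^2\|(U^1,U^2)\|_{W^{N_2}}^2\lesssim (1+t)^{-1+2p_0}\epsilon_0^2$. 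Note that here $\Lambda_{\geq 2}[V]$ and $\Lambda_{\geq 2}[\alpha]$ are quadratic and higher, so each carries a $t^{-1/2}$ factor, and two of them would be too much — but there is only one such factor in each term together with one factor of $W^{N_2}$-type decay, giving $t^{-1}$; this is the standard mechanism.

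For $\mathcal{J}_1^S$, the good cubic correction $\widetilde{GC}_1, \widetilde{GC}_2$ is handled as in (\ref{eqn460}): after substituting the defining relations (\ref{equation79890}) for $c_\tau^i, d_\tau^i$ one sees that $\widetilde{GC}_i$ is a sum of quartic terms whose symbols, by the symmetrization on the Fourier side (difference-of-translates structure, as for $e_1,e_2,e_3$ in Lemma \ref{energyestimate1}), carry $2^{\mathrm{med}\{k_1,k_2,k_3\}}$-type smallness; one input is $SU^i$ (put in $L^2$), the others are $U^i$ or $V,\alpha$ (put in $L^\infty$), and the smallness from the symbol, combined with $2^{-p k_1 - p k_1'} \leq (1+t)^{1/4 + 1/6}$ and the lower bounds $2^{k_1}\geq (1+t)^{-5/4}$, $2^{k_1'}\geq (1+t)^{-5/6}$ (valid after discarding the range where the cubic term already has $1/t$ decay, as explained in the paragraph preceding (\ref{equation239489})), yields the claimed $(1+t)^{-1+2p_0}$ bound. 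The remainder terms $\widetilde{\mathfrak{R}}_i$ are controlled by Lemma \ref{estimatesofremainder} since they do not lose derivatives at either end. \textbf{The main obstacle} is exactly the problematic quartic piece flagged in (\ref{equation239489})–(\ref{equaiton239239}): here one cannot simply put $SU^1$ in $\dot H^p$ and lose $2^{-pk_1-pk_1'}$, so I would isolate the worst subterm by a finer Fourier decomposition (separating out, say, $\mathfrak{Q}_2^S$-type pieces absorbed into $W_{i,\tau}$), verify the phase lower bound $|\Phi^{\mu,\nu,\tau}(\xi,\eta,\sigma)|\gtrsim 2^{k_1'/2}$ in the range $k_2 - 10 \geq k_1' \geq k_1 + 10$, integrate by parts in time dividing by $\Phi^{\mu,\nu,\tau}$ to gain an extra $t^{-1/2}$, and pay the division price $2^{-k_1'/2}$ with the symbol smallness $2^{k_1'/2}$ coming from $\mathfrak{Q}_2^S$ (respectively $\mathfrak{Q}_4^S$) — precisely the scheme outlined in the introduction. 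This is where essentially all the new difficulty of the infinite-energy setting is concentrated, and it is the one step I expect to require genuine care rather than routine bookkeeping.
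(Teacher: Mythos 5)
Your estimates for $\mathcal{J}_2^S$ and $\mathcal{J}_3^S$ match the paper's own proof (symmetry cancellation for $\mathcal{J}_3^S$ as in part (ii) of Lemma \ref{energyestimate1}; for $\mathcal{J}_2^S$, put the higher-frequency $(U^1,U^2)$ input in $L^2$ so the $SU$ placement is harmless), and your reduction of $\widetilde{GC}_i$ via the normal-form identities (\ref{equation79890}) to a symbol with $\mathrm{med}$-frequency smallness, as in (\ref{eqn460}), is also the paper's route. One small bookkeeping slip: the $H^{N_1,p}$ estimate on $\widetilde{\mathfrak{R}}_1,\widetilde{\mathfrak{R}}_2$ is (\ref{vectorfieldestimate2}) in Lemma \ref{finalestimate}, not Lemma \ref{estimatesofremainder} (which controls $\widetilde{\mathcal{R}}_1,\widetilde{\mathcal{R}}_2$).

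The genuine error is your last paragraph. You believe that the problematic quartic piece of (\ref{equation239489})--(\ref{equaiton239239}) — the one requiring the phase lower bound $|\Phi^{\mu,\nu,\tau}|\gtrsim 2^{k_1'/2}$ and an extra time integration by parts — lives inside $\mathcal{J}_1^S$ and is "the main obstacle" of this lemma. It does not and it is not. The high-frequency modified energy $E^{S,\mathrm{high}}_{modi}$ is constructed so that, after the identity (\ref{eqn1232}), the terms $\sum_\tau \int \p_x^\tau SU^1\,\p_x^\tau\mathfrak{Q}_2^S + \p_x^\tau SU^2\,\p_x^\tau\mathfrak{Q}_4^S$ — which carry the dangerous configuration where a normal form eats the $2^{k_1/2}$ smallness — are subtracted off explicitly; those are then matched by $E^{S,\mathrm{low}}_{modi}$ and controlled in Lemma \ref{scalingauxiliary3}, where the phase-division argument is actually deployed. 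In the quartic pieces of $\mathcal{J}_1^S$ that do involve $\mathfrak{Q}_2^S,\mathfrak{Q}_4^S$ (e.g.\ $\int\p_x^\tau\widetilde{C}_{1,\tau}(SU^1,U^1)\,\p_x^\tau\mathfrak{Q}_2^S$), the symbol bound (\ref{equation477}) gives $2^{k_1/2}$ smallness from $\widetilde{C}_{1,\tau}$ when $SU^1$ carries the low frequency $k_1$, and the symbol of $\mathfrak{Q}_2^S=T_{\p_x\d^{-1/2}SU^2}\p_xU^1$ gives $2^{k_1'/2}$ when $SU^2$ carries the low frequency $k_1'$; since $p=1/5<1/2$, the combined smallness $2^{k_1/2+k_1'/2}$ absorbs the $2^{-pk_1-pk_1'}$ loss with room to spare, so no time integration by parts, phase lower bound, or time-dependent cutoff is needed in $\mathcal{J}_1^S$. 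Your additional machinery would not invalidate the conclusion, but it misdiagnoses where the difficulty lies and should be removed; the actual proof of this lemma is the straightforward symbol-bookkeeping that you carry out correctly in the first three paragraphs.
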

\begin{proof}

We first  estimate $\mathcal{J}_1^S$ and $\mathcal{J}_3^S$. From (\ref{vectorfieldestimate2}) in Lemma \ref{finalestimate}, we have the estimate of $H^{N_1, p}$ norm of $\widetilde{\mathfrak{R}}_1$ and  $\widetilde{\mathfrak{R}}_2$.  Similar to the estimate of $GC_i$ we did in the proof of Lemma \ref{energyestimate1}, one can see cancellation happens once write $\widetilde{GC}_i$ on the Fourier side after utilizing the equalities satisfied by normal form transformations in (\ref{equation79890}). Also after utilizing symmetries, we can see cancellations inside $\mathcal{J}_3^S$. As a result, the following estimate holds,
\[  
|\mathcal{J}_1^S|+|\mathcal{J}_3^S| \lesssim \big[ \| (SU^1, SU^2)\|_{H^{N_1, p}} + \| (U^1, U^2)\|_{H^{N_0, p}}\big]^2 \| (U^1,U^2)\|_{{W^{3}}}^2 +(1+t)^{-1+2p_0}\epsilon_0^2 \] 
\[\lesssim (1+t)^{-1+2p_0}\epsilon_0^2.
\]

Next, we proceed to  estimate $\mathcal{J}_2$. Note that the inputs inside bilinear operators $E_1(\cdot, \cdot)$, $E_2(\cdot, \cdot)$ and $F(\cdot, \cdot)$ only depend on $(U^1, U^2)$. Hence we can always put the input with higher frequency in $L^2$ to avoid losing $p$ derivatives of smallness. As a result, we have
\[  
|\mathcal{J}_2|\lesssim \big[ \| (SU^1, SU^2)\|_{H^{N_1, p}} + \| (U^1, U^2)\|_{H^{N_0, p}}\big]^2 \| (U^1,U^2)\|_{{W^{3}}}^2 \lesssim (1+t)^{-1+2p_0}\epsilon_0^2.
\]
To sum up, we can see the desired estimate (\ref{eqn570}) holds.
\end{proof}
\subsubsection{Handling the bulk quadratic terms $\mathfrak{Q}_2^S$ and $\mathfrak{Q}_4^S$}

It remains to  deal with the quadratic terms $\mathfrak{Q}_2^S$ and $\mathfrak{Q}_4^S$. As losing a derivative is not a issue,  it is unnecessary to utilize symmetries to see cancellations any more.  We choose to work in the complex variables setting, which is more convenient in the Fourier transform based method. We define 
\[
U = U^1 +  i U^2,\quad SU = SU^1 + i SU^2, \quad c_{\mu}= \mu/(2i),\,\,\mu\in\{+, -\}.
\]
From (\ref{mainequation}), we can write the equations satisfied by $U$ and $SU$ as follows
\[
\p_t U +i \d^{1/2} U = \sum_{\mu, \nu\in\{+,-\}} Q_{\mu, \nu} (U_{\mu}, U_{\nu}) + \Lambda_{\geq 3}[\p_t U],
\]
\[ 
\p_{t} SU + i \d^{1/2} SU =\sum_{\mu, \nu\in\{+,-\}}  Q_{\mu, \nu}^1 ((SU)_{\mu}, U_{\nu}) + Q_{\mu, \nu}^2(U_{\mu}, U_{\nu}) + \Lambda_{\geq 3}[\p_t SU],
\]
where $f_{+}:= f$ and $f_{-}:= \overline{f}$. 
From  (\ref{equation20}) in Lemma \ref{sizeofsymbol}, it's easy to see the following estimate holds,
\[
 \sum_{\mu, \nu\in\{+,-\}} \|q_{\mu, \nu}(\xi-\eta, \eta)\|_{\mathcal{S}^\infty_{k,k_1,k_2}} + \|q_{\mu, \nu}^1(\xi-\eta, \eta)\|_{\mathcal{S}^\infty_{k,k_1,k_2}} + \|q_{\mu, \nu}^2(\xi-\eta,\eta)\|_{\mathcal{S}^\infty_{k,k_1,k_2}} \]
 \begin{equation}\label{equation92}
 \lesssim 2^{\min\{k_1,k_2\}/2+\max\{k_1,k_2\}}.
\end{equation}
Recall (\ref{equation50}). We can write those problematic cubic terms  on the Fourier side and in terms of $SU$ and $U$ as follows,
\[
\sum_{\tau=p, N_1} \mathfrak{Re} \big[ \int \overline{\p_x^\tau S U^1} \p_x^\tau  \mathfrak{Q}_2^S  + \overline{\p_x^\tau SU^2}\p_x^\tau \mathfrak{Q}_4^S  \big] \]
\begin{equation}\label{equation67}
=\sum_{\tau=p, N_1} \sum_{\mu, \nu, \kappa \in\{+,-\}} \mathfrak{Re} \big[\int \overline{\widehat{(SU)_{\mu}}}(\xi)\widehat{(SU)_{\nu}}(\xi-\eta) \widehat{U_{\kappa}}(\eta) p_{\mu, \nu}^{\kappa,\tau}(\xi-\eta, \eta) d \eta d \xi, 
\end{equation}
where 
\begin{equation}\label{equation69}
p_{\mu, \nu}^{\kappa,\tau}(\xi-\eta, \eta) = -c_{\nu}(\frac{1}{4} + c_{-\mu} c_{\kappa}) (\xi-\eta)\eta |\xi-\eta|^{-1/2} |\xi|^{2\tau}  \theta(\xi-\eta, \eta).
\end{equation}
Note that
\[
\big| \int \overline{\widehat{(SU)_{\mu}}}(\xi)\widehat{(SU)_{\nu}}(\xi-\eta) \widehat{U_{\kappa}}(\eta) p_{\mu, \nu}^{\kappa,\tau}(\xi-\eta, \eta) \psi_{\leq 0 }((1+t)^{5/4+3p_0}|\xi-\eta|)d \eta d \xi\big|
\]
\[
\lesssim \sum_{k_1\leq k-10} \sum_{2^{k_1}\leq (1+t)^{-5/4-3p_0}} 2^{k_1/2+k} \| P_{k}[SU]\|_{L^2} \| P_{k_1}[SU]\|_{L^\infty} \|P_{k}U\|_{L^2}
\]
\begin{equation}\label{equation110}
\lesssim \sum_{2^{k_1}\leq (1+t)^{-5/4-3p_0}} 2^{(1-p)k_1} \| SU\|_{H^{N_1,p}}^2\| U\|_{H^{N_1,p}} \lesssim (1+t)^{-1+2p_0}\epsilon_0^2.
\end{equation}
From above estimate, we can see that the low frequency part already has sufficient decay rate. It is  not needed to cancel it out. This observation motives us to define the following cubic correction terms with time dependent cutoff functions, which only cancel out the case when $|\xi-\eta|\gtrsim (1+t)^{-5/4-3p_0}$,
\begin{equation}\label{eqn593}
E_{modi}^{S, low 1}(t) :=\sum_{\tau=p,N_1} \sum_{\mu, \nu, \kappa \in\{+,-\}} \mathfrak{Re} \big[ \int \overline{\widehat{(SU)_{\mu}}(\xi)}  \widehat{(SU)_{\nu}}(\xi-\eta) \widehat{U_{\kappa}}(\eta) q_{\mu,\nu}^{\kappa,\tau}(t,\xi-\eta, \eta) 
 d \eta d \xi\big],
\end{equation}
where
\begin{equation}\label{equation99898}
q_{\mu, \nu}^{\kappa,\tau}(t,\xi-\eta, \eta) = \frac{p_{\mu, \nu}^{\kappa,\tau}(\xi-\eta, \eta)\psi_{\geq 0} ((1+t)^{5/4+3p_0}|\xi-\eta|)}{\mu|\xi|^{1/2}-\nu |\xi-\eta|^{1/2} -\kappa |\eta|^{1/2}}.
\end{equation}

Note that the following estimate holds for any $\mu, \nu, \kappa\in \{+,-\}$,
\[
\big|\mu|\xi|^{1/2}-\nu |\xi-\eta|^{1/2} -\kappa |\eta|^{1/2}\big|\theta(\xi-\eta, \eta) \gtrsim \big(\big|\xi-\eta|^{1/2}- \big| \frac{|\xi|-|\eta|}{|\xi|^{1/2}+|\eta|^{1/2} }\big| \big)\theta(\xi-\eta, \eta)
\]
\[
\gtrsim |\xi-\eta|^{1/2}\theta(\xi-\eta, \eta).
\]
With above estimate, from Lemma \ref{Snorm} and (\ref{equation69}), the following estimates hold for $\tau\in\{p, N_1\},$
\begin{equation}\label{eqn581}
\sum_{\mu, \nu, \kappa\in\{+,-\}} \| q_{\mu, \nu}^{\kappa,\tau}(\xi-\eta, \eta)\|_{\mathcal{S}^\infty_{k,k_1,k_2}} \lesssim   2^{(2\tau+1) k},
\end{equation}
\begin{equation}\label{equation72}
\sum_{\mu, \nu, \kappa\in\{+,-\}} \| \p_t q_{\mu, \nu}^{\kappa,\tau}(t,\xi-\eta, \eta)\|_{\mathcal{S}^\infty_{k,k_1,k_2}} \lesssim (1+t)^{-1}2^{(2\tau+1) k}.
\end{equation}
Note that, from (\ref{eqn581}) and (\ref{equation72}), the symbols $q_{\mu, \nu}^{\kappa,\tau}(\xi-\eta, \eta)$ and $\p_t q_{\mu, \nu}^{\kappa,\tau}(t,\xi-\eta, \eta)$ now do not contribute any smallness when $|\xi-\eta|\ll |\eta|$. 
 As $SU$ is forced to  be putted in $L^2$ and the symbol can not cover the loss of  $p$-derivatives of smallness,  there is a potential problem when $|\xi-\eta|$ is the smallest inside the time derivative of $E_{modi}^{S, low 1}(t)$.

  To get around this issue, we will add quartic correction terms to cancel problematic quartic terms inside $d E_{modi}^{S, low 1}(t)/dt$. We first identify those problematic quartic terms by calculating $dE_{modi}^{S,low1}/dt$ as follows,
\[
\Lambda_{\geq 4}[\frac{d }{d t}E_{modi}^{S, low1} ]=\sum_{\tau=p, N_1} \sum_{\mu, \nu, \kappa\in\{+,-\}} 
 \mathfrak{G}_{\mu, \nu}^{\kappa,\tau} +\sum_{\mu, \nu, \mu',\nu' \in\{+,-\}} \mathfrak{G}_{\mu, \nu;\mu',\nu'}^{\tau},
\]
where
\[
 \mathfrak{G}_{\mu, \nu}^{\kappa,\tau} = \mathfrak{Re}\Big[   \int \Big(\overline{ \widehat{\Lambda_{\geq 3}[\p_t(SU)_{\mu}]}(\xi)}  \widehat{(SU)_{\nu}}(\xi-\eta) \widehat{U_{\kappa}}(\eta) 
\]
\[
+ \overline{\widehat{(SU)_{\mu}}(\xi)} \widehat{\Lambda_{\geq 2}[(\p_t SU)_{\nu}]}(\xi-\eta) \widehat{U_{\kappa}}(\eta)+\overline{\widehat{(SU)_{\mu}}(\xi) } \widehat{(SU)_{\nu}}(\xi-\eta) \widehat{\Lambda_{\geq 3}[\p_t U_{\kappa}]}(\eta) \Big)
\]
\[
\times q_{\mu,\nu}^{\kappa,\tau}(t,\xi-\eta, \eta)+ \overline{ \widehat{(SU)_{\mu}}(\xi)}  \widehat{(SU)_{\nu}}(\xi-\eta) \widehat{U_{\kappa}}(\eta) \p_t q_{\mu,\nu}^{\kappa,k}(t,\xi-\eta, \eta)
 d \eta d \xi\Big],
\]
\[
\mathfrak{G}_{\mu, \nu;\mu',\nu'}^{ \tau} = \mathfrak{Re}\Big[  \int \overline{\widehat{(SU)_{\mu}}(\xi)} \widehat{(SU)_{\nu}}(\xi-\eta) \widehat{{U}_{\mu'}}(\eta-\sigma) \widehat{{U}_{\nu'}}(\sigma) e_{\mu, \nu;\mu',\nu'}^{  \tau,1}(t,\xi,\eta, \sigma)  
\]
\begin{equation}\label{equation99001}
+  \overline{ \widehat{U_{\mu}}(\xi)} \widehat{(SU)_{\nu}}(\xi-\eta) \widehat{{U}_{\mu'}}(\eta-\sigma)  \widehat{{U}_{\nu'}}(\sigma)e_{\mu, \nu;\mu',\nu'}^{ \tau,2}(t,\xi,\eta, \sigma)  d \sigma d \eta d \xi\Big],
\end{equation}
where 
\[
e_{\mu, \nu}^{ \tau,1}(t,\xi,\eta,\sigma) = \sum_{\kappa\in\{+,-\}} q_{\mu, \nu}^{\kappa, \tau}(t,\xi-\eta, \eta) P_{\kappa}[q_{\kappa\mu',\kappa\nu'}(\eta-\sigma, \sigma)] 
\]
\begin{equation}\label{equation83}
 + q_{\kappa, \nu}^{\mu', \tau}(t,\xi-\eta, \eta-\sigma) P_{-\kappa}[q_{\mu, -\kappa\nu'}^1(\xi,-\sigma)],
\end{equation}
\begin{equation}\label{equation84}
e_{\mu, \nu;\mu',\nu'}^{\tau,2}(t,\xi,\eta, \sigma) = \sum_{\kappa\in\{+,-\}} q_{\kappa, \nu}^{\mu', \tau}(t,\xi-\eta, \eta-\sigma) P_{-\kappa}[q_{\mu, -\kappa\nu'}^2(\xi,-\sigma)]
\end{equation}
and $P_\mu[f]=f_{\mu}$. From (\ref{eqn581}), (\ref{equation72}), (\ref{equation92}), and  (\ref{productofsymbol}) in Lemma \ref{boundness}, the following estimate holds, 
\begin{equation}\label{equation99000}
\| e_{\mu, \nu}^{ \tau,1}(t,\xi,\eta,\sigma)\|_{\mathcal{S}^\infty_{k,k_1,k_2,k_3}} +\| e_{\mu, \nu}^{ \tau,2}(t,\xi,\eta,\sigma)\|_{\mathcal{S}^\infty_{k,k_1,k_2,k_3}} \lesssim 2^{k/2 + (2\tau+2)\max\{k_1,k_2,k_3\}}.
\end{equation}

Note that $ \mathfrak{G}_{\mu, \nu}^{\kappa,\tau}$ is not problematic. There are two types of terms inside $ \mathfrak{G}_{\mu, \nu}^{\kappa,\tau}$: (i) term like $\overline{\widehat{(SU)_{\mu}}(\xi)} $ $\widehat{\Lambda_{\geq 2}[(\p_t SU)_{\nu}]}(\xi-\eta) \widehat{U_{\kappa}}(\eta)$, which is quartic and higher. As we can gain half degree of smallness from the symbol of $\widehat{\Lambda_{\geq 2}[(\p_t SU)_{\nu}]}(\xi-\eta)$, losing $p$ derivatives in $SU$ is not a issue; (ii) all other terms inside $ \mathfrak{G}_{\mu, \nu}^{\kappa,\tau}$. Note that the decay rate of those terms  is at least $(1+t)^{-3/2+2p_0}$. The extra gain of $(1+t)^{-1/2}$  can cover the loss of $p$ derivatives in $SU$. More precisely, we at most lose $|\xi-\eta|^{-p}$. Recall that $|\xi-\eta| \gtrsim (1+t)^{-5/4-3p_0}$ inside the support of cut-off function, hence we at most lose $(1+t)^{1/4+p_0}$, which can be covered from the extra gain of $(1+t)^{-1/2}$. 

Recall (\ref{equation83}), (\ref{equation84}), (\ref{equation92}), (\ref{eqn581}) and (\ref{equation72}).  We know that the new introduced symbols $q_{\mu, \nu}(\cdot, \cdot)$ and $q_{\mu, \nu}^i(\cdot, \cdot)$ inside $e_{\mu, \nu}^{ \tau,i}(t,\xi,\eta,\sigma)$, $i\in\{1,2\}$, contribute half degree of smallness, which is less than or equal to  the second smallest number among  $\xi$, $\xi-\eta$, $\eta-\sigma$, and $\sigma$. Intuitively speaking, as $\xi-(\xi-\eta+\eta-\sigma+\sigma)=0$, we know the largest two number are comparable and the half degree of smallness is less than the biggest number, hence the second largest. Therefore, it less than or equal to the second smallest number.

As a result, after combining the estimate (\ref{equation99000}), the following estimate holds,
\[
\| e_{\mu, \nu}^{ \tau,1}(t,\xi,\eta,\sigma)\psi_k(\xi)\psi_{k_1}(\xi-\eta)\psi_{k_2}(\eta-\sigma)\psi_{k_3}(\sigma)\|_{\mathcal{S}^\infty } +\| \psi_k(\xi)\psi_{k_1}(\xi-\eta)\psi_{k_2}(\eta-\sigma)\psi_{k_3}(\sigma)\|_{\mathcal{S}^\infty }
 \]
\begin{equation}\label{equation99}
\lesssim \min\{ 2^{k/2 + (2\tau+2)\max\{k_1,k_2,k_3\}},  2^{\textup{Smin}\{k,k_1,k_2,k_3\}/2 + (2\tau+2)\max\{k_1,k_2,k_3\}}\},
\end{equation}
where ``$\textup{Smin}\{k,k_1,k_2,k_3\}$"denotes the second smallest number among $k,k_1,k_2,$ and $k_3$. 

Recall the detail formula of $\mathfrak{G}_{\mu, \nu;\mu',\nu'}^{ \tau}$ in (\ref{equation99001}). By multilinear estimate, we put all $SU$ in $L^2$ and all  $U$ in $L^\infty$. As a result, the total loss is at most of size $|\xi|^{-p}|\xi-\eta|^{-p}=2^{-pk-p k_1}$. From (\ref{equation99}), we can see that the loss of $2^{-pk-p k_1}$ can be covered by the symbol if $ 2^{k_1}\gtrsim 2^{\textup{Smin}\{k,k_1,k_2,k_3\}}$.

 Recall that, due to the time dependent cutoff function of $|\xi-\eta|$ in (\ref{equation99898}), we have $|\xi-\eta|\geq (1+t)^{-5/4-3p_0}$.  Again, from (\ref{equation99}), we know that the loss can also be covered by the symbol if $2^{(1/2-p)k}$ is less than $2^{-pk_1}\sim |\xi-\eta|^{-p}\leq (1+t)^{1/4+p_0}$.  Hence,  we can further rule out the case when $|\xi|\sim 2^k \leq (1+t)^{-5/6-5p_0}$. 

To sum up,  the quartic terms inside $\mathfrak{G}_{\mu, \nu;\mu',\nu'}^{ \tau} $  are  only problematic if $|\xi-\eta|$ is the smallest number and not comparable with the second smallest number among  $\xi$, $\xi-\eta$, $\eta-\sigma$, and $\sigma$ and $|\xi|\sim 2^k \gtrsim(1+t)^{-5/6-5p_0} $.

For this problematic scenario, a key observation is that the size of phases in this case is greater than the second smallest number instead of the smallest number among $\xi$, $\xi-\eta$, $\eta-\sigma$, and $\sigma$. More precisely, the following estimate holds,
\[
\big|\mu |\xi|^{1/2} - \nu|\xi-\eta|^{1/2} - \mu' |\eta-\sigma|^{1/2}-\nu'|\sigma|^{1/2}\big|\theta(\xi-\eta, \eta-\sigma)\theta(\xi-\eta, \sigma)\theta(\xi-\eta, \xi)
\]
\begin{equation}\label{equation98}
 \gtrsim \min\{|\xi|, |\eta-\sigma|, |\sigma|\}^{1/2},\quad \mu, \nu, \mu', \nu'\in\{+,-\}.
\end{equation}
Above estimate and a similar type estimate will be used  in the $L^\infty$ decay estimate part, hence we postpone the prove of (\ref{equation98}) to the end of  subsection \ref{Znormcubicterms}.

As the size of phase is not small, we can dividing the phase again to gain another $t^{-1/2}$ decay with the price of $2^{-\textup{Smin}\{k,k_1,k_2,k_3\}/2}$, which can be covered from the symbol, see (\ref{equation99}). This observation motives us to define the following quartic correction terms, which cancel out the problematic quartic terms inside $\mathfrak{G}_{\mu, \nu;\mu',\nu'}^{ \tau}$,
\begin{equation}\label{eqn594}
E_{modi}^{S, low2}(t) = \sum_{\tau=p,N_1}\sum_{\mu, \nu, \mu',\nu' \in\{+,-\}}\mathfrak{Re}\Big[ \int \overline{\widehat{(SU)_{\mu}}(\xi)} \widehat{(SU)_{\nu}}(\xi-\eta) \widehat{{U}_{\mu'}}(\eta-\sigma) \widehat{{U}_{\nu'}}(\sigma) 
\end{equation}
\[ 
\times \tilde{e}_{\mu, \nu;\mu',\nu'}^{\tau,1}(t,\xi,\eta, \sigma)+ \overline{ \widehat{U_{\mu}}(\xi-\sigma)} \widehat{(SU)_{\nu}}(\xi-\eta) \widehat{{U}_{\mu'}}(\eta)  \widehat{{U}_{\nu'}}(\sigma)\tilde{e}_{\mu, \nu;\mu',\nu'}^{ \tau,2}(t,\xi,\eta, \sigma)  d \sigma d \eta d \xi\Big],
\]
where for $i\in\{1,2\}$,
\[
\tilde{e}_{\mu, \nu;\mu',\nu'}^{  \tau,i}(t,\xi,\eta, \sigma) = \frac{{e}_{\mu, \nu;\mu',\nu'}^{  \tau,i}(t,\xi,\eta, \sigma) \theta(\xi-\eta, \min\{|\sigma|,|\eta-\sigma|,|\xi|\} )\psi_{\geq 0}(\xi(1+t)^{5/6+5p_0})}{\mu |\xi|^{1/2} -\nu|\xi-\eta|^{1/2} - \mu' |\eta-\sigma|^{1/2}-\nu'|\sigma|^{1/2}}.
\]
From (\ref{equation98}) and (\ref{equation99}), we can see that the loss from the denominator of $\tilde{e}_{\mu, \nu;\mu',\nu'}^{  \tau,i} $ can be covered by the size of $e_{\mu, \nu}^{ \tau,i}$, $i\in\{1,2\}.$ As a result, from (\ref{equation92}), (\ref{equation83}),  (\ref{equation84}), (\ref{eqn581}), (\ref{equation72}), Lemma \ref{Snorm}, and (\ref{productofsymbol}) in Lemma \ref{boundness}, the following estimate holds,
\begin{equation}\label{equation100}
 \| \tilde{e}_{\mu, \nu;\mu',\nu'}^{  \tau,1}(t,\xi,\eta, \sigma)\|_{\mathcal{S}^\infty_{k,k_1,k_2,k_3}} +  \| \tilde{e}_{\mu, \nu;\mu',\nu'}^{  \tau,2}(t,\xi,\eta, \sigma)\|_{\mathcal{S}^\infty_{k,k_1,k_2,k_3}} \lesssim 2^{(2\tau+2)\max\{k_1,k_2,k_3\}},
\end{equation}
\[
  \| \p_t \tilde{e}_{\mu, \nu;\mu',\nu'}^{  \tau,1}(t,\xi,\eta, \sigma)\|_{\mathcal{S}^\infty_{k,k_1,k_2,k_3}}+ \| \p_t \tilde{e}_{\mu, \nu;\mu',\nu'}^{  \tau,2}(t,\xi,\eta, \sigma)\|_{\mathcal{S}^\infty_{k,k_1,k_2,k_3}}\]
 \begin{equation}\label{equation102} \lesssim (1+t)^{-1} 2^{(2\tau+2)\max\{k_1,k_2,k_3\}}.
\end{equation}

Note that $|\xi|^{-p}|\xi-\eta|^{-p}\lesssim (1+t)^{5/12+2p_0}\leq (1+t)^{1/2-100p_0}$ inside the size of support, hence the extra gain of $t^{-1/2}$ is sufficient to close the argument. 

From above discussions, we define the total correction terms for the low frequency part as follows,
\begin{equation}\label{eqn1230}
E_{modi}^{S, low}(t) := E_{modi}^{S, low1}(t) + E_{modi}^{S, low2}(t).
\end{equation}
\begin{lemma}\label{scalingauxiliary3}
Under the bootstrap assumption \textup{(\ref{assumption})}, we have
\begin{equation}\label{eqn592}
\sup_{t\in[0,T]} |E_{modi}^{S, low}(t)| \lesssim \epsilon_0^2,
\end{equation}
\begin{equation}\label{eqn591}
\Big| \frac{d E_{modi}^{S, low}(t)}{d t}  - \sum_{\tau=p, N_1} \mathfrak{Re} \big[ \int \overline{\p_x^\tau S U^1} \p_x^\tau  \mathfrak{Q}_2^S  + \overline{\p_x^\tau SU^2}\p_x^\tau \mathfrak{Q}_4^S  \big]  \Big|\lesssim (1+t)^{-1+2p_0} \epsilon_1^3.
\end{equation}
\end{lemma}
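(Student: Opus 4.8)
\emph{Plan.} The argument assembles already-available pieces: the symbol bounds (\ref{eqn581}), (\ref{equation72}) for the cubic corrector $E_{modi}^{S,low1}$ and (\ref{equation100}), (\ref{equation102}) for the quartic corrector $E_{modi}^{S,low2}$; the multilinear estimates of Lemma \ref{boundness}; the bootstrap assumption (\ref{assumption}); and the improved energy bound (\ref{equation12}). The recurring nuisance is that $SU^1,SU^2$ are controlled in $L^2$ only with the weak low-frequency weight $2^{pk}$, so a copy of $SU^i$ sitting at the smallest frequency $2^{k_1}$ costs $2^{-pk_1}$; this is tamed only because the time cutoffs $\psi_{\geq0}((1+t)^{5/4+3p_0}|\xi-\eta|)$ and $\psi_{\geq0}(\xi(1+t)^{5/6+5p_0})$ bound $2^{k_1}$ (resp.\ $2^k$) from below, turning the summed loss into a polynomial factor beaten by the dispersive gains precisely because $p=1/5<1/4$. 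For (\ref{eqn592}): in $E_{modi}^{S,low1}$ the cutoff $\theta(\xi-\eta,\eta)$ forces $2^{k_1}=|\xi-\eta|\ll|\xi|\approx|\eta|$, and by (\ref{eqn581}) the symbol has size $2^{(2\tau+1)k}$ with no smallness at $2^{k_1}$; I would apply (\ref{bilinearestimate}) pairing the two high-frequency factors in $L^2$ — one copy of $SU^i$ there and $U^j$ in $L^\infty$ so as to gain $\|U\|_{W^{N_2}}\lesssim(1+t)^{-1/2}\epsilon_1$ — with the low-frequency copy of $SU^i$ in $L^2$ carrying $2^{-pk_1}$. The $k_1$-sum over $2^{k_1}\gtrsim(1+t)^{-5/4-3p_0}$ costs $\le(1+t)^{5p/4}=(1+t)^{1/4}$, while the $k$-sum converges since $2\tau+1\le 2N_1+1<N_0+N_1$ lets me place $U^j$ in $\dot H^{N_0}$ and $SU^i$ in $\dot H^{N_1}$. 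Hence $|E_{modi}^{S,low1}(t)|\lesssim(1+t)^{1/4}\|SU\|_{H^{N_1,p}}^2\|U\|_{W^{N_2}}\lesssim(1+t)^{-1/4+2p_0}\epsilon_1^3\lesssim\epsilon_0^2$, and $E_{modi}^{S,low2}$, being quartic with an extra $\|U\|_{W^{N_2}}$ and estimated with (\ref{equation100}), is smaller still.

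\emph{Proof of (\ref{eqn591}).} Differentiate $E_{modi}^{S,low}=E_{modi}^{S,low1}+E_{modi}^{S,low2}$ and insert $\p_tSU^i,\p_tU^i$ from (\ref{mainequation}) and (\ref{systemofscaling}). The $\p_tq_{\mu,\nu}^{\kappa,\tau}$-contribution is cubic and, by (\ref{equation72}), gains $(1+t)^{-1}$, hence is $\lesssim(1+t)^{-3/2+O(p_0)}\epsilon_1^3$. When the three linear flows $\p_t(\cdot)=\mp i\d^{1/2}(\cdot)$ act, they produce the factor $q_{\mu,\nu}^{\kappa,\tau}\big(\mu|\xi|^{1/2}-\nu|\xi-\eta|^{1/2}-\kappa|\eta|^{1/2}\big)=p_{\mu,\nu}^{\kappa,\tau}\psi_{\geq0}((1+t)^{5/4+3p_0}|\xi-\eta|)$ by the very definition (\ref{equation99898}), and by (\ref{equation67})--(\ref{equation69}) this is exactly the bulk term $\sum_{\tau}\mathfrak{Re}\big[\int\overline{\p_x^\tau SU^1}\p_x^\tau\mathfrak{Q}_2^S+\overline{\p_x^\tau SU^2}\p_x^\tau\mathfrak{Q}_4^S\big]$ with the range $|\xi-\eta|\lesssim(1+t)^{-5/4-3p_0}$ removed; re-running the computation (\ref{equation110}) under the bootstrap shows the removed range contributes $\lesssim(1+t)^{-1+2p_0}\epsilon_1^3$. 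So the matching in (\ref{eqn591}) holds up to an admissible error, and it remains to control the quartic remainders $\mathfrak{G}_{\mu,\nu}^{\kappa,\tau}$, $\mathfrak{G}_{\mu,\nu;\mu',\nu'}^{\tau}$ of (\ref{equation99001}) together with the whole of $\frac{d}{dt}E_{modi}^{S,low2}$.

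\emph{The quartic remainders.} For $\mathfrak{G}_{\mu,\nu}^{\kappa,\tau}$ there is no problem: the term carrying $\Lambda_{\geq2}[(\p_tSU)_\nu]$ inherits a quadratic symbol whose $2^{k_1/2}$-smallness beats $2^{-pk_1}$ since $p<1/2$, and every other term is genuinely quartic with decay $(1+t)^{-3/2+2p_0}$, against which the cutoff $2^{k_1}\gtrsim(1+t)^{-5/4-3p_0}$ loses only $(1+t)^{1/4+O(p_0)}$. For $\mathfrak{G}_{\mu,\nu;\mu',\nu'}^{\tau}$, estimate (\ref{equation99}) shows the symbol carries $2^{\textup{Smin}\{k,k_1,k_2,k_3\}/2}$, so placing the two $SU$-copies in $L^2$ and the two $U$-copies in $L^\infty$ absorbs the loss $2^{-p(k+k_1)}$ except when $|\xi-\eta|$ is the unique smallest frequency and $2^k\gtrsim(1+t)^{-5/6-5p_0}$. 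On that set the phase satisfies the lower bound (\ref{equation98}) (proved in Subsection~\ref{Znormcubicterms}), which is exactly what makes the denominators of $\tilde{e}_{\mu,\nu;\mu',\nu'}^{\tau,i}$ harmless, with the bounds (\ref{equation100})--(\ref{equation102}); thus $\frac{d}{dt}E_{modi}^{S,low2}$ cancels these terms by the same phase-division mechanism, the residual loss $2^{-p(k+k_1)}\le(1+t)^{25p/12}=(1+t)^{5/12}$ being paid by the extra $(1+t)^{-1/2}$ gained in that division, while the leftover $\p_t\tilde{e}$ and nonlinear pieces of $\frac{d}{dt}E_{modi}^{S,low2}$ are quintic and higher. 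Collecting all contributions yields $\big|\frac{d}{dt}E_{modi}^{S,low}-\sum_\tau\mathfrak{Re}[\cdots]\big|\lesssim(1+t)^{-1+2p_0}\epsilon_1^3$.

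\emph{Main obstacle.} The only genuinely delicate step is $\mathfrak{G}_{\mu,\nu;\mu',\nu'}^{\tau}$ in the regime where the paraproduct frequency $|\xi-\eta|$ is strictly the smallest: there the normal form has forced a loss $2^{-p(k+k_1)}$ that no symbol smallness can undo, and the only escape is the phase lower bound (\ref{equation98}), which permits a second division of the phase, trading the loss for a gain of $t^{-1/2}$ at the symbol-payable price $2^{-\textup{Smin}/2}$. Verifying that all powers of $(1+t)$ close is exactly where $p<1/4$ and the smallness of $p_0$ are used; the rest is bookkeeping with the established symbol estimates and Lemma \ref{boundness}.
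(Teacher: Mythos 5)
Your proposal is correct and follows essentially the same route as the paper: bound $E^{S,low1}_{modi}$ and $E^{S,low2}_{modi}$ via (\ref{eqn581}), (\ref{equation100}) and the multilinear estimates of Lemma \ref{boundness}, observe that the linear flows acting under the time derivative reproduce the bulk term up to the low-frequency cutoff (handled by re-running (\ref{equation110})), and cancel the problematic quartic contribution $\mathfrak{G}^{\tau}_{\mu,\nu;\mu',\nu'}$ against $\frac{d}{dt}E^{S,low2}_{modi}$ using the phase lower bound (\ref{equation98}) and the power counting $2^{-p(k+k_1)}\lesssim(1+t)^{5/12+O(p_0)}<(1+t)^{1/2}$. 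The paper's own argument adds only a bit more explicit bookkeeping (e.g.\ the interpolation $\|\Lambda_{\geq2}[\p_t U]\|_{W^3}\lesssim\|U\|_{W^{N_2}}^{3/2+10p_0}\|U\|_{H^{N_0,p}}^{1/2-2p_0}$), but the decomposition and the key mechanism are identical to yours.
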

\begin{proof}
Recall (\ref{eqn593}). From (\ref{eqn581}) and the $L^2-L^\infty$ type bilinear estimate (\ref{bilinearestimate}) in Lemma \ref{boundness}, the following estimate holds,
\begin{equation}\label{eqn1770}
|E_{modi}^{S, low1}(t)|\lesssim \| S U(t)\|_{H^{N_1, p}}^2 \| U(t)\|_{W^3} \sum_{k\in\mathbb{Z},  2^k\geq (1+t)^{-5/4-3p_0}} 2^{-pk}\lesssim (1+t)^{-1/4+10p_0} \epsilon_1^3\lesssim \epsilon_0^2.
\end{equation}
Recall (\ref{eqn594}). From (\ref{equation100}) and the $L^2-L^\infty-L^\infty$ type trilinear estimate (\ref{trilinearestimate}) in Lemma \ref{boundness}, we have
\[
|E_{modi}^{S, low2}(t)| \lesssim \sum_{2^{k_1}\geq (1+t)^{-5/4-3p_0}} \sum_{2^k \geq (1+t)^{-5/6-5p_0}}2^{-p k_1-pk} (\| SU\|_{H^{N_1, p}}+ \| U\|_{H^{N_0,p}})^{2}\| U\|_{W^3}^2
\]
\begin{equation}\label{eqn1771}
\lesssim (1+t)^{-1/2}\epsilon_1^4 \lesssim \epsilon_0^2.
\end{equation}
From (\ref{eqn1770}) and (\ref{eqn1771}), we can see the desired estimate (\ref{eqn592}) holds. From (\ref{equation110}), (\ref{equation100}), (\ref{equation102}) and $L^2-L^\infty$ type multilinear estimate in Lemma \ref{boundness}, the following estimate holds,
\[
\textup{L.H.S. of (\ref{eqn591}) } \lesssim\sum_{\tau=p, N_1} \sum_{\mu, \nu, \kappa\in\{+,-\}} 
 |\mathfrak{G}_{\mu, \nu}^{\kappa,\tau}| +\big| \sum_{\tau=p,N_1} \sum_{\mu, \nu, \mu',\nu' \in\{+,-\}} \mathfrak{G}_{\mu, \nu;\mu',\nu'}^{\tau} + \frac{d}{d t} E_{modi}^{S, low2}(t)|
 \]
\[
 + \sum_{\mu, \nu, \kappa \in\{+,-\}}  \Big|  \int \overline{\widehat{(SU)_{\mu}}}(\xi)\widehat{(SU)_{\nu}}(\xi-\eta) \widehat{U_{\kappa}}(\eta) p_{\mu, \nu}^{\kappa,\tau}(\xi-\eta, \eta)  \psi_{\leq 0} ((1+t)^{5/4+3p_0}|\xi-\eta|) d \eta d \xi  \Big|
\]
\[
\lesssim \| SU\|_{H^{N_1,p}}^2[ (1+t)^{-1/2} \| U\|_{W^3} + \| U\|_{H^{N_0,p}} (1+t)^{-(1+2p_0)}]  + (\| SU\|_{H^{N_1, p}} + \|U\|_{H^{N_0, p}})^2\times \]
\[
\| U\|_{W^3}^2 
+  \sum_{2^{k_1}\geq (1+t)^{-5/4-3p_0}} \sum_{2^k \geq (1+t)^{-5/6-5p_0}}2^{-p k_1-pk} \big( \| \Lambda_{\geq 2}[\p_t SU]\|_{H^{N_1,p}}\| SU\|_{H^{N_1,p}}\|U\|_{W^3}^2 
\]
\[
+ \| SU\|_{H^{N_1,p}}^2 \| U\|_{W^2}\| \Lambda_{\geq 2}[\p_t U]\|_{W^2}\big) +\sum_{2^{k_1}\geq (1+t)^{-5/4-3p_0}} 2^{-p k_1} \| SU\|_{H^{N_1,p}}^2 \| U\|_{W^3}\| \Lambda_{\geq 2}[\p_t U]\|_{W^3}
\]
\begin{equation}\label{eqn1662}
 \lesssim (1+t)^{-1+2p_0} \epsilon_1^3.
\end{equation}
In above estimate, we used the following fact, which is derived from Sobolev embedding,
\[
\| \Lambda_{\geq 2}[\p_t U]\|_{W^3} \lesssim \| U\|_{W^3} \| U\|_{W^4}\lesssim \| U\|_{W^{N_2}}^{3/2+10p_0}\| U\|_{H^{N_0,p}}^{1/2-2p_0}.
\]
\end{proof}
Combining (\ref{equation8916}), (\ref{equation6200}),  (\ref{eqn540}), (\ref{eqn1232}), (\ref{eqn592}) and (\ref{eqn591}), we can see our desired energy estimate (\ref{equation12}) in Proposition (\ref{mainproposition1}) holds. 

\section{Improved dispersion estimate }\label{improveddispersion}
The goal of this section is to derive the sharp $L^\infty$-type estimate under the bootstrap  assumptions (\ref{assumption}), hence finish the bootstrap argument.  From (\ref{equation12}) in Proposition (\ref{mainproposition1}), we have
\begin{equation}\label{improvedenergy}
\sup_{t\in[0,T]} (1+t)^{-p_0}\big[ \| (U^1, U^2) \|_{H^{N_0,p}} + \| S(U^1, U^2)\|_{H^{N_1, p}}\big] \lesssim \epsilon_0.
\end{equation}
From the improved energy estimate (\ref{improvedenergy}), we can first rule out the very low frequency case and very high frequency case as follows,
\[
\sum_{2^k \leq (1+t)^{-5/3-2p_0} \,\textup{or}\,2^{k}\geq (1+t)^{10/89+p_0}} \|P_k(U^1, U^2)\|_{W^{N_2}} 
\]
\begin{equation}\label{eqn1510}
\lesssim  \sum_{2^k \leq (1+t)^{-5/3-2p_0} \,\textup{or}\,2^{k}\geq (1+t)^{10/89+p_0}} 2^{k/2 + N_2 k_+}   \|P_k(U^1, U^2)\|_{L^2} \lesssim (1+t)^{-1/2}\epsilon_0.
\end{equation}
To derive the sharp decay estimate  for the case  when $(1+t)^{-5/3-2p_0}\leq |\xi|\leq (1+t)^{10/89+p_0}$, we  use the following auxiliary $Z$-normed  space \begin{equation}\label{eqn1120}
\| h\|_{Z}:=\| |\xi|^{\beta}(1 +|\xi|^{\gamma})  {h}(\xi)\|_{L^\infty_\xi},\quad \beta=\frac{3}{4}-p_0, \gamma= N_2 + 2p_0,
\end{equation}
and the  linear decay estimate (\ref{eqn400}) in Lemma \ref{lineardecay}. To simplify notations, we define $\Lambda:=\d^{1/2}$ and $\Lambda(\xi):=|\xi|^{1/2}$ throughout this section.
\begin{lemma}\label{lineardecay}
For any $t, |t|\geq 1$, $k \in \mathbb{Z}$,  $\epsilon \in(0,1/2]$ and $f\in L^2(\R)$, the following estimate holds,
\begin{equation}\label{eqn400}
\| e^{i t \Lambda} P_k f\|_{L^\infty} \lesssim |t|^{-\h} 2^{3k/4} \| \widehat{f}\|_{L^\infty} + |t|^{-(1/2+\epsilon/2)} 2^{(1-\epsilon)k/4}
[2^k\| \p_\xi \widehat{f}\|_{L^2} + \| f\|_{L^2}].
\end{equation}
\end{lemma}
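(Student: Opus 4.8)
The plan is to prove the estimate by a straightforward stationary‑phase/integration‑by‑parts analysis of the kernel, with the parameter $\epsilon$ entering only through an interpolation near the stationary point. First some reductions: it is harmless to take $t\ge 1$ (the case $t\le -1$ is identical); composing $P_k$ with a fattened Littlewood--Paley cutoff and using $\partial_\xi(\psi_k\widehat f)=\psi_k'\widehat f+\psi_k\widehat f'$ we may assume $\widehat f$ is supported in $\{|\xi|\sim 2^k\}$, at the cost of replacing $\|\partial_\xi\widehat f\|_{L^2}$ by $\|\partial_\xi\widehat f\|_{L^2}+2^{-k}\|f\|_{L^2}$, which is harmless since $2^{-k}\|f\|_{L^2}$ already appears (up to the $2^k$ factor) on the right; and by $\xi\mapsto-\xi$ we restrict the Fourier integral to $\xi>0$, so $e^{it\Lambda}P_kf(x)=\tfrac1{2\pi}\int e^{i(x\xi+t\xi^{1/2})}\psi_k(\xi)\widehat f(\xi)\,d\xi$. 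After rescaling $\xi=2^k\zeta$ the phase is $\lambda\psi(\zeta)$ with $\lambda=t2^{k/2}$ and $\psi(\zeta)=\tfrac{x2^{k/2}}{t}\zeta+\zeta^{1/2}$, so $|\psi''|\sim1$ on the support while $|\psi'(\zeta)|=\big|\tfrac{x2^{k/2}}{t}+\tfrac12\zeta^{-1/2}\big|\gtrsim\max\{1,\,|x|2^{k/2}/t\}$ unless $|x|\sim t2^{-k/2}$. Two workhorse bounds will be used repeatedly: (i) van der Corput's lemma (second‑derivative version, no monotonicity needed), uniformly in $x$,
$$\|e^{it\Lambda}P_kf\|_{L^\infty_x}\lesssim (t2^{-3k/2})^{-1/2}\big(\|\psi_k\widehat f\|_{L^\infty}+\|\partial_\xi(\psi_k\widehat f)\|_{L^1}\big)\lesssim t^{-1/2}2^{3k/4}\|\widehat f\|_{L^\infty}+t^{-1/2}2^{5k/4}\|\partial_\xi\widehat f\|_{L^2},$$
the last step by Cauchy--Schwarz (the $\psi_k'\widehat f$ contribution being absorbed into $\|\widehat f\|_{L^\infty}$); and (ii) Bernstein together with $L^2$‑unitarity of $e^{it\Lambda}$, $\|e^{it\Lambda}P_kf\|_{L^\infty_x}\lesssim 2^{k/2}\|P_kf\|_{L^2}\le 2^{k/2}\|f\|_{L^2}$.

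If $\lambda=t2^{k/2}\le 1$, i.e. $2^{k/2}\le t^{-1}$, estimate (ii) already suffices, since $2^{k/2}=(2^{k/2})^{(1-\epsilon)/2}(2^{k/2})^{(1+\epsilon)/2}\le 2^{(1-\epsilon)k/4}\,t^{-(1+\epsilon)/2}$, which is dominated by the second term on the right of the claim. So assume $\lambda\ge 1$. For $x$ away from the light cone, say $|x|\le 2^{-10}t2^{-k/2}$ or $|x|\ge 2^{10}t2^{-k/2}$, we have $|\psi'|\gtrsim\nu_0:=\max\{1,\,|x|2^{k/2}/t\}\ge 1$ on the support; one integration by parts in $\zeta$ produces the gain $\lambda^{-1}\nu_0^{-1}$, and estimating the resulting amplitude (whose derivative contains $\widehat f'$) in $L^1$ by Cauchy--Schwarz gives $|e^{it\Lambda}P_kf(x)|\lesssim t^{-1}\nu_0^{-1}\big(\|f\|_{L^2}+2^k\|\partial_\xi\widehat f\|_{L^2}\big)$; taking the supremum over this region ($\nu_0\gtrsim 1$) yields $\lesssim t^{-1}\big(\|f\|_{L^2}+2^k\|\partial_\xi\widehat f\|_{L^2}\big)$, which is $\lesssim t^{-(1/2+\epsilon/2)}2^{(1-\epsilon)k/4}\big[2^k\|\partial_\xi\widehat f\|_{L^2}+\|f\|_{L^2}\big]$ because $\lambda^{(1-\epsilon)/2}=(t2^{k/2})^{(1-\epsilon)/2}\ge 1$.

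It remains to treat $x$ with $|x|\sim t2^{-k/2}$, where a nondegenerate stationary point lies in the support. Here (i) gives the first term of the claim plus $t^{-1/2}2^{5k/4}\|\partial_\xi\widehat f\|_{L^2}$, which must be upgraded. The idea is to exploit $|x|\gtrsim t2^{-k/2}$ on the light cone: writing $u=x^{-1}(xu)$ with $\widehat{xu}=ie^{it\xi^{1/2}}\big[\tfrac{it}2\xi^{-1/2}\psi_k\widehat f+\partial_\xi(\psi_k\widehat f)\big]$, bounding $xu$ by (i) and/or (ii) on the two summands, and dividing by $|x|^{-1}\lesssim t^{-1}2^{k/2}$, one obtains on the light cone the extra bound $|e^{it\Lambda}P_kf(x)|\lesssim 2^{k/2}\|f\|_{L^2}+t^{-1}2^k\|\partial_\xi\widehat f\|_{L^2}$; interpolating this against the van der Corput bound — comparing $t^{-1/2}2^{5k/4}$ against $t^{-1}2^k$ for the $\|\partial_\xi\widehat f\|_{L^2}$‑factor, whose geometric mean with weight $\epsilon$ on the second is $t^{-(1+\epsilon)/2}2^{(5-\epsilon)k/4}$ — is what produces the $\epsilon$‑family of exponents in the statement. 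The main obstacle is precisely this last step on the light cone: the van der Corput bound has the sharp $t^{-1/2}$ decay but loses $2^{k/2}$ on the weighted norm relative to the target, while the $|x|$‑weighted $L^2$ bounds carry the right frequency weight but only $t^{-1}$ decay and come with an undecaying $2^{k/2}\|f\|_{L^2}$ tail; extracting the clean one‑parameter interpolated statement with the correct exponents — equivalently, carrying out a secondary dyadic decomposition $\psi_k=\sum_{j\ge 0}\beta_j$ of the frequency shell around the stationary point, integrating by parts on each piece $\{|\xi-\xi_*|\sim 2^{k-j}\}$ with $|\psi'|\sim t2^{-k/2-j}$ and optimizing the cut‑off $j$ — is the only delicate piece of bookkeeping; everything else (the reductions, the two workhorse estimates, the off‑light‑cone integration by parts, and the elementary lower bounds on $|\psi'|$, $|\psi''|$) is routine.
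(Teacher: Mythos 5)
Your proposal is correct and follows the same stationary-phase argument that the paper itself invokes only by scaling to $k=0$ and citing Ionescu--Pusateri: the reduction to a dyadic shell, van der Corput near the stationary point, one integration by parts off the light cone, and the $|x|$-weighted trick are exactly the cited proof, and you correctly identify the secondary dyadic decomposition $\psi_k=\sum_j\beta_j$ about $\xi_*$ (with cutoff $2^{j_0}\sim\lambda^{1/2}$, $\lambda=t2^{k/2}$) as the mechanism producing the $\epsilon$-family. Carrying out that bookkeeping does close the estimate for all $\epsilon\in(0,1/2]$ provided the amplitude in the outer integration-by-parts error is controlled via $\|\widehat f\|_{L^\infty}$ rather than $\|f\|_{L^2}$, yielding the constraint $\lambda^{\epsilon}\lesssim 2^{j_0}\lesssim\lambda^{1/2}$.
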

\begin{proof}
Note that estimate  (\ref{eqn400})  is scaling invariant, it is sufficient to prove it for the case when $k=0$. The proof is very similar to the proof of linear decay estimates in \cite{IP2,IP3,IP4}. With minor modifications, one can rerun the argument used  in \cite{IP2} to derive (\ref{eqn400}) with any problems.  
\end{proof}

The strategy that we will use to get the sharp decay estimate for $U^1$ and $U^2$ is as follows:  (i) Recall (\ref{eqn1600}). We first derive the equation satisfied by the normal form transformation($V_1$ and $V_2$) of $U^1$ and $U^2$, which  is cubic and higher. Then we show that the decay rates of $(U^1,U^2)$ and $(V_1,V_2)$ are comparable. (\mbox{ii}) We  prove that the $Z$-norm of the  profile of $(V_1, V_2)$ does not grow with respect to time and of size $\epsilon_0$. Hence $V^1$ and $V^2$ decay sharply from Lemma \ref{lineardecay}. As the decay rate of $(U^1,U^2)$ is same as $(V_1, V_2)$, we know that  $U^1$ and $U^2$ also decay sharply.

 Recall (\ref{eqn1600}). Define  $V=V_1+ i\, V_2$ and $\mathcal{S}:=\{(+,+,+), (+, +, -), (+,-,-), (-,-,-) \}$. As the equation satisfied by $V$ is cubic and higher, we can write  it as follows, 
\begin{equation}\label{eqn610}
(\p_{t} + i \Lambda ) V = \sum_{(\iota_{1}, \iota_{2}, \iota_{3})\in\mathcal{S}}C_{\iota_1, \iota_2, \iota_3}(V^{\iota_{1}}, V^{\iota_{2}}, V^{\iota_{3}}) +  R(t),
\end{equation}
where $R(t)$ represents the quintic and higher order terms. Define $f(t):= e^{it \Lambda}V$. We use  $\widehat{f_k}(t,\xi)$ to abbreviate $\widehat{f}(t,\xi)\psi_k(\xi)$ throughout this section.

\begin{lemma}\label{sizeinfo}
Under the smallness condition \textup{(\ref{assumption})} and the improved energy estimate \textup{(\ref{improvedenergy})}, the following estimates hold for $t\in[0,T]$,
\begin{equation}\label{eqn107}
\| (V_1-U_1, V_2-U_2)(t)\|_{W^{N_2}} \lesssim (1+t)^{-5/8} \epsilon_1^2,\quad  \| V(t) \|_{W^{N_2}} \lesssim  (1+t)^{-1/2}\epsilon_1,\end{equation}
\begin{equation}\label{eqn1245}
 \sup_{k\in\mathbb{Z}} \big[ 2^{k_{-}/5+ 7k_{+}}\| P_kf(t)\|_{L^2} + 2^{k_{-}/5}\| \xi \p_\xi \widehat{f}( \xi, t)\psi_k(\xi)\|_{L^2}\big] \lesssim (1+t)^{p_0} \epsilon_0.
\end{equation}
\end{lemma}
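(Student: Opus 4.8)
The plan is to prove the three estimates in Lemma~\ref{sizeinfo} in the order they are stated, each building on the previous ones. For the first assertion, recall from (\ref{eqn1600}) that $V_1 - U^1 = A_1(U^1,U^1) + A_2(U^2,U^2)$ and $V_2 - U^2 = B(U^1,U^2)$. The plan is to bound the $W^{N_2}$ norm of each bilinear expression by Littlewood--Paley decomposition: for $P_k$ of a bilinear term $A(f,g)$ one splits into the regions $k_1 \le k_2 - 5$, $|k_1-k_2|\le 5$, $k_1\ge k_2+5$ and uses the $\mathcal{S}^\infty$ bounds (\ref{equation8905}) from Lemma~\ref{auxilary1} together with the $L^\infty\times L^\infty$ case of the bilinear estimate (\ref{bilinearestimate}). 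In each region one puts both inputs in $L^\infty$, paying $2^{\max\{k_1,k_2\}}$ from the symbol, which is harmless against the $N_2$-regularity of the $W^{N_2}$ norm at high frequencies, while at low frequencies the $\min$-type gain keeps the $k_-$-weights summable; using the bootstrap decay $\|(U^1,U^2)\|_{W^{N_2}} \lesssim (1+t)^{-1/2}\epsilon_1$ twice gives a gain of $(1+t)^{-1}$, and after summing the dyadic pieces (which costs at most a polynomial-in-$t$ factor coming from the frequency truncations already established in (\ref{eqn1510})) one lands at $(1+t)^{-5/8}\epsilon_1^2$, with room to spare. The second claim $\|V(t)\|_{W^{N_2}}\lesssim (1+t)^{-1/2}\epsilon_1$ is then immediate from the triangle inequality and the bootstrap assumption (\ref{assumption}).

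For the third estimate (\ref{eqn1245}), the idea is that both quantities are controlled directly by the energy norms already proven in (\ref{improvedenergy}), once one accounts for the $1/5$-derivative loss at low frequencies built into the $H^{N_0,p}$ and $H^{N_1,p}$ norms with $p=1/5$. For the first term, $2^{k_-/5 + 7k_+}\|P_k f(t)\|_{L^2} = 2^{k_-/5+7k_+}\|P_k V(t)\|_{L^2}$ since $e^{it\Lambda}$ is an isometry on $L^2$, and since $\|V(t)\|_{H^{N_0,p}}\lesssim (1+t)^{p_0}\epsilon_0$ (from (\ref{improvedenergy}) and the comparability of $(U^1,U^2)$ with $(V_1,V_2)$ at the level of energy, using the just-proven bound on $V-U$) and $N_0 = 8 \ge 7$, the weight $2^{p k} + 2^{N_0 k}$ dominates $2^{k_-/5}+2^{7k_+}$ for every $k$, giving the bound with room to spare. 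For the second term one commutes: $\xi\p_\xi \widehat{f}(\xi,t) = \xi\p_\xi(e^{it\Lambda(\xi)}\widehat{V}(\xi,t)) = e^{it\Lambda(\xi)}\big(\xi\p_\xi\widehat{V} + \tfrac{it}{2}|\xi|^{1/2}\widehat{V}\big)$, and $\xi\p_\xi$ in physical space is (up to constants) $-\widehat{x\p_x V}$, while $t\Lambda \widehat{V}$ combines with $2x\p_x V$ into the scaling field $S V = t\p_t V + 2x\p_x V$ modulo the nonlinear part of $\p_t V$; so $\|\xi\p_\xi\widehat{f}\psi_k\|_{L^2}$ is controlled by $\|S V\|_{\dot H^{?}}$ plus $\|x\p_x V\|$ plus lower-order nonlinear terms, all of which are bounded by $\|S(U^1,U^2)\|_{H^{N_1,p}}$ (hence $(1+t)^{p_0}\epsilon_0$) after again absorbing the $2^{k_-/5}$ weight into the $2^{pk_-}$ already present in $H^{N_1,p}$ since $p=1/5$, plus the contribution of $S(V-U)$, which is handled the same way as $V-U$ above but with one factor carrying a scaling field; the nonlinear remainder terms in $\p_t V$ contribute only lower-order, rapidly decaying pieces via Lemma~\ref{estimatesofremainder}.

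The main obstacle I expect is bookkeeping rather than anything deep: one must be careful that the $2^{k_-/5}$ low-frequency weight is exactly matched — not exceeded — by the $p=1/5$ weight in the energy norms, so there is genuinely no loss, and one must verify that the extra term $\tfrac{it}{2}|\xi|^{1/2}\widehat V$ arising from differentiating $e^{it\Lambda(\xi)}$ really does reassemble into the scaling vector field $S$ (using $S = t\p_t + 2x\p_x$ and the equation $\p_t V = -i\Lambda V + (\text{cubic})$ from (\ref{eqn610})) up to nonlinear terms with acceptable decay; the coefficient $2$ in $S$ is precisely what makes $2\lambda = 1$ work for the homogeneity-$1/2$ symbol $|\xi|^{1/2}$. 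Once that commutation identity is pinned down, the estimate (\ref{eqn1245}) follows from (\ref{improvedenergy}) and the first part of the lemma with no further work; the $(1+t)^{-5/8}$ in (\ref{eqn107}) is deliberately weaker than the $(1+t)^{-1}$ one can actually prove, so there is ample slack everywhere.
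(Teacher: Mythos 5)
Your overall plan matches the paper's: for (\ref{eqn107}) you dyadically decompose the bilinear corrections and use the symbol bounds of Lemma \ref{auxilary1} plus the bootstrap dispersive estimate; for (\ref{eqn1245}) you commute $\xi\p_\xi$ with $e^{it\Lambda}$ to reassemble the scaling field $S$ (the $2$ in $S=t\p_t+2x\p_x$ against the $1/2$-homogeneity of $\Lambda$ is indeed the crucial cancellation) and control everything by the improved energy bound on $S(U^1,U^2)$ plus the bilinear corrections. The second half is essentially the paper's proof. However, your treatment of (\ref{eqn107}) has a concrete gap at high frequencies that the paper circumvents with an interpolation you omit.

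Specifically, your claim that ``paying $2^{\max\{k_1,k_2\}}$ from the symbol is harmless against the $N_2$-regularity of the $W^{N_2}$ norm at high frequencies'' is not correct in the low--high interaction regime. Take $A_1(U^1,U^1)$ with $k_1\leq k_2-5$, so the output is at frequency $k\sim k_2$. Putting both inputs in $L^\infty$ via (\ref{bilinearestimate}) gives
\[
(1+2^{N_2 k_+})\,\|P_k A_1\|_{L^\infty}\lesssim (1+2^{N_2 k_2})\,2^{k_2}\,\|P_{k_1}U^1\|_{L^\infty}\,\|P_{k_2}U^1\|_{L^\infty},
\]
and the best the $W^{N_2}$ control of the high-frequency input can absorb is $2^{-N_2 k_{2,+}}$, which leaves a bare $2^{k_2}$ over $k_2>0$; the sum diverges, so you do not get $(1+t)^{-1}$ and there is no polynomial-in-$t$ loss coming from a truncation. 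The reference to (\ref{eqn1510}) does not help: that bound concerns very high/low frequencies of the profile itself and is not a truncation of the bilinear sum. What the paper actually does is interpolate: it puts the high-frequency input half in $L^2$ and half in $L^\infty$, i.e.
\[
\|P_{k_2}U^1\|_{L^\infty}\longrightarrow \|P_{k_2}U^1\|_{L^\infty}^{1/2}\|P_{k_2}U^1\|_{L^2}^{1/2},
\]
so the $H^{N_0,p}$ energy (with $N_0=8$ and $p=1/5$) supplies the extra $2^{-(N_0/2)k_2}$ needed for summability. This costs half a power of energy, giving $\|(U^1,U^2)\|_{H^{N_0,p}}^{1/2}\|(U^1,U^2)\|_{W^{N_2}}^{3/2}\lesssim (1+t)^{-3/4+p_0/2}\epsilon_1^2$, which is (generously) recorded as $(1+t)^{-5/8}$ in (\ref{eqn107}); the exponent $5/8$ is not the result of a lossy dyadic sum but of spending only $3/2$ powers of dispersion.

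For (\ref{eqn1245}) your commutation identity is correct modulo a harmless extra term: $\xi\p_\xi\widehat{V}$ equals $-\widehat{x\p_x V}-\widehat{V}$, not just $-\widehat{x\p_x V}$; the zeroth-order $\widehat{V}$ is absorbed by the energy and the paper writes the identity as $\widehat{SV}(\xi)=e^{-it|\xi|^{1/2}}(t\p_t-2\xi\p_\xi-2)\widehat{f}(t,\xi)$. Also be careful that the term $t\p_t\widehat f$ is purely nonlinear after the linear cancellation (this is estimate (\ref{eqn1240}) via the cubic equation (\ref{eqn610})), and that when you estimate $\|P_k[S A_1(U^1,U^1)]\|_{L^2}$ the low-frequency weight $2^{k_-/5}$ on $SU^1$ must be summed against the symbol gain: the paper again interpolates ($\|P_{k_1}SU^1\|_{L^2}^{1/2}\|P_{k_1}SU^1\|_{L^\infty}^{1/2}$ with Bernstein) to produce $2^{k_1/4}$ which beats $2^{-pk_1}$ with $p=1/5$. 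Once the high-frequency interpolation for (\ref{eqn107}) and this low-frequency interpolation for (\ref{eqn1245}) are inserted, your plan closes.
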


\begin{proof}
From estimate (\ref{equation8905}), bilinear estimate (\ref{bilinearestimate}) in Lemma \ref{boundness}, and Sobolev embedding, our desired estimate (\ref{eqn107}) holds as follows,
\[
\| A_1(U^1, U^1)\|_{W^{N_2}} + \|A_2(U^2, U^2)\|_{W^{N_2}}  + \| B(U^1, U^2)\|_{W^{N_2}}  \]
\[\lesssim \sum_{k\in \mathbb{Z}} 2^{5k/4 + N_2 k_{+}}  \| (P_{\leq k} U^1, P_{\leq k} U^2)\|_{W^{0}} \| (P_k U^1, P_k U^2)\|_{L^\infty}^{1/2}\| (P_k U^1, P_k U^2)\|_{L^2}^{1/2} \]
\[
+ \sum_{k\in\mathbb{Z}}\sum_{k_1 \leq k} 2^{k_1/4+k + N_2 k_{1,+}} \| P_{k} U^1\|_{L^4} \| P_k U^2\|_{L^\infty}\]
\[ \lesssim \| (U^1, U^2)\|_{H^{N_0, p}}^{1/2} \| (U^1, U^2)\|_{W^{N_2}}^{3/2} \lesssim (1+t)^{-5/8}\epsilon_1^2,
\]
We also have the following $L^2$- type estimate,
\[
\| A_1(U^1, U^1)(t)\|_{H^{N_0-1, p}} + \| A_2(U^2, U^2)(t)\|_{H^{N_0-1, p}} + \| B(U^1, U^2)(t)\|_{H^{N_0-1, p}}
\]
\[
\lesssim \| (U^1, U^2)(t)\|_{H^{N_0, p}}\| (U^1, U^2)(t)\|_{W^{0}}\lesssim (1+t)^{-(1/2-p_0)} \epsilon_1^2,
\]
which further gives us the following estimate,
\begin{equation}\label{eqn1242}
 \sup_{t\in[0, T]}\sup_{k\in\mathbb{Z}} (1+t)^{-p_0}  2^{k_{-}/5+7k_{+}}\| P_kf(t)\|_{L^2}\lesssim \epsilon_0.
\end{equation} 
 Recall that  $A(\cdot, \cdot)$ is a symmetric bilinear operator, the following estimate holds for fixed $k\in\mathbb{Z}$,  
\[
\| P_{k}[S A_1(U^1, U^1)]\|_{L^2} \lesssim \| P_{k}[A_1(SU^1, U^1)]\|_{L^2} + \| P_{k}[  A_1( U^1, U^1)]\|_{L^2}
\]
\[
\lesssim \sum_{k_1 \leq k-4} 2^k \big[ \| P_k U^2\|_{L^2} \| P_{k_1}(SU^1)\|_{L^\infty}\| P_k U^1\|_{L^\infty} \| P_{k_1}(SU^1)\|_{L^2} \big]^{1/2} 
\]
\[
+ 2^{k}\| P_k(SU^1,  U^1)\|_{L^2} \| U^1\|_{W^0}\lesssim \sum_{k_1\leq k-4} 2^{k_1/4} 2^k \| P_{k_1}(SU^1)\|_{L^2} \| P_{k} U^1\|_{L^2}^{1/2}\|P_{k}U^1\|_{L^\infty}^{1/2} \]
\[+ (1+|t|)^{-1/2+p_0}\epsilon_1^2 \lesssim (1+t)^{-1/4+2p_0}\epsilon_1^2 \lesssim \epsilon_0.
\]
Following the same arugment, we can show that
\begin{equation}\label{eqn1241}
\sup_{k\in \mathbb{Z}}\| P_{k}[S A_2(U^2, U^2)]\|_{L^2} + \| P_{k}[S B(U^1, U^2)]\|_{L^2}\lesssim \epsilon_0.
\end{equation}
Note that $ \widehat{SV}(\xi)= e^{-it |\xi|^{1/2}}(t\p_t -2\xi\p_{\xi}-2)\widehat{f}(t,\xi).$ Hence
\begin{equation}\label{eqn1640}
 \sup_{t\in[0, T]}\sup_{k\in\mathbb{Z}} (1+t)^{-p_0}  2^{k_{-}/5}\| (t\p_t -2 \xi \p_\xi) \widehat{f}(t, \xi)\psi_k(\xi)\|_{L^2} \lesssim \epsilon_0.
\end{equation}
From estimate (\ref{equation4000}) in Lemma \ref{symbolbound}, $L^2-L^\infty-L^\infty$ type trilinear estimate (\ref{trilinearestimate}) in Lemma \ref{boundness} and (\ref{eqn433}) in Lemma \ref{estimatesofremainder}, the following estimate holds, 
\[
\| t \p_t f \|_{L^2}\lesssim t \sum_{k_1, k_2, k_3 \in \mathbb{Z }} 2^{\med\{k_i\}/2} 2^{2\max\{k_i\}} \| P_{\min\{k_i\}}(e^{-i t\Lambda} f)\|_{L^\infty}\| P_{\med\{k_i\}}(e^{-it \Lambda} f)\|_{L^\infty}\times
\]
\begin{equation}\label{eqn1240}
\| P_{\max\{k_i\}} f \|_{L^2} + t \| R\|_{L^2}\lesssim t \| V(t) \|_{W^{1/2}}^2  \| V \|_{H^{2, p}}\lesssim (1+|t|)^{p_0}\epsilon_1^3\lesssim (1+|t|)^{p_0} \epsilon_0.
\end{equation}
Combining (\ref{eqn1242}), (\ref{eqn1640}) and (\ref{eqn1240}), it is  easy to see the desired estimate (\ref{eqn1245}) holds. With minor modifications, we can also derive the following estimate, 
\begin{equation}\label{eqn1641}
  \sup_{ (1+t)^{-5/3-2p_0} \leq \,2^{k}\leq (1+t)^{10/89+p_0} }  2^{k_{-}/5+ N_1 k}\| \xi \p_\xi \widehat{f}(t, \xi)\psi_k(\xi)\|_{L^2}\lesssim (1+t)^{p_0}\epsilon_0.
\end{equation}

\end{proof}
From (\ref{eqn1245}) and  (\ref{eqn1641}),  the following estimate holds, 
\[ 
\sum_{(1+t)^{-5/3-2p_0} \leq \,2^{k}\leq (1+t)^{10/89+p_0}}   2^{k/8+N_2 k_+}
[2^k\| \p_\xi \widehat{f}\psi_{k}(\xi)\|_{L^2} + \| P_{k}f\|_{L^2}]\lesssim (1+|t|)^{1/4-2p_0}\epsilon_0.
\]
Hence, from above estimate and   (\ref{eqn400}) in Lemma \ref{lineardecay},  it is  sufficient to derive the improved  $Z$-norm estimate of $f$ to derive  the sharp decay estimate for $V$.

\subsection{Set-up of the $Z$-norm estimate}

 After writing the cubic terms on the Fourier side and do Littlewood-Paley decomposition for all inputs, we have
\[
 \mathcal{F}(e^{ i t \Lambda}C_{\iota_1, \iota_2, \iota_3}(V^{\iota_{1}}, V^{\iota_{2}}, V^{\iota_{3}})) (\xi):=iI^{\iota_1,\iota_2,\iota_3}( t,\xi) = i\sum_{ k_{1}, k_{2}, k_{3}\in \mathbb{Z}} I^{\iota_1,\iota_2,\iota_3}_{k_{1}, k_{2}, k_{3}}( t,\xi) ,
\]
\[
I^{\iota_1, \iota_2, \iota_3}_{k_{1}, k_{2}, k_{3} } ( t,\xi) = \int_{\R\times \R} e^{it\,\Phi_{\iota_1, \iota_2, \iota_3}(\xi, \eta, \sigma)}\,c^{\iota_1, \iota_2, \iota_3}(\xi-\eta, \eta-\sigma,\sigma) \widehat{f^{\iota_1}_{k_{1}}}(t,\xi-\eta) \times\]
\[\widehat{f^{\iota_2}_{k_{2}}}(t,\eta-\sigma) \widehat{f^{\iota_3}_{k_{3}}}(t,\sigma)d\eta d\sigma,
\]
where the phase $\Phi_{\iota_1, \iota_2, \iota_3}(\cdot, \cdot, \cdot)$ is defined as follows,
\[
\Phi_{\iota_1, \iota_2, \iota_3}(\xi, \eta, \sigma) = \Lambda(\xi) - \iota_1 \Lambda(\xi-\eta) -\iota_2\Lambda(\eta-\sigma) - \iota_3 \Lambda(\sigma), \quad (\iota_1, \iota_2, \iota_3)\in \mathcal{S}.
\]
The precise formulas of symbols $c^{\iota_1, \iota_2, \iota_3}(\cdot, \cdot, \cdot)$, $(\iota_1, \iota_2, \iota_3)\in\mathcal{S}$ do not play many roles,  it is enough to know the estimate of the $\mathcal{S}^\infty$ norm of symbols  $c^{\iota_1, \iota_2, \iota_3}(\cdot, \cdot, \cdot)$. More precisely, the following lemma  holds,
\begin{lemma}\label{symbolbound}
For any $(\iota_1, \iota_2, \iota_3)\in \mathcal{S}$, $k,k_1,k_2, k_3\in\mathbb{Z}$,  the following estimate holds,
\begin{equation}\label{equation4000}
\|c^{\iota_{1}, \iota_{2}, \iota_{3}}(\xi-\eta, \eta-\sigma, \sigma)\|_{\mathcal{S}^\infty_{k,k_1,k_2,k_3}} \lesssim 2^{\med\{k_{1}, k_2,k_3\}/2} 2^{2\max\{k_{1},k_2,k_3\}},\end{equation}
where $\med\{k_{1}, k_2,k_3\}$ denotes the medium number among $k_1,k_2$, and $k_3.$
\end{lemma}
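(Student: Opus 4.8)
The plan is to reduce the claimed bound on $\|c^{\iota_1,\iota_2,\iota_3}(\xi-\eta,\eta-\sigma,\sigma)\|_{\mathcal{S}^\infty_{k,k_1,k_2,k_3}}$ to the already established quadratic estimates in Lemma~\ref{sizeofsymbol} together with the algebra property \eqref{productofsymbol} of the $\mathcal{S}^\infty$ norm. The cubic symbols $c^{\iota_1,\iota_2,\iota_3}$ arise in two ways: from the genuine cubic terms $C_1,C_2$ in \eqref{mainequation} (which are paraproduct-type expressions $T_{\Lambda_{\geq 2}[\alpha]}\d^{1/2}(\cdot)$, $T_{\Lambda_{\geq 2}[V]}\p_x(\cdot)$ whose symbols are controlled directly by Lemma~\ref{Snorm}), and — more importantly — from substituting the quadratic normal form corrections $A_1(U^1,U^1)$, $A_2(U^2,U^2)$, $B(U^1,U^2)$ into the quadratic terms $Q_i$ when one passes from the $(U^1,U^2)$ system to the $(V_1,V_2)$ system. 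So first I would write out, schematically, $C_{\iota_1,\iota_2,\iota_3}(V^{\iota_1},V^{\iota_2},V^{\iota_3})$ as a finite sum of terms of the form $Q_j(\text{(normal form }A\text{ or }B), \,U)$ plus genuinely cubic paraproduct terms plus terms coming from $\Lambda_{\geq 2}$-corrections acting through $\p_t$.

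The key computational step is then: for a term like $Q_j\bigl(A_i(U^{\iota_2},U^{\iota_3}),U^{\iota_1}\bigr)$, the associated trilinear symbol localized to $\psi_k(\xi)\psi_{k_1}(\xi-\eta)\psi_{k_2}(\eta-\sigma)\psi_{k_3}(\sigma)$ factors as a product of $q_j$ evaluated on the pair (total frequency of the $A$-output, frequency of $U^{\iota_1}$) times $a_i$ evaluated on $(\eta-\sigma,\sigma)$. By \eqref{productofsymbol} the $\mathcal{S}^\infty$ norm is bounded by the product of the two $\mathcal{S}^\infty$ norms. I would then do the standard case analysis on the ordering of $k_1,k_2,k_3$ (equivalently, which of the three inputs has the largest/medium/smallest frequency), using Lemma~\ref{sizeofsymbol} — specifically \eqref{equation20} and \eqref{equation21} — for the $q_j$ factor and Lemma~\ref{auxilary1} (estimate \eqref{equation8905}) for the $a_i$ or $b$ factor. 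In each case the total power of $2$ that comes out is at most $2^{\med\{k_1,k_2,k_3\}/2}2^{2\max\{k_1,k_2,k_3\}}$: one checks that the $q_j$ symbol contributes at most $2^{\min/2 + \max}$ on its two arguments, the normal-form symbol contributes at most one power of the larger of its two arguments, and the frequency of the $A$-output is comparable to the maximum of the two frequencies feeding it (so it never exceeds $\max\{k_1,k_2,k_3\}$ by more than $O(1)$). For the genuine paraproduct cubic terms $C_1,C_2$ the bound is easier since those symbols lose at most one derivative in the high output frequency and come with a low-frequency factor $\Lambda_{\geq 2}$, giving even better powers.

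The main obstacle I anticipate is bookkeeping rather than any deep difficulty: one must be careful that when the two inputs of the inner bilinear operator $A_i$ or $B$ have comparable (large) frequencies, their output can be at low frequency, and then the outer $q_j$ symbol is evaluated in a High$\times$High$\to$Low regime where \eqref{equation20} gives $2^{3\min/2}$ — this is fine but one has to track that ``$\min$'' here refers to the output frequency of $A_i$, which is then the \emph{smallest} of the three original frequencies, so the total is $\lesssim 2^{3\min\{k_i\}/2}\cdot 2^{\max\{k_i\}}$, still inside the claimed bound since $\min \le \med$. Conversely when the inner pair is unbalanced, $A_i$ contributes $2^{\max}$ of its pair but then $q_j$ sees two unbalanced arguments. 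The worst case to verify carefully is when exactly two of $k_1,k_2,k_3$ are large and comparable and one is small; there $\max$ and $\med$ are both $\approx$ the large value, so one is allowed $2^{5\max/2}$, which comfortably absorbs any reasonable combination. I would organize the verification by which pair of inputs is ``inside'' the normal form and present one representative case in detail, remarking that the others are analogous, exactly as the excerpt already does for Lemma~\ref{auxilary1} and Lemma~\ref{sizeinfolemma}. Finally the estimate for the quintic-and-higher remainder $R(t)$, if needed, is deferred to Lemma~\ref{estimatesofremainder} in the appendix and is not part of this symbol bound.
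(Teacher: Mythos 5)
Your overall approach matches the paper's: identify the cubic sources in the $(V_1,V_2)$ system, factor each trilinear symbol as a product of a quadratic symbol $q_j$ and a normal-form symbol $a_i$ or $b$, apply the algebra property \eqref{productofsymbol} together with Lemma~\ref{sizeofsymbol} and Lemma~\ref{auxilary1}, and close by case analysis on the ordering of $k_1,k_2,k_3$. The paper frames the same content as two high-level facts --- all cubic symbols are homogeneous of degree $5/2$ in terms of $U^1,U^2$, and they lose at most $2$ derivatives at the high output frequency --- and then notes that together these two facts force the bound $2^{\mathrm{med}/2+2\max}$. Your case-by-case verification is the fleshed-out version of the paper's ``intuitive'' proof, and it would succeed.

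One structural inaccuracy is worth flagging. The cubic terms arising from the normal form are of the shape
$A_1(\Lambda_2[\p_t U^1],U^1)$, $A_2(\Lambda_2[\p_t U^2],U^2)$, $B(\Lambda_2[\p_t U^1],U^2)$, $B(U^1,\Lambda_2[\p_t U^2])$ --- that is, the normal-form operator sits \emph{outside} the quadratic piece of $\p_t U$, not the other way around. Your schematic form $Q_j(A_i(\cdot,\cdot),U)$ has the composition reversed; it would be the term generated if one substituted $U=V-A(V,V)+\cdots$ into $Q_j(U,U)$, but at the cubic level the quadratic terms in the $V$-equation cancel by the normal form, so such a term does not actually appear. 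Fortunately the reversal is harmless: for the factorization $a_i(\eta,\xi-\eta)\,q_j(\eta-\sigma,\sigma)$ (the correct one) the same case analysis you perform --- with $q_j$ at frequencies $(k_2,k_3)$ and $a_i$ at (intermediate, $k_1$), and with the intermediate frequency $k_4$ ranging up to $\max\{k_2,k_3\}$ and summed geometrically --- produces the identical bound $2^{\mathrm{med}/2+2\max}$. A smaller slip: in the High$\times$High$\to$Low discussion the relevant estimate is the balanced one \eqref{equation21}, $\lesssim 2^{k+k_1/2}$ with $k$ the (low) output, not \eqref{equation20} and not $2^{3\min/2}$ (the $2^{3\min/2}$ bound in \eqref{equation36} applies to the symmetrized symbols $\widetilde q^i$, which do not enter here). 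Neither slip changes the outcome, and the proposal would carry through to \eqref{equation4000}.
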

\begin{proof}
 Recall that $V=V_1+iV_2$, $V_1=U^1 + A_{1}(U^1, U^1)+A_2(U^2, U^2)$, and $V_2= U^2+B(U^1, U^2)$.    From the Taylor expansion of the Dirichlet-Neumann operator in (\ref{eqn1002}), definitions of $V_i$ and $U_i$, $i\in\{1,2\}$, in (\ref{equation10}) and (\ref{eqn1600}), we can calculate explicit formulas of $c^{\iota_{1}, \iota_{2}, \iota_{3}}(\xi-\eta, \eta-\sigma, \sigma)$. Then, we can see our desired estimate (\ref{equation4000}) holds after applying  the estimate (\ref{eqn61001}) in Lemma \ref{Snorm}. Unfortunately, the detail formulas of $c^{\iota_{1}, \iota_{2}, \iota_{3}}(\xi-\eta, \eta-\sigma, \sigma)$ are very tedious. So, in the following, we provide a more intuitive ``proof'', which explains why our desired estimate (\ref{equation4000}) should hold.  

Note that there are two sources of cubic terms that contribute to the cubic terms inside the equation,(\ref{eqn610}), satisfied by $V$. One of them comes from $\Lambda_{3}[\p_t U_1]$ and $\Lambda_{3}[\p_t U_2]$. The other one comes from $A_{1}(\Lambda_2[\p_t U^1], U^1)$, $A_{2}(\Lambda_2[\p_t U^2], U^2)$, $B(\Lambda_2[\p_t U^1], U^2)$, and $B(U^1, \Lambda_2[\p_t U^1])$.

 Our desired estimate (\ref{equation4000}) follows from the following facts: (i) The symbols of   cubic terms are all homogeneous of degree ``$5/2$'' in terms of $U^1$ and $U^2$. Hence to prove (\ref{equation4000}), we only have to consider the case when $\med\{k_i\}\leq \max\{k_i\}-10$. (ii) Cubic terms at most lose ``$2$'' derivatives. 
  From (\ref{equation6}), we know that  $\Lambda_{3}[\p_t U_1]$  and $\Lambda_{3}[\p_t U_2]$ at most lose one derivative. From (\ref{equation8905}) in Lemma \ref{auxilary1} and (\ref{equation20}) in Lemma \ref{sizeofsymbol}, we know that cubic terms, $A_{1}(\Lambda_2[\p_t U^1], U^1)$, $A_{2}(\Lambda_2[\p_t U^2], U^2)$, $B(\Lambda_2[\p_t U^1], U^2)$, and $B(U^1, \Lambda_2[\p_t U^1])$, at most lose two derivatives. To sum up, we know that the size of symbol contributes at least half degree smallness of the medium frequency. 

 To see why fact (i) holds. Note that the symbols of normal form transformations $A_1(\cdot, \cdot)$, $A_2(\cdot, \cdot)$ and $B(\cdot, \cdot)$ are all homogeneous of degree ``$1$'' and the symbols of quadratic terms $\Lambda_{2}[\p_t U^1]$ and $\Lambda_{2}[\p_t U^2]$ are all homogeneous of degree ``$3/2$''. Therefore the symbols of all cubic terms that come from the normal form transformation are homogeneous of degree ``$5/2$''. Intuitively speaking, the  cubic term  of $\p_t U^1$ is either of type $\p_x h \p_x h \p_x \psi$ or of type $\p_x\d\psi \d \psi \d\psi$; the cubic term of $\p_t U^2$ is of type $\d^{1/2}(\p_x \psi \p_x \psi \p_x h)$ or of type $\d^{1/2}(h \p_x h \p_x h)$. Recall that $\Lambda_{1}[\psi]=\d^{-1/2}U^2, \Lambda_{1}[h]=U^1$. Now it is easy to see that they are all homogeneous of degree `` $5/2$''.

\end{proof}

Same as in \cite{IP3}, to successfully close the argument and see the modified scattering property, we need to modified the phase of the profile first.   We define
\begin{equation}\label{equation152}
 c^{\ast}(\xi,x, y) := c^{+,+,-}(\xi+x,\xi+ y, -\xi-x-y),\quad \widetilde{c}(\xi):= - 8 \pi |\xi|^{3/2} c^{\ast}(\xi,0,0) ,
\end{equation}
and the modified phase is defined as
\begin{equation}\label{eqn1000}
L(t,\xi) := {\widetilde{c}(\xi)}\int^{t}_{0} |\widehat{f}(s,\xi) |^{2} \frac{d\,s}{1+s},\quad g(t,\xi) := e^{i L(t,\xi)} \widehat{f}(t,\xi),
\end{equation}
hence
\[
\p_{t} g(\xi, t) = i e^{i L(t,\xi)} \big[ I^{+,+,-}(t,\xi) + \widetilde{c}(\xi) \frac{|\widehat{f}(t,\xi)|^{2}}{1+t} \widehat{f}(t,\xi)\big] +
\]
\begin{equation}\label{equation9980}
 i e^{i L(t,\xi)} \Big[ I^{+,+,+}(t,\xi) + I^{+,-,-}(t,\xi) + I^{-,-,-}(t,\xi)\Big] + e^{ i L(t,\xi) + i t \Lambda(\xi)}\widehat{R}(t,\xi).
\end{equation}

From  (\ref{equation9980}), we can see that the modified phase is only effective for the cubic  term $I^{+,+,-}(t,\xi)$, which is the only one that has non empty space-time resonance set. 

Now, our main goal is  to prove the following proposition.

\begin{proposition}\label{propositionZnorm2}
Under the bootstrap assumption \textup{ (\ref{assumption})} and the following assumption 
\begin{equation}\label{equation140}
\sup_{t\in[0,T']} \| \widehat{f} (t)\|_{Z}\lesssim \epsilon_1, \quad T'\in[0,T],
\end{equation}
  there exist $p_1 > 0$ such that the following estimate holds  for any $m \in\mathbb{N}$ and any $t_1, t_2 \in [2^{m-1}, 2^{m+1} ]\subset [0, T']$.
\begin{equation}\label{desiredznorm}
\| |\xi|^{3/4-p_0}(1+|\xi|^{N_2+2p_0}) \big( g(t_2,\xi) - g(t_1,\xi )  \big)\|_{L^\infty_\xi} \lesssim 2^{-p_1 m} \epsilon_0,
\end{equation}
Hence, we have $T'=T$ and
\begin{equation}
\sup_{t\in[0,T]} \| \widehat{f}(t)\|_{Z} =\sup_{t\in[0,T]} \| g (t)\|_{Z} \lesssim \epsilon_0, \quad \sup_{t\in[0,T]} (1+t)^{1/2} \| V(t)\|_{W^{N_2}} \lesssim \epsilon_0.
\end{equation}
\end{proposition}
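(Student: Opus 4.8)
The plan is to prove the bootstrap estimate \eqref{desiredznorm} for the modified profile $g(t,\xi)$, from which the claimed uniform $Z$-norm bound and the sharp $W^{N_2}$ decay follow immediately by summing a geometric series in $m$ (since $\|g(0)\|_Z\lesssim\epsilon_0$ and $\|g(t)\|_Z=\|\widehat f(t)\|_Z$ by $|e^{iL(t,\xi)}|=1$), together with Lemma \ref{lineardecay} and the $L^2$-type bounds on $\widehat f$, $\p_\xi\widehat f$ already recorded in Lemma \ref{sizeinfo} and in \eqref{eqn1510}. By \eqref{equation9980} it suffices to integrate $\p_t g(t,\xi)$ on $[t_1,t_2]\subset[2^{m-1},2^{m+1}]$ and show that each contribution is $O(2^{-p_1 m}\epsilon_0)$ after multiplying by $|\xi|^{3/4-p_0}(1+|\xi|^{N_2+2p_0})$ and taking $L^\infty_\xi$. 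The quintic-and-higher remainder $e^{iL+it\Lambda}\widehat R(t,\xi)$ is harmless: it has enough factors of $\epsilon_1$ and enough decay (each extra $V$ costs $(1+t)^{-1/2}$ in $W^{N_2}$ by \eqref{eqn107}) that a crude $L^2\times L^\infty\times\cdots$ estimate via Lemma \ref{boundness} gains a positive power of $2^{-m}$.

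The core is therefore the trilinear integrals $I^{\iota_1,\iota_2,\iota_3}_{k_1,k_2,k_3}(t,\xi)$, and I would organize the analysis by the standard space-time resonance dichotomy on the phase $\Phi_{\iota_1,\iota_2,\iota_3}(\xi,\eta,\sigma)=\Lambda(\xi)-\iota_1\Lambda(\xi-\eta)-\iota_2\Lambda(\eta-\sigma)-\iota_3\Lambda(\sigma)$, restricting always to the frequency window $(1+t)^{-5/3-2p_0}\le 2^k\le(1+t)^{10/89+p_0}$ (the complement being controlled by \eqref{eqn1510}) and to dyadic pieces with $\max\{2^{k_1},2^{k_2},2^{k_3}\}\lesssim$ a small power of $t$ by the energy bound \eqref{eqn1245}. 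First, when the phase is nonstationary in $\eta$ or $\sigma$ — i.e. $|\p_\eta\Phi|$ or $|\p_\sigma\Phi|\gtrsim$ (lower bound in terms of the dyadic parameters) — integrate by parts in that variable; the price of $\p_\eta$ or $\p_\sigma$ hitting a profile is absorbed by $\|\xi\p_\xi\widehat f\psi_k\|_{L^2}\lesssim(1+t)^{p_0}2^{-k_-/5}\epsilon_0$ from \eqref{eqn1245}, and one gains $2^{-m}$ times the inverse of the phase lower bound; iterating gives arbitrary decay away from the stationary set. Second, when the phase is large, $|\Phi|\gtrsim$ (threshold), one integrates by parts in \emph{time} using $e^{it\Phi}=\p_t(e^{it\Phi})/(i\Phi)$; the boundary terms are quadratic-in-$\epsilon_1$ improvements of the $Z$-norm and the bulk terms are quartic, hence gain $(1+t)^{-1/2}$, which (as in the energy estimate) beats the at most $2^{-pk}\le(1+t)^{5/4+\cdots}$-type losses in this regime. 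The genuinely resonant contribution is localized near $\eta=\sigma=0$ with $(\iota_1,\iota_2,\iota_3)=(+,+,-)$: there $\Phi\approx 0$ and $\p_\eta\Phi=\p_\sigma\Phi=0$ at the single point $\eta=\sigma=0$; here the symbol contributes $\med\{k_i\}/2$ degrees of smallness (Lemma \ref{symbolbound}), the $\eta,\sigma$-integration over a box of size $\delta$ around $0$ contributes a factor $\delta^2$ with a quadratic phase of size $t\,|\xi|^{-3/2}\delta^2$, and choosing $\delta\sim (t|\xi|^{-3/2})^{-1/2}$ one extracts precisely the leading term $\widetilde c(\xi)\,\tfrac{|\widehat f(t,\xi)|^2}{1+t}\widehat f(t,\xi)$ — which is exactly what the modified phase $L(t,\xi)$ in \eqref{eqn1000} was designed to cancel in \eqref{equation9980} — leaving a remainder with an extra $t^{-p_1}$ from the difference between $\widehat f(t,\eta-\sigma\text{-shifted})$ and $\widehat f(t,0)$ (estimated by $\|\xi\p_\xi\widehat f\|_{L^2}$ and Hölder) and from $|\widehat f|^2$ versus $|g|^2$.

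The main obstacle, and the place where the infinite-energy setting genuinely bites, is the bookkeeping at \emph{low output and low internal frequencies}: because we only control $2^{k_-/5}\|P_kf\|$ rather than $2^{0}\|P_kf\|$, every time a profile at frequency $2^{k_i}\le 1$ is put in $L^2$ we lose $2^{-k_i/5}$, and similarly the $Z$-norm weight $|\xi|^{3/4-p_0}$ degenerates as $\xi\to0$. I would handle this exactly as in the energy estimate of $SU^1,SU^2$: use the lower cutoff $2^k\ge(1+t)^{-5/3-2p_0}$ so that $2^{-k/5}\lesssim(1+t)^{1/3+\cdots}$ is a fixed small power of $t$, absorb such losses against the $(1+t)^{-1/2}$ gained from time integration by parts or from the extra-linear factors, and in the resonant region use the good lower bound on the phase away from the diagonal (the analogue of \eqref{equation98}) to divide by $\Phi$ once more and recover a further $t^{-1/2}$. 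One must also check the dispersive loss $2^{3k/4}$ in Lemma \ref{lineardecay} against the upper cutoff $2^k\le(1+t)^{10/89+p_0}$, which is where the precise numerology ($N_2=61/20$, $p=1/5$, the exponent $10/89$) is pinned down; verifying that all these competing powers of $(1+t)$ leave a strictly negative exponent $-p_1<0$ is the one step requiring care rather than ingenuity, and it closes the bootstrap, yielding $T'=T$ and the stated bounds.
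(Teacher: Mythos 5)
Your plan for the cubic terms follows the paper's proof almost step for step: restrict to $|\xi|$ in a polynomial window, split according to the space-time resonance structure, integrate by parts in $\eta$ or $\sigma$ where the phase is spatially non-stationary and absorb the cost with the $\|\xi\p_\xi\widehat f\|_{L^2}$ bound from Lemma~\ref{sizeinfo}, integrate by parts in time where $|\Phi|$ is bounded below (Lemma~\ref{znorm5} and the cases $(\iota_1,\iota_2,\iota_3)\ne(+,+,-)$ via \eqref{equation9970}), and at the lone resonance $\eta=\sigma=0$ for $(+,+,-)$ do the stationary-phase expansion to peel off the $\widetilde c(\xi)|\widehat f|^2\widehat f/(1+t)$ term that $L(t,\xi)$ was built to cancel (Lemma~\ref{auxiliarylemma1}, Case~2). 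You also correctly identify the new infinite-energy ingredient: the phase lower bound \eqref{equation5400} (the analogue of \eqref{equation98}) that lets you divide by $\Phi$ to defeat the $2^{-k/5}$ low-frequency losses in Lemma~\ref{znorm5}. This is the paper's argument.

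There is a small but genuine gap in the way you dispose of the remainder $R(t)$. You propose ``a crude $L^2\times L^\infty\times\cdots$ estimate via Lemma~\ref{boundness}.'' But Lemma~\ref{boundness} bounds $\|R\|_{L^r_x}$, and what you need is a \emph{pointwise} bound on $\widehat{R}(\xi)$ (with the $Z$-weight) — and $\|\widehat{P_k R}\|_{L^\infty_\xi}$ is not controlled by $\|P_kR\|_{L^2}$ alone. The paper's Lemma~\ref{znormremainder} does this via the interpolation inequality in Lemma~\ref{znorm6}, $\|\widehat{P_kR}\|_{L^\infty_\xi}^2\lesssim 2^{-k}\|\widehat R\|_{L^2}\big(2^k\|\p_\xi\widehat R\|_{L^2}+\|\widehat R\|_{L^2}\big)$, which forces you to also control $\p_\xi\widehat{R}$, equivalently $SR$; that is the content of \eqref{vectorfieldestimate3} in Lemma~\ref{finalestimate}, and it also requires an integration by parts in time to recycle the $t\,\p_t R$ contribution. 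You already use exactly this $\p_\xi\widehat f$-interpolation mechanism for the linear decay (Lemma~\ref{lineardecay}) and you cite the $S$-vector-field bounds, so the fix is entirely within your reach — but as written the remainder step would not close.
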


 Now we restrict ourself inside the time interval $[2^{m-1}, 2^{m+1}]\subset[0,T']$ and reformulate estimates (\ref{eqn107}), (\ref{eqn1245}), and (\ref{equation140}) as follows,\begin{equation}\label{equation7010}
\sup_{k \in\mathbb{Z}} \|P_k f \|_{L^2} \lesssim \epsilon_0 2^{p_0 m} 2^{- k_{-}/5-7k_+}, \quad \sup_{k \in \mathbb{Z}} \| \mathcal{F}( P_k f)(\xi) \|_{L^\infty_\xi} \lesssim \epsilon_1 2^{-\beta k} 2^{-\gamma k_+},
\end{equation}
\begin{equation}\label{equation7012}
\sup_{k \in \mathbb{Z}} \|\xi \p_\xi \widehat{f}(\xi) \psi_k(\xi)\|_{L^2} \lesssim \epsilon_0 2^{p_0 m} 2^{- k_{-}/5},\quad \sup_{k\in\mathbb{Z}} \| e^{-it \Lambda} P_k f \|_{L^\infty_x} \lesssim \epsilon_1 2^{-m/2}  2^{-N_2 k_+}.  
\end{equation}
\begin{lemma}\label{znorm6}
For any $k \in \mathbb{Z}$ and $f\in L^2(\R)$, we have
\begin{equation}\label{equation141}
\| \widehat{P_k f}\|_{L^\infty_\xi}^2 \lesssim 2^{- k}\| \widehat{f}\|_{L^2} \big[ 2^k \| \p_\xi \widehat{f}\|_{L^2} + \| \widehat{f}\|_{L^2} \big].
\end{equation}
\end{lemma}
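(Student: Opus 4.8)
The plan is to reduce (\ref{equation141}) to an elementary one–dimensional Gagliardo--Nirenberg inequality on the Fourier side. Write $g:=\widehat{f}$, so that $\widehat{P_kf}=\psi_k g$ is supported in $\{|\xi|\sim 2^k\}$. If $\partial_\xi\widehat{f}\notin L^2(\R)$ the claimed bound is vacuous, so I may assume $g\in H^1(\R)$; then $\psi_k g\in H^1(\R)$ is compactly supported (recall that $\psi_k$ is smooth with $|\psi_k|\le 1$ and $|\psi_k'|\lesssim 2^{-k}$), hence continuous and vanishing at $\pm\infty$.

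First I would apply the fundamental theorem of calculus to the $W^{1,1}$ function $|\psi_k g|^2$: for every $\xi\in\R$,
\[
|(\psi_k g)(\xi)|^2 \;=\; 2\,\mathfrak{Re}\int_{-\infty}^{\xi}\overline{(\psi_k g)(\eta)}\,\partial_\eta(\psi_k g)(\eta)\,d\eta,
\]
and then Cauchy--Schwarz gives
\[
\|\psi_k g\|_{L^\infty_\xi}^2 \;\le\; 2\,\|\psi_k g\|_{L^2}\,\|\partial_\xi(\psi_k g)\|_{L^2}.
\]

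Next I would estimate the two factors on the right. Trivially $\|\psi_k g\|_{L^2}\le\|g\|_{L^2}=\|\widehat f\|_{L^2}$. For the derivative I expand $\partial_\xi(\psi_k g)=\psi_k'\,g+\psi_k\,\partial_\xi g$, and using $|\psi_k'|\lesssim 2^{-k}$ together with $|\psi_k|\le 1$ I obtain
\[
\|\partial_\xi(\psi_k g)\|_{L^2}\;\lesssim\; 2^{-k}\|\widehat f\|_{L^2}+\|\partial_\xi\widehat f\|_{L^2}\;=\;2^{-k}\big[\,\|\widehat f\|_{L^2}+2^k\|\partial_\xi\widehat f\|_{L^2}\,\big].
\]
Combining the last three displays yields exactly (\ref{equation141}).

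I do not expect any real obstacle here: the argument is a two–line interpolation once the reduction to $\widehat f\in H^1$ is made. The only mild point worth recording is the regularity justification for the fundamental theorem of calculus, which is exactly why I first dispose of the case $\partial_\xi\widehat f\notin L^2$ and then use that $\psi_k g\in H^1(\R)$ is compactly supported, hence absolutely continuous with limit $0$ at infinity.
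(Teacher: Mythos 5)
Your proof is correct and uses essentially the same core idea as the paper: the one-dimensional Gagliardo–Nirenberg / fundamental-theorem-plus-Cauchy--Schwarz bound $\|h\|_{L^\infty}^2\le 2\|h\|_{L^2}\|h'\|_{L^2}$ applied to $h=\psi_k\widehat f$. The only cosmetic difference is that the paper first reduces to $k=0$ by scaling invariance, whereas you track the factor $2^{-k}$ directly through $|\psi_k'|\lesssim 2^{-k}$; the two bookkeeping choices are equivalent.
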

\begin{proof}
The proof is standard. The desired estimate (\ref{equation141}) is scale invariant, we only have to prove it for the case when $k=0$, which follows from the Cauchy-Schwartz inequality directly. Or one can find the detailed proof in \cite{IP3,IP4}.
\end{proof}
From estimate (\ref{equation141}) in  Lemma \ref{znorm6}, the following estimates hold when $|\xi|\leq 2^{- 21 p_0 m }$ or $|\xi|\geq 2^{20 p_0 m}$,
\[
\sup_{|\xi|\leq 2^{- 21 p_0 m } \textup{or} |\xi|\geq 2^{20 p_0 m} }  | |\xi|^{3/4-p_0}(1+|\xi|^{N_2+2p_0}) g(t, \xi)|\]
\[= \sup_{|\xi|\leq 2^{- 21 p_0 m } \textup{or} |\xi|\geq 2^{20 p_0 m} }  | |\xi|^{3/4-p_0}(1+|\xi|^{N_2+2p_0}) f(t, \xi)|\lesssim \sup_{k \leq -21p_0 m} \epsilon_0 2^{(1/4-p_0)k-k/5} 2^{p_0 m }\]
\[+\sup_{k \geq  20 p_0 m} \epsilon_0 2^{(1/4+N_2+p_0)k-(N_0-1)k/2} 2^{p_0 m } \lesssim \epsilon_0 2^{-p_0m}.
\]

Hence, to prove (\ref{desiredznorm}),  it remains to consider the case when $2^{-21p_0 m} \leq |\xi|\leq 2^{20 p_0 m}$. For this case, we need to use the equation satisfied by the modified profile ``$g(t,\xi)$". Recall (\ref{equation9980}), we have the following identity, 
\[
g(t_2, \xi) - g(t_1, \xi)=\sum_{k_1,k_2,k_3\in \mathbb{Z}}\sum_{(\iota_1, \iota_2,\iota_3)\in \mathcal{S}} J^{\iota_1, \iota_2,\iota_3 }_{k_1,k_2, k_3} + \int_{t_1}^{t_2} e^{ i L(t,\xi) + i t \Lambda(\xi)}\widehat{R}(t,\xi) d t,\]
\[J^{+, +,- }_{k_1,k_2, k_3}:=\int_{t_1}^{t_2} i e^{i L(t,\xi)} \big[ I^{+,+,-}_{k_1,k_2, k_3}(t,\xi) + \widetilde{c}(\xi) \frac{\widehat{f_{k_1}}(t,\xi)\widehat{\overline{f}}_{k_3} (t, -\xi)}{1+t} \widehat{f_{k_2}}(t,\xi)\big] d s,
\]
\[
J^{\iota_1, \iota_2,\iota_3 }_{k_1,k_2, k_3}:= \int_{t_1}^{t_2} i e^{i L(t,\xi)} I^{\iota_1, \iota_2,\iota_3}_{k_1,k_2, k_3}(t,\xi) d t, (\iota_1, \iota_2,\iota_3)\in \{(+,+,+),(+,-,-), (-,-,-)\}.
\]

The argument naturally splits into two parts : $Z$-norm estimate for the cubic terms and $Z$-norm estimate for the remainder term.
\subsection{$Z$-norm estimate for the cubic terms}\label{Znormcubicterms}
The goal of this subsection is to prove the following proposition, 
\begin{proposition}\label{cubicZnorm}
For $t_{1}, t_{2}\in[2^{m-1}, 2^{m+1}]\subset[0,T']$, $ |\xi|:=2^k \in [2^{-21p_0m} , 2^{20 p_0 m}]$,  the following estimates hold under the bootstrap assumptions \textup{(\ref{assumption})} and  \textup{(\ref{equation140})}.
\begin{equation}\label{equation4200}
\sum_{k_1,k_2,k_3\in \mathbb{Z}}\sum_{(\iota_1, \iota_2,\iota_3)\in \mathcal{S}} \|J^{\iota_1, \iota_2,\iota_3 }_{k_1,k_2, k_3} \psi_k(\xi)\|_{Z}\lesssim 2^{-p_{0}m}\epsilon_{1}^{3}.
\end{equation}
\end{proposition}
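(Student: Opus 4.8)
\textbf{Proof strategy for Proposition \ref{cubicZnorm}.} The plan is to estimate $\|J^{\iota_1,\iota_2,\iota_3}_{k_1,k_2,k_3}\psi_k(\xi)\|_Z$ by summing over the frequency parameters $k_1,k_2,k_3$, the main subtlety being to extract a gain $2^{-p_0 m}$ uniformly in $\xi$ with $2^k\in[2^{-21p_0 m},2^{20 p_0 m}]$. First I would set up the standard dichotomy on the size of the output weight: the $Z$-norm weight is $|\xi|^{3/4-p_0}(1+|\xi|^{N_2+2p_0})$, and by the range restriction on $2^k$ and the Sobolev-type bound (\ref{equation141}) together with the energy and $Z$-norm inputs (\ref{equation7010})--(\ref{equation7012}), only $O(p_0 m)$ values of $k$ and a comparable window of $(k_1,k_2,k_3)$ actually contribute; frequencies that are too small or too large are absorbed by the extra high/low-frequency room (exactly as in (\ref{eqn1510}) and the displayed computation just before this proposition). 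So it suffices to get, for each fixed admissible $(k,k_1,k_2,k_3)$, a bound of the form $2^{-\delta \max\{m,|k_1|,|k_2|,|k_3|\}}\epsilon_1^3$ for some $\delta>0$, and then sum the geometric-type series at the cost of the usual $m^{O(1)}$ loss, which is beaten by $2^{-p_0 m}$ after slightly shrinking the exponent.

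For each frequency-localized piece I would distinguish cases according to the geometry of the phase $\Phi_{\iota_1,\iota_2,\iota_3}(\xi,\eta,\sigma)$ and whether $(\iota_1,\iota_2,\iota_3)=(+,+,-)$ (the only triple with nonempty space-time resonant set, which is why the modified phase $L(t,\xi)$ in (\ref{eqn1000}) was introduced). \emph{(i) Non-resonant triples} $(+,+,+),(+,-,-),(-,-,-)$: here $|\Phi|\gtrsim \max\{\Lambda(\xi),\Lambda(\xi-\eta),\Lambda(\eta-\sigma),\Lambda(\sigma)\}$ on the bulk of the support, so one integrates by parts in time once; the boundary terms are quartic and gain a factor $t^{-1}$ relative to $\int I\,dt$, and the interior term produces quartic/quintic expressions for which one uses the $L^2\times L^\infty\times L^\infty\times L^\infty$ multilinear estimate (\ref{trilinearestimate})/its four-linear analogue together with the symbol bound (\ref{equation4000}) in Lemma \ref{symbolbound}, the decay $\|e^{-it\Lambda}P_k V\|_{L^\infty}\lesssim 2^{-m/2}$ from (\ref{equation7012}), and the $\Lambda_{\geq 3}[\p_t U]$ bound coming from (\ref{equation6}). \emph{(ii) The resonant triple} $(+,+,-)$: subtract the modified-phase correction $\widetilde c(\xi)|\widehat f|^2\widehat f/(1+t)$ as in the definition of $g$; away from the space-time resonance $\{\xi-\eta=\eta-\sigma=\sigma=0\}$ one again integrates by parts in $t$ (or in $\eta,\sigma$ when a spatial derivative of the phase is large), while near it the correction term cancels the leading stationary-phase contribution of $I^{+,+,-}$ up to an error that is lower order by half a power of $t$; this is the same computation as in Ionescu--Pusateri \cite{IP3}, and I would invoke it, checking only that the weaker $Z$-norm and the $p=1/5$ low-frequency loss here are harmless because near the resonance all three inputs have frequency comparable to $|\xi|$, which is bounded below by $2^{-21p_0 m}$.

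The cases where integration by parts in $t$ is not directly available are those with an output or internal frequency so small (or two frequencies so close together) that the phase lower bound degenerates; for these I would either integrate by parts in the spatial frequencies $\eta$ or $\sigma$ (using that $\p_\eta\Phi$ or $\p_\sigma\Phi$ is then bounded below, at the cost of the $\xi\p_\xi\widehat f$ weight controlled by (\ref{equation7012})), or when even that fails, estimate $\int_{t_1}^{t_2} I\,dt$ directly by putting the lowest-frequency factor in $L^\infty_\xi$ via its $Z$-norm and the other two in $L^2$, which already gains because the symbol contributes $2^{\med\{k_i\}/2}$ of smallness and the time integral over a dyadic block $[2^{m-1},2^{m+1}]$ costs only $2^m$. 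The main obstacle I anticipate is bookkeeping the low-frequency region $2^{k_i}\gtrsim (1+t)^{-5/3}$ with the $p=1/5$ derivative loss: one must verify that whenever $SU$-type or $\dot H^p$-type losses of $2^{-pk_i}$ appear, they are compensated either by the half-derivative smallness of the quadratic symbols inside the cubic terms (cf. the argument around (\ref{equaiton239239})) or by the extra $t^{-1/2}$ gained from a second time integration by parts — exactly the mechanism already used in the energy estimate; this is routine in spirit but requires care to ensure the final exponent is strictly positive so that $2^{-p_1 m}$ with some $p_1>0$ really comes out. Summing all contributions then yields (\ref{equation4200}).
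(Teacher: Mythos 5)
Your overall architecture — separate the resonant triple $(+,+,-)$ (handled via the modified phase $L(t,\xi)$ and a stationary-phase expansion near $\xi-\eta=\eta-\sigma=\sigma=0$) from the non-resonant triples (handled by integration by parts in time), supplement with integration by parts in $\eta,\sigma$ when frequencies are widely separated, and rule out extreme frequencies by the rough bounds (\ref{eqn115})--(\ref{eqn116}) — does match the paper's structure (Lemmas \ref{auxiliarylemma1}, \ref{znorm2}, \ref{znorm5} and estimate (\ref{equation9970})). But two concrete points are wrong or missing.

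First, your phase lower bound for the non-resonant triples is incorrect. You claim $|\Phi|\gtrsim\max\{\Lambda(\xi),\Lambda(\xi-\eta),\Lambda(\eta-\sigma),\Lambda(\sigma)\}$, but the true bound is (\ref{equation9970}): $|\Phi^{\iota_1,\iota_2,\iota_3}|\gtrsim 2^{\med\{k_i\}/2}$, the \emph{medium} input magnitude. For instance for $(+,+,+)$ with $|\xi|\ge|\xi-\eta|\ge|\eta-\sigma|\ge|\sigma|$, the difference $\Lambda(\xi)-\Lambda(\xi-\eta)$ nearly cancels and one is left with $|\Phi|\sim|\eta-\sigma|^{1/2}$, not $|\xi|^{1/2}$. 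The weaker bound is exactly what forces one to use the half-degree of symbol smallness in (\ref{equation4000}) to absorb the loss after the time integration by parts; claiming $\gtrsim\max$ hides the tightness of this step.

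Second, and more substantively, your fallback for the problematic low-frequency case of $(+,+,-)$ (the regime $\max|k_i-k|\geq 20$, $\min\{k_i\}<-(5/7-1000p_0)m$, $\med\{k_i\}-\min\{k_i\}\geq 10$) does not close. There is no useful integration by parts in $\eta$ or $\sigma$ here, and the direct bound of $\int_{t_1}^{t_2} I\,dt$ loses a full factor $2^m$ against a pointwise bound on $I$ no better than $\epsilon_1^3 2^{-m/2}$ — no symbol smallness rescues this. What the paper's Lemma \ref{znorm5} uses is the single time integration by parts combined with the key observation (\ref{equation5400}): because the smallest frequency is isolated, $|\Phi^{+,+,-}|\gtrsim 2^{\tilde k/2}$ where $\tilde k=\min\{k,\med\{k_i\}\}$ is the \emph{second} smallest, not the smallest, of $\{k,k_1,k_2,k_3\}$. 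Dividing by $\Phi$ therefore costs only $2^{-\tilde k/2}$, which together with the symbol bound, the $L^2$ loss $2^{-k_-/5}$ from (\ref{equation7010}), and the constraint $\med\{k_i\}\geq-(1+100p_0)m$ from (\ref{equation194}) produces the required $2^{-p_0 m}$. You gesture toward this via "the mechanism already used in the energy estimate around (\ref{equaiton239239})," and that analogy is indeed correct in spirit, but you never state the phase lower bound, and the alternative you propose ("a second time integration by parts" or putting the low-frequency factor in $L^\infty_\xi$ and the other two in $L^2$) does not reproduce it. This is a genuine gap: without (\ref{equation5400}) the problematic regime has no control. Also minor: the window of $(k_1,k_2,k_3)$ in (\ref{equation194}) is $O(m)$, not $O(p_0 m)$, so one must sum a length-$O(m)$ geometric range and beat the polynomial loss by a strictly positive exponent, as you correctly anticipate at the end.
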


\begin{lemma}\label{auxiliarylemma1}
Under the assumptions in Proposition \textup{\ref{cubicZnorm}}, the following estimate holds if $k_{1}, k_{2}, k_{3}\in  [ k-10, k+10] $,
\begin{equation}\label{equation193}
\| J^{+, +,- }_{k_1,k_2, k_3}\psi_k(\xi)\|_{Z}\lesssim 2^{-2p_0 m }.
\end{equation}
\end{lemma}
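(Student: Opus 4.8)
The plan is to estimate $J^{+,+,-}_{k_1,k_2,k_3}$ in the fully resonant regime $k_1,k_2,k_3\in[k-10,k+10]$, where the space-time resonant point (all frequencies equal) lives. Since $|\xi|\sim 2^k\in[2^{-21p_0m},2^{20p_0m}]$ and all input frequencies are comparable to $2^k$, the symbol bound from Lemma~\ref{symbolbound} gives $\|c^{+,+,-}\|_{\mathcal{S}^\infty_{k,k_1,k_2,k_3}}\lesssim 2^{5k/2}$, i.e.\ the symbol supplies ``$5/2$'' derivatives of smallness at this frequency. First I would split $I^{+,+,-}_{k_1,k_2,k_3}(t,\xi)$ into its contribution near the space-time resonance (where $\eta,\sigma$ are both small, so that the phase $\Phi_{+,+,-}(\xi,\eta,\sigma)$ and its $\eta,\sigma$-gradients are nearly degenerate) and the complementary piece, using a cutoff at scale roughly $|\eta|,|\sigma|\lesssim 2^{k/2}t^{-1/2}$ adapted to the stationary phase.

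On the non-resonant piece I would integrate by parts in $\eta$ and $\sigma$ (using $e^{it\Phi}=\frac{\nabla_{\eta,\sigma}\Phi}{it|\nabla_{\eta,\sigma}\Phi|^2}\cdot\nabla_{\eta,\sigma}e^{it\Phi}$), paying the price $|\nabla\Phi|^{-1}\gtrsim (2^{k/2}t^{-1/2})^{-1}$ per integration against the gain of $t^{-1}$, so that each integration by parts is essentially free up to the cutoff scale and yields polynomial decay in $t$ beyond it; the derivatives falling on $\widehat{f^{\iota_i}_{k_i}}$ are controlled by the weighted $L^2$ bounds in \eqref{equation7012}, those on the symbol by Lemma~\ref{symbolbound}. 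On the resonant piece the cubic term $I^{+,+,-}_{k_1,k_2,k_3}$ is, to leading order, $\widetilde{c}(\xi)\frac{|\widehat{f}(t,\xi)|^2}{1+t}\widehat{f}(t,\xi)$ plus an error, and this is exactly the term subtracted off inside $J^{+,+,-}_{k_1,k_2,k_3}$ via the modified phase $L(t,\xi)$; so I would Taylor expand $\widehat{f^{\iota_1}_{k_1}}(t,\xi-\eta)\widehat{f^{\iota_2}_{k_2}}(t,\eta-\sigma)\widehat{f^{\iota_3}_{k_3}}(t,\sigma)$ and $c^{+,+,-}$ around $\eta=\sigma=0$, identify the leading term with the modified-phase correction (so it cancels), and bound the remainder using $\|\xi\partial_\xi\widehat f\|_{L^2}$-type control from \eqref{equation7012} together with the $t^{-1/2}$ dispersive decay from \eqref{equation7012}, the net gain being an $\epsilon$-power of $t$ beating the $t^{-1}$ integrability threshold. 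Summing over the $O(1)$ choices of $(k_1,k_2,k_3)$ in $[k-10,k+10]$ and over the time interval of length $\sim 2^m$ then produces the bound $2^{-2p_0m}$.

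The main obstacle I expect is the careful bookkeeping near the space-time resonance: one must show that after subtracting the explicit modified-phase term, what remains genuinely gains a power $t^{-\delta}$ over the critical $t^{-1}$ rate. This requires exploiting that the difference between $\widehat f$ evaluated at shifted frequencies and at $\xi$ itself is controlled by $\|\partial_\xi\widehat f\|_{L^2}$ (hence costs a derivative, i.e.\ a factor $2^{k/2}t^{-1/2}$ once integrated against the resonant cutoff), and that the $Z$-norm assumption \eqref{equation140} together with \eqref{equation7012} is strong enough to absorb the logarithmically-divergent-looking phase $L(t,\xi)$; the subtlety is that the $Z$-norm here is weaker than in \cite{IP1}, so low-frequency care (the lower cutoff $|\xi|\geq 2^{-21p_0m}$) is essential to make the symbol's $2^{5k/2}$ and the decay estimates interact favorably. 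A secondary technical point is that $|L(t,\xi)|$ is only bounded by $\epsilon_1^2\log(1+t)$, so it does not hurt the $L^\infty_\xi$ estimate, but its $\xi$-derivative must be tracked when one later differentiates in $\xi$ — for the present Lemma, which only asks for the $Z$-norm, this does not yet arise, and the argument above suffices.
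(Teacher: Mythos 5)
Your proposal is essentially the paper's argument: change variables so the space-time resonance sits at $\eta=\sigma=0$, split at a scale adapted to the quadratic approximation of the phase, integrate by parts in $\eta,\sigma$ off the resonant region, and Taylor-expand the profiles and symbol on the resonant region so that the leading term cancels against the modified-phase correction, with the remainders controlled by $\|\partial_\xi\widehat f\|_{L^2}$ and the dispersive decay. Two small corrections: since $\Phi\approx -\eta\sigma/(4|\xi|^{3/2})$ near the critical point, the stationary-phase scale is $|\eta|,|\sigma|\sim|\xi|^{3/4}t^{-1/2}$ (the paper's $\bar l$), not $|\xi|^{1/2}t^{-1/2}$ as you wrote (harmless here since $|\xi|=2^k$ is within $2^{O(p_0 m)}$ of $1$); and ``identifying the leading term with the modified-phase correction'' is not automatic but rests on the explicit computation $\int e^{-it\eta\sigma/(4|\xi|^{3/2})}\psi_{\bar l}^{\bar l}(\eta)\psi_{\bar l}^{\bar l}(\sigma)\,d\eta\,d\sigma = 8\pi|\xi|^{3/2}/t + O(\cdot)$, which is precisely why $\tilde c(\xi)=-8\pi|\xi|^{3/2}c^{*}(\xi,0,0)$ is the right normalizing constant in the definition of $L(t,\xi)$.
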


\begin{proof}
The idea of proof is very similar to the proof of \cite{IP2}[Lemma 6.4]. Because the setting of function spaces is changed, and also for self-completeness, we still give a detailed proof here. 

 We first do change of variables to transform $(\xi-\eta, \eta-\sigma, \sigma)$ into $(\xi+\eta, \xi+\sigma, -\xi-\eta-\sigma)$, hence near the critical point $(\xi, \xi, -\xi)$ ( corresponds to the space-time resonance set), $(\eta, \sigma)=(0,0)$. After changing of variables, we decompose $I^{+,+,-}_{k_{1}, k_{2}, k_{3}}(t,\xi) $ as follows,
 \begin{equation}\label{equation7024}
I^{+,+,-}_{k_{1},k_{2}, k_{3}}(t,\xi ) = \sum_{l_{1}, l_{2}=\bar{l}}^{k+10} 
J_{l_{1}, l_{2}}(t,\xi),\quad \bar{l}=-(1-100p_0)m/2+3k/4,
\end{equation}
where
\[
J_{l_{1}, l_{2}}(t,\xi) =  \int_{ \R^{2}} 
e^{ it\Phi(\xi, \eta, \sigma)} \widehat{f}_{k_{1}}(t,\xi+\eta )
\widehat{f}_{k_{2}}(t,\xi+\sigma ) \widehat{\overline{f}}_{k_{3}}(t,-\xi-\eta-\sigma)\]
\[ c^{\ast}(\xi, \eta, \sigma) \psi_{l_{1}}^{\bar{l}}(\eta) \psi_{l_{2}}^{\bar{l}}(\sigma) d\eta d\sigma,
\]
where $c^{\ast}(\cdot, \cdot, \cdot)$ is defined in (\ref{equation152}), the phase $\Phi(\xi, \eta, \sigma)$ is defined as follows, 
\[
\Phi(\xi, \eta, \sigma):= \Lambda(\xi) -\Lambda(\xi+\eta) -\Lambda(\xi+\sigma) + \Lambda(-\xi-\eta-\sigma),
\]
and  the cutoff function $\psi^{l_1}_{l_2}(\cdot)$ is defined as follows, 
\[
\psi^{l_1}_{l_2}(\xi)= \left\{ \begin{array}{ll}
\psi_{l_2}(\xi) & \textup{if} l_2> l_1\\
\psi_{\leq l_1}(\xi) & \textup{if} l_2=l_1.\\
\end{array}\right.
\]
\noindent \textbf{Case 1:} We first consider the case when $l_{2}\geq \max\{l_{1}, \bar{l}+1\}$. For this case,  we have
\begin{equation}\label{equation4203}
|\p_{\eta}\Phi(\xi, \eta,\sigma)|\psi_{l_{2}}^{\bar{l}}(\sigma) = | \Lambda'(\xi+\eta+\sigma)- \Lambda'(\xi+\eta)|\psi_{l_{2}}^{\bar{l}}(\sigma) \gtrsim 2^{ l_{2}} 2^{-3k/2 }.
\end{equation}
After integrating by parts in $\eta$, we can derive the following:
\begin{equation}
|J_{l_{1}, l_{2}}(t,\xi)| \lesssim 2^{-m}  \big[ 
|J_{l_{1}, l_{2}}^{1}(t,\xi)| +  |J_{l_{1}, l_{2}}^{2}(t,\xi)|  \big]  
\big],
\end{equation}
where 
\[
J^{1}_{l_{1}, l_{2}}(t,\xi) = \int_{\R^{2}} e^{i t \Phi(\xi, \eta, \sigma)} \widehat{f}_{k_{1}}(t,\xi+\eta) \widehat{f}_{k_{2}}(t,\xi+\sigma) 
\widehat{\overline{f}}_{k_{3}}(t,-\xi-\eta-\sigma) \p_{\eta} r_{1}(\xi, \eta,\sigma)  \, d\eta d \sigma,
\]
\[
J^{2}_{l_{1}, l_{2}}(\xi, s) = \int_{\R^{2}} e^{i t \Phi(\xi, \eta, \sigma)} \p_{\eta}\big(\widehat{f}_{k_{1}}(t,\xi+\eta)
\widehat{\overline{f}}_{k_{3}}(t,-\xi-\eta-\sigma) \big) \widehat{f}_{k_{2}}(t,\xi+\sigma)   r_{1}(\xi,\eta,\sigma)  d\eta d \sigma,
\]
where
\[
r_{1}(\xi, \eta, \sigma) := \frac{c^{\ast}(\xi, \eta, \sigma) \psi_{l_{1}}^{\bar{l}}(\eta) \psi_{l_{2}}^{\bar{l}}(\sigma). }{\p_{\eta}\Phi(\xi, \eta, \sigma)}.
\]
From Lemma \ref{Snorm}, (\ref{equation4000}) in Lemma \ref{symbolbound}, and (\ref{equation4203}), the following estimate holds,
\begin{equation}\label{equation162}
\|r_1(\xi, \eta, \sigma)\psi_{k}(\xi)\|_{\mathcal{S}^\infty } \lesssim 2^{-l_2 + 4k}, \quad \|r_1(\xi, \eta, \sigma)\psi_k(\xi)\|_{\mathcal{S}^\infty } \lesssim 2^{-l_2 -l_1 + 4k} + 2^{-l_2+3k}.
\end{equation}
Therefore, from  (\ref{equation162}) and the $L^2-L^2-L^\infty$ type trilinear estimate (\ref{trilinearestimate}) in Lemma \ref{boundness}, the following estimates hold, 
\[
|J_{l_1, l_2}^1(t,\xi)| \lesssim \big( 2^{-l_2-l_1} 2^{4k} + 2^{-l_2+3k}\big)\| \widehat{f_{k_1}}(t,\xi+\eta)\psi_{l_{1}}^{\bar{l}}(\eta)
\|_{L^{2}} \| \widehat{f}_{k_{2}}(t,\xi+\sigma)\psi_{l_{2}}^{\bar{l}}(\sigma)\|_{L^{2}} 
\]
\[
\times \| e^{ -i t \Lambda}P_{k_3} f \|_{L^{\infty}} \lesssim \big( 2^{-l_2-l_1} 2^{4k} + 2^{-l_2+3k}\big)2^{(l_1+l_2)/2} \| \widehat{f}_k(t,\xi)\|_{L^\infty}^2 2^{-m/2-N_2 k_+}\epsilon_1
\]
\[
\lesssim 2^{-50p_0 m }\epsilon_1^3 2^{-6k_+},
\]
 \[
|J_{l_{1}, l_{2}}^{2}(t,\xi)|   \lesssim \sum_{\{i,j\}=\{1,3\}}2^{-l_2 + 4k} \| \p_\xi \widehat{f}(\xi,s)\psi_{k_i}(\xi)\|_{L^2}\| P_{k_j} f\|_{L^2} \| e^{-it\Lambda} P_{k_2} f\|_{L^\infty}
\]
\[
\lesssim 2^{-l_2 -m/2 +2p_0m} 2^{(3-2p)k - (N_2+N_0-1)k_+}\epsilon_1^3 \lesssim 2^{-50p_0 m} 2^{-6 k_+} \epsilon^3_1.
\]
To sum up, we have
\begin{equation}\label{equation5300}
|J_{l_{1}, l_{2}}(t,\xi)| \lesssim  \epsilon_{1}^{3} 2^{-(1+50p_{0})m} 2^{-6k_+}.
\end{equation}
 The symmetric case when $l_{1}\geq \max\{l_{2}, \bar{l}\}$ can be handled very similarly.

\noindent\textbf{Case 2:} It  remains  to consider the case when $l_1 = l_2 =\bar{l}$. For this case, note that the following estimate holds,
\begin{equation}\label{equation7020}
|J_{\bar{l}, \bar{l}}(\xi, s) + \frac{\tilde{c}(\xi) \widehat{f}_{k_{1}}(t,\xi) \widehat{f}_{k_{2}}(t,\xi)\widehat{\overline{f}}_{k_{3}}(t,-\xi)   }{t+1}|\lesssim |\mathcal{I}_{1}| + |\mathcal{I}_{2}|
+ |\mathcal{I}_{3}|,
\end{equation}
where
\[
\mathcal{I}_{1} =   \int_{\R^{2}} \big(e^{it \Phi(\xi, \eta, \sigma)} -e^{-i {t\eta\sigma}/(4|\xi|^{3/2})} \big) \widehat{f}_{k_{1}}(t,\xi+\eta) \widehat{f}_{k_{2}}(t,\xi+\sigma) \]
\[\times 
\widehat{\overline{f}}_{k_{3}}(t,-\xi-\eta-\sigma) c^{\ast}(\xi, \eta, \sigma) \psi_{\bar{l}}^{\bar{l}}(\eta) \psi_{\bar{l}}^{\bar{l}}(\sigma) \,d\eta d\sigma，
\]
\[
\mathcal{I}_{2} =   \int_{\R^{2}} e^{-i {t\eta\sigma}/(4|\xi|^{3/2})} \big[\widehat{f}_{k_{1}}(t,\xi+\eta) \widehat{f}_{k_{2}}(t,\xi+\sigma)\widehat{\overline{f}}_{k_{3}}(t,-\xi-\eta-\sigma)   c^{\ast}(\xi, \eta, \sigma) - 
\]
\[
\widehat{f}_{k_{1}}(t,\xi) \widehat{f}_{k_{2}}(t,\xi)\widehat{\overline{f}}_{k_{3}}(t,-\xi)  c^{\ast}(\xi,0,0) \big]  \psi_{\bar{l}}^{\bar{l}}(\eta)\psi_{\bar{l}}^{\bar{l}}(\sigma) \,d\eta d\sigma  ,
\]
\[
\mathcal{I}_{3} =  \int_{\R^{2} } e^{-i {s\eta\sigma}/(4|\xi|^{3/2})}\widehat{f}_{k_{1}}(t,\xi) \widehat{f}_{k_{2}}(t,\xi)\widehat{\overline{f}}_{k_{3}}(t,-\xi)  c^{\ast}(\xi,0,0)\psi_{\bar{l}}^{\bar{l}}(\eta)\psi_{\bar{l}}^{\bar{l}}(\sigma) \,d\eta d\sigma + 
\]
\[
 \frac{\tilde{c}(\xi) \widehat{f}_{k_{1}}(t,\xi) \widehat{f}_{k_{2}}(t,\xi)\widehat{\overline{f}}_{k_{3}}(t,-\xi)   }{t+1}
  .
\]
Note that, 
\[
\Big| \Phi(\xi, \eta, \sigma) + \frac{\eta\sigma}{4|\xi|^{3/2}} \Big| \lesssim 2^{-5k/2} (|\eta|+|\sigma|)^3.
\]
Hence, after  using estimate (\ref{equation4000}) in Lemma \ref{symbolbound} and the size of support of $\eta$ and $\sigma$, the following estimate holds,
\begin{equation}\label{equation7021}
|\mathcal{I}_1| \lesssim \epsilon_1^3 2^m 2^{5 \bar{l}} 2^{-9k/4 -3N_2 k_+} \lesssim \epsilon_1^3 2^{-6k_+ -(3-1000p_0)m/2} \lesssim  \epsilon_0  2^{-6k_+} 2^{-(1+50p_0)m}.
\end{equation}
Note that
\[
\big| \widehat{f}_l(\xi+\rho, s) - \widehat{f}_{l}(\xi, s) \big|  \lesssim |\rho|^{\h} \| \p_{\xi} \widehat{f}_l (s)\|_{L^2},   |c^{\ast}(\xi, \eta, \sigma) - c^{\ast}(\xi, 0, 0)|\psi_{\bar{l}}^{\bar{l}}(\eta) \psi_{\bar{l}}^{\bar{l}}(\sigma)\lesssim 
2^{3k/2}2^{\bar{l}}.
\]
From above estimates, the following estimate holds after using estimate (\ref{equation4000}) in Lemma \ref{symbolbound} and the size of support of $\eta$ and $\sigma$,
\begin{equation}\label{equation7022}
|\mathcal{I}_2| \lesssim 2^{-k_{-}/5}2^{5\bar{l}/2 -2N_2 k_+ +k} \epsilon_1^3 + 2^{3k/2 -9k/4+3\bar{l} -3N_2 k_+}\epsilon_1^3 \lesssim \epsilon_1^3 2^{-6k_+ -(1+50p_0)m}.
\end{equation}
We proceed to estimate $\mathcal{I}_3$. Note the fact (see also \cite{IP1}) that
\begin{equation}\label{equation178}
\int_{\R\times\R} e^{ -i xy}\tilde{\psi}(x/N) \tilde{\psi}(y/N) d x dy = 2\pi + \mathcal{O}(N^{-1/2}).
\end{equation}
Through  scaling,  the following estimate holds from (\ref{equation178}),
\begin{equation}
\Big| \int_{\R^2} e^{-i s \eta \sigma/(4|\xi|^{3/2})}\psi_{\bar{l}}^{\bar{l}}(\eta)\psi_{\bar{l}}^{\bar{l}}(\sigma)d\eta d\sigma  - \frac{4|\xi|^{3/2}}{s}(2\pi) \Big|\lesssim 2^{-(1+25p_0)m}2^{3k/2},
\end{equation}
which further gives us
\begin{equation}\label{equation7023}
|\mathcal{I}_3| \lesssim \epsilon_1^3 2^{7k/4-3N_2 k_+} 2^{-(1+25p_0)m} \lesssim \epsilon_1^3 2^{-6k_+} 2^{-(1+25p_0)m}.
\end{equation}
To sum up, from (\ref{equation7021}), (\ref{equation7022}) and (\ref{equation7023}), the following estimate holds, 
\[
(\ref{equation7020}) \lesssim \epsilon_1^3 2^{-(1+25p_0)m}2^{ -6 k_+}.
\]
Therefore finishing the proof.
\end{proof}

Now, it remains to consider the case when $k_i \notin[k-10, k+10]$ for some $i\in\{1,2,3\}$.  Therefore the term $\widehat{f_{k_1}}(t,\xi)\widehat{\overline{f}}_{k_3} (t, -\xi) \widehat{f_{k_2}}(t,\xi)\psi_k(\xi)$ vanishes. From now on, we can drop this term inside $J^{+,+,-}_{k_1,k_2,k_3} $.

\begin{lemma}
For any $(\iota_{1}, \iota_{2}, \iota_{3})\in \mathcal{S}$, we have the following rough estimates:
\begin{equation}\label{eqn115}
|I_{k_1, k_2, k_3}^{\iota_1, \iota_2, \iota_3}(t,\xi)| \lesssim 2^{\min\{k_i\}/4} 2^{3\med\{k_i\}/4-N_2\med\{k_i\}_+} 2^{5\max\{k_i\}/4 - N_2\max\{k_i\}_+} \epsilon_1^3,
\end{equation}
\begin{equation}\label{eqn116}
|I_{k_1, k_2, k_3}^{\iota_1, \iota_2, \iota_3}(t,\xi)| \lesssim 2^{\med\{k_i\}/2} 2^{2\max\{k_i\}}\| e^{-it \Lambda} P_{\min\{k_{i}\}} f\|_{L^\infty} \|P_{\med\{k_{i}\}} f\|_{L^2} \| P_{\max\{k_{i}\}}f\|_{L^2}.
\end{equation}
\end{lemma}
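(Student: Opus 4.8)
The plan is to establish both \eqref{eqn115} and \eqref{eqn116} by crude, oscillation-free estimates: no integration by parts and no resonance analysis is needed, since these are exactly the bounds that survive when one throws the phase away. The two inputs are the symbol bound \eqref{equation4000} (Lemma \ref{symbolbound}) and the multilinear estimates of Lemma \ref{boundness}; for \eqref{eqn115} one also uses the $Z$-norm control of $\widehat{f_k}$ in $L^\infty_\xi$ recorded in \eqref{equation7010}.

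First I would prove \eqref{eqn116}. Pull the unimodular factor $e^{it\Lambda(\xi)}$ out of the phase $\Phi_{\iota_1,\iota_2,\iota_3}$, and recognize each remaining factor $e^{-i\iota_j t\Lambda(\cdot)}\widehat{f^{\iota_j}}(\cdot)$ as $\widehat{V^{\iota_j}}(\cdot)$; then $|I^{\iota_1,\iota_2,\iota_3}_{k_1,k_2,k_3}(t,\xi)|=|\widehat{\mathcal{C}}(\xi)|$, where $\mathcal{C}$ is the trilinear operator with symbol $c^{\iota_1,\iota_2,\iota_3}$ applied to $P_{k_1}V^{\iota_1},P_{k_2}V^{\iota_2},P_{k_3}V^{\iota_3}$. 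By $\|\widehat g\|_{L^\infty_\xi}\lesssim\|g\|_{L^1_x}$ this is $\lesssim\|\mathcal{C}\|_{L^1_x}$, and the trilinear estimate \eqref{trilinearestimate} with $(s,p,q,r)=(1,\infty,2,2)$, putting the lowest-frequency input in $L^\infty_x$ and the other two in $L^2_x$, bounds it by $\|c^{\iota_1,\iota_2,\iota_3}\|_{\mathcal{S}^\infty_{k,k_1,k_2,k_3}}\,\|P_{\min}V\|_{L^\infty_x}\|P_{\med}V\|_{L^2}\|P_{\max}V\|_{L^2}$. Now \eqref{equation4000} bounds the symbol norm by $2^{\med\{k_i\}/2}2^{2\max\{k_i\}}$, and since $e^{-it\Lambda}$ is $L^2$-unitary and commutes with the Littlewood--Paley projections (and complex conjugation does not change the relevant norms), one has $\|P_kV^{\iota}\|_{L^2}=\|P_kf\|_{L^2}$ and $\|P_kV^{\iota}\|_{L^\infty_x}=\|e^{-it\Lambda}P_kf\|_{L^\infty_x}$; relabeling so that the $L^\infty_x$ slot carries the lowest frequency gives \eqref{eqn116}.

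For \eqref{eqn115} I would instead estimate the integral pointwise in $\xi$, discarding the phase outright: $|I^{\iota_1,\iota_2,\iota_3}_{k_1,k_2,k_3}(t,\xi)|\le\|c^{\iota_1,\iota_2,\iota_3}\|_{L^\infty}\int\!\!\int|\widehat{f_{k_1}}(\xi-\eta)||\widehat{f_{k_2}}(\eta-\sigma)||\widehat{f_{k_3}}(\sigma)|\,d\eta\,d\sigma$, using $\|c\|_{L^\infty}\le\|c\|_{\mathcal{S}^\infty_{k,k_1,k_2,k_3}}\lesssim 2^{\med\{k_i\}/2}2^{2\max\{k_i\}}$. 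Writing the double integral as an integral over $\{a+b+c=\xi\}$ of $|\widehat{f_{k_1}}(a)||\widehat{f_{k_2}}(b)||\widehat{f_{k_3}}(c)|$ and integrating freely over the two variables whose Fourier supports are smallest (measures $\lesssim 2^{\min\{k_i\}}$ and $\lesssim 2^{\med\{k_i\}}$), the third being then pinned, one gets $\lesssim 2^{\min\{k_i\}+\med\{k_i\}}\prod_i\|\widehat{f_{k_i}}\|_{L^\infty_\xi}$. Inserting $\|\widehat{f_{k_i}}\|_{L^\infty_\xi}\lesssim\epsilon_1 2^{-\beta k_i}2^{-\gamma k_{i,+}}$ with $\beta=3/4-p_0$, $\gamma=N_2+2p_0$ from \eqref{equation7010} and collecting powers of $2$ yields a bound of the form $\epsilon_1^3\,2^{(1-\beta)\min\{k_i\}}2^{(3/2-\beta)\med\{k_i\}}2^{(2-\beta)\max\{k_i\}}2^{-\gamma(k_{1,+}+k_{2,+}+k_{3,+})}$, which for the stated values of $\beta,\gamma$ is in every regime of $k_1,k_2,k_3$ no larger than, hence implies, the right-hand side of \eqref{eqn115}.

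The estimates are routine; the only real obstacle is bookkeeping — tracking the powers of $2^{k_i}$ from the symbol bound against those from the $Z$-norm (and from Bernstein/Plancherel), uniformly over all orderings $\min\{k_i\}\le\med\{k_i\}\le\max\{k_i\}$. In particular, obtaining the factor $2^{3\med\{k_i\}/4}$ in \eqref{eqn115} (rather than the weaker $2^{5\med\{k_i\}/4}$ one would get by feeding Bernstein into \eqref{eqn116}) genuinely requires the "measure of the integration region" argument above, so \eqref{eqn115} should be proved directly rather than deduced from \eqref{eqn116}.
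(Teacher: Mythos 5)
Your proof is correct and follows essentially the same route the paper intends: both bounds are oscillation-free applications of the symbol bound \eqref{equation4000} together with, for \eqref{eqn116}, the trilinear estimate of Lemma~\ref{boundness} in the $L^\infty\times L^2\times L^2$ configuration (the paper's ``$L^2$--$L^\infty$--$L^\infty$'' phrasing is a misprint, since \eqref{eqn116} visibly has one $L^\infty$ and two $L^2$ factors), and, for \eqref{eqn115}, putting each profile in $L^\infty_\xi$ via \eqref{equation7010} and gaining $2^{\min\{k_i\}+\med\{k_i\}}$ from the measure of the restricted convolution domain. The power bookkeeping at the end works out exactly as you claim (the $2^{-\gamma\min\{k_i\}_+}$ factor absorbs the $p_0$-excess in the $\min$ exponent when $\min\{k_i\}\ge 0$), so the argument is complete.
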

\begin{proof}
 From (\ref{equation4000}) in Lemma \ref{symbolbound}, the desired estimate  (\ref{eqn115}) follows straightforwardly after putting all inputs into the $Z$-normed space. From (\ref{equation4000}) in Lemma \ref{symbolbound} and $L^2-L^\infty-L^\infty$ type trilinear estimate (\ref{trilinearestimate}) in Lemma \ref{boundness}, (\ref{eqn116}) holds straightforwardly. 
\end{proof}

Recall that $ 2^{- 21 p_0 m }\leq |\xi|\leq 2^{20p_0m}$.  From (\ref{eqn115}) and (\ref{eqn116}), the following estimate holds when $\med\{k_{i}\} \leq -(1+100p_0) m$ or $\min\{k_{i}\} \leq - 4(1+100p_0)m$ or  $\max\{k_i\} \geq (1+100p_0)m/5$, 
\begin{equation}\label{equation9901}
\sup_{t\in[2^{m-1}, 2^{m+1}]}\big||\xi|^{3/4-p_0}(1+|\xi|^{N_2+2p_0}) I^{\iota_{1}, \iota_{2}, \iota_{3}}_{k_{1}, k_{2}, k_{3}}(\xi, t)\big|  \lesssim \epsilon_1^3 2^{-(1+p_0)m},
\end{equation}
which is sufficient for most of cases in our desired estimate (\ref{equation4200}). Therefore, it is sufficient to consider fixed $k_1,k_2,$ and $k_3$  in the following range:
\begin{equation}\label{equation194}
-(1+100p_0)m \leq\med\{k_i\}\leq \max\{k_i\} \leq (1+100p_0)m/5, \min\{k_{i}\} \geq - 4(1+100p_0)m.
\end{equation}

\begin{lemma}\label{znorm2}
If $k_{1}, k_{2}, k_{3}$ satisfies the estimate \textup{(\ref{equation194})} and  one of the following two conditions is satisfied:
\[
\max\{ |k_{1}-k|, |k_{2}-k|, |k_{3}-k|\}\geq 20, \quad \med\{k_i\} -\min\{k_i\} \leq 10,
\]
\[
\max\{ |k_{1}-k|, |k_{2}-k|, |k_{3}-k|\}\geq 20, \quad \min\{k_i\} \geq -(5/7-1000p_0) m,\]
then the following estimate holds,
\begin{equation}\label{equation7031}
\sup_{t\in[2^{m-1}, 2^{m+1}]} \big||\xi|^{3/4-p_0}(1+|\xi|)^{N_2+2p_0}I^{+,+,-}_{k_{1}, k_{2}, k_{3}}(t,\xi )  \big| 
\lesssim \epsilon_{1}^{3} 2^{ - (1+p_{0})m}.
\end{equation}
\end{lemma}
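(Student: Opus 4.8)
The plan is to exploit the fact that, under the hypotheses of Lemma \ref{znorm2}, the frequencies are separated (some $k_i$ is $\geq 20$ away from $k$) so that the phase $\Phi_{+,+,-}(\xi,\eta,\sigma)$ has a non-degenerate gradient in at least one of the variables $\eta,\sigma$, and then to integrate by parts in that variable. Concretely, after the change of variables $(\xi-\eta,\eta-\sigma,\sigma)\mapsto(\xi+\eta,\xi+\sigma,-\xi-\eta-\sigma)$ the phase becomes $\Phi(\xi,\eta,\sigma)=\Lambda(\xi)-\Lambda(\xi+\eta)-\Lambda(\xi+\sigma)+\Lambda(-\xi-\eta-\sigma)$, and $\p_\eta\Phi=\Lambda'(-\xi-\eta-\sigma)-\Lambda'(\xi+\eta)$, $\p_\sigma\Phi=\Lambda'(-\xi-\eta-\sigma)-\Lambda'(\xi+\sigma)$. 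First I would do a Littlewood–Paley decomposition of $\eta$ and $\sigma$ at dyadic scales $2^{l_1},2^{l_2}$ (as in \eqref{equation7024}), noting that since some output frequency is $\geq 20$ away from $k$, at least one of $|\eta|,|\sigma|$ is $\gtrsim 2^{\max\{k,k_i\}}$ — in particular not tiny — so the relevant derivative of $\Phi$ is bounded below by roughly $2^{-3\max\{k_i\}/2}2^{\max\{l_1,l_2\}}$, a genuine lower bound with no cancellation.

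The main steps, in order: (i) reduce to the dyadic pieces $J_{l_1,l_2}$ via \eqref{equation7024}, using the crude bounds \eqref{eqn115} and \eqref{eqn116} together with \eqref{equation194} to discard the pieces where $l_1$ or $l_2$ is below $\bar l$ or above $\max\{k_i\}+O(1)$; (ii) on each remaining piece integrate by parts once (or, if needed, twice) in whichever of $\eta,\sigma$ carries the good lower bound on $\p\Phi$, picking up a gain of $2^{-m}2^{3\max\{k_i\}/2-\max\{l_i\}}$ per integration and distributing the $\p_\eta$ (or $\p_\sigma$) either onto the symbol $c^\ast$ (controlled by Lemma \ref{symbolbound} and Lemma \ref{Snorm}) or onto the profiles $\widehat f_{k_i}$, where it produces a factor $\xi\p_\xi\widehat f$ estimated by \eqref{equation7012}/\eqref{eqn1641}; (iii) apply the $L^2\times L^2\times L^\infty$ trilinear estimate \eqref{trilinearestimate}, placing the two factors with a $\p_\xi$-derivative or with the largest frequencies in $L^2$ and the remaining factor in $L^\infty$ via the dispersive bound \eqref{equation7012}; (iv) sum the resulting geometric series in $l_1,l_2$ (each integration by parts beats the $l$-loss because $\bar l$ is chosen so that $2^{-m}2^{-\bar l}$ is a negative power of $2^m$) and then in $m$, checking that the accumulated loss of $2^{-pk_i}$ at low frequency — at most $2^{4p(1+100p_0)m/7}\ll 2^{p_0 m/2}$ once one is in the regime $\min k_i\geq -(5/7-1000p_0)m$, or else compensated by $\med k_i-\min k_i\leq 10$ — is absorbed into the final $2^{-(1+p_0)m}$.

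The hard part will be step (ii)–(iii) in the subcase where the frequency separation forces us to integrate by parts in the variable attached to the \emph{low}-frequency input: there the $\p_\xi$ falling on $\widehat f_{k_i}$ with $k_i$ very negative costs a factor $2^{-k_i}$ from the $\xi\p_\xi$ normalization in \eqref{equation7012}, and one has to check this is still beaten by the oscillatory gain $2^{-m}2^{3\max\{k_i\}/2-\max\{l_i\}}$ together with the $2^{k_i/2}$ from the symbol bound \eqref{equation4000}; this is exactly why the two alternative hypotheses of the lemma ($\med-\min\leq 10$, or $\min k_i$ not too small) are needed, and matching the bookkeeping to those two cases is the delicate point. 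The rest is routine once one has fixed, for each piece, which variable to integrate by parts in and which two factors go in $L^2$.

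One technical wrinkle worth flagging: when $\max\{l_1,l_2\}=\bar l$ (the smallest allowed scale) a single integration by parts may not suffice and one integrates by parts twice, which is harmless since $2^{-2m}2^{3\max k_i-2\bar l}=2^{-2m}2^{3\max k_i}2^{(1-100p_0)m}2^{-3k/2}$ still carries a net power $2^{-(1-O(p_0))m}$ after using \eqref{equation194}; I would simply record this as the worst case and verify the exponent arithmetic there, leaving the generic case (one integration by parts) to the reader.
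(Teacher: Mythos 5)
The overall strategy is right (an integration by parts in $\eta$ or $\sigma$), but you have imported the wrong localization scheme and, with it, a genuinely weaker lower bound on the phase gradient that puts the bookkeeping at risk.

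The key point the paper exploits in this lemma is that \emph{frequency separation alone} makes the phase gradient uniformly large, with no need for a Littlewood--Paley localization in $\eta,\sigma$. When $|k_1-k_3|\geq 5$ (or when $k_1\sim k_2\sim k_3$ all lie well above $k$, forcing $|\eta|\sim|\sigma|\sim 2^{k_1}$ and opposite signs for $\xi+\eta$ and $\xi+\eta+\sigma$), one has the clean bound
\[
|\p_\eta\Phi(\xi,\eta,\sigma)| = |\Lambda'(\xi+\eta+\sigma)-\Lambda'(\xi+\eta)| \gtrsim 2^{-\min\{k_1,k_3\}/2},
\]
which is independent of where $\eta$ and $\sigma$ sit, and which is \emph{large} precisely when $\min\{k_1,k_3\}$ is very negative (this is the paper's \eqref{equation7051} and \eqref{equation7050}). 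Because of this, a single integration by parts gives a net $2^{-m}\cdot 2^{\min\{k_1,k_3\}/2}$ gain, and the dyadic $l_1,l_2$ splitting from \eqref{equation7024} is simply not needed here: that splitting is designed for Lemma \ref{auxiliarylemma1}, where all four frequencies are comparable and $\p_\eta\Phi$ genuinely degenerates near the stationary point $(\eta,\sigma)=(0,0)$.

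Your proposed lower bound $|\p_\eta\Phi|\gtrsim 2^{-3\max\{k_i\}/2}\,2^{\max\{l_1,l_2\}}$ is the second-derivative/mean-value estimate appropriate to the close-frequency regime. With $\max\{l_1,l_2\}\gtrsim\max\{k,k_i\}$ this reduces to $\gtrsim 2^{-\max\{k_i\}/2}$, which is much weaker than $2^{-\min\{k_1,k_3\}/2}$ when the frequencies are widely separated. In the subcase where $\min\{k_i\}$ is near the lower threshold $-4(1+100p_0)m$ and $\max\{k_i\}\sim\med\{k_i\}$, the factor you would get from $1/\p_\eta\Phi$ is far larger than what the paper's bound gives, and it is not at all clear that your bookkeeping closes against the dispersive $L^\infty$ bound and the $\xi\p_\xi\widehat f$ weight; the symbol bound \eqref{equation4000} does not rescue this deficit. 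Your final ``worst-case'' paragraph about double integration by parts at $\max\{l_1,l_2\}=\bar l$ is a non-issue (as you in fact note, $\max\{l_1,l_2\}$ is forced well above $\bar l$ here), and suggests the scheme was borrowed from the wrong lemma. Finally, the hypothesis $\med\{k_i\}-\min\{k_i\}\leq 10$ is not merely a bookkeeping footnote: in the paper's Case 3 it is exactly what allows the $2^{\med\{k_i\}/2}$ smallness of the symbol to absorb the $2^{-\min\{k_i\}/2}$ loss from dividing by $\p_\eta\Phi$; your proposal only mentions this hypothesis in passing and does not exhibit how it is used.

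In short: the paper's argument is a single IBP with a frequency-separation lower bound on $\p_\eta\Phi$ (no $l$-decomposition), whereas you propose the $l$-localized scheme of Lemma \ref{auxiliarylemma1}. The two are not equivalent, and your phase-gradient bound is strictly weaker in the very regime this lemma is designed to handle. You would need to replace it with \eqref{equation7051}/\eqref{equation7050} before the exponent arithmetic has a chance of closing.
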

\begin{proof}
\noindent\textbf{Case 1:\quad} We  first consider the case when $\max\{ |k_{1}-k_{3}|, |k_{2}-k_{3}|\} \geq 5$ and $\min \{k_i\} \geq -(5/7-1000p_0) m$. Note that
\[
\p_{\eta}\Phi = \Lambda'(\xi+\eta+\sigma) - \Lambda'(\xi+\eta),\quad \p_{\sigma}\Phi(\xi, \eta, \sigma) = \Lambda'(\xi+\eta+\sigma) -\Lambda'(\xi+\sigma).
\]
From symmetry and without loss of generality, we   assume that $|k_1 - k_3| \geq 5$. Hence 
\begin{equation}\label{equation7051}
|\p_\eta \Phi(\xi, \eta, \sigma)| = \big| \Lambda'(\xi+\eta)- \Lambda'(\xi+\eta+\sigma) \big| \gtrsim 2^{-\min\{k_1,k_3\}/2}.
\end{equation}
After integration by parts in $\eta$, we can derive the following estimate:
\begin{equation}\label{equation7034}
|I_{k_1, k_2, k_3}^{+,+,-}(t,\xi)| \lesssim   \big( |\mathcal{J}_{k_1, k_2, k_3}^{1} |+|\mathcal{J}_{k_1, k_2, k_3}^{2} |\big),
\end{equation}
\[
\mathcal{J}_{k_1, k_2, k_3}^1 := \frac{1}{t} \int_{\R^2} e^{i t\Phi(\xi, \eta, \sigma)} \widehat{f^{+}_{k_1}}(t,\xi+\eta) \widehat{f^{+}_{k_2}}(t,\xi+\sigma) \widehat{f^{-}_{k_3}}(t,-\xi-\eta-\sigma) \p_{\eta}r_{2}(\xi,\eta, \sigma) d\eta d\sigma ,
\]
\[
\mathcal{J}_{k_1, k_2, k_3}^2 := \frac{1}{t}  \int_{\R^2} e^{it\Phi(\xi, \eta, \sigma)}  \p_{\eta} \big(\widehat{f^{+}_{k_1}}(t,\xi+\eta)  \widehat{f^{-}_{k_3}}(t,-\xi-\eta-\sigma)\big)\widehat{f^{+}_{k_2}}(t,\xi+\sigma) r_{2}(\xi,\eta, \sigma) d\eta d\sigma ,
\]
where
\[
r_2(\xi,\eta, \sigma) = \displaystyle{\frac{c^{\ast}(\xi, \eta, \sigma)\psi_{k_1}(\xi+\eta)\psi_{k_2}(\xi+\sigma)\psi_{k_3}(\xi+\eta+\sigma)}{  \p_\eta\Phi(\xi, \eta, \sigma)}}.
\]
From (\ref{equation7051}), (\ref{equation4000}) in Lemma \ref{symbolbound} and Lemma \ref{Snorm}, the following estimates hold,  
\begin{equation}\label{equation210}
 \|   r_2(\xi, \eta, \sigma)\|_{\mathcal{S}^\infty} \lesssim   2^{ \min\{k_1, k_3\}/2} 2^{\med\{k_1, k_2, k_3\}/2} 2^{2\max\{k_1, k_2, k_3\}},
\end{equation}
\begin{equation}\label{equation211}
 \| \p_\eta r_2(\xi, \eta, \sigma)\|_{\mathcal{S}^\infty} \lesssim   2^{-\min\{k_1, k_3\}/2} 2^{\med\{k_1, k_2, k_3\}/2} 2^{2\max\{k_1, k_2, k_3\}}.
\end{equation}
Therefore, from (\ref{equation210}), (\ref{equation211}) and $L^2-L^2-L^\infty$ type trilinear estimate (\ref{trilinearestimate}) in Lemma \ref{boundness}, the following estimates hold,
\[
|\mathcal{J}_{k_1, k_2, k_3}^1| \lesssim 2^{-\min\{k_i\}/2+\med\{k_i\}/2+2\max\{k_i\}} 2^{-m} \| e^{-is \Lambda}P_{\min} f \|_{L^\infty} \| P_{\med}f\|_{L^2} \| P_{\max}f\|_{L^{2}}
\]
\begin{equation}
  \lesssim  \epsilon_1^3 2^{2\max\{k_i\}} 2^{-(1+100p_0)m} 2^{-4\max\{k_1, k_2, k_3\}_{+}} \lesssim \epsilon_1^3 2^{-(1+100p_0)m},
\end{equation}
\[
|\mathcal{J}_{k_1, k_2, k_3}^2| \lesssim 2^{-m} 2^{\min\{k_1, k_3\}/2} 2^{\med\{k_1, k_2, k_3\}/2} 2^{2\max\{k_1, k_2, k_3\}}\big(
\| \p_{\xi} \widehat{f}(\xi,s) \psi_{k_1}(\xi)\|_{L^2}\]
\[
 \times \|P_{\max\{k_2, k_3\}} f\|_{L^2}\| e^{-is \Lambda} P_{\min\{k_2,k_3\}} f\|_{L^\infty} + \|  \p_{\xi} \widehat{f}(\xi,s) \psi_{k_3}(\xi) \|_{L^2} \| e^{-is\Lambda} P_{\min\{k_1,k_2\}} f\|_{L^\infty} 
\]
\[ 
 \times \|P_{\max\{k_1, k_2\}} f\|_{L^2} \big)\lesssim \epsilon_1^3 2^{-7 \min\{k_{i}\}/10}2^{-3m/2} + \epsilon_1^3 2^{-3m/2} 2^{3\max\{k_i\}_{+}/2}\lesssim \epsilon_1^3 2^{-(1+100p_0)m}.
\]
From above estimates, (\ref{equation7034}) and the fact that $|\xi|\leq 2^{20p_0 m}$, the following estimate holds, 
\begin{equation}\label{equation230}
 \big||\xi|^{3/4-p_0}(1+|\xi|)^{N_2+2p_0}I^{+,+,-}_{k_{1}, k_{2}, k_{3}}(t,\xi )  \big|\lesssim 2^{-(1+100)p_0 m +80 p_0 m} \epsilon_1^3 \lesssim 2^{-(1+p_0)m}\epsilon_1^3.
\end{equation}
Therefore, finishing the proof. We remark that when $|k_2 - k_3|\geq 5$, we can do integration by parts with respect to $\sigma$ and the argument is very similar. 

\noindent\textbf{Case 2:\quad} Now we proceed to consider the case when $\max\{|k_1 - k_3|, |k_2 - k_3|\} \leq 5$ and $\min\geq - (5/7-1000 p_0)m$. Recall that $\max\{|k_1 - k|, |k_2 - k|, |k_3 -k|\} \geq 20$, hence $\min\{k_1, k_2, k_3\} \geq k + 10 $. As a result, the following estimate holds,
\begin{equation}
|\xi+ \eta|\sim  |\xi+\sigma|\sim |\xi+\sigma+\eta|\sim 2^{k_1}, |\xi|\sim 2^{k} \Longrightarrow |\eta|\sim |\sigma| \sim 2^{k_1},
\end{equation}
\begin{equation}\label{equation7050}
|\p_\eta\Phi(\xi, \eta, \sigma)| = |\Lambda'(\xi+\eta) - \Lambda'(\xi+\eta+\sigma)| \gtrsim 2^{-k_1/2}\sim 2^{-\min\{k_1,k_3\}/2}.
\end{equation}
Note that estimate (\ref{equation7050}) if of same type as (\ref{equation7051}).  Hence the argument used in \textbf{Case 1} can be applied to this case. We can use integration by parts in ``$\eta$" to derive our desired estimate (\ref{equation230}).

\noindent\textbf{Case 3:\quad} For the case when $\med\{k_i\} -\min\{k_i\} \leq 10$, we can use the same method used in \textbf{Case 1} and \textbf{Case 2}. After integration by parts in $\eta$ or $\sigma$, the loss of $2^{-\min\{k_i\}/2 }$ can be covered by $2^{\med\{k_i\}/2 }$ from the symbol. As a result, the following estimate holds,
\[ 
\big| I_{k_1, k_2, k_3}^{+, +,-}(t,\xi)\big| \lesssim \epsilon_1^3 2^{-\med\{k_i\} /5}  2^{-3m/2}+\epsilon_1^3 2^{-3m/2} 2^{3\max\{k_i\}_{+}/2} \lesssim \epsilon_1^{3} 2^{-(21-1000p_0)m/20},
\]
where we used the fact that $\med\{k_{i}\} \geq -(1+100p_0)m$ and $\max\{k_{i}\}\leq (1+100p_0)m/5$, see (\ref{equation194}).

\end{proof}

\begin{lemma}\label{znorm5}
If $k_{1}, k_{2}, k_{3}$ satisfies the estimate \textup{(\ref{equation194})} and the following condition holds,
\[
\max\{ |k_{1}-k|, |k_{2}-k|, |k_{3}-k|\}\geq 20,\,  \min\{k_i\}  < -(5/7-1000p_{0})m, \med\{k_i\}  -\min\{k_i\}  \geq 10, 
\]
then  the following estimate holds,
\begin{equation}\label{equation9920}
 \Big| |\xi|^{3/4-2p_0} (1+|\xi|^{N_2+2p_0})\int_{t_{1}}^{t_{2}} e^{ i  L(t,\xi)} [I^{+,+,-}_{k_1, k_2, k_3}(t,\xi)  ] d\,t \Big| \lesssim 2^{-p_{0}m}\epsilon_{1}^{3}.
\end{equation}
\end{lemma}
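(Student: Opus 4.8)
The scenario under consideration is the one isolated case left after Lemmas~\ref{auxiliarylemma1}, \ref{znorm2}: the smallest of the three input frequencies is genuinely tiny ($\min\{k_i\} < -(5/7-1000p_0)m$), well separated from the medium frequency ($\med\{k_i\}-\min\{k_i\}\geq 10$), and the output frequency $|\xi|\sim 2^k$ is separated from all inputs. The plan is to exploit the large separation between $\min\{k_i\}$ and the other two frequencies to get a good lower bound on a first-order derivative of the phase $\Phi^{+,+,-}$, and then to integrate by parts \emph{in time} — not in $\eta$ or $\sigma$ — because a single spatial integration by parts (as used in Lemma~\ref{znorm2}) would produce the factor $2^{-\min\{k_i\}/2}$, which is now too large to be absorbed by the $2^{\med\{k_i\}/2}$ from the symbol. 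Integrating in time costs $2^{-m}$ but pays only the reciprocal of the full phase $\Phi^{+,+,-}(\xi,\eta,\sigma)$, and the key geometric fact to establish is that on the relevant frequency support $|\Phi^{+,+,-}(\xi,\eta,\sigma)|\gtrsim 2^{\med\{k_i\}/2}$ (a bound of the type already announced in~(\ref{equation98}) and proved at the end of this subsection). Dividing by the phase then costs $2^{-\med\{k_i\}/2}$, which \emph{is} covered by the symbol bound (\ref{equation4000}).

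\textbf{Key steps, in order.} First I would record the phase lower bound: when one input frequency is much smaller than the other two and the output is comparable to none of them, the cancellation $|\xi|^{1/2}-|\xi-\eta|^{1/2}-|\eta-\sigma|^{1/2}+|\sigma|^{1/2}$ cannot be small — the two large square-root terms are of size $2^{\med\{k_i\}/2}$ and do not cancel each other because the geometry (all inputs summing to $\xi$, with $\xi$ small) forces them to be separated; this gives $|\Phi^{+,+,-}|\gtrsim 2^{\med\{k_i\}/2}$, and its proof can be deferred to the promised computation after~(\ref{equation98}). Second, I would write $e^{itL(t,\xi)}I^{+,+,-}_{k_1,k_2,k_3}(t,\xi) = e^{itL}\int e^{it\Phi}\,(\text{amplitude})$ and integrate by parts in $t$, using $e^{it\Phi}=\frac{1}{i\Phi}\partial_t e^{it\Phi}$; this produces three types of terms: the boundary terms at $t_1,t_2$, the term where $\partial_t$ hits the profiles $\widehat{f^{\iota_j}_{k_j}}$, and the term where $\partial_t$ hits $e^{itL(t,\xi)}$. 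Third, I would estimate each of these. For the boundary terms, one uses the $Z$-norm and energy bounds (\ref{equation7010})–(\ref{equation7012}) together with the $L^2$-$L^2$-$L^\infty$ trilinear estimate (\ref{trilinearestimate}), with the division by $\Phi$ costing $2^{-\med\{k_i\}/2}$ and the symbol (\ref{equation4000}) supplying $2^{\med\{k_i\}/2}2^{2\max\{k_i\}}$; the constraints~(\ref{equation194}) on $\med\{k_i\},\max\{k_i\},\min\{k_i\}$ ensure the powers of $2^{m}$ stay under control and one gains a small power $2^{-p_0 m}$. For the term with $\partial_t$ on a profile, I would invoke the bound on $\|t\partial_t f\|_{L^2}$ type estimates (cf. the computation in~(\ref{eqn1240}) and Lemma~\ref{estimatesofremainder}), which has the $1/t$ gain built in, cancelling the $2^m$ from the time integration. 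For the term with $\partial_t L$, note $\partial_t L = \widetilde c(\xi)|\widehat f(t,\xi)|^2/(1+t)$, which again carries the crucial $1/t$ decay, so this term is essentially a higher-order (quintic) contribution and is handled by the $Z$-norm bound together with (\ref{equation141}) in Lemma~\ref{znorm6}.

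\textbf{Main obstacle.} The delicate point is bookkeeping the three competing scales $2^{k}$ (the tiny output, which can be as small as $2^{-21p_0 m}$), $2^{\min\{k_i\}}$ (as small as $2^{-4(1+100p_0)m}$), and $2^{\max\{k_i\}}$ (as large as $2^{(1+100p_0)m/5}$), and checking that after the time integration by parts — which trades $2^{m}$ for $2^{-\med\{k_i\}/2}$ times a possible $2^{-\min\{k_i\}/2}$ loss from differentiating a profile near low frequency — the net power of $2^m$ is still strictly negative by a margin $\gtrsim p_0 m$. Concretely, differentiating $\widehat{f}$ near frequency $2^{\min\{k_i\}}$ can cost up to $2^{-\min\{k_i\}/5}2^{p_0 m}$ from (\ref{equation7012}); since $\min\{k_i\}\geq -4(1+100p_0)m$ this is at most $2^{(4/5+\text{small})m}$, and one must verify this is beaten by the combined gains from the $1/t$ factor and the favorable power of $2^{\med\{k_i\}}$ in the symbol — this is precisely where the threshold $\min\{k_i\}<-(5/7-1000p_0)m$ (rather than a cruder cutoff) is used, and getting the exponents to line up is the crux. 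Everything else is a routine application of the multilinear estimates in Lemma~\ref{boundness} and the a priori bounds (\ref{equation7010})–(\ref{equation7012}).
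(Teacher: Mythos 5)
Your plan matches the paper's proof of Lemma~\ref{znorm5}: you correctly identify that spatial integration by parts (the Lemma~\ref{znorm2} route) breaks down because the $2^{-\min\{k_i\}/2}$ loss is uncontrollable here, and that the right move is integration by parts in time against the full phase $\Phi^{+,+,-}$, paying only the reciprocal of the phase lower bound. The resulting three contributions (boundary terms, $\partial_t$ on a profile, $\partial_t L$), and the way each is controlled (trilinear $L^2$--$L^2$--$L^\infty$ estimates with the symbol bound~(\ref{equation4000}), the $\|t\partial_t f\|_{L^2}$ control from~(\ref{eqn1240}), and the $O(1/t)$ decay of $\partial_t L$ from~(\ref{equation276})), all track the paper.

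Two small imprecisions worth flagging. First, you state the phase lower bound as $|\Phi^{+,+,-}|\gtrsim 2^{\med\{k_i\}/2}$, while the version the paper actually proves in~(\ref{equation5400}) is $|\Phi^{+,+,-}|\gtrsim 2^{\tilde{k}/2}$ with $\tilde{k}=\min\{k,\med\{k_i\}\}$, i.e.\ the second-smallest of all four frequencies including the output $\xi$. These coincide under the paper's WLOG ordering with $|\xi|$ largest, and your stronger statement is in fact true (if $|\xi|$ drops below $\med\{k_i\}$ then the two large square-root terms carry the same sign and the phase is even larger), but you should either prove the stronger version or use the paper's; either suffices, since in the symbol estimate~(\ref{equation236}) the possible loss $2^{-\tilde{k}/2}$ is absorbed by $2^{\med\{k_i\}/2}$ when $\tilde{k}=\med\{k_i\}$, and by $2^{11p_0 m}$ (using $k\geq -21p_0 m$) when $\tilde{k}=k$. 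Second, your ``main obstacle'' paragraph invokes a $\partial_\xi\widehat{f}$ cost of $2^{-\min\{k_i\}/5}$ from~(\ref{equation7012}); no $\partial_\xi$ arises in this lemma---the $T_2$ term involves $\partial_t\widehat{f}$, estimated in $L^2$ via~(\ref{eqn1240}), and the $\partial_\xi$ bound plays no role. Relatedly, the threshold $\min\{k_i\}<-(5/7-1000p_0)m$ is not used inside the proof of this lemma; it is what makes the \emph{complementary} Lemma~\ref{znorm2} close (via $2^{-7\min\{k_i\}/10}2^{-3m/2}\leq 2^{-(1+p_0)m}$), and here only the bounds $\med\{k_i\}\geq -(1+100p_0)m$ and $\max\{k_i\}\leq (1+100p_0)m/5$ from~(\ref{equation194}) enter.
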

\begin{proof}
Define $\tilde{k} = \min\{ k, \med\{k_{i}\}\}$.
Recall that $ -21 p_0 m \leq k \leq 20 p_0 m$ and $\min\{k_i\} < -(5/7-1000p_0)m$. Hence $\min\{k_1,k_2, k_3\}$ is much smaller than $k$. As $\med\{k_i\} - \min\{k_i\} \geq 10$, we could see that the smallest number among $k,k_1,k_2,k_3$, which is $\min\{k_i\}$, is much smaller than the second smallest number, which is $\tilde{k}$. As mentioned before,  a very important observation for this case is that the size of phase $\Phi(\xi, \eta, \sigma)$ is determined by the second smallest number instead of the smallest number. More precisely, the following estimate holds, 
\begin{equation}\label{equation5400}
|\Phi(\xi, \eta, \sigma)| \gtrsim 2^{\tilde{k}/2}.
\end{equation}
To improve presentation, we postpone the proof of (\ref{equation5400}) to the end of this subsection. We can take it as granted first.

To prove (\ref{equation9920}), we will integrate by parts in time. 
Hence, after integration by parts in time, we have
\[
 \Big| \int_{t_{1}}^{t_{2}} e^{ i L(\xi, s)} [I^{+,+,-}_{k_1, k_2, k_3;1}(\xi, s)  ] d\,s \Big|  \lesssim |B_{1}( t_{1},\xi)| + |B_{2}( t_{2},\xi)|+ | T_{1}(\xi)|+ |T_2(\xi)|,
\]
where for $i\in \{1,2\},$
\[
B_{i}(t_i,\xi) = \int_{\R^{2}}  { e^{ i t_{i}\Phi(\xi, \eta, \sigma)  + i L(\xi, t_{i})}} \widehat{f_{k_{1}}}(\xi+\eta, t_{i}) \widehat{f_{k_{2}}}(\xi+\sigma, t_{i})\widehat{\overline{f}}_{k_{3}}(-\xi-\eta-\sigma, t_{i})   r_3(\xi, \eta, \sigma) d\eta d\sigma,
\]
\[
T_{1}(\xi) = \int_{t_{1}}^{t_{2}} \int_{\R^{2}}
e^{ i t\Phi(\xi, \eta, \sigma)+i  L(t,\xi)}  \p_t L(t,\xi)
\widehat{f_{k_{1}}}(t,\xi+\eta) \widehat{f_{k_{2}}}(t,\xi+\sigma)\widehat{\overline{f}}_{k_{3}}(t,-\xi-\eta-\sigma)\]
\[\times  r_3(\xi, \eta, \sigma)  d\eta d\sigma d t,\]
\[
T_{2}(\xi) = \int_{t_{1}}^{t_{2}} \int_{\R^{2}}
e^{ i t\Phi(\xi, \eta, \sigma)+i  L(t,\xi)}  \p_t\Big(\widehat{f_{k_{1}}}(t,\xi+\eta) \widehat{f_{k_{2}}}(t,\xi+\sigma)\widehat{\overline{f}}_{k_{3}}(t,-\xi-\eta-\sigma)\Big)\]
\[ \,\, \times  r_3(\xi, \eta, \sigma)  d\eta d\sigma d t,
\,\,
r_3(\xi, \eta, \sigma)= \frac{c^{\ast}(\xi, \eta, \sigma)\psi_{k_1}(\xi+\eta)\psi_{k_2}(\xi+\sigma) \psi_{k_3}(\xi+\eta+\sigma)}{\Phi(\xi, \eta, \sigma)}.
\]
From (\ref{equation5400}), Lemma \ref{Snorm}, and (\ref{equation4000}) in Lemma \ref{symbolbound}, the following estimates hold, 
\begin{equation}\label{equation236}
\| r_3(\xi, \eta, \sigma)\|_{\mathcal{S}^\infty}\lesssim 2^{\med\{k_i\}/2+2\max\{k_i\}-\tilde{k}/2}\lesssim 2^{5\max\{k_i\}/2+11p_0 m }.
\end{equation}
From (\ref{equation236}),  the $L^2-L^2-L^\infty$ type trilinear estimate (\ref{trilinearestimate}) in Lemma \ref{boundness} and the fact that $\med\{k_i\}\geq -(1+100p_0)m$, the following estimate holds, 
\[
 |B_{i}(  t_{i}, \xi)| \lesssim 2^{5\max\{k_i\}/2+11p_0 m } \| e^{-it_i \Lambda} P_{\min\{k_i\}} f\|_{L^\infty} \| P_{\med\{k_i\}} f \|_{L^2} \| P_{\max\{k_i\}} f\|_{L^2}
\]
\begin{equation}
\lesssim  2^{-\med\{k_i\}/5+23\max\{k_i\}/10-4 \max\{k_i\}_{+}-m/2+11p_0 m}\epsilon_1^3\lesssim \epsilon_1^3 2^{-(3-1000p_0)m/10}.
\end{equation}

Recall (\ref{eqn1000}). From (\ref{equation4000}) in Lemma \ref{symbolbound}, the following estimate holds, 
\begin{equation}\label{equation276}
|\p_{t}L(t, \xi)\psi_k(\xi)|\lesssim 2^{-m} |\xi|^{4}\| \widehat{f}(t,\xi)\psi_k(\xi)\|_{L^\infty_\xi}^2 \lesssim 2^{-m + 4k_+}\epsilon_1^2\lesssim 2^{-(1-100p_0)m}\epsilon_0.
\end{equation}
From (\ref{equation276}), (\ref{equation236}), and the $L^2-L^2-L^\infty$ type trilinear estimate (\ref{trilinearestimate}) in Lemma \ref{boundness}, the following estimate holds,
\[
|T_{1}(\xi)| \lesssim \epsilon_0 2^{m-(1-111p_0)m +5\max\{k_i\}/2} \|e^{-it \Lambda} P_{\min} f\|_{L^\infty} \| P_{\med} f \|_{L^2} \| P_{\max} f\|_{L^2}
\]
\begin{equation}
\lesssim \epsilon_1^3 2^{111p_0 m-m/2-\med\{k_i\}/5} \lesssim  2^{-(3-1000p_0)m/10  } \epsilon_1^3.
\end{equation}
Again, here we used the fact that $\med\{k_i\}\geq -(1+100p_0)m$.

To estimate $T_2(\xi)$, we  put the input $\p_t \widehat{f^{\pm}_{k_i}}$ and the input with the smallest frequency in $L^2$ and the other one in $L^\infty$. More precisely, from (\ref{equation236}) and the $L^2-L^2-L^\infty$ type trilinear estimate (\ref{trilinearestimate}) in Lemma \ref{boundness}, the following estimate holds,
\[
|T_2(\xi)| \lesssim \sum_{\{l, m, n\}=\{1,2,3\}}  2^{m +5\max\{k_i\}/2+11p_0 m }\| \p_t \widehat{f}(t,\xi) \psi_{k_l}(\xi) \|_{L^2} \|P_{\max\{k_m, k_n\}} f\|_{L^2}
\]
\begin{equation}
\times \| e^{-it \Lambda} P_{\min\{k_m, k_n\}} f \|_{L^\infty}  \lesssim \epsilon_1^5 2^{-(3-1000p_0)m/10}.
\end{equation}
To sum up, our desire estimate $(\ref{equation9920})$ holds.
\end{proof}

It remains to prove  (\ref{equation4200}) for the case when $(\iota_1, \iota_2, \iota_3)\in \{(+,+,+), (+,-,-),$ $(-,-,-) \}$ and $k_1,k_2$ and $k_3$ satisfy (\ref{equation194}).  An important observation is that we have the following  weak ellipticity  estimate for phases $\Phi^{-,-,-}, \Phi^{+,-,-}, \Phi^{+,+,+}$:
\begin{equation}\label{equation9970}
\sum_{(\iota_1,\iota_2,\iota_3)\in \mathcal{S}, (\iota_1, \iota_2,\iota_3)\neq (+,+,-)}|\Phi^{\iota_{1},\iota_{2},\iota_{3}}(\xi_{1}, \xi_{2}, \xi_{3})\psi_{k_{1}}(\xi_{1})\psi_{k_{2}}(\xi_{2}) \psi_{k_{3}}(\xi_{3})| \gtrsim 2^{\med\{ k_{i}\}/2}.
\end{equation}
Hence, after doing integration by parts in time,  the loss of $2^{-\med\{k_i\}/2}$ can be covered by the size of the symbol, see (\ref{equation194}) in Lemma \ref{symbolbound}. Similar to the argument we used in the proof of Lemma \ref{znorm5}, with minor modifications, one can show the desired estimate (\ref{equation4200}) holds without any problems.

\noindent \textit{Proof of (\ref{equation5400}) and (\ref{equation9970}).}\quad 
Without loss of generality, we assume that $|\xi|\geq |\xi-\eta|\geq |\eta-\sigma|\geq |\sigma|$. As a result, we have $|\eta|\leq 2|\xi-\eta|$ and $\xi\eta>0$, which further gives us $|\eta|\leq 2|\xi|/3$ or $\eta=2\xi$. Recall that $\Lambda(\xi):=|\xi|^{1/2}$. Note that, the following estimate holds,
\begin{equation}\label{equation287}
\big|\Lambda(\xi)-\Lambda(\xi-\eta)\big| = \Big|\frac{|\xi|-|\xi-\eta|}{|\xi|^{1/2}+|\xi-\eta|^{1/2}}\Big| \leq \frac{1}{2}|\eta|^{1/2}.
\end{equation}
When $(\iota_1, \iota_2, \iota_3)\in \{(+,+,+), (+,-,-),$ $(-,-,-) \}$, from (\ref{equation287})   , the following estimate holds,
\[
|\Phi^{\iota_{1},\iota_{2},\iota_{3}}(\xi , \eta, \sigma)\psi_{k_{1}}(\xi-\eta)\psi_{k_{2}}(\eta-\sigma) \psi_{k_{3}}(\sigma)|\]
\[\geq |\Lambda(\eta-\sigma)|+|\Lambda(\sigma)|- |\Lambda(\xi)-\Lambda(\xi-\eta)|\gtrsim \max\{|\eta-\sigma|^{1/2},|\sigma|^{1/2}\}.
\]
Therefore desired estimate (\ref{equation9970}) holds. To prove (\ref{equation5400}), we have another assumption, which is $ |\sigma|\leq 2^{-10}|\eta-\sigma|.$ As a result, we have $\Lambda(|\eta-\sigma|)-\Lambda(|\sigma|) \geq (1-2^{-5})|\eta-\sigma|^{1/2}$ $\geq (1-2^{-5})^2 |\eta|^{1/2}$. Hence, from (\ref{equation287}), the following estimate holds,
\[
|\Phi^{+,+,-}(\xi , \eta, \sigma)\psi_{k_{1}}(\xi-\eta)\psi_{k_{2}}(\eta-\sigma) \psi_{k_{3}}(\sigma)|\geq |\Lambda(\eta-\sigma)|-|\Lambda(\sigma)|\]
\[- |\Lambda(\xi)-\Lambda(\xi-\eta)|\gtrsim |\eta-\sigma|^{1/2}\sim \max\{|\eta-\sigma|^{1/2},|\sigma|^{1/2}\}.
\]
Therefore, finishing the proof of (\ref{equation5400}).
\qed

\subsection{$Z$-norm estimate of the remainder term}
\begin{lemma}\label{znormremainder}
Under the bootstrap assumptions \textup{(\ref{assumption})} and  \textup{(\ref{equation140})}, the following estimate holds for $k\in[-21 p_0 m, 20p_0 m]$,
\begin{equation}\label{equation700001}
  \sup_{t\in[2^{m-1},2^{m+1}]} \| \int_{t_1}^{t_2} e^{ i L(t,\xi) + i t \Lambda(\xi)}\widehat{R}(t,\xi)\psi_k(\xi) d t\|_Z\lesssim 2^{ -(1+ p_{0} )m}\epsilon_{1}^{3}.
\end{equation} 

\end{lemma}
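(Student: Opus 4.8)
\textbf{Proof proposal for Lemma \ref{znormremainder}.}

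The plan is to exploit the fact that $R(t)$ collects the quintic and higher order terms of the equation \eqref{eqn610} satisfied by the normal form variable $V = V_1 + iV_2$, so that crude estimates already give more than enough decay. First I would record the structure of $R(t)$: schematically it is a sum of $5$-linear (and higher) expressions in $(U^1, U^2)$ with symbols that lose at most a fixed finite number of derivatives (this is the content of the remainder estimates in Appendix \ref{remainderestimate}, together with the fact that $V_i - U^i$ is quadratic and the symbols $a_1, a_2, b$ of the normal form do not lose derivatives, see \eqref{equation8905}). Consequently, just as in Lemma \ref{sizeinfo} where $\|t\p_t f\|_{L^2}$ was bounded by $t\|V\|_{W^{1/2}}^2\|V\|_{H^{2,p}}\lesssim(1+t)^{p_0}\epsilon_1^3$, the quintic term obeys
\[
\| \widehat{R}(t) \|_{L^2} \lesssim \| V(t)\|_{W^{N_2}}^4 \, \| V(t)\|_{H^{N_0,p}} \lesssim (1+t)^{-2+p_0}\,\epsilon_1^5,
\]
using the sharp $L^\infty$ decay $\|V(t)\|_{W^{N_2}}\lesssim (1+t)^{-1/2}\epsilon_1$ from \eqref{eqn107} on the four inputs placed in $L^\infty$ and the energy bound on the remaining input in $L^2$.

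Next I would pass from the $L^2$ bound to the $Z$-norm bound on the frequency-localized piece. Since we are on a fixed dyadic block $2^k\sim|\xi|$ with $k\in[-21p_0 m, 20 p_0 m]$ and on a time interval $[2^{m-1},2^{m+1}]$, and since $e^{iL(t,\xi)+it\Lambda(\xi)}$ is a unit-modulus factor, it suffices to bound the $L^\infty_\xi$ norm of $\psi_k(\xi)\int_{t_1}^{t_2}\mathcal{F}(R(t))(\xi)\,dt$ weighted by $|\xi|^{3/4-p_0}(1+|\xi|^{N_2+2p_0})$. I would use the elementary pointwise-in-$\xi$ estimate analogous to Lemma \ref{znorm6}, namely $\|\widehat{P_k g}\|_{L^\infty_\xi}^2 \lesssim 2^{-k}\|\widehat g\|_{L^2}\big(2^k\|\p_\xi\widehat g\|_{L^2}+\|\widehat g\|_{L^2}\big)$, applied to $g = R(t)$. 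For the $\p_\xi$ side one differentiates under the trilinear/multilinear integral sign; this produces either a derivative hitting a symbol (harmless, since the symbols are smooth away from $0$ and lose only finitely many derivatives, cf. Lemma \ref{symbolbound} and \eqref{equation4000}) or a factor of the form $\xi\p_\xi\widehat f$ on one input, which is controlled by $\|S V\|$-type quantities via \eqref{eqn1245}, i.e. by $(1+t)^{p_0}\epsilon_0$. Hence $\|\p_\xi\widehat{R}(t)\psi_k(\xi)\|_{L^2}\lesssim 2^{-k_-/5}(1+t)^{-3/2+2p_0}\epsilon_1^4$ on the relevant range of $k$, which combined with the $L^2$ bound and the weight $|\xi|^{3/4-p_0}(1+|\xi|^{N_2+2p_0})\lesssim 2^{21N_2 p_0 m}$ yields, after integrating in $t$ over an interval of length $\sim 2^m$,
\[
\Big\| \int_{t_1}^{t_2} e^{iL(t,\xi)+it\Lambda(\xi)}\widehat R(t,\xi)\psi_k(\xi)\,dt\Big\|_Z \lesssim 2^m \cdot 2^{-2m + p_0 m + C p_0 m}\,\epsilon_1^3 \lesssim 2^{-(1+p_0)m}\epsilon_1^3,
\]
provided $p_0$ is small enough (here $C$ is an absolute constant coming from the weight and the $2^{-k_-/5}$ loss, all of which is $O(p_0 m)$ since $|k|\lesssim p_0 m$). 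This is the desired \eqref{equation700001}.

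The main obstacle I anticipate is not the decay counting — there is a full power of $(1+t)^{-1}$ to spare — but rather bookkeeping the precise multilinear structure of $R(t)$: one must be sure that every term genuinely contains at least five factors of $(U^1,U^2)$ (so that four can be put in the $t^{-1/2}$-decaying $L^\infty$ norm while only one sits in $L^2$), and that the paradifferential remainders $\widetilde{\mathcal R}_1, \widetilde{\mathcal R}_2$ and their contributions through the normal form transformations really do not lose derivatives at either the high or the low frequency end — in particular that at low frequency one still gains the $2^{k/5}$ needed so the weight $|\xi|^{3/4-p_0}$ is absorbed. Both facts are exactly what is asserted in Appendix \ref{remainderestimate} and in the discussion of the Dirichlet--Neumann operator in Appendix \ref{appdxb}, so modulo invoking those estimates the argument is routine; the only real care is to track that all the $2^{\pm k}$ powers that appear are bounded by $2^{O(p_0 m)}$ on the range $k\in[-21p_0m, 20p_0 m]$ and hence cannot destroy the $2^{-(1+p_0)m}$ gain.
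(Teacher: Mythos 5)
Your route is similar in spirit to the paper's (reduce the $L^\infty_\xi$ control to $L^2$ and $\partial_\xi$-$L^2$ control via the interpolation of Lemma~\ref{znorm6}, then exploit the high degree of $R$), but the execution diverges in two places where I think there are genuine gaps.

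First, the decay count. You treat $R$ as quintic and claim $\|\widehat R(t)\|_{L^2}\lesssim(1+t)^{-2+p_0}\epsilon_1^5$. The text around \eqref{eqn610} does say ``quintic'', but the explicit formula \eqref{equation929} contains terms like $\Lambda_{\geq 4}[C_1+\mathcal R_1]$ and $A_1(C_1+\Lambda_{\geq 3}[\mathcal R_1],U^1)$, which are \emph{quartic} in $(U^1,U^2)$, and the paper's own $L^2$ bound \eqref{eqn433} in Lemma~\ref{estimatesofremainder} is correspondingly only $\|R(t)\|_{L^2}\lesssim 2^{-3m/2+200p_0 m}\epsilon_0^2$, i.e.\ $(1+t)^{-3/2+O(p_0)}$. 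Your stronger $(1+t)^{-2}$ bound is not established by the paper and would need its own proof; it is safer to use the quartic count.

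Second, and more importantly, the treatment of $\partial_\xi$. The paper does not differentiate $\widehat R$ under the multilinear integral and appeal to \eqref{eqn1245}. Instead, it applies the Lemma~\ref{znorm6} interpolation directly to the time-integrated quantity $\int_{t_1}^{t_2} e^{iL+it\Lambda}\widehat R\,\psi_k\,dt$, expresses $\xi\partial_\xi$ of the integrand through the combination $t\partial_t-2\xi\partial_\xi-2$ (i.e.\ $\widehat{S(\cdot)}$), and invokes the $L^2$ bound on $SP_k[R]$ from \eqref{vectorfieldestimate3} in Lemma~\ref{finalestimate}; the residual $t\partial_t$ piece is then removed by integration by parts in time, see \eqref{equation99995}. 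This step is essential: when you differentiate the phase $e^{it\Lambda(\xi)}$ in $\xi$ you pick up $it\Lambda'(\xi)\sim 2^m 2^{-k/2}$, a full power of $t$, and it is precisely the $S$/integration-by-parts manipulation that cancels it. Your proposal elides this by applying the interpolation to $R(t)$ at fixed $t$ and then integrating, which is legitimate in principle (the phase is unimodular), but it then requires a direct bound on $\|\partial_\xi\widehat R(t)\|_{L^2}$ that the paper never proves; and when the $\partial_\xi$ lands on an input $\widehat f(\xi-\eta)$, the available estimate \eqref{eqn1245} controls $\|\xi\partial_\xi\widehat f\|_{L^2}$, so dividing out $|\xi-\eta|$ costs a factor that is unbounded as the input frequency shrinks, which your sketch does not address.

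Finally, the closing arithmetic is off. Writing $\|\cdot\|_Z\lesssim 2^m\cdot 2^{-2m+O(p_0)m}\epsilon_1^3\lesssim 2^{-(1+p_0)m}\epsilon_1^3$ is an $L^1_tL^2_\xi$ estimate, not a $Z$-norm ($L^\infty_\xi$) estimate; carrying the interpolation through with your own numbers gives roughly $2^{-3m/4+O(p_0 m)}$, and with the paper's quartic bound $2^{-m/2+O(p_0 m)}$, which is exactly what \eqref{equation99994}--\eqref{equation99995} produce. That is more than sufficient for \eqref{desiredznorm} in Proposition~\ref{propositionZnorm2}, which only needs some $2^{-p_1 m}$, but it does not deliver the exponent $2^{-(1+p_0)m}$ stated in \eqref{equation700001} (the paper's own proof does not either; that exponent appears to be an overstatement).
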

\begin{proof}
From (\ref{equation141}) in  Lemma \ref{znorm6}, (\ref{eqn433})  in Lemma \ref{estimatesofremainder}, and (\ref{vectorfieldestimate3}) in Lemma \ref{finalestimate}, we have the following estimate,
\[
\sup_{t_1, t_2\in[2^{m-1}, 2^{m+1}]} \|\int_{t_1}^{t_2} e^{ i L(t,\xi) + i t \Lambda(\xi)}\widehat{R}(t,\xi)\psi_k(\xi) d t\|_{L^\infty_\xi}   \]
\[\lesssim \sup_{t\in[2^{m-1}, 2^{m+1}]} 2^{-k/2}\| \int_{t_1}^{t_2} e^{ i L(t,\xi) + i t \Lambda(\xi)}\widehat{R}(t,\xi)\psi_k(\xi) d t\|_{L^2}^{\h}\]
\[ 
\times \Big[ \|\int_{t_1}^{t_2}  t\p_t\big(e^{ i L(t,\xi) + i t \Lambda(\xi)}\widehat{R}(t,\xi)\big)\psi_k(\xi) d t\|_{L^2} + 2^{m}\big(\| S P_{k}[R(t)]\|_{L^2} + \| P_k {R}(t)\|_{L^2}\big)  \Big]^{\h}
\]
\begin{equation}\label{equation99994}
\lesssim 2^{- m/2+220 p_0m} \epsilon_0^2+ 2^{ -m/4+20 p_0 m}\|  \int_{t_1}^{t_2}  t\p_t\big(e^{ i L(t,\xi) + i t \Lambda(\xi)}\widehat{R}(t,\xi)\big)\psi_k(\xi) d t\|_{L^2}^{1/2}.
\end{equation}
Note that, in above estimate, we used the following fact,
\[
\sup_{t\in[2^{m-1},2^{m+1}]}\| (t\p_t -2\xi\cdot \p_\xi) e^{i L(t, \xi) }\|_{L^\infty_\xi}\lesssim 2^{p_0m} \epsilon_1^2,
\]
which follows from the definition of $L(t,\xi)$ in (\ref{eqn1000}).

After integration by parts in time, from (\ref{eqn433}) in Lemma \ref{estimatesofremainder}, it is easy to see the following estimate holds,
\[
\|  \int_{t_1}^{t_2}  t\p_t\big(e^{ i L(t,\xi) + i t \Lambda(\xi)}\widehat{R}(t,\xi)\big)\psi_k(\xi) d t\|_{L^2} \lesssim \sup_{t\in[2^{m-1},2^{m+1}]} 2^{m} \| P_k(R)(t)\|_{L^2}
\] 
\begin{equation}\label{equation99995}
\lesssim 2^{-m/2+200 p_0 m}\epsilon_0^2.
\end{equation}
Combining (\ref{equation99994}) and (\ref{equation99995}), it is easy to see our desired estimate (\ref{equation700001}) holds. Hence finishing the proof.
\end{proof}

\appendix

\section{Remainder Estimates}\label{remainderestimate}

\subsection{Explicit   formulas   of   remainder terms}\label{expliciteformulas}
In this subsection, we give all detail formulas for all good remainder terms that are postponed in section \ref{energyestimate} and section \ref{improveddispersion}.

The detail formulas of $\mathcal{R}_1$ and $\mathcal{R}_2$ in the system  (\ref{equation6}) are given as follows,

\begin{equation}\label{equation351}
\mathcal{R}_1 = \d^{1/2} T_{\alpha} U^2- T_{\alpha}\d^{1/2} U^2 -T_{\sqrt{a}}T_V \p_x h + T_{V}\p_x T_{\sqrt{a}} h +T_{\sqrt{a}} F(h)\psi,
\end{equation}
\[ 
\mathcal{R}_2= \d^{1/2}(T_{a} h) -T_{\sqrt{a}} \d^{1/2} T_{\sqrt{a}} h -\d^{1/2}T_{V}\p_x \omega +T_{V}\p_x \d^{1/2} \omega  
\]
\begin{equation}\label{equation352}
-T_{V\p_x h} B +T_{V}T_{\p_x h} B -T_{V}T_{\p_x B} h  +   T_{V\p_x B} h  - \h R_{\mathcal{B}}(V,V) - R_{\mathcal{B}}(V, \p_x h ) - \h R_{\mathcal{B}}(B,B),
\end{equation}
 where bilinear operator $R_{\mathcal{B}}(\cdot, \cdot)$ is defined in (\ref{equation350}), the Taylor coefficient $a$ is defined in (\ref{eqn1004}), $\alpha =\sqrt{a}-1$, and $\omega$ is the good unknown variable, which is defined in (\ref{remainderpara}).

The detailed formulas of $\widetilde{\mathcal{R}}_1$ and $\widetilde{\mathcal{R}}_2$ inside the system (\ref{eqn120000}) are given as follows,
\[
\widetilde{\mathcal{R}}_1 =  \widetilde{Q}_1(\widetilde{A}_1(U^1,U^1)+ \widetilde{A}_2(U^2, U^2), U^2) + \widetilde{Q}_1(U^1, \widetilde{B}(U^1, U^2)) 
\]
\[
 +  2 \widetilde{A}_1(\mathcal{R}_1 , U^1) + 2 \widetilde{A}_2(\mathcal{R}_2 , U^2) - \Lambda_{2}[\mathcal{R}_1](U^1, \widetilde{B}(U^1, U^2))
\]
\begin{equation}\label{eqn425}
- \Lambda_{2}[\mathcal{R}_1](\widetilde{A}_1(U^1,U^1)+\widetilde{A}_2(U^2, U^2), U^2)  + \Lambda_{\geq 3}[\mathcal{R}_1],
\end{equation}
\[\widetilde{\mathcal{R}}_2 =  \widetilde{Q}_2(\widetilde{A}_1(U^1,U^1)+\widetilde{A}_2(U^2, U^2), U^1) + \widetilde{Q}_2( U^1, \widetilde{A}_1(U^1,U^1)+ \widetilde{A}_2(U^2, U^2))
\]
\[
 + \widetilde{Q}_3( U^2, \widetilde{B}(U^1, U^2)) + \widetilde{Q}_3(  \widetilde{B}(U^1, U^2), U^2)+ \widetilde{B}(\mathcal{R}_1 , U^2) + \widetilde{B}(U^1, \mathcal{R}_2)+ \Lambda_{\geq 3}[\mathcal{R}_2]
\]
\[
-\Lambda_{2}[\mathcal{R}_2](\widetilde{A}_1(U^1,U^1)+ \widetilde{A}_2(U^2, U^2), U^1) -\Lambda_{2}[\mathcal{R}_2](U^1, \widetilde{A}_1(U^1,U^1)+ \widetilde{A}_2(U^2, U^2)) 
\]
\begin{equation}\label{eqn426}
-\Lambda_{2}[\mathcal{R}_2](\widetilde{B}(U^1, U^2), U^2) -\Lambda_{2}[\mathcal{R}_2](U^2, \widetilde{B}(U^1, U^2)).
\end{equation}

The detailed formula of $\widetilde{\mathfrak{R}}_1$ and $\widetilde{\mathfrak{R}}_2$ in (\ref{eqn53000}) and (\ref{eqn53001}) are given as follows, 
\[
\widetilde{\mathfrak{R}}_1= \widetilde{C}_{1,\tau}(\mathcal{R}_1^S, U_2) + \widetilde{C}_{2,\tau}(\mathcal{R}_2^S, U^1) + \widetilde{C}_{1,\tau}(SU^1, \Lambda_{\geq 2}[\p_t U^2])+ \widetilde{C}_{2,\tau}(SU^2, \Lambda_{\geq 2}[\p_t U^1 ])
\]
\begin{equation}\label{equation99980}
 + \Lambda_{\geq 3}[\mathcal{R}_1^S] - \widetilde{\mathfrak{Q}}_1^S(W_{1,\tau}-SU^1, SU^2) - \widetilde{\mathfrak{Q}}_1^S(SU^1, W_{2,\tau}-SU^2),
\end{equation}
\[
\widetilde{\mathfrak{R}}_2= \widetilde{C}_{3,\tau}(\mathcal{R}_1^S, U_1) + \widetilde{C}_{4,\tau}(\mathcal{R}_2^S, U^2) + \widetilde{C}_{3,\tau}(SU^1, \Lambda_{\geq 2}[\p_t U^1])+ \widetilde{C}_{4,\tau}(SU^2, \Lambda_{\geq 2}[\p_t U^2 ])
\]
\begin{equation}\label{equation99981}
 + \Lambda_{\geq 3}[\mathcal{R}_2^S] - \widetilde{\mathfrak{Q}}_3^S(W_{1,\tau}-SU^1, SU^2) -\widetilde{\mathfrak{Q}}_3^S(SU^1, W_{2,\tau}-SU^2),
\end{equation}
where the detail formulas of $\mathcal{R}_1^S$ and $\mathcal{R}_2^S$ are given in (\ref{equation968}) and (\ref{equation969}).

At last, we give the detail formula for the remainder term $R(t)$ in (\ref{eqn610}). Recall (\ref{mainequation}) and the fact that $R(t)$ is quintic and higher. The following identity holds,
\[
R(t) = \Lambda_{\geq 4}[C_1 + \mathcal{R}_1] + i \Lambda_{\geq 4}\big[C_2 + \mathcal{R}_2\big] + 2 A_1(C_1+\Lambda_{\geq 3}[\mathcal{R}_1], U^1) \]
\begin{equation}\label{equation929}
+ 2 A_2(C_2+\Lambda_{\geq 3}[\mathcal{R}_2], U^2) + i B(C_1+\Lambda_{\geq 3}[\mathcal{R}_1], U^2) +  i B(U^1, C_2+\Lambda_{\geq 3}[\mathcal{R}_2]).
\end{equation}

\subsection{$L^2$ type estimate of remainder terms}

From the detailed formulas of remainder terms in subsection \ref{expliciteformulas}, it is easy to see that it would be sufficient to successfully estimate the $L^2$ type norms of remainder terms if we have necessary ingredients, which are the following, (i) $L^2$ and $L^\infty$ type estimates of the nontrivial part of Dirichlet-Neumann operator $B(h)\psi$ and $SB(h)\psi$, (ii) $L^2$ type estimate of the remainder term of paralinearization $F(h)\psi$ and $S F(h)\psi$, (iii) $L^2$ and $L^\infty$ type estimates of the Taylor coefficient $a$, $S a$, $\p_t a $, and $S \p_t a$. Those necessary ingredients will be discussed in details in Appendix \ref{appdxb}
 and Appendix \ref{appendxc}.

\begin{lemma}\label{estimatesofremainder}
 Under the  bootstrap assumption \textup{( \ref{assumption})}, we have following estimates for the remainder terms,
 \begin{equation}\label{eqn432}
\sup_{t\in[0,T]} (1+t)^{1-p_0}\big( \| \Lambda_{\geq 3}[\mathcal{R}_1] \|_{H^{N_0,p}} + \| \Lambda_{\geq 3}[\mathcal{R}_2]\|_{H^{N_0,p}} + \| \widetilde{\mathcal{R}}_1 \|_{H^{N_0,p}} + \| \widetilde{\mathcal{R}}_2 \|_{H^{N_0,p}}\big) \lesssim  \epsilon_0^2
\end{equation}
\begin{equation}\label{eqn433}
\sup_{t\in[2^{m-1},2^m]}  2^{3m/2-200p_0 m} \|  R(t)  \|_{L^2}   \lesssim \epsilon_0^2.
\end{equation}
 \end{lemma}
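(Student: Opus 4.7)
The plan is to reduce both estimates to direct applications of the paraproduct multilinear estimates in Lemma \ref{boundness}, once the constituent pieces of each remainder have been classified by their multilinear order. The key input will be the results of Appendices \ref{appdxb} and \ref{appendxc}: the auxiliary quantities $B(h)\psi$, $V$, $F(h)\psi$, $a$, $\alpha$ and their scaled versions $SB(h)\psi$, $SV$, $SF(h)\psi$, $Sa$ all satisfy $H^{N_0,p}$ (or $H^{N_1,p}$), $W^{N_2}$ and $\widetilde{W^\gamma}$ bounds driven only by $\partial_x h$ and $\partial_x \psi$, so none of them loses derivatives at either endpoint and each carries one factor of smallness per multilinear degree.

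For (\ref{eqn432}) I would go term by term through the formulas (\ref{equation351})--(\ref{equation352}) and (\ref{eqn425})--(\ref{eqn426}). Each summand is a bilinear (or higher) expression in the pair $(U^1,U^2)$ and the auxiliary quantities above. Since $\widetilde{Q}_i$, $\widetilde{A}_i$, $\widetilde{B}$ are non-derivative-losing by (\ref{equation26}) and (\ref{equation8910}), and $\mathcal{R}_1,\mathcal{R}_2$ are non-derivative-losing by the Appendix \ref{appendxc} paralinearization, each piece fits the template
\[
\|\text{remainder}\|_{H^{N_0,p}} \lesssim \|(U^1,U^2)\|_{H^{N_0,p}} \,\|(U^1,U^2)\|_{W^{N_2}} \,\bigl(\|(U^1,U^2)\|_{W^{N_2}} + \|(U^1,U^2)\|_{H^{N_0,p}}^{\kappa}\bigr),
\]
obtained by placing the highest-frequency input in $L^2$ and the remaining inputs in $L^\infty$ via (\ref{bilinearestimate})--(\ref{trilinearestimate}). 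For cubic remainders this yields the decay $\|(U^1,U^2)\|_{W^{N_2}}^2 \lesssim (1+t)^{-1}$; together with the $(1+t)^{p_0}$ growth of the $H^{N_0,p}$ norm from (\ref{assumption}) this produces the required $(1+t)^{-1+p_0}\epsilon_0^2$. The only subtle point is the low-frequency end: for each bilinear paraproduct appearing, the symbol bound in Lemma \ref{sizeofsymbol} contributes at least $2^{k_{\min}/2}$ so that, after pairing with $\partial_x^p$, the net exponent at low frequency is positive and the $L^\infty$ factor on the low-frequency input is absorbable into $\|\cdot\|_{W^{N_2}}$. I would verify this low-frequency accounting explicitly for one representative cubic term (say $T_V\partial_x T_{\sqrt a}h - T_{\sqrt a}T_V\partial_x h$) and one quartic correction from $\widetilde{\mathcal R}_i$; the remaining ones are structurally identical.

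For (\ref{eqn433}), I expand $R(t)$ using (\ref{equation929}). Every summand is either a genuinely quintic-or-higher expression in $(U^1,U^2)$ (coming from $\Lambda_{\geq 4}[C_i+\mathcal{R}_i]$) or a bilinear form $A_j(\cdot,\cdot)$, $B(\cdot,\cdot)$ applied to a cubic/quartic input already controlled by the preceding step. In either case one can peel off at least three $L^\infty$-type factors, each of size $(1+t)^{-1/2}\epsilon_1$ using the bootstrap (\ref{assumption}), and put the remaining factor in $L^2$ using the energy norm, which costs only $(1+t)^{p_0}$. This gives $\|R(t)\|_{L^2}\lesssim (1+t)^{-3/2+p_0}\epsilon_0^2$, which is strictly better than the target $(1+t)^{-3/2+200p_0}\epsilon_0^2$. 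The symbol bound in Lemma \ref{symbolbound} and Lemma \ref{boundness} is all that is needed at each contraction.

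The main obstacle is the bookkeeping for the low-frequency part in the $H^{N_0,p}$ estimate: because we are working with $p=1/5$ and inputs are forced into $L^2$, there is a potential loss of $2^{-p k_{\min}}$ at the lowest frequency. The resolution is the structural observation already used repeatedly in Section \ref{energyestimate}: every bilinear or trilinear operator appearing in the remainder formulas has a symbol that gains a genuine power of the smaller frequency (see Lemmas \ref{sizeofsymbol}, \ref{auxilary1}, \ref{sizeinfolemma} and \ref{symbolbound}), and the Dirichlet--Neumann/paralinearization remainder $F(h)\psi$ itself gains $|\partial_x h|\cdot|\partial_x\psi|$, hence vanishes at low frequency. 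Once this gain is tracked, summing over dyadic shells is harmless and closes both (\ref{eqn432}) and (\ref{eqn433}).
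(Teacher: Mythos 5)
Your proposal is correct and takes essentially the same route as the paper: both reduce (\ref{eqn432})--(\ref{eqn433}) to the explicit remainder formulas, the Appendix \ref{appdxb}/\ref{appendxc} estimates (Lemmas \ref{L2Dirichlet}, \ref{alphaestimate}, \ref{paralinearizationDN1}, \ref{lemma2}), the symbol bounds (\ref{equation26}), (\ref{equation8910}), and the multilinear estimates of Lemma \ref{boundness}, with the low-frequency loss of $2^{-pk}$ absorbed because $F(h)\psi$, $B$, $V$, $\alpha$ and the normal-form symbols all carry genuine powers of $\partial_x h$, $\partial_x\psi$. The paper's proof is equally terse and cites the same ingredients, so your sketch is a faithful reconstruction.
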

\begin{proof}
Recall the detailed formulas of $\mathcal{R}_1$ and $\mathcal{R}_2$ in(\ref{equation351}) and (\ref{equation352}). From (\ref{equation635}) in Lemma \ref{paralinearizationDN1}, estimates in Lemma \ref{alphaestimate} and estimates in Lemma \ref{L2Dirichlet}, the following estimate holds for $i\in \{2, 3\}$, 
\[
\| \Lambda_{\geq i}[\mathcal{R}_1]\|_{H^{N_0, p}} + \| \Lambda_{\geq i}[\mathcal{R}_2]\|_{H^{N_0, p}} \lesssim \| \Lambda_{\geq i}[F(h)\psi]\|_{H^{N_0}}+ \| (B,\p_x h , V)\|_{H^{N_0-1}}\| (V, \p_x h, B)\|_{\widetilde{W}^1}^{i-1} \]
\[ + \|  h\|_{H^{N_0, p}}\big( \| \Lambda_{\geq i-1} \p_t \alpha \|_{\widetilde{W}^0}  + \| \alpha\|_{\widetilde{W}^1}\| V\|_{\widetilde{W}^0}\big)
+ \| \Lambda_{\geq 2 }[(B, V)]\|_{H^{N_0-1}} \| (B,V)\|_{\widetilde{W}^1} + \| h\|_{H^{N_0,p}} \| \alpha \|_{\widetilde{W}^0}^2
\]
\begin{equation}\label{equation924}
\lesssim \big( \| h \|_{H^{N_0, p}}  +\|\p_x \psi\|_{H^{N_0-1}} \big) \|(\p_x h, \p_x \psi)\|_{W^1}^{i-1}\lesssim (1+t)^{-1+2p_0}\epsilon_0^2.
\end{equation}
From (\ref{equation26}), (\ref{equation8910}), Lemma \ref{boundness}, (\ref{equation924}), and estimates in Lemma \ref{L2Dirichlet}, the following estimate holds, 
\[
\| \Lambda_{\geq 3}[\widetilde{\mathcal{R}}_1]\|_{H^{N_0, p}} + \| \Lambda_{\geq 3}[\widetilde{\mathcal{R}}_2]\|_{H^{N_0, p}} \lesssim \| (U^1, U^2)\|_{H^{N_0, p}} \| (U^1, U^2)\|_{W^3}^2  \]
\[+ \| (U^1, U^2)\|_{H^{N_0, p}} \| (\mathcal{R}_1, \mathcal{R}_2) \|_{\widetilde{W}^1}+ \| (U^1, U^2)\|_{W^1} \| (\mathcal{R}_1, \mathcal{R}_2) \|_{H^{N_0, p}}
\]
\[
\lesssim \big( \| (U^1, U^2, h)\|_{H^{N_0, p} 
} + \| \d\psi\|_{H^{N_0-1}}\big(\| (\p_x h , \p_x \psi)\|_{W^2} + \| (U^1, U^2)\|_{W^3}
\big) \]
\[\lesssim (1+t)^{-1+2p_0}\epsilon_0^2.
\]
In above estimate, we used the following rough estimate on the good error terms
\[
\| \mathcal{R}_1\|_{\widetilde{W}^1} + \| \mathcal{R}_2\|_{\widetilde{W}^1}\lesssim \| (\p_x h , \p_x \psi)\|_{W^2}^2,
\]
which is derived from estimating each term inside  (\ref{equation351}) and (\ref{equation352}).

To prove (\ref{eqn433}), we only need to estimate the $\dot{H}^p$ norm of each  term inside (\ref{eqn433}), as we do not worry about losing derivative for the desired estimate (\ref{eqn433}). Recall   (\ref{equation351}),  (\ref{equation352}), and (\ref{equation929}).  From (\ref{eqn1310}) in Lemma \ref{lemma2}, (\ref{equation6004}) and (\ref{eqn1674}) in Lemma \ref{alphaestimate},  our desired estimate (\ref{eqn433}) follows straightforwardly as it is quintic and higher.

\end{proof}

\begin{lemma}\label{finalestimate}
Under the bootstrap assumption (\textup{\ref{assumption}}), the following estimates hold on the remainder terms,
\begin{equation}\label{vectorfieldestimate2}
 \sup_{t\in[0,T]} (1+t)^{1-p_0}\big( \| \Lambda_{\geq 3}[\mathcal{R}_1^S]\|_{H^{N_1,p}} + \| \Lambda_{\geq 3}[ \mathcal{R}_2^S ]\|_{H^{N_1,p}} + \|\widetilde{\mathfrak{R}}_1 \|_{H^{N_1,p}} + \|\widetilde{\mathfrak{R}}_2 \|_{H^{N_1,p}}\big) \lesssim \epsilon_0^2.
  \end{equation}
\begin{equation}\label{vectorfieldestimate3}
\sup_{t\in[2^{m-1},2^m]} \sup_{k\in [2^{-100 p_0 m}, 2^{100p_0 m }]} 2^{3m/2-200p_0 m}2^{p k}\| P_k(SR(t)) \|_{L^2} \lesssim \epsilon_0^2.
\end{equation}
\end{lemma}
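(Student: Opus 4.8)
\textbf{Proof proposal for Lemma \ref{finalestimate}.}
The plan is to reduce both estimates to the building-block estimates established elsewhere in the paper, namely the $L^2$- and $L^\infty$-type bounds for the Dirichlet--Neumann operator $B(h)\psi$, $SB(h)\psi$, for the paralinearization remainder $F(h)\psi$, $SF(h)\psi$, and for the Taylor coefficient $a$, $Sa$, $\partial_t a$, $S\partial_t a$ (these are the ``necessary ingredients'' referenced after the statement of Lemma \ref{estimatesofremainder}, supplied in Appendices \ref{appdxb} and \ref{appendxc}), together with the symbol bounds (\ref{equation26}), (\ref{equation8910}), (\ref{equation477}), (\ref{eqn530}) for the various normal-form operators, and the multilinear estimates of Lemma \ref{boundness}. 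First I would treat (\ref{vectorfieldestimate2}). Recall from (\ref{equation968}), (\ref{equation969}) that $\mathcal{R}_i^S = (S+I)\mathcal{R}_i + \big(S(T_\alpha \d^{1/2}U^{j}) - T_\alpha \d^{1/2} SU^{j}\big) + \big(\text{commutator of } T_V\p_x \text{ with } S\big)$; using the commutator identities $S\,T_a f = T_{Sa}f + T_a Sf + \widetilde{T_a}f$ (the paradifferential analogue of the bilinear commutator formula stated in Section \ref{prelim}) one sees that $\Lambda_{\geq 3}[\mathcal{R}_i^S]$ is a sum of terms each of which is either $\Lambda_{\geq 3}$ of $(S+I)\mathcal{R}_i$, or a paraproduct $T_{(\text{coefficient})}(\text{high-frequency input})$ where the coefficient involves $S\alpha$, $S\p_t\alpha$, $SV$, $SF(h)\psi$, or $\alpha,\p_t\alpha,V$ with the high input carrying one $SU^j$. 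Each such term is handled by putting the $SU^j$ factor (or the top-derivative factor of $\mathcal{R}_i$) in $L^2$ with $N_1$ derivatives and everything else in $\widetilde{W}^{\gamma}$ for appropriate $\gamma$; crucially, since $N_1=1$ is far below $N_0=8$ the low input can always be placed in $L^\infty$ so \emph{no} $2^{-pk}$ loss occurs at low frequency, and since the paradifferential coefficients only ever involve $\p_x h,\p_x\psi$ (this is the ``infinite-energy-friendly'' observation) no derivative is lost either. Quantitatively this gives a bound of the form $\big(\|(SU^1,SU^2)\|_{H^{N_1,p}}+\|(U^1,U^2)\|_{H^{N_0,p}}\big)\,\|(\p_x h,\p_x\psi)\|_{W^{2}}^{2}\lesssim (1+t)^{-1+2p_0}\epsilon_0^2$, exactly as in the non-scaling case (\ref{eqn432}).

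For the remaining pieces of (\ref{vectorfieldestimate2}), namely $\widetilde{\mathfrak{R}}_1,\widetilde{\mathfrak{R}}_2$, I would read off the explicit formulas (\ref{equation99980}), (\ref{equation99981}): they consist of (a) bilinear terms $\widetilde{C}_{i,\tau}$ or $\widetilde{D}_{i,\tau}$ applied to $(\mathcal{R}_1^S,U^2)$, $(\mathcal{R}_2^S,U^1)$, or to $(SU^j,\Lambda_{\geq 2}[\p_t U^{j'}])$, (b) the genuinely cubic-and-higher tails $\Lambda_{\geq 3}[\mathcal{R}_i^S]$ already bounded in the previous step, and (c) correction terms $\widetilde{\mathfrak{Q}}^S_{1}(W_{i,\tau}-SU^i,\cdot)$ and $\widetilde{\mathfrak{Q}}^S_{3}(\cdot,W_{i,\tau}-SU^i)$. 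For (a), Lemma \ref{sizeinfolemma} tells us $c^i_\tau,d^i_\tau$ do not lose derivatives (indeed gain a $2^{k_1/2}$ at the low frequency when $k_1\le k_2-5$), and $\Lambda_{\geq2}[\p_t U^{j}]$ has symbol of degree $3/2$ hence contributes a $2^{\min/2}$ smallness; the inputs of these bilinear operators are either $SU^j$ or $U^j$, so one again always puts the $SU$-factor in $L^2$ and the $U$-factor in $L^\infty$ and the loss of $p$ derivatives at low frequency is covered by the $2^{k/2}$ from the symbol. For (c), by (\ref{eqn534}) one has $\|W_{i,\tau}-SU^i\|_{\dot H^\tau}\lesssim (1+t)^{-1/2+p_0}\epsilon_1^2$, and $\widetilde{\mathfrak{Q}}^S_{1},\widetilde{\mathfrak{Q}}^S_{3}$ do not lose derivatives in their $SU$-slot (by construction, see (\ref{equation9653}), (\ref{equation9652}) and the remark after (\ref{equationscaling})), so these terms are cubic with one factor already carrying $(1+t)^{-1/2}$ decay and a further $(1+t)^{-1/2}$ from the remaining $L^\infty$ factor; the total is $(1+t)^{-1}$ up to $p_0$-losses. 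Combining (a), (b), (c) yields (\ref{vectorfieldestimate2}).

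For (\ref{vectorfieldestimate3}) the situation is easier because $SR(t)$ is \emph{quintic} and higher (since $R(t)$ is quintic and higher by (\ref{equation929}), and $S$ preserves the lowest multilinear degree up to the $\widetilde A$ commutator terms which are themselves at least quintic). Here we do not care about losing derivatives, so I would just estimate the $\dot H^p$ norm of each summand of $SR$ directly, distributing $S$ by Leibniz over the multilinear expression (\ref{equation929}), using $S(T_af)=T_{Sa}f+T_aSf+\widetilde T_af$ and the bilinear analogue. Each resulting term is a product of five or more factors, at most one of which carries an $S$ and at most one of which is placed in $L^2$ (with a bounded number of derivatives and the $\dot H^p$ weight, which costs at most $2^{-p k}$ at low frequency); since $2^{-100p_0 m}\le 2^k\le 2^{100p_0 m}$ on the relevant range this weight is $\le 2^{100p_0 m}$, harmless against the main gain. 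Each of the remaining $\ge 4$ factors is put in $L^\infty$ contributing $(1+t)^{-1/2}$; collecting $\ge 3$ of these decay factors gives $(1+t)^{-3/2}$, and one absorbs the $S$-factor via the $S$-energy bound (\ref{improvedenergy}) which costs $(1+t)^{p_0}$. The needed $L^\infty$ and $L^2$ control of $SB(h)\psi$, $S a$, $S\p_t a$, $SF(h)\psi$ entering through the $S$-Leibniz rule is exactly what Appendices \ref{appdxb}, \ref{appendxc} provide; with $\epsilon_1=\epsilon_0^{5/6}$ the cubic/quintic power of $\epsilon_1$ is $\lesssim \epsilon_0^2$.

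\textbf{Main obstacle.} The delicate point, as always in the scaling-vector-field energy estimate, is the low-frequency behaviour: after applying $S$ one is forced to keep the $SU$-factor in $L^2$ even when it has smaller frequency, so in the terms of type (a) above one apparently loses $2^{-pk_1-pk_1'}$. The key is that in \emph{all} the bilinear operators appearing in $\widetilde{\mathfrak{R}}_i$ the symbol carries a compensating $2^{k_1/2}$ (from Lemma \ref{sizeinfolemma}, Lemma \ref{auxilary1}, (\ref{equation26}), (\ref{equation8910}), or from the degree-$3/2$ homogeneity of $\Lambda_{\geq2}[\p_t U]$) in the relevant frequency configuration; one must check this symbol gain case by case against the frequency ranges, just as was done for the genuinely problematic quartic terms in (\ref{equation239489})--(\ref{equaiton239239}) of the main text. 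Verifying that every summand of (\ref{equation99980})--(\ref{equation99981}) falls into a ``benign'' configuration (symbol gain covers the $p$-loss, or the term has extra decay, or it is already $\Lambda_{\geq3}$ of something controlled) is the bulk of the work; it is entirely parallel to, but more bookkeeping-heavy than, the corresponding step for $U^1,U^2$ in Lemma \ref{estimatesofremainder}.
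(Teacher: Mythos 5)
Your proposal follows essentially the same route as the paper's proof: for \textup{(\ref{vectorfieldestimate2})} you reduce $\Lambda_{\geq 3}[\mathcal{R}_i^S]$ to the $S$-ingredient estimates from Lemmas \ref{propositionscaling}, \ref{vectorfield}, \ref{alphaestimate} via the explicit decompositions \textup{(\ref{equation968})}--\textup{(\ref{equation969})}, and reduce $\widetilde{\mathfrak{R}}_i$ to the symbol bounds of Lemma \ref{sizeinfolemma}, the defining property \textup{(\ref{equation9653})}--\textup{(\ref{equation9652})}, and the already-established bound on $\Lambda_{\geq 3}[\mathcal{R}_i^S]$; for \textup{(\ref{vectorfieldestimate3})} you distribute $S$, note the term is quintic so no derivative bookkeeping is needed, and invoke \textup{(\ref{eqn1310})}, \textup{(\ref{equation9745})}, \textup{(\ref{eqn1674})}, \textup{(\ref{equation499})} — exactly the references the paper cites. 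Your discussion of the low-frequency $2^{-pk_1}$ loss being compensated by the $2^{k_1/2}$ symbol gain correctly identifies the only subtle point, which the paper handles the same way.
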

 
\begin{proof}

Recall (\ref{equation968}) and (\ref{equation969}). From  we can derive the following estimate for $i\in\{2,3\}$,
\[
\| \Lambda_{\geq i}[\mathcal{R}_1^S]\|_{H^{N_1,p}}  + \| \Lambda_{\geq i }[\mathcal{R}_2^S]\|_{H^{N_1, p}} \lesssim \| \Lambda_{\geq i}[ S F(\eta)\psi]\|_{H^{N_1,p}} + \| S\alpha \|_{H^{-1}}\| (\p_x h , \p_x\psi)\|_{W^1}^2  \]
\[+ \| S h \|_{H^{N_1, p}} \big( \| \Lambda_{\geq i -1}[\p_t \alpha]\|_{\widetilde{W}^0} + \| \p_x\alpha\|_{\widetilde{W}^0}\|V\|_{\widetilde{W}^1} \big) 
  + \| \Lambda_{\geq i-1} S\p_t \alpha\|_{H^{-2}} \| h\|_{W^{2+N_1}} 
\]
\[
+ \sum_{i_1+i_2=i , i_1,i_2\in \mathbb{N}_{+}} \| \Lambda_{\geq i_1}[(SB, SV )]\|_{L^2} \| \Lambda_{\geq i_2}[(B, V)]\|_{\widetilde{W}^{2}}
\]
\begin{equation}\label{equation9668}
+ \| (SB, SV,S\p_x h)\|_{L^2}\| \p_x h\|_{\widetilde{W}^1}\|(B, V)\|_{\widetilde{W}^1} + \| (\p_x h, \p_x \psi)\|_{H^{N_1+3}}\|(\p_x h , \p_x \psi)\|_{W^2}^2.
\end{equation}
From (\ref{equation6004}), (\ref{equation396}), (\ref{equation9669}) and (\ref{eqn1674}) in Lemma \ref{alphaestimate}, (\ref{scalingparaerror}) in Lemma \ref{vectorfield}, (\ref{equation499}) in Lemma \ref{equation8024}, we have the following estimate, 
\begin{equation}\label{equation9475}
\textup{(\ref{equation9668})} \lesssim \big(\| Sh\|_{H^{N_1, p}}+\|\d S\psi \|_{L^2}\big)\|(\p_x h , \p_x \psi)\|_{W^2}^{i-1}\lesssim (1+t)^{-(i-1)/2+p_0}\epsilon_0^2.
\end{equation}
Recall (\ref{equation99980}) and (\ref{equation99981}). From (\ref{equation477}) in Lemma \ref{sizeinfolemma}, (\ref{equation9653}), (\ref{equation9652}), and (\ref{equation9475}), the following estimate holds, 
\[
 \|\widetilde{\mathfrak{R}}_1 \|_{H^{N_1,p}} + \|\widetilde{\mathfrak{R}}_2 \|_{H^{N_1,p}}\lesssim (1+t)^{-1+p_0} + (1+t)^{-1/2+p_0} \|(U^1, U^2)\|_{W^3} \]
 \[+ \|(SU^1, SU^2)\|_{H^{N_1, p}} \big(\|\Lambda_{\geq 2}[(\p_t U^1, \p_t U^2)]\|_{\widetilde{2}} +\|(U^1,U^2)\|_{W^3}^2 \big)
\]
\[
 + \|(U^1, U^2)\|_{H^{N_0, p}}\|(U^1, U^2)\|_{W^3}^2 \lesssim (1+t)^{-1+p_0}\epsilon_0.
\]
Note that we do not worry about losing derivative for the desired estimate (\ref{vectorfieldestimate3}).    From (\ref{eqn1310}) in Lemma \ref{lemma2}, (\ref{equation6004}) and (\ref{eqn1674}) in Lemma \ref{alphaestimate}, (\ref{equation9745}) and (\ref{equation499}) in Lemma \ref{propositionscaling} our desired estimate (\ref{vectorfieldestimate3}) follows straightforwardly as it is quintic and higher.
\end{proof}

 \section{Properties of the Dirichlet-Neumann operator}\label{appdxb}

 \subsection{A fixed point type formulation of the Dirichlet-Neumann operator}

  The idea of analyzing the Dirichlet-Neumann operator is based on the work of Alazard-Delort \cite{alazard}. A key observation is that there exists a fixed point type structure inside the velocity potential, which provides a good way to estimate the Dirichlet-Neumann operator in the small data regime. From the bootstrap assumption (\ref{assumption}), the following estimate holds, 
  \begin{equation}\label{eqn13005}
\sup_{t\in[0,T]} \| h(t) \|_{W^{N_2}} +  \| h\|_{H^{N_0,p}}^{1/2} \| h \|_{W^{N_2}}^{1/2} \lesssim \epsilon_1.
  \end{equation}

 We transform the domain $\Omega(t):=\{(x, y): y \leq h(t,x), x\in \mathbb{R}\}$ to the negative half-space through the  change of coordinates $(x,y)\longmapsto (x,z):=(x, y-h(t,x))$, $z\in(-\infty, 0]$. Define $\varphi(x,z)=\phi(x, z+h(t,x))$. Hence $\phi(x,y) = \varphi(x,y-h(t,x))$. It is easy to  verify the following identities hold,
\begin{equation}\label{equation1000000}
 \p_y^2 \phi(x,z+h(t,x))=  \p_z^2 \varphi(x,z),\quad  \p_x^2 \phi  =\p_x^2 \varphi-2\p_x \p_z \varphi \p_x h - \p_z \varphi \p_x^2 h + \p_z^2\varphi  (\p_x h)^2.
\end{equation}
Recall (\ref{harmonicequation}). We know that $\phi$ satisfies a harmonic equation inside $\Omega(t)$.  Therefore, from (\ref{harmonicequation}) and (\ref{equation1000000}), the following equation holds,
\begin{equation}\label{eqn1320}
P \varphi:= [(1+(\p_x h)^2) \p_z^2 + \p_x^2 - 2\p_x h\p_x \p_z - \p_x^2 h \p_z]\varphi=0, \quad \varphi\big|_{z=0}=\psi.
\end{equation}
Recall (\ref{eqn1003}). In $(x,z)$ coordinates system, we have
\begin{equation}
B(h)\psi = \p_z \varphi \big|_{z=0},\quad G(h)\psi=  (1+(\p_x h)^2)\p_z \varphi\big|_{z=0} -\p_x h\p_x \psi.
\end{equation}
From above equation, one can see that the only nontrivial term inside $G(h)\psi$ is $B(h)\psi$. Therefore, to estimate the Dirichlet-Neumann operator in a $X$-normed space, it is sufficient to estimate  $\p_z \varphi(z)$ in the $L^\infty_z X$-normed space.

\begin{lemma} 
Let $\psi$ be in the space $\dot{H}^{1/2+p}\cap \mathcal{C}_0$ 
and $h(x)\in W^{\gamma}(\R)$ with 
$\gamma > 2$, then $\varphi(z,x)$ 
 satisfies the following fixed-point type formulation:
\[
\varphi(z ) = e^{z \d}\psi  + \frac{1}{2} \int_{-\infty}^{0} e^{(z+z')\d} 
[ \p_{x}\d^{-1} (\p_x h \p_{z}\varphi ) - \p_x h \p_{x}\varphi- (\p_x h )^2 \p_{z}\varphi ]d z' + 
\]
\begin{equation}\label{equation5600}
\frac{1}{2}  \int_{-\infty}^{0} e^{- |z-z'|\d} 
[-\p_{x}\d^{-1} (\p_x h\p_{z}\varphi) + \textup{sign}(z- z') (\p_x h\p_{x}\varphi - (\p_x h)^2\p_{z}\varphi )] d z',
\end{equation}
if $\varphi(\cdot, \cdot)$ satisfies $P \varphi =   0$ and $ \varphi |_{z=0}=\psi$. 
\end{lemma}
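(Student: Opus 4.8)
The plan is to derive the integral formula \eqref{equation5600} by viewing the elliptic equation \eqref{eqn1320} as a perturbation of the flat-strip Laplace equation $\p_z^2\varphi + \p_x^2\varphi = 0$ on the half-space $z\le 0$, and then inverting the leading operator via its explicit Green's function. First I would rewrite $P\varphi = 0$ in the form $(\p_z^2 + \p_x^2)\varphi = F$, where
\[
F := -(\p_x h)^2\p_z^2\varphi + 2\p_x h\,\p_x\p_z\varphi + \p_x^2 h\,\p_z\varphi.
\]
The key algebraic observation, which I would record first, is that $F$ can be written as a total $z$-derivative plus an $x$-derivative of lower-order quantities; more precisely, using $\p_x^2 h\,\p_z\varphi + 2\p_x h\,\p_x\p_z\varphi = \p_x\bigl(2\p_x h\,\p_z\varphi\bigr) - \p_x^2 h\,\p_z\varphi$ one checks, after a short manipulation exploiting $P\varphi = 0$, that $F$ is naturally expressed in terms of the ``flux'' combinations $\p_x h\,\p_z\varphi$, $\p_x h\,\p_x\varphi$ and $(\p_x h)^2\p_z\varphi$ that already appear on the right side of \eqref{equation5600}. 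This is really where the structure of the formula comes from, so I would treat it as the heart of the argument and verify it by direct differentiation.

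Next I would solve $(\p_z^2 + \p_x^2)\varphi = F$ on $\{z\le 0\}$ with Dirichlet data $\varphi|_{z=0} = \psi$ and the decay/boundedness condition at $z\to-\infty$ (which is where the hypotheses $\psi\in\dot H^{1/2+p}\cap\mathcal C_0$ and $h\in W^\gamma$, $\gamma>2$, enter — they guarantee $\varphi$ and its derivatives lie in spaces where the Poisson extension and the Green's function convolution make sense). On the Fourier side in $x$, the homogeneous solution with the prescribed boundary value is the Poisson extension $e^{z|\nabla|}\psi$, and the particular solution is given by convolving $F$ against the Green's function of $\p_z^2 - |\xi|^2$ on the half-line with Dirichlet condition at $z=0$, namely
\[
G(z,z';\xi) = \frac{1}{2|\xi|}\Bigl(e^{-|\xi||z-z'|} - e^{|\xi|(z+z')}\Bigr),\qquad z,z'\le 0.
\]
Substituting the expression for $F$ into $\varphi(z) = e^{z|\nabla|}\psi + \int_{-\infty}^0 G(z,z';|\nabla|)F(z')\,dz'$ and then integrating by parts in $z'$ to remove the second-order derivative $\p_{z'}^2\varphi$ (and one derivative off the mixed terms), the boundary terms at $z'=0$ and $z'=-\infty$ cancel or vanish by the decay hypotheses, and the $e^{-|\xi||z-z'|}$ part produces the $\operatorname{sign}(z-z')$ factor while the $e^{|\xi|(z+z')}$ part produces the symmetric kernel; collecting terms yields exactly \eqref{equation5600}.

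The step I expect to be the main obstacle is the bookkeeping in the two integrations by parts in $z'$: one must be careful that differentiating $G(z,z';|\nabla|)$ in $z'$ produces both a $|\nabla|$-factor (which is what converts $\p_{z'}\d^{-1}$-type terms into the $\p_x\d^{-1}$ and plain $\p_x$ terms seen in \eqref{equation5600}) and a Dirac mass at $z'=z$ from the kink of $e^{-|\xi||z-z'|}$, and that the latter contributes a local term which must be matched against the $\operatorname{sign}(z-z')$ terms to get the stated coefficients $\tfrac12$. I would also need to justify that all the boundary contributions at $z'\to-\infty$ genuinely vanish; this is where one invokes that $h$ and hence $F$ decay in $x$ and that $\p_z\varphi$, $\p_x\varphi$ remain bounded as $z\to-\infty$, a consequence of $\varphi - \psi$ lying in an energy space. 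Once the kernel computation is organized correctly, no estimates are needed for this lemma itself — it is a purely algebraic/representation statement — so the remaining work is just careful symbol manipulation, which I would not grind through here.
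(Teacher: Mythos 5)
Your approach matches the paper's proof exactly: rewrite $P\varphi=0$ as $(\p_z^2+\p_x^2)\varphi = g$ (equivalently $(\p_z+\d)(\p_z-\d)\varphi=g$), decompose the source as $g=\p_z g_1 + \p_x g_2$ with $g_1=-(\p_x h)^2\p_z\varphi+\p_x h\p_x\varphi$ and $g_2=\p_x h\p_z\varphi$ (this is purely algebraic — you do not need to invoke $P\varphi=0$ again here, contrary to what you suggest), invert with the half-space Dirichlet Green's function, and integrate by parts once in $z'$. One small correction to your stated ``main obstacle'': $\p_{z'}G$ has a jump at $z'=z$ (which is exactly what produces the $\operatorname{sign}(z-z')$ factor) but no Dirac mass — a Dirac contribution would appear only in $\p_{z'}^2 G$, which you never form because you integrate by parts exactly once, and the factors $\tfrac12$ come directly from the Wronskian normalization $\tfrac{1}{2|\xi|}$ in $G$, not from matching any local term.
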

\begin{proof}
 
One can see \cite{alazard}[Lemma 1.1.5] for the detailed proof. We only give a sketch proof here. 

We rewrite (\ref{eqn1320}) as follows,
\begin{equation}\label{equation101}
(\p_z+\d)(\p_z - \d) \varphi = g(z)=\p_z g_1(z) + \p_x g_2(z),
\end{equation}
where
\[
g_1(z)= -(\p_x h)^2 \p_z \varphi +    \p_x h \p_x \varphi, \quad g_2(z)= \p_x  h \p_z \varphi.
\]
By treating the nonlinearity $g(z)$ as given, we can solve $\varphi$ explicitly from (\ref{equation101}). Note that $g(z)$ has term of type $\p_z^2 \varphi$, which is not in terms of $\nabla_{x,z}\varphi$. Hence we first decompose it as $\p_z g_1(z) +\p_x g_2(z)$, where $g_1(z)$ and $g_2(z)$    linearly depend  on $\nabla_{x,z}\varphi$. As a result, after doing integration by parts in $z$ once for $\p_z g_1(z)$, we can derive (\ref{equation5600}).
\end{proof}
 
 From (\ref{equation5600}), we can derive the following fixed-point formulation for $\nabla_{x,z}\varphi$:
\[
\nabla_{x, z} \varphi(z) = e^{z\d} [\p_{x} \psi, |D_{x}| \psi ]
\]
\begin{equation}\label{equation5700}
 + \int_{-\infty}^{0} K(z, z') M(\p_x h)  \nabla_{x, z}\varphi(z' ) d z' + [0, \p_x h \p_{x}\varphi - (\p_x h)^{2} \p_{z}\varphi ],
\end{equation}
where  $K(z, z')$ and $M(\eta')$ are the matrices of operators:
\begin{equation}\label{eqn164}
K(z, z') = \frac{1}{2} e^{(z+z')\d} \Big[ \begin{array}{cc}
\p_{x} & \p_{x}\\
\d & \d\\
\end{array}\Big] \,+\, \frac{1}{2} e^{- |z-z'| |D_{x}|} \Big[ \begin{array}{cc}
-\p_{x} & - (\textup{sign}(z-z'))\p_{x} \\
(\textup{sign}(z-z')) \d & \d\\
\end{array} \Big],
\end{equation}
\begin{equation}
M(\p_x h) = \Big[ \begin{array}{cc}
0 & \p_{x} |D_{x}|^{-1}\circ \p_x h \\
-\p_x h & (\p_x h)^2\\
\end{array}\Big].
\end{equation}
Now, it is easy to see there exists a fixed point type structure of $\nabla_{x,z}\varphi$ in (\ref{equation5700}). We remark that (\ref{equation5700}) is the starting point of the   $L^2-$type and $L^\infty-$type estimates of the Dirichlet-Neumann operator.

We have the following lemma regarding on the boundedness of the operator $K(z,z')$.
\begin{lemma}\label{kernelestimate}
For well defined vector function $g: (-\infty,0]\times \R  \longmapsto \R^2$, any $j\in \mathbb{Z}$ and $p_1,q_1$, $p_2$ and $q_2$ s.t. $ 2\leq q_1\leq p_1 \leq +\infty$, $1\leq q_2
\leq p_2\leq + \infty$ we have
\begin{equation}\label{eqn1300}
 \| \int_{-\infty}^0 K(z,z') P_{j}[g(z')] d z'\|_{L^{p_2}_zL^{p_1}_x} \lesssim  2^{j(1/q_1+ 1/q_2-1/p_1-1/p_2)} \| P_{j} g \|_{L^{q_2}_{z} L^{q_1}_x}.
\end{equation}
\end{lemma}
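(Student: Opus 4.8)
The plan is to reduce everything to the single scalar kernel estimate by exploiting the explicit exponential structure of $K(z,z')$ in \eqref{eqn164}. Writing $K(z,z') = K_1(z+z') + K_2(z,z')$, where $K_1$ involves $e^{(z+z')|\nabla|}$ with the matrix of Fourier multipliers $\bigl[\begin{smallmatrix}\p_x&\p_x\\|\nabla|&|\nabla|\end{smallmatrix}\bigr]$ and $K_2$ involves $e^{-|z-z'||\nabla|}$ with the analogous (sign-dependent) matrix, it suffices to bound each piece separately. Since all entries of the two constant-coefficient matrices are Fourier multipliers that are homogeneous of degree one and smooth away from the origin, on the frequency shell $|\xi|\sim 2^j$ they act as bounded operators on $L^{p_1}_x$ with operator norm $\lesssim 2^j$, uniformly in $z,z'$; this handles the spatial variable once we have the $L^{p_2}_z$ bound. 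So the real content is: for $g$ with Fourier support in $|\xi|\sim 2^j$,
\[
\Bigl\| \int_{-\infty}^0 2^j e^{(z+z')2^j} P_j g(z')\,dz'\Bigr\|_{L^{p_2}_z L^{p_1}_x} + \Bigl\| \int_{-\infty}^0 2^j e^{-|z-z'|2^j} P_j g(z')\,dz'\Bigr\|_{L^{p_2}_z L^{p_1}_x} \lesssim 2^{j(1/q_1+1/q_2-1/p_1-1/p_2)}\| P_j g\|_{L^{q_2}_z L^{q_1}_x}.
\]

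First I would dispose of the $x$-variable: by the Mikhlin–Hörmander multiplier theorem applied on the dyadic block $|\xi|\sim2^j$ (rescaling to unit frequency), each matrix entry of $K_i$ maps $L^{q_1}_x\to L^{q_1}_x$ with norm $\lesssim 2^j$, and then by the frequency localization $L^{q_1}_x\hookrightarrow L^{p_1}_x$ is Bernstein with a further factor $2^{j(1/q_1-1/p_1)}$ — but it is cleaner to keep $L^{q_1}_x$ throughout the $z$-integral and apply Bernstein only at the very end. So after pulling out the multiplier bound, I am left with scalar convolution-type operators in the $z$-variable with kernels $k_1(z,z') = 2^j e^{(z+z')2^j}\mathbf 1_{z'\le 0}$ and $k_2(z,z')=2^j e^{-|z-z'|2^j}$, acting on functions of $z'\in(-\infty,0]$, and I want $L^{q_2}_z\to L^{p_2}_z$ bounds with the stated power of $2^j$. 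For $k_2$ this is exactly Young's convolution inequality on $\R$: $\|k_2\|_{L^r_z}$ with $1+1/p_2 = 1/r + 1/q_2$ equals $(2\cdot 2^j\cdot\tfrac{1}{r\cdot 2^j})^{?}$ — concretely $\|2^j e^{-2^j|\cdot|}\|_{L^r} \sim 2^{j(1-1/r)} = 2^{j(1/q_2-1/p_2)}$, giving the bound. For $k_1$ the kernel factors as $2^j e^{z2^j}\cdot e^{z'2^j}$, i.e. it is rank one: $\int_{-\infty}^0 k_1(z,z')g(z')dz' = 2^j e^{z2^j}\int_{-\infty}^0 e^{z'2^j}g(z')dz'$, so by Hölder the inner integral is $\lesssim \|e^{z'2^j}\|_{L^{q_2'}_{z'}(-\infty,0)}\|g\|_{L^{q_2}_{z'}} \sim 2^{-j/q_2'}\|g\|_{L^{q_2}_z}$ and the outer factor has $\|2^j e^{z2^j}\|_{L^{p_2}_z(-\infty,0)}\sim 2^{j(1-1/p_2)} = 2^{j/p_2'}$; multiplying, $2^{j(1/p_2' - 1/q_2')} = 2^{j(1/q_2-1/p_2)}$, again as claimed. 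Combining the $x$ and $z$ gains yields precisely the exponent $1/q_1 + 1/q_2 - 1/p_1 - 1/p_2$ in \eqref{eqn1300}.

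The only mild subtlety — and the step I would be most careful about — is the legitimacy of the multiplier bound for the matrix $\bigl[\begin{smallmatrix}-\p_x & -\mathrm{sign}(z-z')\p_x\\ \mathrm{sign}(z-z')|\nabla| & |\nabla|\end{smallmatrix}\bigr]$ in $k_2$: the sign factor $\mathrm{sign}(z-z')$ is constant in $x$ for each fixed pair $(z,z')$, so it does not affect the $x$-multiplier estimate at all, and one simply absorbs $\pm1$ into the constant. Thus there is no genuine obstacle; the proof is a matter of organizing Bernstein's inequality, the Mikhlin multiplier theorem on a dyadic block, Young's inequality for $k_2$, and the rank-one Hölder argument for $k_1$, then bookkeeping the powers of $2^j$. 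I would also note the hypothesis $q_1\ge 2$ is what lets us invoke Plancherel-based ($L^2$) estimates if one prefers that route for the $x$-multipliers, and $q_2\ge 1$ is all that is needed for Young; no smallness or bootstrap input enters here, so the lemma is purely harmonic-analytic and can be stated unconditionally.
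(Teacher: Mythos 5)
Your proof is correct, and it takes a more explicit route than the paper's. The paper's own proof proceeds by computing a pointwise kernel estimate for $K(z,z')\circ P_j$, namely $|\widetilde{K}_j(z,z',x)| \lesssim_N 2^{2j}(1+2^j|x|+2^j|z\pm z'|)^{-N}$ via non-stationary phase (integration by parts in $\xi$), and then invokes H\"older and Bernstein; after the dust settles this is essentially a mixed-norm Young's inequality on a kernel effectively supported in a box of size $2^{-j}\times 2^{-j}$, which indeed produces the exponent $1/q_1+1/q_2-1/p_1-1/p_2$. You instead split the two pieces of $K$ and separate variables from the start: for each fixed $(z,z')$ the $x$-operator is a dyadic-block Fourier multiplier $m_i(\xi)\psi_j(\xi)e^{\pm(z\pm z')|\xi|}$ with $L^{q_1}_x\to L^{q_1}_x$ norm $\lesssim 2^j e^{-c2^j|z\pm z'|}$ (strictly speaking one need not invoke Mikhlin here; the kernel of any smooth dyadic-block multiplier is Schwartz with $L^1$ norm $\lesssim\sup|m|$, giving the bound for all $1\le q_1\le\infty$ and making the $q_1\ge2$ hypothesis extraneous for this lemma), and then the remaining scalar $z$-kernel $2^je^{-2^j|z-z'|}$ is handled by Young, while the $z+z'$ piece is handled by your rank-one H\"older observation (or equivalently by Young after the substitution $z'\mapsto -z'$). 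Both methods deliver $2^{j(1/q_2-1/p_2)}$ from the $z$-integral and $2^{j(1/q_1-1/p_1)}$ from Bernstein in $x$. Your approach is more elementary and makes the rank-one structure of the $e^{(z+z')\d}$ piece visible, which the paper's unified pointwise bound obscures; the paper's kernel estimate is more compact to state and treats both pieces identically. The only slip in your write-up is purely notational: you initially say the $x$-multiplier norm is "$\lesssim 2^j$" and separately write the scalar kernel as $2^je^{-|z\pm z'|2^j}$, which reads as double-counting the factor $2^j$; what you actually mean (and use) is that the $x$-operator norm is $\lesssim 2^je^{-c2^j|z\pm z'|}$, with the $2^j$ coming from $|m_i(\xi)|$ on the block and the decaying exponential surviving into the $z$-kernel. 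With that clarified, the bookkeeping closes exactly.
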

\begin{proof}
From the explicit formula of operator $K(z,z')$ in (\ref{eqn164}), it's easy to find that the kernel of $K(z,z')\circ P_j$ is given by
\begin{equation}
\widetilde{K}_j(z,z',x):=\int_{\R} \frac{1}{2}  e^{i x \xi +(z+z')|\xi|} m_1(\xi)\psi_{j}(\xi)  + \frac{1}{2} e^{ix\xi-|z-z'||\xi|} m_2(\xi) \psi_{j}(\xi) d \xi,
\end{equation}
where matrices $m_1(\xi)$ and $m_2(\xi)$ are given by
\[
m_1(\xi)=\Big[\begin{array}{cc}
-i\xi & -i\xi\\
|\xi| & |\xi| \\
\end{array}\Big], \quad m_2(\xi) = \Big[ \begin{array}{cc}
i \xi & i\textup{sign}(z-z') \xi\\
\textup{sign}(z-z')|\xi| & |\xi|\\ 
 \end{array}\Big].
\]
We can integration by parts in $\xi$ and gain $(|x|+|z-z'|)^{-1}$ with the price of $2^{-j}$, hence we have the following pointwise estimate for the kernel
\begin{equation}\label{equation5707}
|\widetilde{K}_j(z,z',x)|\lesssim_{N} 2^{2j} (1+ 2^j |x|+ 2^j |z\pm z'|)^{-N}.                                                
\end{equation}
With above estimate on the kernel and after using H\"older inequality and Bernstein inequality, it's not difficult to see estimate (\ref{eqn1300}) hold.
\end{proof}

\begin{lemma}\label{L2Dirichlet}
Under the assumption \textup{(\ref{eqn13005})}, the following estimates hold for $\gamma\leq \gamma'$,
\begin{equation}\label{equation5851}
  \| \nabla_{x,z} \varphi \|_{L^\infty_z\widetilde{W^\gamma}} \lesssim \| (\p_x \psi )
\|_{ {W^\gamma}},\,\,  \| \nabla_{x,z} \varphi\|_{L^\infty_z H^k}\lesssim \| \d\psi\|_{H^k}+ \| \p_x h\|_{H^k}\|  \p_x \psi  \|_{ {W^0}},
\end{equation}
\begin{equation}
 \| \Lambda_{\geq 2}[\nabla_{x,z}\varphi]\|_{L^\infty_z\widetilde{W^\gamma}} \lesssim \| \p_x h\|_{\widetilde{W^\gamma}}\| (\p_x \psi )\|_{ {W^\gamma}},
\end{equation}
\begin{equation}
 \| \Lambda_{\geq 2}[\nabla_{x,z}\varphi]\|_{L^\infty_zH^k} \lesssim \| \p_x h\|_{\widetilde{W^0}} \| \d\psi\|_{H^k} + \|  \p_x \psi  \|_{ {W^0}}\|\p_x h\|_{H^k},
\end{equation}
\begin{equation}\label{equation5867}
 \| \Lambda_{\geq 3}[\nabla_{x,z}\varphi]\|_{L^\infty_z H^k} \lesssim \|\p_x h\|_{\widetilde{W^0}}^2\| \d\psi\|_{H^k} + \| \p_x h\|_{\widetilde{W^0}}\| \p_x \psi \|_{ {W^0}} \| \p_x h\|_{H^k}.
\end{equation}
\end{lemma}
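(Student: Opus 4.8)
The plan is to derive everything from the fixed-point formulation (\ref{equation5700}) for $\nabla_{x,z}\varphi$ together with the kernel bound (\ref{eqn1300}) in Lemma \ref{kernelestimate}. Write (\ref{equation5700}) schematically as $\nabla_{x,z}\varphi = \mathcal{L}\psi + \mathcal{K}(M(\p_x h)\nabla_{x,z}\varphi) + \mathcal{N}(\p_x h, \nabla_{x,z}\varphi)$, where $\mathcal{L}\psi = e^{z|\nabla|}[\p_x\psi,|\nabla|\psi]$, $\mathcal{K}g = \int_{-\infty}^0 K(z,z')g(z')\,dz'$, and $\mathcal{N}$ is the purely local quadratic-and-higher term $[0,\p_x h\p_x\varphi-(\p_x h)^2\p_z\varphi]$. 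First I would establish the first estimate in (\ref{equation5851}) by a contraction/Neumann-series argument: applying Lemma \ref{kernelestimate} with $p_1=q_1=\infty$, $p_2=q_2=\infty$ (using the $\widetilde{W^\gamma}$ version of the estimate, summed over Littlewood--Paley pieces with the $2^{\gamma k_+}$ weights and $P_{\le 0}$ in $L^\infty$) shows $\|\mathcal{K}(M(\p_x h)G)\|_{L^\infty_z\widetilde{W^\gamma}}\lesssim \|\p_x h\|_{\widetilde{W^\gamma}}\|G\|_{L^\infty_z\widetilde{W^\gamma}}$, and similarly the local term $\mathcal{N}$ is bounded by $(\|\p_x h\|_{\widetilde{W^\gamma}}+\|\p_x h\|_{\widetilde{W^\gamma}}^2)\|G\|_{L^\infty_z\widetilde{W^\gamma}}$; since $\|\p_x h\|_{\widetilde{W^\gamma}}\lesssim\|h\|_{W^{N_2}}\lesssim\epsilon_1\ll1$ by (\ref{eqn13005}) (note $N_2>\gamma+1$ is needed, which holds for the relevant $\gamma\le N_2-1$), the map is a contraction on $L^\infty_z\widetilde{W^\gamma}$, giving $\|\nabla_{x,z}\varphi\|_{L^\infty_z\widetilde{W^\gamma}}\lesssim\|\mathcal{L}\psi\|_{L^\infty_z\widetilde{W^\gamma}}\lesssim\|\p_x\psi\|_{W^\gamma}$, since $\|e^{z|\nabla|}P_k f\|_{L^\infty_x}\lesssim\|P_k f\|_{L^\infty_x}$ uniformly in $z\le 0$.

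Next I would prove the $H^k$ estimate in (\ref{equation5851}). Here the structure is a fixed point in $L^\infty_z H^k$, but the ``coefficient'' $\p_x h$ must sometimes be placed in $L^\infty$ rather than $H^k$ to avoid a high-high loss. Splitting $M(\p_x h)\nabla_{x,z}\varphi$ and $\mathcal{N}$ via paraproducts: when the high frequency falls on $\nabla_{x,z}\varphi$ we use Lemma \ref{kernelestimate} with $q_1=p_1=2$, $q_2=p_2=\infty$ to get the factor $\|\p_x h\|_{\widetilde{W^0}}\|\nabla_{x,z}\varphi\|_{L^\infty_z H^k}$, which is absorbed by smallness; when the high frequency falls on $\p_x h$ we produce the inhomogeneous term $\|\p_x h\|_{H^k}\|\nabla_{x,z}\varphi\|_{L^\infty_z\widetilde{W^0}}$, and then invoke the already-proven $L^\infty$ bound $\|\nabla_{x,z}\varphi\|_{L^\infty_z\widetilde{W^0}}\lesssim\|\p_x\psi\|_{W^0}$. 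The linear term contributes $\|\mathcal{L}\psi\|_{L^\infty_z H^k}\lesssim\||\nabla|\psi\|_{H^k}$. Solving the fixed point then yields $\|\nabla_{x,z}\varphi\|_{L^\infty_z H^k}\lesssim\||\nabla|\psi\|_{H^k}+\|\p_x h\|_{H^k}\|\p_x\psi\|_{W^0}$.

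For the three remaining estimates I would exploit that $\Lambda_{\ge 2}[\nabla_{x,z}\varphi]=\nabla_{x,z}\varphi-\mathcal{L}\psi$ and iterate the identity once or twice. Concretely, $\Lambda_{\ge 2}[\nabla_{x,z}\varphi]=\mathcal{K}(M(\p_x h)\nabla_{x,z}\varphi)+\mathcal{N}(\p_x h,\nabla_{x,z}\varphi)$; each term already carries one explicit factor of $\p_x h$ (hence is genuinely quadratic-and-higher), so applying the kernel/product estimates above with that $\p_x h$ factored out gives $\|\Lambda_{\ge 2}[\nabla_{x,z}\varphi]\|_{L^\infty_z\widetilde{W^\gamma}}\lesssim\|\p_x h\|_{\widetilde{W^\gamma}}\|\nabla_{x,z}\varphi\|_{L^\infty_z\widetilde{W^\gamma}}\lesssim\|\p_x h\|_{\widetilde{W^\gamma}}\|\p_x\psi\|_{W^\gamma}$, and the analogous $H^k$ estimate with the $\p_x h$ placed in $L^\infty$ or $H^k$ as dictated by frequency balance. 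For $\Lambda_{\ge 3}[\nabla_{x,z}\varphi]$ one substitutes $\nabla_{x,z}\varphi=\mathcal{L}\psi+\Lambda_{\ge 2}[\nabla_{x,z}\varphi]$ back into the right-hand side of this quadratic identity: the $\mathcal{L}\psi$ contribution is exactly the quadratic part, so the cubic-and-higher remainder is controlled by a second factor of $\p_x h$ times $\|\Lambda_{\ge 2}[\nabla_{x,z}\varphi]\|$, yielding $\|\Lambda_{\ge 3}[\nabla_{x,z}\varphi]\|_{L^\infty_z H^k}\lesssim\|\p_x h\|_{\widetilde{W^0}}^2\||\nabla|\psi\|_{H^k}+\|\p_x h\|_{\widetilde{W^0}}\|\p_x\psi\|_{W^0}\|\p_x h\|_{H^k}$. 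The main obstacle is bookkeeping the paraproduct/frequency cases so that in every $H^k$ estimate the ``bad'' high-high interaction is always the one where smallness (not top regularity) is spent on $\p_x h$, and checking that the contraction constant stays $\ll 1$ uniformly in $z$; the kernel decay (\ref{equation5707}) and Lemma \ref{kernelestimate} make this routine but require care with the $z$-integration exponents $(q_2,p_2)$.
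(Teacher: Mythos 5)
Your proposal is correct and follows essentially the same approach as the paper: the fixed-point formulation (\ref{equation5700}) with the kernel bound (\ref{eqn1300}) gives a contraction in $L^\infty_z\widetilde{W^\gamma}$ (so that norm closes by itself), the $H^k$ bound then closes using the already-established $L^\infty_z\widetilde{W^0}$ control to handle the high-high term $\|\p_x h\|_{H^k}\|\nabla_{x,z}\varphi\|_{L^\infty_z\widetilde{W^0}}$, and the $\Lambda_{\ge i}$ bounds follow by iterating the fixed-point identity to peel off factors of $\p_x h$ — which is exactly what the paper's one-line remark (``derive a fixed point type formulation for $\Lambda_{\geq i}[\nabla_{x,z}\varphi]$'') alludes to. The only cosmetic difference is that you prove the $\widetilde{W^\gamma}$ estimate first and then feed it into the $H^k$ fixed point, while the paper states the $H^k$ inequality (\ref{equation5705}) first but only closes it after establishing (\ref{equation6005}); logically the two are identical.
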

\begin{proof}
From (\ref{equation5700}) and    (\ref{eqn1300}) in lemma \ref{kernelestimate}, we have
\[
\| \nabla_{x, z}\varphi \|_{L^{ \infty}_z H^k}    \lesssim \| \d\psi\|_{H^{k}}  + + \| | \big[ M(\p_x h)\nabla_{x,z}\varphi\big]\|_{L^\infty_z H^k}
\]
\begin{equation}\label{equation5705}
 \lesssim \| \d\psi\|_{H^k} + \| \p_x h\|_{
\widetilde{W^0}} \| \nabla_{x,z}\varphi\|_{L^\infty_z H^{k}} + \| \nabla_{x,z}\varphi\|_{L_z^\infty \widetilde{W^0}}\| \hx\|_{H^k}.
\end{equation}
Again, from (\ref{equation5700}) and (\ref{eqn1300}) we have
\[
\| \nabla_{x, z}\varphi \|_{L^\infty_z \widetilde{W^\gamma}} \lesssim \| e^{z\d}[\p_x \psi, \d\psi]\|_{L^\infty_z \widetilde{W^\gamma}} + \| \int_{-\infty}^0 K(z, z') M(\hx) (\nabla_{x, z}\varphi(z')) d\, z' \|_{L^\infty_z W^{\gamma}} 
\]
\begin{equation}\label{eqn1306}
+ \sum_{l=1,2}\| (\hx)^l \nabla_{x,z}\varphi\|_{L^\infty_z \widetilde{W^\gamma}}\lesssim \|  \p_x \psi \|_{ {W^\gamma}} + \big[  \| \hx\|_{L^2}^{2/q}\|\hx\|_{L^\infty}^{1-2/q} + \| \hx\|_{\widetilde{W^\gamma}} \big]\|\nabla_{x,z}\varphi\|_{L^\infty_z \widetilde{W^\gamma}},
\end{equation}
which further   gives us  the following estimate from  the smallness assumption (\ref{eqn13005}), 
\begin{equation}\label{equation6005}
 \|  \nabla_{x,z}\varphi \|_{L^\infty_z \widetilde{W^\gamma}} \lesssim \|  \p_x \psi  \|_{ {W^\gamma}}.
\end{equation}
Combining estimates (\ref{equation6005}) and (\ref{equation5705}), we have 
\begin{equation}
\| \nabla_{x, z}\varphi \|_{L^{ \infty}_z H^k} \lesssim 
\| \d\psi\|_{H^{k}} + \|\hx\|_{\widetilde{W^0}} \| \nabla_{x, z}\varphi \|_{L^\infty_z H^k} + \| \hx\|_{H^{k}} \| \p_x \psi \|_{{W^0}},
\end{equation}
which  further gives us the following estimate  from  the smallness assumption (\ref{eqn13005}), 
\begin{equation}\label{equation00090}
\| \nabla_{x, z}\varphi \|_{L^\infty_z H^k} \lesssim \| \d\psi \|_{H^k}+ \| \hx\|_{H^k}\|  \p_x \psi \|_{ {W^0}}. 
\end{equation}
From (\ref{equation6005}) and (\ref{equation00090}), we can see our desired estimate (\ref{equation5851}) holds. From (\ref{equation5700}), one can derive a fixed point type formulation for $\Lambda_{\geq i 
}[\nabla_{x,z}\varphi]$, $i\in \{2,3\}$.  Hence, all other estimates can be derived very similarly. We omit details here.
\end{proof}

By the same fixed point type argument, we can also derive the following Lemma, which is very helpful when the scaling vector field is applied to the Dirichlet-Neumann operator.  Note that, different from the estimates in Lemma \ref{L2Dirichlet},  in Lemma \ref{lemma2}, only $\psi$ is putted in $L^2$-type space.
\begin{lemma}\label{lemma2}
Under the smallness assumption \textup{(\ref{eqn13005})}, the   following estimates hold for $i\in\{1,2,3\}$ and $k\leq \gamma'-1$,
\begin{equation}\label{eqn1475}
\| \Lambda_{\geq i}[\nabla_{x,z}\varphi]\|_{L^\infty_z H^{k}} \lesssim \| \p_x h\|_{{W^{k}}}^{i-1}\| \d\psi\|_{H^{k}},
\end{equation}
\begin{equation}\label{eqn1310}
\| \Lambda_{\geq 4}[\nabla_{x,z}\psi]\|_{L^\infty_z L^2} \lesssim \| \p_x h\|_{{W^0}}^3 \| \d\psi\|_{L^2}.
\end{equation}
\end{lemma}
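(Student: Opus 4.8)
\textbf{Proof plan for Lemma \ref{lemma2}.}
The plan is to exploit the fixed-point formulation \eqref{equation5700} for $\nabla_{x,z}\varphi$, but this time tracking the order of homogeneity in $(h,\psi)$ and, crucially, keeping $\psi$ as the only factor in an $L^2$-type space while all factors of $h$ are measured in $L^\infty$-type spaces. Write \eqref{equation5700} schematically as $\nabla_{x,z}\varphi = \mathcal{L}\psi + \mathcal{K}[M(\p_x h)\nabla_{x,z}\varphi] + N(\p_x h)\nabla_{x,z}\varphi$, where $\mathcal{L}\psi := e^{z\d}[\p_x\psi,\d\psi]$, $\mathcal{K}$ denotes the convolution against $K(z,z')$ from Lemma \ref{kernelestimate}, and $N(\p_x h)$ is the local matrix $[0,\,\p_x h\p_x\varphi-(\p_x h)^2\p_z\varphi]$. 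The key point is that every occurrence of $M(\p_x h)$ or $N(\p_x h)$ carries at least one factor of $\p_x h$, so $\Lambda_{\ge i}[\nabla_{x,z}\varphi]$ can only arise from expressions with at least $i-1$ factors of $\p_x h$ hitting the ``seed'' $\mathcal{L}\psi$ (which is $\Lambda_1$ in $\psi$).

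First I would establish \eqref{eqn1475}. Iterating the fixed-point identity and collecting the $i$-th and higher order part, one gets that $\Lambda_{\ge i}[\nabla_{x,z}\varphi]$ is a (convergent, by the smallness \eqref{eqn13005}) sum of terms of the form $\big(\mathcal{K}$ or $N\big)$ applied repeatedly, with at least $i-1$ symbols $M(\p_x h)$ or $N(\p_x h)$, acting ultimately on $\mathcal{L}\psi$ or on a lower-order $\nabla_{x,z}\varphi$ that itself already carries the requisite $\p_x h$ factors. For each such term I put the single $\psi$ factor in $H^k$ via $\|\mathcal{L}\psi\|_{L^\infty_z H^k}\lesssim \|\d\psi\|_{H^k}$, and estimate each factor of $\p_x h$ in $\widetilde{W^k}$ (hence $W^k$), using Lemma \ref{kernelestimate} with the choice $q_2=p_2=\infty$, $q_1=p_1=2$ for the step where the $H^k$ norm is propagated, and $q_1=2,p_1=\infty$ elsewhere so that $\p_x h$ can always be taken in $L^\infty$-type norms. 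The product rule for $\widetilde{W^k}$-norms together with Bernstein handles the Littlewood--Paley bookkeeping in $k$; the smallness of $\|\p_x h\|_{\widetilde{W^{N_2}}}$ ensures the geometric series converges and absorbs the self-referential term $\mathcal{K}[M(\p_x h)\,\cdot\,]$ into the left-hand side. This gives $\|\Lambda_{\ge i}[\nabla_{x,z}\varphi]\|_{L^\infty_z H^k}\lesssim \|\p_x h\|_{W^k}^{i-1}\|\d\psi\|_{H^k}$ for $i\in\{1,2,3\}$ and $k\le\gamma'-1$.

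For \eqref{eqn1310} the argument is the same in spirit but sharper in counting: the quartic-and-higher part of $\nabla_{x,z}\varphi$ (and hence of $\nabla_{x,z}\psi$, since $\nabla_{x,z}\varphi|_{z=0}$ recovers the relevant traces) carries at least three factors of $\p_x h$. I would take $k=0$, put $\psi$ in $L^2$ through $\|\mathcal{L}\psi\|_{L^\infty_z L^2}\lesssim\|\d\psi\|_{L^2}$, and bound all three (or more) $\p_x h$ factors in $W^0=\widetilde{W^0}$, again using Lemma \ref{kernelestimate} with the $L^2$-preserving choice of exponents on the ``spine'' of the iteration and $L^2\to L^\infty$ on the branches carrying $h$. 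The smallness \eqref{eqn13005} closes the fixed point, yielding $\|\Lambda_{\ge 4}[\nabla_{x,z}\psi]\|_{L^\infty_z L^2}\lesssim\|\p_x h\|_{W^0}^3\|\d\psi\|_{L^2}$.

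The main obstacle I anticipate is purely organizational rather than analytic: making precise the claim that the $i$-th order part of the solution of \eqref{equation5700} is a finite sum (up to the convergent tail) of iterated operators with exactly the advertised number of $\p_x h$ factors, and verifying that in each such term one can \emph{simultaneously} keep $\psi$ in the single $L^2$ slot and all of $h$ in $L^\infty$ slots while the kernel estimate \eqref{eqn1300} still applies with admissible exponents $2\le q_1\le p_1\le\infty$, $1\le q_2\le p_2\le\infty$. The potential pitfall is a term where two ``$H^k$-demanding'' factors would both need to sit in $L^2$ at the same convolution; the homogeneity bookkeeping (each $M$ or $N$ supplies its own $\p_x h$, and the seed $\mathcal{L}\psi$ is linear in $\psi$) is exactly what rules this out, and I would spell that out carefully. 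Everything else — the geometric-series absorption, the Littlewood--Paley product estimates, Bernstein — is routine given Lemma \ref{kernelestimate} and the smallness hypothesis \eqref{eqn13005}.
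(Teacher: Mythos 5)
Your proposal is correct and takes essentially the same route as the paper, which simply notes that Lemma~\ref{lemma2} follows ``by the same fixed point type argument'' as Lemma~\ref{L2Dirichlet}, with the one crucial modification — which you identify precisely — that only $\psi$ is placed in the $L^2$-type slot while every factor of $\p_x h$ is kept in $L^\infty$-type norms, so that iterating \eqref{equation5700} and counting $\p_x h$ factors yields the stated homogeneous bounds.
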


 The following Lemma is very helpful if one wants to estimate the term of type $\psi_2B(h)\psi_1 -B(h)(\psi_1\psi_2)$. This type of terms will appear when the scaling vector field hits the Dirichlet-Neumann operator.

\begin{lemma}\label{DNofproduct}
Given any two well defined smooth functions $f(x)$ and $g(x)$, if $\varphi_1(x,z)$ satisfies $P \varphi_1=0$ and $\varphi_1\big|_{z=0}=f g$, meanwhile $\varphi_2(x, z)$ satisfies $P \varphi_2=0$ and  $\varphi_2\big|_{z=0}=g(x)$, then under the smallness assumption that $\| \eta\|_{\widetilde{W^{\gamma+1}}} \leq \delta < 1$, we have the following estimate for $k\leq \gamma$,
\begin{equation}\label{equation8030}
\| \nabla_{x,z}\varphi_1 ( z) - f \nabla_{x,z}\varphi_2 (z)\|_{L^\infty_z H^k} \lesssim \| g \|_{\widetilde{W^k}} \| f\|_{H^{k+1,1-\epsilon}},
\end{equation}
\begin{equation}\label{eqn1670}
\| \Lambda_{\geq 3}[\nabla_{x, z}\varphi_1 - f \nabla_{x,z}\varphi_2]\|_{L^\infty_z H^k} \lesssim \| \eta\|_{\widetilde{W^{k+1}}}\| g\|_{\widetilde{W^k}} \|  f\|_{H^{k+1,1-\epsilon}},
\end{equation}
\begin{equation}\label{eqn1671}
\| \Lambda_{\geq 4}[\nabla_{x, z}\varphi_1 - f \nabla_{x,z}\varphi_2]\|_{L^\infty_z L^2} \lesssim \| \eta\|^2_{\widetilde{W^{1}}}\| g\|_{\widetilde{W^0}} \|  f\|_{H^{1,1-\epsilon}}.
\end{equation}

\end{lemma}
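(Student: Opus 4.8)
\textbf{Proof proposal for Lemma \ref{DNofproduct}.}

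The plan is to exploit the fixed-point formulation (\ref{equation5700}) for $\nabla_{x,z}\varphi$, applied to the two profiles $\varphi_1$ (with boundary data $fg$) and $\varphi_2$ (with boundary data $g$), and to study the difference $R(z):=\nabla_{x,z}\varphi_1(z)-f\,\nabla_{x,z}\varphi_2(z)$. The guiding idea is that the Dirichlet--Neumann operator is, to leading order, the Fourier multiplier $|D_x|$, and $|D_x|$ almost commutes with multiplication by $f$; the commutator is an operator of order $0$ in $g$, which is why $f$ gets moved onto the outside at the cost of only one derivative on $f$ (hence the $\|f\|_{H^{k+1,1-\epsilon}}$ on the right of (\ref{equation8030})). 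Concretely, first I would write the equation $P\varphi_1=0$ with $\varphi_1=f\varphi_2+\rho$; since $P$ has variable coefficients $(1+(\p_xh)^2)\p_z^2+\p_x^2-2\p_xh\,\p_x\p_z-\p_x^2h\,\p_z$, and $\varphi_2$ is independent of $z$-derivatives of $f$, one gets $P\rho = -[P,f]\varphi_2$ where $[P,f]\varphi_2 = -2\p_xf\,\p_x\varphi_2 - \p_x^2 f\,\varphi_2 + 2\p_xh\,\p_xf\,\p_z\varphi_2$ — all terms carrying at least one $x$-derivative on $f$ and no $z$-derivative, plus a lower-order piece. Then I would rerun the derivation of (\ref{equation5700}) with this inhomogeneous right-hand side, obtaining a fixed-point formula for $R(z)=\nabla_{x,z}\rho(z)$ of the schematic form
\[
R(z) = \int_{-\infty}^0 K(z,z')\,M(\p_x h)\,R(z')\,dz' + \mathcal{E}(z),
\]
where the error $\mathcal{E}(z)$ collects: (a) the Duhamel contribution of $[P,f]\varphi_2$ (after the same integration-by-parts-in-$z$ trick used to produce $g_1,g_2$ in (\ref{equation101})), and (b) the commutator between $e^{z|D_x|}$, the kernel $K$, and multiplication by $f$ — i.e.\ the discrepancy between $f$ acting outside versus inside the half-space Poisson-type integrals.

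The main step is then to estimate $\mathcal{E}(z)$ in $L^\infty_z H^k$. For the Duhamel term this is essentially a repetition of the estimates in Lemma \ref{L2Dirichlet}/Lemma \ref{lemma2}: bound $\|\p_x f\|$-type factors in $L^2$-based spaces (using $\|f\|_{H^{k+1,1-\epsilon}}$), bound $\varphi_2$ and $\nabla_{x,z}\varphi_2$ via the already-established $\|\nabla_{x,z}\varphi_2\|_{L^\infty_z\widetilde{W^k}}\lesssim\|g\|_{\widetilde{W^k}}$ from (\ref{equation5851}), and use the kernel bound (\ref{eqn1300}) of Lemma \ref{kernelestimate} to absorb the $z'$-integration. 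For the commutator term (b), I would write $[e^{z|D_x|},f] = e^{z|D_x|}f - f e^{z|D_x|}$ and similarly for $K$; using the pointwise kernel bound (\ref{equation5707}) $|\widetilde K_j(z,z',x)|\lesssim 2^{2j}(1+2^j|x|+2^j|z\pm z'|)^{-N}$ together with a first-order Taylor expansion $f(x)-f(x')\lesssim |x-x'|\,\|\p_xf\|_{L^\infty}$ inside the kernel integral, one gains a factor $|x-x'|$ which kills one power of $2^j$ and exactly produces the extra derivative landing on $f$; the frequency-localized pieces are then summed by Littlewood--Paley, putting $g$ in the $\widetilde{W^k}$ (hence $L^\infty$-based) norm and $\p_x f$ in the $L^2$-based norm. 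Finally, since $\|M(\p_x h)\|$ contributes a factor $\lesssim\|\p_x h\|_{\widetilde{W^{\gamma+1}}}\leq\delta$, the operator $g\mapsto\int K(z,z')M(\p_xh)g(z')\,dz'$ has small norm on $L^\infty_z H^k$, so the fixed-point (Neumann series) argument gives $\|R\|_{L^\infty_z H^k}\lesssim\|\mathcal{E}\|_{L^\infty_z H^k}\lesssim\|g\|_{\widetilde{W^k}}\|f\|_{H^{k+1,1-\epsilon}}$, which is (\ref{equation8030}).

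For the Taylor-expansion refinements (\ref{eqn1670}) and (\ref{eqn1671}), the strategy is identical but bookkeeping-heavier: one derives from (\ref{equation5700}) the analogous fixed-point identity for $\Lambda_{\geq i}[R]$, noting that $\Lambda_{\geq 3}[R]$ and $\Lambda_{\geq 4}[R]$ must pick up at least two, respectively three, factors of $\p_x h$ (one already present in $M(\p_x h)$ forces the extra $\|\p_x h\|_{\widetilde{W^{k+1}}}$, resp.\ $\|\p_x h\|^2_{\widetilde{W^1}}$, out front), and then reruns the same kernel and commutator estimates with the lowest-order term in each factor dropped. The parameter $\epsilon$ appearing in $H^{k+1,1-\epsilon}$ is exactly the small loss at low frequency one tolerates so that the $L^2$-norm of $\p_x f$ at very low frequency is controlled by $2^{(1-\epsilon)k}\|P_k f\|_{L^2}$; no smallness in $\epsilon$ is needed beyond $\epsilon\in(0,1)$. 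I expect the commutator term (b) to be the genuine obstacle: it is where the ``one extra derivative on $f$'' is actually purchased, and it requires the careful kernel/Taylor analysis sketched above rather than a black-box application of the earlier lemmas; everything else reduces to the paraproduct-plus-kernel bookkeeping already in place for Lemma \ref{L2Dirichlet}.
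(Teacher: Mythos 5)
The paper does not introduce $\rho=\varphi_1-f\varphi_2$ and never computes $[P,f]$ at the PDE level. It substitutes the boundary data $fg$ and $g$ directly into the fixed-point identity (\ref{equation5700}) for $\nabla_{x,z}\varphi_1$ and for $f\nabla_{x,z}\varphi_2$, subtracts, and isolates the self-referential piece $\int K(z,z')M(\p_xh)\big[\nabla_{x,z}\varphi_1-f\nabla_{x,z}\varphi_2\big]\,dz'$ from two explicit commutator errors: the boundary-data commutator $e^{z\d}[\p_x(fg),\d(fg)]-f\,e^{z\d}[\p_xg,\d g]$ and the kernel commutator $\int K M(\p_xh)\big[f\nabla_{x,z}\varphi_2\big]dz'-f\int KM(\p_xh)\nabla_{x,z}\varphi_2\,dz'$. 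The estimate is then Fourier-side: writing $a(\xi)=(i\xi,|\xi|)$, the boundary commutator has symbol $e^{z|\xi|}a(\xi)-e^{z|\eta|}a(\eta)$, and the single key inequality is $|e^{z|\xi|}a(\xi)-e^{z|\eta|}a(\eta)|\lesssim|\xi-\eta|$, uniformly in $z\leq 0$. This gains one derivative on the $f$-frequency $\xi-\eta$, and the $\mathcal{S}^\infty$-bilinear estimate (\ref{bilinearestimate}) then places $g$ in $\widetilde{W^k}$ and $f$ in $H^{k+1,1-\epsilon}$ automatically. The kernel commutator is handled similarly, and the self-referential term is absorbed using the smallness of $\|\p_xh\|_{\widetilde{W^{\gamma+1}}}$ together with the auxiliary bound $\|\nabla_{x,z}\varphi_2\|_{L^1_z\widetilde{W^k}}\lesssim\|g\|_{\widetilde{W^k}}$.

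Your route of commuting $f$ through the operator $P$ is a genuinely different start, but as written it conflates two proofs and misses the step that makes the norms come out right. Once you set $\varphi_1=f\varphi_2+\rho$ and solve for $\rho$ with zero boundary data and source $-[P,f]\varphi_2$, there is no remaining ``$f$ outside versus $f$ inside the Poisson integral'' comparison; your item (b) belongs to the direct-comparison route, not yours. Moreover $\nabla_{x,z}\rho\neq\nabla_{x,z}\varphi_1-f\nabla_{x,z}\varphi_2$: the $x$-component differs by $-\p_xf\,\varphi_2$, which must be added back. The more serious gap is the Taylor expansion $f(x)-f(x')\lesssim|x-x'|\|\p_xf\|_{L^\infty}$: this places $\p_xf$ in $L^\infty$, but (\ref{equation8030}) needs $f$ in the $L^2$-based $H^{k+1,1-\epsilon}$ with $g$ carrying the $L^\infty$-based $\widetilde{W^k}$. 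You assert the $L^2$ norm eventually lands on $\p_xf$ without saying how; the obvious fix (Minkowski applied to the integral form of the Taylor remainder) is precisely the bookkeeping the paper's Fourier-side bilinear formulation sidesteps. Finally, the $\p_x^2f\,\varphi_2$ term in $[P,f]\varphi_2$ carries two derivatives on $f$ and drops to one only after rewriting it as $\p_x(\p_xf\,\varphi_2)-\p_xf\,\p_x\varphi_2$ or exploiting the $z'$-smoothing of the Duhamel integral, another step your sketch omits.
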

\begin{proof}
Recall  (\ref{equation5700}). We have 
\[
\nabla_{x,z}\varphi_1(z,) = e^{z\d}[\p_x(f g), \d(f g)] + \int_{-\infty}^{0} K(z, z') M(\hx)\nabla_{x,z}\varphi_1(z') d z'\]
\[+[0,\hx\p_x \varphi_1-(\hx)^2\p_z \varphi_1],
\]
\[
f \nabla_{x,z}\varphi_2 (z) = f e^{z\d}[\p_x g, \d g] + \int_{-\infty}^0 f K(z, z') M(\hx)\nabla_{x, z}\varphi_2(z')  d z' \]
\[+[0,\hx f\p_x\varphi_2 -(\hx)^2 f \p_z \varphi_2].
\]
Note that,
\[
\mathcal{F}\big[ e^{z\d}[\p_x(f g), \d(f g)] - f(\cdot)  e^{z\d}[\p_x g, \d g]\big](z, \xi) = \]
\[
 \int_{\R} e^{z|\xi|} a(\xi) \widehat{f}(\xi-\eta) \widehat{g}(\eta)\, d \eta - \int_{\R} \widehat{f}(\xi-\eta) \widehat{g}(\eta) e^{z|\eta|} a(\eta)\,d\eta, \quad \textup{where}\, a(\xi)=[i\xi,\,\,|\xi|].
\]
Note that, the following estimate holds for any fixed $z\leq 0$,
\[
\big| e^{z|\xi|} a(\xi)- e^{z |\eta|}a(\eta) \big| \lesssim \big||\xi|- |\eta|\big|.
\]
Hence
\begin{equation}\label{eqn140000}
\| e^{z\d}[\p_x(f g), \d(f g)] - f(\cdot)  e^{z\d}[\p_x g, \d g]\|_{L^\infty_z H^k} \lesssim \| f\|_{H^{k+1, 1-\epsilon}}\| g \|_{\widetilde{W^k}}.
\end{equation}
We can write the difference of $\nabla_{x,z}\varphi_1$ and $f\nabla_{x,z}\varphi_2$ as follows:
\[
\nabla_{x,z}\varphi_1(z)- f \nabla_{x,z}\varphi_2(z)= e^{z\d}[\p_x(f g), \d(f g)]-f e^{z\d}[\p_x g, \d g] +
\]
\[
[0,\hx(\p_x \varphi_1-f \p_x \varphi_2)-(\hx)^2(\p_z\varphi_1- f \p_z\varphi_2)]\]
\[ + \int_{-\infty}^0 K(z,z') M(\hx)[\nabla_{x,z} \varphi_1(z') - f \nabla_{x,z}\varphi_2(z')] d z' 
\]
\[
- \int_{-\infty}^0 f K(z,z') M(\hx) \nabla_{x,z}\varphi_2(z')+ K(z,z') M(\hx)[f\nabla_{x,z}\varphi_2(z')] d z'.
\]
Very similar to the estimate of (\ref{eqn140000}), we have
\[
\| \nabla_{x, z}\varphi_{1} - f\nabla_{x, z}\varphi_2\|_{L^\infty_z H^k}\lesssim  \|  f\|_{H^{k+1,1-\epsilon}}\| g \|_{\widetilde{W^k}} + 
\]
\begin{equation}\label{equation6015}
\| \p_x h \|_{{W^{k}}} \|\nabla_{x,z}\varphi_1 - f \nabla_{x,z}\varphi_2 \|_{L^\infty_z H^k} + \|  f\|_{H^{k+1,1-\epsilon}} \| \hx\|_{\widetilde{W^k}} \| \nabla_{x,z}\varphi_2\|_{L^1_z \widetilde{W^k}}.
\end{equation}
From the fixed point type formulation  in (\ref{equation5700}) for  $\nabla_{x,z}\varphi_2$, the following estimate holds, 
\begin{equation}
\| \nabla_{x,z}\varphi_2 \|_{L^1_z \widetilde{W^k}} \lesssim 
\| g \|_{\widetilde{W^k}} + \| M(\hx)\nabla_{x,z}\varphi_2\|_{L^1_z \widetilde{W^k}}\lesssim \| g \|_{\widetilde{W^k}} + \| \hx\|_{\widetilde{W^{k}}} \| \nabla_{x,z}\varphi_2\|_{L^1_z \widetilde{W^k}},
\end{equation}
which further gives us the following estimate from the smallness assumption on $\eta$,
\begin{equation}\label{equation6014}
\| \nabla_{x,z}\varphi_2\|_{L^1_z \widetilde{W^k}}\lesssim \| g \|_{\widetilde{W^k}}.
\end{equation}
From (\ref{equation6015}) and (\ref{equation6014}), the following estimate holds,
\begin{equation}
\| \nabla_{x,z}\varphi_1 - f\nabla_{x,z} \varphi_2 \|_{L^\infty_z H^k}\lesssim  \| g \|_{\widetilde{W^k}} \| f\|_{H^{k+1, 1-\epsilon}}.
\end{equation}
Similar to the proof of (\ref{eqn1310}) in Lemma \ref{lemma2}, with minor modifications, we can show our desired estimates (\ref{eqn1670}) and (\ref{eqn1671}) hold.  We omit the details here.
\end{proof}

\section{Paralinearization of the gravity water waves system}\label{appendxc}

 \begin{lemma}\label{paralinearizationDN1}
The following paralinearization for the Dirichlet-Neumann operator holds,
\begin{equation}\label{equation224}
G(\eta)\psi = \d \omega -T_{V}\p_x h +F(h)\psi, \quad \omega:= \psi -T_{B(h)\psi}\eta,
\end{equation}
where $F(h)\psi$ is the good  quadratic and higher error term that does not lose derivative. Moreover, under the smallness condition \textup{(\ref{eqn13005})}, the following estimate holds for $i\in\{2,3\}$, and $k\geq 0$,
\begin{equation}\label{equation635}
\| \Lambda_{\geq i}[F(h)\psi]\|_{H^k} \lesssim \|(\p_x h,\p_x \psi) \|_{H^{k-1}} \| (\p_x h,\p_x \psi ) \|_{ {W^1}}^{i-1}.
\end{equation}
\end{lemma}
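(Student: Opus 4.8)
The plan is to establish Lemma \ref{paralinearizationDN1} by exploiting the fixed-point formulation of $\nabla_{x,z}\varphi$ developed in Appendix \ref{appdxb}, following the strategy of Alazard--Delort. First I would recall from (\ref{eqn1003}) that $G(h)\psi = (1+(\p_x h)^2)\p_z\varphi|_{z=0} - \p_x h\, \p_x\psi$ and $B(h)\psi = \p_z\varphi|_{z=0}$, so the task of analyzing $G(h)\psi$ reduces entirely to analyzing the trace of $\p_z\varphi$. To extract the paraproduct structure, I would paralinearize the product $(1+(\p_x h)^2)\p_z\varphi$ and the product $\p_x h\,\p_x\psi$ using the Bony decomposition (\ref{equation340}): writing $ab = T_a b + T_b a + R_{\mathcal B}(a,b)$, the terms $T_{\p_z\varphi}(\cdots)$ and the remainders $R_{\mathcal B}$ are manifestly of the right form (quadratic/cubic, no derivative loss) once one has good $L^2$ and $L^\infty$ control on $\nabla_{x,z}\varphi$, which is exactly what Lemma \ref{L2Dirichlet} supplies. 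The main term $T_{1}\p_z\varphi = \p_z\varphi$ is where the paralinearization of the Dirichlet--Neumann operator (the identity $\p_z\varphi|_{z=0} = \d\psi - \d(T_{B}h)/\cdots$, more precisely the structure producing $\d\omega$ with $\omega = \psi - T_{B(h)\psi}h$) must be read off, and this is precisely the content of the paralinearization lemma in \cite{alazard}; I would cite that and record the resulting identity (\ref{equation224}) with $F(h)\psi$ \emph{defined} as the collection of all the remaining terms.

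With the identity in hand, the substance of the lemma is the quantitative bound (\ref{equation635}). Here the plan is: $F(h)\psi$ is, by construction, a sum of (i) remainder terms $R_{\mathcal B}(\p_x h,\p_x\psi)$, $R_{\mathcal B}(\p_x h,\p_z\varphi)$ and $T_{\p_x\psi}\p_x h$, $T_{\p_z\varphi}(\text{lower order})$-type paraproducts, all of which are bilinear and estimated by $\|(\p_x h,\p_x\psi)\|_{H^{k-1}}\|(\p_x h,\p_x\psi)\|_{W^1}$ using the bilinear estimate (\ref{bilinearestimate}) in Lemma \ref{boundness} (put the high-frequency factor in $L^2$, the low in $L^\infty$); (ii) the higher-order part of $\p_z\varphi$, i.e. $\Lambda_{\geq 2}[\nabla_{x,z}\varphi]$, coupled against paraproducts. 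For the quartic-and-higher part one multiplies out and uses $\Lambda_{\geq 2}[\nabla_{x,z}\varphi]$ and $\Lambda_{\geq 3}[\nabla_{x,z}\varphi]$ estimates from Lemma \ref{L2Dirichlet}, namely $\|\Lambda_{\geq 2}[\nabla_{x,z}\varphi]\|_{L^\infty_z H^k}\lesssim \|\p_x h\|_{\widetilde{W^0}}\|\d\psi\|_{H^k} + \|\p_x\psi\|_{W^0}\|\p_x h\|_{H^k}$ and the corresponding $\Lambda_{\geq 3}$ bound. Tracking degrees of homogeneity, each additional factor of $\p_x h$ beyond the quadratic level costs one power of $\|\p_x h\|_{W^1}$, which gives the claimed $\|(\p_x h,\p_x\psi)\|_{W^1}^{i-1}$ factor for $i\in\{2,3\}$; for $k\geq 0$ the paradifferential composition Lemma \ref{composi} ensures no loss of derivative occurs when the paraproduct operators are composed with $\d$ or $\p_x$.

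The key point to check carefully, and the step I expect to be the main obstacle, is verifying that the term $T_1\p_z\varphi|_{z=0}$ really does reorganize into $\d\omega - T_V\p_x h$ plus a derivative-non-losing remainder; this requires going back into the fixed-point formula (\ref{equation5700}) for $\nabla_{x,z}\varphi$, isolating the linear part $e^{z\d}[\p_x\psi,\d\psi]$, commuting the Poisson semigroup past the paraproduct $T_{B(h)\psi}$ (using a symbolic calculus estimate of the type in Lemma \ref{composi}), and absorbing all commutators into $F(h)\psi$. This is a somewhat delicate bookkeeping computation but it is entirely parallel to Alazard--Delort's argument, so I would present the identity as known (citing \cite{alazard, alazard2}) and concentrate the written proof on the norm estimate (\ref{equation635}), which is where the infinite-energy function spaces $H^{k-1}$ and $W^1$ (rather than the classical Sobolev spaces) enter and where one must be slightly careful to only ever place a single input in an $L^2$-based norm.
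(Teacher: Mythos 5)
Your plan correctly identifies the starting point (write $G(h)\psi$ in terms of $\p_z\varphi|_{z=0}$, apply Bony's decomposition, and use the appendix bounds on $\nabla_{x,z}\varphi$) and correctly flags that the hard step is showing the main term reorganizes into $\d\omega$ plus a non-derivative-losing remainder. But the mechanism you propose for that hard step — going back to the fixed-point formula (\ref{equation5700}), isolating the flat semigroup $e^{z\d}$ and commuting it past the paraproduct $T_{B(h)\psi}$ — is not what the paper does, and it is doubtful it would close. The fixed-point formula is well adapted to proving the crude $L^2$ and $L^\infty$ bounds of Lemma \ref{L2Dirichlet}, but it only isolates the \emph{flat} Poisson kernel $e^{z\d}$; the dangerous derivative-losing terms come from the full curved operator $P$, and a commutator of $e^{z\d}$ with a paraproduct does not by itself exhibit the required cancellation.

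What the paper actually does, and what is missing from your plan, is the introduction of the good unknown $W=\varphi-T_{\p_z\varphi}h$ combined with a paradifferential factorization of the curved Laplacian: it paralinearizes $P\varphi=0$, rewrites the elliptic paradifferential operator as $(\p_z-T_{\tilde a})(\p_z-T_{\tilde A})$ (Lemma \ref{paralinearization1}) with $\operatorname{Re}\tilde a\lesssim -|\xi|$ and $\operatorname{Re}\tilde A\gtrsim |\xi|$, and then invokes the forward-parabolic smoothing estimate (Lemma \ref{elliptic}) to show that $(\p_z-T_A)W|_{z=0}$ gains derivatives (Lemma \ref{forwardparabolic}). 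The residual $\widetilde F(h)\psi$ is then the sum of $T_{1+|\p_x h|^2}(\p_z-T_A)W$ plus composition commutators controlled by Lemma \ref{composi} and genuine balanced remainders $R_{\mathcal B}(\cdot,\cdot)$, and it is this decomposition — not a direct semigroup commutation — that yields (\ref{equation635}) with the single high-frequency factor in $H^{k-1}$ and the rest in $W^1$. So the proposal as written has a genuine gap: it names the right obstacle but proposes an approach that does not supply the elliptic-factorization/parabolic-smoothing cancellation, and then punts the identity to a citation rather than verifying it in the infinite-energy function spaces. You would need to introduce the good unknown $W$ and set up the factorization and parabolic estimate before the norm bound (\ref{equation635}) can actually be proved.
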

\begin{proof}
Recall that 
\[
G(\,h)\psi = \big((1+|\p_x h|^2)\p_z \varphi - \p_x h  \p_x \varphi\big)\big|_{z=0}.
\]
 Define $W= \varphi - T_{\p_z \varphi} h$ and $\uline{V}= \p_x \varphi - \p_x h \p_x \p_z \varphi$. By using (\ref{equation340}), as a result, we have the paralinearization of above equation as follows,
  \[
\big((1+|\p_x h|^2)\p_z \varphi - \p_x h  \p_x \varphi\big)= T_{1+|\p_x h |^2} \p_z \varphi + 2T_{\p_z\varphi} T_{\p_x h}\p_x h  + T_{\p_x \varphi}R_{\mathcal{B}}(\p_x h , \p_x h ) 
\]
\[-T_{\p_x h}\p_x \varphi - T_{\p_x \varphi} \p_x h=T_{1+|\p_x h |^2} \p_z (W + T_{\p_z \varphi} h) + 2T_{\p_z\varphi} T_{\p_x h}\p_x h \]
\[-T_{\p_x h}\p_x( W +T_{\p_z \varphi} h)   - T_{\p_x \varphi} \p_x h +  T_{\p_x \varphi}R_{\mathcal{B}}(\p_x h , \p_x h )\]
\[
 =\d W + T_{1+|\p_x h |^2} \p_z W -  T_{\p_x h} \p_x W -\d W - T_{\uline{V}}\p_x h + \tilde{\mathcal{R}}_1,
\]
\begin{equation}\label{equation222}
=\d W -T_{\uline{V}} \p_x h + T_{1+|\p_x h|^2}(\p_z -T_A) W  +  T_{|\p_x h|^2}T_{A-|\xi|} W - T_{|\p_x h|^2(A-|\xi|)}W + \tilde{\mathcal{R}}_1,
\end{equation}
\[ =\d W -T_{\uline{V}} \p_x h + \widetilde{F}(h)\psi, 
 \]
 where
 \[
  A =  \frac{1}{1+|\p_x h|^2}(i \p_x h \cdot \xi + |\xi|),
\]
 \[
\tilde{\mathcal{R}}_1:=2T_{\p_z \varphi}T_{\p_x h } \p_x h- 2 T_{\p_z \varphi\p_x h}\p_x h-T_{\p_x h }T_{\p_z\varphi}\p_x h+ T_{\p_z \varphi\p_x h}\p_x h \]
\begin{equation}\label{equation000921}
-T_{\p_x h }T_{\p_x \p_z\varphi}h +  T_{\p_x \varphi}R_{\mathcal{B}}(\p_x h , \p_x h ),
 \end{equation}
 \begin{equation}\label{equation836}
 \widetilde{F}(h)\psi = 
 \big(T_{1+|\p_x h|^2}(\p_z -T_A) W  +  T_{|\p_x h|^2}T_{A-|\xi|} W - T_{|\p_x h|^2(A-|\xi|)}W \big)  + \tilde{\mathcal{R}}_1 .  
\end{equation}
From the explicit formula  of $\tilde{\mathcal{R}}_1$ in (\ref{equation000921}) and  the composition Lemma \ref{composi}, it is easy to see that all terms inside $\widetilde{F}(h)\psi$ except $T_{1+|\p_x h|^2}(\p_z -T_A) W $ are good error terms, which do not lose derivatives at the high frequency part or the low frequency part.

After evaluation (\ref{equation836}) at the boundary $z=0$, we can see  the following estimate holds,
\[
\| \Lambda_{\geq i}[F(h)\psi]\|_{H^k} \lesssim \| \Lambda_{\geq i}\big[(\p_z -T_A) W\big|_{z=0}\big]\|_{H^k} + \| \p_x h \|_{\widetilde{W}^1}^{i-1} \| \omega \|_{H^{k-1}}\] 
\[+ \| (\p_x h , \p_x \psi , \d \psi)\|_{\widetilde{W}^1}^{i-1} \|(\p_x h , \p_x \psi)\|_{H^{k-1}},
\lesssim \|(\p_x h,\p_x \psi) \|_{H^{k-1}} \| (\p_x h,\p_x \psi ) \|_{ {W^1}}^{(i-1)}.
\]
Note that, we used (\ref{equation887}) in Lemma \ref{forwardparabolic} in above estimate.
\end{proof}

In the following lemma, we show that term $(\p_z -T_A) W $ in (\ref{equation836}) also does not lose derivatives. More precisely, the following estimate holds.
\begin{lemma}\label{forwardparabolic}
The following estimate holds for $i\in\{2,3 \}$, $k \geq 0$,
\begin{equation}\label{equation887}
\| \Lambda_{\geq i}\big[(\p_z -T_{A}) W \big|_{z=0}\big]\|_{H^k } \lesssim \|(\p_x h,\p_x \psi) \|_{H^{k-1}} \| (\p_x h,\p_x \psi ) \|_{ {W^1}}^{(i-1)}.
\end{equation}
\end{lemma}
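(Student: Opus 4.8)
\textbf{Proof proposal for Lemma~\ref{forwardparabolic}.}
The plan is to exploit the fact that $W$ solves a forward parabolic equation in the $z$-variable, obtained from $P\varphi=0$, and that the ``good unknown'' $W = \varphi - T_{\p_z\varphi}h$ is precisely the combination that makes the paradifferential factorization of the operator $P$ exact up to an acceptable remainder. Concretely, the second-order operator $P$ in \eqref{eqn1320} can be factored on the paradifferential level as $P = (\p_z - T_{\underline a})(\p_z - T_A) + \textup{l.o.t.}$, where $A$ is the elliptic symbol defined in the proof of Lemma~\ref{paralinearizationDN1} and $\underline a$ is the companion (parabolic) symbol with $\mathfrak{Re}\,\underline a \lesssim -|\xi|$. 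Setting $\Theta := (\p_z - T_A)W$, one checks from $P\varphi=0$ that $\Theta$ satisfies a forward parabolic equation $(\p_z - T_{\underline a})\Theta = \mathcal{G}$ on $z\in(-\infty,0]$, where $\mathcal{G}$ collects quadratic-and-higher source terms built from $h$, $\p_z\varphi$, $\p_x\varphi$ and the paradifferential composition errors coming from Lemma~\ref{composi}. Since $\mathfrak{Re}\,\underline a \lesssim -|\xi|$, the parabolic evolution is smoothing going from $z=-\infty$ toward $z=0$, so $\Theta|_{z=0}$ can be estimated by Duhamel's formula against the source $\mathcal{G}$ with a genuine gain of one derivative relative to the obvious count.

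The steps I would carry out, in order, are: (i) write out the factorization $P = T_{1+|\p_xh|^2}(\p_z - T_{\underline a})(\p_z - T_A) + R$, identify $\underline a$ and verify $\mathfrak{Re}\,\underline a \lesssim -|\xi|$ using $M^0_\rho$-bounds on the symbol of $h$ under the smallness hypothesis \eqref{eqn13005}, and control the operator remainder $R$ by Lemma~\ref{composi}; (ii) derive the evolution equation $(\p_z - T_{\underline a})\Theta = \mathcal{G}$ and expand $\mathcal{G}$ into its $\Lambda_{\geq 2}$, $\Lambda_{\geq 3}$ pieces, using the fixed-point bounds for $\nabla_{x,z}\varphi$ from Lemma~\ref{L2Dirichlet} (the estimates \eqref{equation5851}--\eqref{equation5867}) to bound each factor; (iii) solve forward in $z$ by Duhamel: $\Theta(0) = \int_{-\infty}^0 S(0,z')\mathcal{G}(z')\,dz'$ where $S(z,z')$ is the parabolic propagator for $T_{\underline a}$, and prove the smoothing bound $\|\int_{-\infty}^0 S(0,z')P_j[\mathcal{G}(z')]\,dz'\|_{L^2_x} \lesssim 2^{-j}\|P_j\mathcal{G}\|_{L^\infty_z L^2_x}$ by a Littlewood--Paley/kernel argument analogous to Lemma~\ref{kernelestimate}; (iv) combine with the $\Lambda_{\geq i}$ decomposition to land the right-hand side of \eqref{equation887} at regularity $H^k$, losing one derivative in the source but recovering it from the parabolic gain, and peel off the $\|(\p_xh,\p_x\psi)\|_{W^1}^{i-1}$ factor by always placing the low-frequency inputs in $W^1$ and the remaining input in $H^{k-1}$.

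The main obstacle I expect is step (iii): establishing the quantitative parabolic smoothing estimate for the propagator of the \emph{variable-coefficient} paradifferential operator $T_{\underline a}$ on the half-line, uniformly in $z'\le 0$, with the correct $2^{-j}$ gain and without losing the smallness needed to absorb error terms. The constant-coefficient model ($\underline a = -|\xi|$, propagator $e^{(z-z')|\xi|}$ for $z>z'$) is exactly the content of the kernel bound \eqref{equation5707}, so the real work is to show that the variable-coefficient propagator differs from this model by an operator that is still bounded $H^{k-1}\to H^k$ with small norm; this is handled by freezing coefficients, a Gronwall-type argument in $z$, and repeated application of Lemma~\ref{composi} to control commutators $[T_{\underline a}, P_j]$ and the composition error between $T_{\underline a}$ and the model symbol. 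A secondary subtlety is bookkeeping the $\Lambda_{\geq i}$ expansion consistently: one must check that the paralinearization error $R$ from step (i), the source $\mathcal{G}$, and the factor $1+|\p_xh|^2$ all contribute only quadratic-and-higher terms in $(\p_xh,\p_x\psi)$, so that the final estimate genuinely carries the $\|(\p_xh,\p_x\psi)\|_{W^1}^{i-1}$ smallness; this follows because every error arises from a product or composition in which at least $i-1$ factors are $h$ or $\psi$-derivatives, but it needs to be tracked carefully through the Duhamel integral.
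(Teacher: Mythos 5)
Your plan (parabolic factorization of $P$, pass to $\Theta=(\p_z-T_A)W$, gain a derivative from forward-parabolic smoothing, then bookkeep $\Lambda_{\geq i}$) is the same core mechanism the paper uses. Two details, however, are under-specified in a way that matters.

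First, the variable-coefficient parabolic smoothing you flag as ``the main obstacle'' is not something you need to re-derive: the paper imports it wholesale as Lemma~\ref{elliptic}, which is \cite{alazard}[Lemma 2.1.9--2.1.10]. So the freeze-coefficients/Gronwall program in your step~(iii) is unnecessary here; one simply cites that lemma. Related to this, your factorization uses the principal symbol $A$ directly, but the paper's Lemma~\ref{paralinearization1} factors with $\tilde a=a+a'$, $\tilde A=A+A'$ precisely so that the symbolic composition $\tilde a\sharp\tilde A$ cancels the first-order terms; that subprincipal correction is what makes the remainder $f$ in (\ref{equation294u39}) satisfy the lossless bound (\ref{equation264}). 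The paper then returns from $T_{\tilde A}$ to $T_A$ by absorbing the extra $T_{A'}W$ term, which is order zero and contributes only a good cubic error. If you keep $A$ unchanged, you should verify that the first-order composition error sitting in your source $\mathcal G$ can still be estimated in $H^{k-1+\epsilon}$ by quantities of the form $\|(\p_xh,\p_x\psi)\|_{H^{k-1}}\|(\p_xh,\p_x\psi)\|_{W^1}$; this is not automatic and is the reason $a',A'$ are introduced.

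Second, your treatment of the case $i=3$ is vague in exactly the place where the paper does something specific. Since $\Lambda_1[(\p_z-T_A)W]=0$, the $i=2$ case is just the smoothing estimate applied to $\Theta$; but for $i=3$ one cannot simply say ``expand $\mathcal G$ and place the low-frequency factor in $W^1$,'' because the variable-coefficient operator $T_{\tilde a}$ does not commute with the Taylor projections $\Lambda_n$, so $\Lambda_{\geq 3}[\Theta]$ is \emph{not} a solution of the same parabolic equation with source $\Lambda_{\geq 3}[f]$. The paper's device is to introduce an auxiliary function $h(z,x)$ solving $(\p_z-T_{\tilde a})h=\Lambda_2[f]$ with boundary data $h(0,\cdot)=\Lambda_2\big[(\p_z-T_{\tilde A})W|_{z=0}\big]$, so that $\widetilde W:=(\p_z-T_{\tilde A})W-h$ satisfies $(\p_z-T_{\tilde a})\widetilde W=\Lambda_{\geq 3}[f]$ and, crucially, $\widetilde W(0,\cdot)=\Lambda_{\geq 3}\big[(\p_z-T_{\tilde A})W|_{z=0}\big]$; one then applies Lemma~\ref{elliptic} to $\widetilde W$. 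Your alternative of splitting $\tilde a$ into $-|\xi|$ plus an $h$-dependent part and pushing the latter into the source could also be made to work with the constant-coefficient propagator, but that is a genuinely different bookkeeping from what you wrote, and either way the $i=3$ step needs to be spelled out rather than waved through.
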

\begin{proof}
Note that $\Lambda_{1}[(\p_z - T_{A} )W]=0$, i.e. $(\p_z-T_{A})W$ itself is quadratic and higher. From (\ref{equation254}) in Lemma \ref{paralinearization1}, we have
\begin{equation}
(\p_{z}-T_{\tilde{a}}) (\p_{z} -T_{\tilde{A}} ) W =f ,
\end{equation}
where $\tilde{a}$ and $\tilde{A}$ is given in (\ref{equation8349347}).

  From (\ref{coreest1}) in Lemma \ref{elliptic} and (\ref{equation264}) in Lemma \ref{paralinearization1}	, the following estimate holds, 
\[
\| (\p_z -T_A ) W\big|_{z=0}\|_{H^k} \lesssim \sup_{z\in[-1/4,0]} \| (\p_z -T_{\tilde{A}} ) W(z,x)\|_{H^k}  + \| T_{A'} W \big|_{z=0}\|_{H^k}
\]
\[
\lesssim \sup_{z\in[- 1,0]} \| (\p_z -T_{\tilde{A}} ) W(z,x)\|_{H^{k-1}}  + \| f(z,\cdot)\|_{H^k} + \|(\p_x h,\p_x \psi) \|_{H^{k-1}} \| (\p_x h,\p_x \psi ) \|_{ {W^1}}
\]
\[
\lesssim \|(\p_x h,\p_x \psi) \|_{H^{k-1}} \| (\p_x h,\p_x \psi ) \|_{ {W^1}}.
\]
It remains to consider the case when $i=3$. We define,
 \begin{equation}\label{equation5800}
\widetilde{W}(z, x) = (\p_{z}- T_{\tilde{A}}) W(z,x) - h(z, x),\quad z \leq 0,
\end{equation}
where the function $h(z, x)$ is to be determined. Let $h(0,x)$ to be  given as follows,
\[
h(0, x):=  \Lambda_{2}\big[(\p_{z}- T_{\tilde{A}})  W(z,x)\big|_{z=0}\big].
\]
 Hence, from the definition, we know that $\widetilde{W}(0,\cdot)$ is the cubic and higher order terms of $(\p_{z}- T_{\tilde{A}}) W(z,x)|_{z=0}$. 
  
We let $h(z,x)$ to be the solution of the following parabolic equation: 
\begin{equation}
\left\{ \begin{array}{l}
(\p_{z } - T_{\tilde{a}}) h = \Lambda_{2}\big[f\big],\\
\\
h(0, x) =  \Lambda_{2}\big[(\p_{z}- T_{\tilde{A}})  W(z,x)\big|_{z=0}\big].\\
\end{array}\right.
\end{equation}
Therefore, we can derive the following parabolic equation satisfied by $\widetilde{W}(z, x)$ as follows,
\begin{equation}\label{equation5805}
(\p_{z}- T_{\tilde{a}}) \widetilde{W} = 
\widetilde{\mathcal{N}}:= f - (\p_{z} -T_{\tilde{a}} ) h=  \Lambda_{\geq 3}[f].
\end{equation}
From (\ref{coreest1}) in Lemma \ref{elliptic} and (\ref{equation264}) in Lemma \ref{paralinearization1}, the following estimate holds, 
\[
\| \Lambda_{\geq 3}\big[(\p_z -T_{ {A}} ) W\big|_{z=0}\big]\|_{H^k} \lesssim \sup_{z\in[-1/4,0]} \| (\p_z -T_{\tilde{A}} ) \widetilde{W}(z,x)\|_{H^k} + \| \Lambda_{\geq 3}[T_{A'} W\big|_{z=0}]\|_{H^k}
\]
\[
\lesssim \sup_{z\in[- 1,0]} \| (\p_z -T_A )  \widetilde{W}(z,x)\|_{H^{k-1}}  + \| \Lambda_{\geq 3}[f(z,\cdot)]\|_{H^k} + \|(\p_x h,\p_x \psi) \|_{H^{k-1}} \| (\p_x h,\p_x \psi ) \|_{ {W^1}}^{2}
\]
\[
\lesssim  \|(\p_x h,\p_x \psi) \|_{H^{k-1}} \| (\p_x h,\p_x \psi ) \|_{ {W^1}}^{2}.
\]
\end{proof}

\begin{lemma}\label{paralinearization1}
The following decomposition holds,
\begin{equation}\label{equation254}
(\p_z -T_{\tilde{a}})(\p_z -T_{\tilde{A}}) W =f,
\end{equation}
where
\begin{equation}\label{equation8349347}
\tilde{a}= a + a', \quad \tilde{A}= A +A',
\end{equation}
\begin{equation}\label{equation000790}
a = \frac{1}{1+|\p_x h|^2}(i \p_x h \cdot \xi - |\xi|), \quad A =  \frac{1}{1+|\p_x h|^2}(i \p_x h \cdot \xi + |\xi|),
\end{equation}
\begin{equation}\label{equation000793}
a'= \frac{1}{A-a} \big( i \p_\xi a \p_x A - \frac{\p_x^2 h a}{1+|\p_x h |^2} \big),
\quad 
A'= \frac{1}{a-A}\big(i \p_\xi a \p_x A - \frac{\p_x^2 h A}{1+|\p_x h |^2} \big).
\end{equation}
and ``$f$'' is a good error term. Its precise formula is given in \textup{(\ref{equation294u39})}. Moreover, under the smallness condition \textup{(\ref{eqn13005})}, the following estimate holds for $i\in\{2,3\}$,
\begin{equation}\label{equation264}
\sup_{z\leq 0}\|\Lambda_{\geq i }[ f(z)]\|_{H^{k}}\lesssim \|(\p_x h,\p_x \psi) \|_{H^{k-1}} \| (\p_x h,\p_x \psi ) \|_{ {W^1}}^{(i-1)}.
\end{equation}
\end{lemma}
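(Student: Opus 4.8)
\textbf{Proof proposal for Lemma \ref{paralinearization1}.}

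The plan is to factor the second-order elliptic operator $P$ (equivalently $(\p_z+\d)(\p_z-\d)$ after the flattening change of coordinates, cf. \eqref{equation101}) into a product of two first-order paradifferential operators, and to identify the error. First I would recall that, modulo smoothing terms, $P$ acts on the good unknown $W = \varphi - T_{\p_z\varphi}h$ (introduced in the proof of Lemma \ref{paralinearizationDN1}) rather than directly on $\varphi$; it is $W$ that solves a clean paradifferential equation because the worst self-interaction $T_{\p_z\varphi}$-type terms have been absorbed. The leading symbol of the operator in $(x,z)$-coordinates is $p_2(x,\xi) = -(1+|\p_x h|^2)|\xi|^2$ for the $z$-independent principal part coming from \eqref{eqn1320}, together with the first-order symbol $p_1(x,\xi) = -2 i \p_x h\cdot\xi\,\p_z - \p_x^2 h\,\p_z$ hidden in the $\p_z$-terms. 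Writing the factorization as $(\p_z - T_{\tilde a})(\p_z - T_{\tilde A}) = \p_z^2 - T_{\tilde a}\p_z - \p_z(T_{\tilde A}\cdot) + T_{\tilde a}T_{\tilde A}$ and using the composition Lemma \ref{composi} to compute $T_{\tilde a}T_{\tilde A} = T_{\tilde a\sharp \tilde A} + (\text{smoothing})$, I would match symbols order by order: the second-order match forces $\tilde a\tilde A = -|\xi|^2/(1+|\p_x h|^2) + \cdots$ which, combined with the first-order match $-(\tilde a + \tilde A) = $ (symbol of the $\p_z$-coefficient)$= -2 i\p_x h\cdot\xi/(1+|\p_x h|^2)$, gives at leading order precisely $a$ and $A$ in \eqref{equation000790}. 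The sub-principal correction — matching the $\p_\xi\cdot\p_x$ term in $\tilde a\sharp\tilde A$ against the remaining first-order discrepancy and the $\p_x^2 h$ contribution — yields the formulas \eqref{equation000793} for $a'$ and $A'$, where the factor $1/(A-a)$ (harmless since $A - a = 2|\xi|/(1+|\p_x h|^2)$ is elliptic) appears from inverting the leading symbol difference.

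Next I would pin down the error term $f$ explicitly. It collects: (i) the difference $T_{\tilde a}T_{\tilde A} W - T_{\tilde a\sharp\tilde A}W$, which by \eqref{eqn700} in Lemma \ref{composi} maps $H^\mu\to H^{\mu+\rho}$ with norm controlled by $M^{1}_\rho(\tilde a)M^{1}_\rho(\tilde A)$ and hence does not lose derivatives; (ii) paraproduct remainders $R_{\mathcal B}(\p_x h,\cdot)$-type terms and $T_a b - ab$-type commutators arising when the nonlinear coefficient $1+|\p_x h|^2$ and its $z$-derivatives are paralinearized, all of which are quadratic-and-higher in $(\p_x h,\p_x\psi)$ and gain regularity; (iii) the genuinely lower-order pieces coming from the $z$-dependence of $\varphi$, which enter through the substitution $\varphi\mapsto W$ and through $\nabla_{x,z}\varphi$'s fixed-point structure \eqref{equation5700}. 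Assembling these gives the promised formula \eqref{equation294u39} for $f$. Since every constituent of $f$ is a paraproduct or paracommutator that is at least quadratic and at least as smoothing as the inputs allow, $\Lambda_1[f] = 0$, matching the fact that $\Lambda_1[(\p_z - T_{\tilde A})W] = 0$ used in Lemma \ref{forwardparabolic}.

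For the quantitative estimate \eqref{equation264} I would proceed by a straightforward but careful bookkeeping: for each term in $f$, put the lowest-frequency factor(s) in $W^1$-type norms and the highest-frequency factor in $H^{k-1}$, invoking the bilinear/trilinear boundedness of Lemma \ref{boundness} together with the symbol bounds $M^m_\rho$ of the paradifferential operators involved; the composition Lemma \ref{composi} supplies the $+\rho$ gain of derivatives that prevents any loss at high frequency, and the smallness condition \eqref{eqn13005} on $\|h\|_{W^{N_2}}$ keeps the elliptic factor $1+|\p_x h|^2$ (and its reciprocal appearing in $a',A'$) uniformly bounded with all its derivatives. The case $i=2$ is the base case; the case $i=3$ follows by the same argument after one further Taylor expansion of the coefficients, isolating the quadratic part already estimated and bounding the cubic-and-higher remainder. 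The main obstacle, I expect, is not any single estimate but the verification that the sub-principal symbols $a',A'$ are chosen exactly so that the first-order discrepancy in the factorization lands in $f$ with the claimed gain — i.e., that the $\p_\xi a\,\p_x A$ and $\p_x^2 h$ terms in \eqref{equation000793} precisely cancel the non-smoothing part of $T_{\tilde a}T_{\tilde A} - T_{\tilde a\sharp\tilde A}$ modulo the $z$-dependence; this is the computational heart of the lemma and must be done with the composition formula written out to order $\rho$, keeping track of which terms are $z$-dependent and therefore contribute to $f$ rather than to the symbols.
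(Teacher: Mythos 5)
Your proposal follows essentially the same route as the paper: paralinearize $P\varphi=0$, pass to the good unknown $W=\varphi - T_{\partial_z\varphi}h$ so that $\mathcal{P}W=f_2$ with $f_2$ a collection of paraproduct remainders, and then determine $a,A,a',A'$ by matching $\tilde a+\tilde A$ and $\tilde a\sharp\tilde A$ against the sub\mbox{-}principal symbols of $T_{1/(1+|\partial_x h|^2)}\mathcal{P}$, with the linear system $a'+A'=\partial_x^2 h/(1+|\partial_x h|^2)$ and $aA'+Aa'=i\,\partial_\xi a\,\partial_x A$ producing exactly \eqref{equation000793}. Your earlier description of this symbol\mbox{-}matching is accurate, but the closing paragraph mischaracterizes the ``computational heart'': $a',A'$ are \emph{not} chosen to cancel the non\mbox{-}smoothing part of $T_{\tilde a}T_{\tilde A}-T_{\tilde a\sharp\tilde A}$; that operator\mbox{-}level remainder is simply deposited into $f$ (via $R_1$) and then controlled by \eqref{eqn700}, which in any case maps $H^\mu\to H^{\mu-m-m'+\rho}$ (so $H^\mu\to H^\mu$ for two order\mbox{-}one symbols with $\rho=2$), not $H^\mu\to H^{\mu+\rho}$ as you wrote. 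Likewise, the fixed\mbox{-}point formulation \eqref{equation5700} plays no role in deriving $f$; the paper's \eqref{equation294u39} is a concrete sum of paraproducts and paracommutators, and Appendix~\ref{appdxb} enters only afterwards, to bound the $\partial_z^j\varphi$\mbox{-}coefficients of $f_2$ when proving \eqref{equation264}. With those two corrections, your plan is the paper's plan.
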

\begin{proof}
 Recall that $P \varphi=0$. After paralinearizing this equation, we have
\begin{equation}\label{equation190}
\mathcal{P}  \varphi + 2T_{\p_z^2\varphi \p_x h} \p_x h   - 2T_{\p_x \p_z \varphi}\p_x h    -T_{\p_z \varphi}\p_x^2 h+f_1=0, 
\end{equation}
where
\begin{equation}\label{equation000678}
\mathcal{P}:= T_{(1+|\p_x h|^2)} \p_z^2 +\p_x^2 -2T_{\p_x h}\p_x \p_z -T_{\p_x^2 h}\p_z,
\end{equation}
\[f_1 = {R}_{\mathcal{B}}(|\p_x h|^2, \p_z^2 \varphi) -2R_{\mathcal{B}}(\p_x\p_z \varphi, \p_x h ) - R_{\mathcal{B}}(\p_x^2 h, \p_z \varphi)\]
\[+ T_{\p_z^2 \varphi} R_{\mathcal{B}}(\p_x h, \p_x h )
+2\big(T_{\p_z^2 \varphi}T_{\p_x h}- T_{\p_z^2\varphi \p_x h}\big)\p_x h .
\]
Recall that $W= \varphi - T_{\p_z \varphi }h$, then from (\ref{equation190}), we have
\begin{equation}\label{equation000578}
\mathcal{P} W = f_2,\quad f_2= - {R}_{\mathcal{B}}(|\p_x h|^2, \p_z^2 \varphi)+2R_{\mathcal{B}}(\p_x\p_z \varphi, \p_x h )
\end{equation}
\[ 
 + R_{\mathcal{B}}(\p_x^2 h, \p_z \varphi)- T_{\p_z^2 \varphi} R_{\mathcal{B}}(\p_x h, \p_x h )-T_{|\p_x h |^2} T_{\p_z^3 \varphi} h +T_{\p_z^3\varphi |\p_x h |^2} h
\]
\begin{equation}\label{equation220}
 + 2 T_{\p_x h }T_{\p_x \p_z^2 \varphi} h - 2T_{\p_x h \p_x \p_z^2\varphi} h + T_{\p_x^2 h }T_{\p_z^2 \varphi} h -T_{\p_x^2 h \p_z^2 \varphi} h.
\end{equation}
From the composition Lemma \ref{composi}, we can see that $f_2$ does not lose derivative.

Note that the following identities holds from (\ref{equation000790}) and (\ref{equation000793}),
\[
\tilde{a}+ \tilde{A} = \frac{2i \p_x h \xi}{1+|\p_x h |^2}   +\frac{\p_x^2 h}{1+|\p_x h |^2}, 
\]
\[
\tilde{a}\sharp \tilde{A}  := \tilde{a} \tilde{A}  +\frac{1}{i} \p_\xi \tilde{a}  \p_x \tilde{A} 
= -\frac{|\xi|^2}{1+|\p_x h |^2} + a'A'- i \p_\xi \tilde{a}\p_x A'-i \p_\xi a' \p_x A.
\]

Hence, from (\ref{equation000578}), we have 
\begin{equation}\label{equation000339}
(T_{(1+ |\p_x h |^2)}\p_z^2 -T_{(\tilde{a}+\tilde{A})(1+ |\p_x h |^2)} +T_{\tilde{a}\sharp\tilde{A}(1+ |\p_x h |^2)}) W = f_2 + R_0  ,
\end{equation}
where
\[
R_0:= T_{(a'A'-i \p_\xi \tilde{a} \p_x A'-i\p_\xi a'\p_x A)(1+|\p_x h|^2)} W.
\]
It is easy to verify that $R_0$ does not lose derivative.  From (\ref{equation000339}),  we have
\[
\big(T_{(1+|\p_x h|^2)}(\p_z -T_{\tilde{a}} )(\p_z -T_{\tilde{A}})\big)W = R_1,\]
where
\[  R_1 = f_2 + R_0	+ T_{(1+|\p_x h|^2)}\big( T_{\tilde{a}}T_{\tilde{A}}-T_{\tilde{a}\sharp \tilde{A}}\big) \]
\[ + T_{ |\p_x h |^2 } T_{\tilde{a}\sharp \tilde{A}}W - T_{\tilde{a}\sharp \tilde{A}  |\p_x h|^2 }W -T_{ |\p_x h |^2 }T_{\tilde{a}+\tilde{A}} W + T_{(\tilde{a}+\tilde{A}) |\p_x h |^2}W,
\]
which further implies the following
\[
(\p_z -T_{\tilde{a}} )(\p_z -T_{\tilde{A}})W = f, \quad  f =  T_{1/(1+|\p_x h|^2)} R_1\]
\begin{equation}\label{equation294u39}
+ \big( T_{-|\p_x h|^4/(1+|\p_x h |^2)} - T_{-|\p_x h|^2/(1+|\p_x h |^2)}T_{|\p_x h|^2}\big)(\p_z -T_{\tilde{a}})(\p_z - T_{\tilde{A}}) W.
\end{equation}
Again, from the composition Lemma \ref{composi}, we can see that  $R_1$ and $f$ do not lose derivative from above explicit formulas. After checking terms in (\ref{equation294u39}) very carefully, the desired estimate (\ref{equation264}) follows very easily.
\end{proof}

\begin{lemma}\label{elliptic}
Let $a\in \Gamma^{1}_{2}(\R^2)$ and satisfies the following assumption,
$
Re[a(x,\xi)]\geq c |\xi|,$
for some positive constant $c$. If $u$ solves the following equation 
\[
(\p_w + T_a) u(w,\cdot) = g(w,\cdot),
\]
then we have the following estimate holds for  any fixed and sufficiently  small constant  $\tau$, and arbitrarily small constant $\epsilon >0.$
\begin{equation}\label{coreest1}
\sup_{w\in[\tau,0]}\| u(w)\|_{H^{k}} \lesssim M_2^1(a) \frac{1+|\tau|}{|\tau|}\big[ \sup_{z\in[4\tau,0]} \| u(w)\|_{H^{k-2(1-\epsilon)}} + \sup_{z\in[4\tau, 0]} \| g(z)\|_{H^{\mu-(1-\epsilon)}}\big].
\end{equation}
\end{lemma}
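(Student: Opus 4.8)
The plan is to prove Lemma~\ref{elliptic} as a standard smoothing estimate for a paradifferential parabolic evolution, in the spirit of the elliptic/parabolic estimates of Alazard--M\'etivier and Alazard--Burq--Zuily. First I would perform a Littlewood--Paley decomposition $u=\sum_{j}P_j u$ and record, for each frequency block $u_j:=P_j u$, the equation $(\p_w+T_a)u_j=P_j g+[P_j,T_a]u$. Only two pieces of paradifferential machinery are needed. On the one hand, since $a\in\Gamma^1_2(\R^2)$ with $\mathrm{Re}[a(x,\xi)]\geq c|\xi|$, the G\aa rding-type inequality for paradifferential operators gives, for $j$ large, $\mathrm{Re}\langle T_a u_j,u_j\rangle\geq c'\,2^{j}\|u_j\|_{L^2}^2-C\,M_2^1(a)\|u_j\|_{L^2}^2$; for $j$ bounded one simply uses that $T_a$ is $L^2$-bounded with norm $\lesssim M_2^1(a)$. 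On the other hand, the commutator $[P_j,T_a]$ is of order $0$ and bounded on $L^2$ with norm $\lesssim M_2^1(a)$, so it only produces a term with weaker norms of $u$.

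\textbf{Energy estimate and the scalar ODE.} Next I would run the energy identity $\tfrac{d}{dw}\|u_j(w)\|_{L^2}^2=2\,\mathrm{Re}\langle\p_w u_j,u_j\rangle$ and combine it with the coercivity above to obtain the differential inequality $\tfrac{d}{dw}\|u_j\|_{L^2}^2+c'\,2^{j}\|u_j\|_{L^2}^2\lesssim \|P_j g\|_{L^2}\|u_j\|_{L^2}+M_2^1(a)\,\big(\text{weaker norms of }u\big)\|u_j\|_{L^2}$. Integrating this scalar inequality from a ``base time'' $w_0\in[4\tau,\tau]$ up to $w\in[\tau,0]$ yields
\[
\|u_j(w)\|_{L^2}\lesssim e^{-c'2^{j}(w-w_0)}\|u_j(w_0)\|_{L^2}+\int_{w_0}^{w}e^{-c'2^{j}(w-s)}\Big(\|P_j g(s)\|_{L^2}+M_2^1(a)\,\big(\text{weaker norms of }u(s)\big)\Big)\,ds.
\]

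\textbf{Smoothing gain and reassembly.} The crucial observation is that on the relevant range $w-w_0\gtrsim|\tau|$, so $e^{-c'2^{j}(w-w_0)}\lesssim\min\big(1,(2^{j}|\tau|)^{-N}\big)\lesssim 2^{-2(1-\epsilon)j}|\tau|^{-2(1-\epsilon)}$ for a suitably large $N$; this converts the initial-data term into a gain of $2(1-\epsilon)$ derivatives at the price of $|\tau|^{-2(1-\epsilon)}$, and after averaging $w_0$ over the interval $[4\tau,\tau]$ of length $\sim|\tau|$ one also picks up the factor $(1+|\tau|)/|\tau|$ recorded in the statement. The source term is even better: $\int_{w_0}^{w}e^{-c'2^{j}(w-s)}\|P_j g(s)\|_{L^2}\,ds\lesssim 2^{-j}\sup_s\|P_j g(s)\|_{L^2}$, a full derivative gain, more than the claimed $1-\epsilon$. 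Weighting by $2^{jk}$, squaring, and summing in $j$ via the Littlewood--Paley characterization of $H^{k}$ then produces the asserted bound; the low-frequency sum $\sum_{j\leq 0}$ is harmless since there $\p_w+T_a$ reduces to an ODE with $L^2$-bounded generator and no smoothing is required, and it is exactly here that the restriction $|\xi|\geq 1/2$ in the definition of $\Gamma^1_2$ forces a high/low frequency split.

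\textbf{Main obstacle.} I expect the difficulty to be bookkeeping rather than conceptual: getting the exponents exactly right---in particular why the gain is $2(1-\epsilon)$ and not $2$, which comes from the $\epsilon$-room needed both in the interpolation $e^{-x}\lesssim_N x^{-N}$ and to sum the resulting geometric-type series in $j$ without a logarithm---and tracking the linear dependence on $M_2^1(a)$ through the G\aa rding inequality and the commutator bound. A secondary technical point is that one should work with a smooth average in $w_0$ (integrate against a normalized bump supported in $[4\tau,2\tau]$) rather than a single base point, so that the terms involving $\p_w u$ generated by integration by parts in $w_0$ do not reappear in the final estimate.
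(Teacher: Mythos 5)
First, note that the paper does not actually prove Lemma~\ref{elliptic}: it simply points to Alazard--Delort, combining their Lemma 2.1.9 with the proof of their Lemma 2.1.10. So you are not taking ``a different route from the paper'' --- you are (blind) reconstructing the referenced argument, and the dyadic parabolic energy method you outline is indeed the standard approach for such paradifferential smoothing estimates.

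That said, there are two points in your sketch that do not hold up as written. The first concerns the commutator $[P_j,T_a]$. You assert that because it is of order $0$ it ``only produces a term with weaker norms of $u$.'' This is not accurate. Being order $0$ and (by the $\theta$-cutoff in the paraproduct) approximately frequency-localized, one has $\|[P_j,T_a]u\|_{L^2}\lesssim M^1_2(a)\|\tilde P_j u\|_{L^2}$, i.e.\ the \emph{same} Sobolev level as $u_j$. Feeding this through the Duhamel integral $\int_{w_0}^w e^{-c'2^j(w-s)}\,ds\lesssim 2^{-j}$ gains exactly one derivative, which places the commutator contribution in $H^{k-1}$. But for the stated $\epsilon\in(0,1/2)$ the norm $H^{k-1}$ is \emph{strictly stronger} than the target $H^{k-2(1-\epsilon)}$, so your argument does not close as a single pass. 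One has to iterate: apply the one-derivative-gain estimate on a chain of nested intervals $[\tau,0]\subset[2\tau,0]\subset[4\tau,0]$, using the arbitrarily strong gain from the initial-data term $e^{-c'2^j|\tau|}$ to absorb the intermediate $H^{k-1}$ contributions, and thereby cascade down to $H^{k-2(1-\epsilon)}$. This bootstrap, which is where the gain of two derivatives actually comes from, is absent from your outline and is the real content of the lemma.

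The second point is the $\tau$ bookkeeping. You claim the averaging in $w_0$ over an interval of length $\sim|\tau|$ ``picks up the factor $(1+|\tau|)/|\tau|$.'' What the estimate $e^{-c'2^j|\tau|}\lesssim (2^j|\tau|)^{-2(1-\epsilon)}$ actually yields is $|\tau|^{-2(1-\epsilon)}$, and a moment's thought shows that $e^{-c'2^j|\tau|}\lesssim |\tau|^{-1}2^{-2(1-\epsilon)j}$ fails on the range $2^j|\tau|\lesssim 1$, so the power $|\tau|^{-1}$ cannot be extracted uniformly in $j$ from this bound alone. In other words, neither your accounting nor the lemma's literal $\frac{1+|\tau|}{|\tau|}$ prefactor matches the natural output of the argument; this is immaterial in the paper because $\tau$ is a fixed constant and only the qualitative gain in derivatives is used, but it means your proposed explanation of where that factor comes from is a red herring. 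Similarly, the remark about averaging to avoid ``$\p_w u$ terms from integration by parts in $w_0$'' is not relevant in a Duhamel-based argument; the only role of averaging (or simply choosing $w_0\in[4\tau,2\tau]$) is to ensure $w-w_0\gtrsim|\tau|$ uniformly. Your high/low frequency split observation, and the identification of G\aa rding and the zero-order commutator bound as the two paradifferential inputs, are correct.
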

\begin{proof}
A detailed proof can be found in \cite{alazard}, above result is the combination of \cite{alazard}[Lemma 2.1.9] and the proof part of \cite{alazard}[Lemma 2.1.10].

\end{proof}

\subsection{Estimate of the scaling vector field part}
\begin{lemma}\label{propositionscaling}
Under the smallness assumption \textup{(\ref{assumption})},  the following estimates hold for $i\in\{1,2,3\}$ and any $0< \epsilon \ll 1$,
 \begin{equation}\label{equation499}
\|\Lambda_{\geq i }[SB(h)\psi]\|_{H^{k}} \lesssim    \|(\p_x h,\p_x \psi)\|_{ {W^{k}}}^{i-1} \big[ \| (S \eta, \eta)\|_{H^{k+1,1-\epsilon}}  +\|(\d (S \psi),\d\psi)\|_{H^k} \big],
 \end{equation}
 \begin{equation}\label{equation9745}
\|\Lambda_{\geq 4 }[SB(h)\psi]\|_{L^2} \lesssim \big(\| (Sh, h ) \|_{H^{1,1-\epsilon}}+\| \d (S \psi, \psi)\|_{L^2}\big)\|(\p_x h, \p_x \psi)\|_{W^2}^3.
 \end{equation}
\end{lemma}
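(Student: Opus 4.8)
\textbf{Proof proposal for Lemma \ref{propositionscaling}.}

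The plan is to exploit the fixed-point formulation of $\nabla_{x,z}\varphi$ in (\ref{equation5700}) after applying the scaling vector field $S=t\p_t+2x\p_x$, reducing the estimate of $SB(h)\psi$ to an $L^\infty_z$ estimate of $S\nabla_{x,z}\varphi$. First I would observe that $S$ is compatible with the structure of equation (\ref{eqn1320}): since the operator $P$ in (\ref{eqn1320}) has coefficients that depend only on $\p_x h$ and is (after accounting for the $z$-scaling which must be introduced consistently with the $2x\p_x$ weight) essentially scaling-covariant, applying $S$ to (\ref{equation5700}) produces a fixed-point type equation for $S\nabla_{x,z}\varphi$ of the same shape, namely $S\nabla_{x,z}\varphi = (\text{linear data in }S\psi) + \int K(z,z')M(\p_x h)S\nabla_{x,z}\varphi\,dz' + (\text{commutator terms involving }S(\p_x h)\text{ acting on }\nabla_{x,z}\varphi)$. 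The commutator terms arise because $S$ does not commute with $M(\p_x h)$, with $K(z,z')$, nor with the exponential multipliers $e^{z\d}$; each such commutator replaces one factor of $\p_x h$ by $S(\p_x h)=\p_x(Sh)-2\p_x h$ (up to lower-order corrections) or produces an extra homogeneity-counting factor that is harmless. Here the key input allowing one to put only $\psi$ (and $S\psi$) in $L^2$-type spaces, while keeping $h$ and $Sh$ in the weaker $W$-type norms, is Lemma \ref{lemma2} and Lemma \ref{DNofproduct}: the latter handles precisely the terms of type $\psi_2 B(h)\psi_1 - B(h)(\psi_1\psi_2)$ that appear when $S$ hits the Dirichlet--Neumann operator, and it is stated with $f$ (playing the role of a factor coming from $S$) in an $H^{k+1,1-\epsilon}$ space, matching the norm $\|(S\eta,\eta)\|_{H^{k+1,1-\epsilon}}$ in (\ref{equation499}).

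The key steps, in order, would be: (i) write down the $S$-differentiated fixed-point equation for $S\nabla_{x,z}\varphi$, carefully bookkeeping all commutator terms; (ii) estimate the linear data term $e^{z\d}[\p_x(S\psi),\d(S\psi)]$-type contributions using $\|\nabla_{x,z}(\text{linear})\|_{L^\infty_z H^k}\lesssim \|\d(S\psi)\|_{H^k}$, exactly as in the proof of Lemma \ref{L2Dirichlet}; (iii) bound the kernel term $\int K(z,z')M(\p_x h)S\nabla_{x,z}\varphi\,dz'$ via Lemma \ref{kernelestimate} (estimate (\ref{eqn1300})), absorbing it into the left-hand side using the smallness $\|\p_x h\|_{\widetilde{W^0}}\lesssim \epsilon_1\ll 1$ coming from (\ref{eqn13005}); (iv) estimate the commutator terms, where one factor is $S(\p_x h)$ (controlled by $\|S\eta\|_{H^{k+1,1-\epsilon}}$ through Lemma \ref{DNofproduct}-type reasoning) times $\nabla_{x,z}\varphi$ or $\Lambda_{\geq i-1}[\nabla_{x,z}\varphi]$ (controlled by Lemma \ref{L2Dirichlet} and Lemma \ref{lemma2}, giving the factor $\|(\p_x h,\p_x\psi)\|_{W^k}^{i-1}$); (v) close the fixed-point estimate and evaluate at $z=0$, using $SB(h)\psi = \p_z(S\varphi)|_{z=0} + (\text{lower-order from the }z\text{-scaling})$. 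Then (\ref{equation9745}) follows by the same scheme restricted to the quartic-and-higher part, putting everything in $L^2\times (W^2)^3$ and using (\ref{eqn1310}) in Lemma \ref{lemma2} together with (\ref{eqn1671}) in Lemma \ref{DNofproduct} for the quartic terms.

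The main obstacle I expect is step (i)--(iv): correctly identifying the commutator terms produced by $S$ and verifying that each one is genuinely of the advertised multilinear shape, i.e. that no term forces $h$ or $Sh$ into an $\dot H^s$ norm with $s$ too large, and in particular that the borderline regularity $H^{k+1,1-\epsilon}$ (rather than $H^{k+1,p}$ or worse) suffices. The subtlety is that $S=t\p_t+2x\p_x$ contains a $t\p_t$ piece, and to use it inside the elliptic/harmonic problem one must trade $t\p_t$ for spatial derivatives via the equation itself; this introduces the $z$-variable scaling and the factor $\frac{1+|\tau|}{|\tau|}$-type losses seen in Lemma \ref{elliptic}, which must be checked not to destroy the estimate. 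Once the bookkeeping is done, each individual term is handled by the already-established lemmas (\ref{kernelestimate}, \ref{L2Dirichlet}, \ref{lemma2}, \ref{DNofproduct}) plus the multilinear estimates in Lemma \ref{boundness}, so the remaining work is routine. I would also remark, as the excerpt does for its companion lemmas, that the quartic estimate (\ref{equation9745}) is proved ``with minor modifications'' of the cubic argument and that the details are omitted.
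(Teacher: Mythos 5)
Your plan — differentiate the fixed-point formulation (\ref{equation5700}) by $S$ and chase commutators — is a genuinely different route from the paper's, and as stated it has two problems.

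\textbf{What the paper actually does.} The paper does not apply $S$ to (\ref{equation5700}) at all. It starts from the closed-form algebraic identity (\ref{equation5732}) for $SB(h)\psi$, cited from Alazard--Delort \cite{alazard}[Lemma 2.3.3] and obtained by comparing $S\varphi$ with the harmonic extension $\varphi_1$ of the boundary data $S\psi$ (essentially a Fr\'echet-derivative/variation-of-parameters computation, not a commutator computation inside the fixed-point equation). Using the structural identity $-\p_x V = G(h)(B(h)\psi)$, the paper rewrites this as (\ref{equation8024}): $SB(h)\psi$ equals $B(h)(S\psi)$ plus terms with no $S$ at all plus exactly two commutator-shaped terms $B(h)(hB) - hB(h)B$ and $Sh\,B(h)B - B(h)(Sh\,B)$. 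Each piece is then estimated by the already-established lemmas: $B(h)(S\psi)$ by Lemma \ref{lemma2}, the two commutators by Lemma \ref{DNofproduct}, and the $S$-free terms by Lemma \ref{L2Dirichlet}. The $H^{k+1,1-\epsilon}$ norm on $(S\eta,\eta)$ comes out directly from (\ref{eqn140000}) inside Lemma \ref{DNofproduct}.

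\textbf{Gap 1: Lemma \ref{DNofproduct} does not apply to the terms your plan produces.} You correctly describe Lemma \ref{DNofproduct} as treating $\psi_2 B(h)\psi_1 - B(h)(\psi_1\psi_2)$, but that commutator structure is a feature of the \emph{algebraic identity} (\ref{equation8024}), not of the differentiated fixed-point equation. If you apply $S$ to (\ref{equation5700}), the kernel term produces $\int K(z,z')\,M(S\p_x h)\,\nabla_{x,z}\varphi\,dz'$: a plain product of $S(\p_x h)$ with $\nabla_{x,z}\varphi$, with no subtraction and hence no $|\,|\xi|-|\eta|\,|$ gain. Estimating this term forces $\|S\p_x h\|_{H^k}\approx\|Sh\|_{H^{k+1,1}}$ (not the commutator-improved $H^{k+1,1-\epsilon}$), and crucially it never invokes Lemma \ref{DNofproduct} at all. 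So the central piece of machinery you announce is not in fact reachable from your set-up; either you must first manufacture the commutator structure (which amounts to re-deriving the Alazard--Delort identity), or you must argue that the plain product estimates suffice — the latter is plausible but is a different proof from what you sketch, and you would owe a careful check that $\|Sh\|_{H^{k+1,1}}$ is still controlled by the bootstrap (it is, for $k+1\le N_1$, but you do not say so).

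\textbf{Gap 2: the norm allocation is misstated.} You write that the idea is ``to put only $\psi$ (and $S\psi$) in $L^2$-type spaces, while keeping $h$ and $Sh$ in the weaker $W$-type norms.'' This is wrong for $Sh$: the bootstrap assumption (\ref{assumption}) gives \emph{only} $\|S(U^1,U^2)\|_{H^{N_1,p}}$, with no $W$-type control on $SU^1, SU^2$. Indeed the target norm $\|(S\eta,\eta)\|_{H^{k+1,1-\epsilon}}$ in (\ref{equation499}) is an $L^2$-type norm; that is precisely why Lemma \ref{DNofproduct} is needed, to trade $L^\infty$-control of $Sh$ (which is unavailable) for a commutator gain that makes the $L^2$-control sufficient. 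The later part of your proposal invokes the correct norm, so this may just be a slip, but as written it undermines the logic of steps (iii)--(iv).

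\textbf{Secondary issue.} You flag the $t\p_t$ piece of $S$ as an obstacle and suggest trading it ``for spatial derivatives via the equation itself,'' invoking the $|\tau|^{-1}$ loss in Lemma \ref{elliptic}. Lemma \ref{elliptic} plays no role here; it is used in the paralinearization of Appendix \ref{appendxc}, not in the $S$-estimate of $B(h)\psi$. In the paper's argument $t\p_t$ is handled entirely through the algebraic identity — no elliptic estimate with $\tau$-dependent loss enters. If you genuinely go the fixed-point route, the correct observation is that $\varphi$ depends on $t$ only through $h(t)$ and $\psi(t)$, so $t\p_t\varphi$ is well-defined by the chain rule and the equation need not be invoked to ``trade'' anything.

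In short: a valid proof along your lines exists, but it would effectively reconstruct the Alazard--Delort identity that the paper simply cites, and the two lemmas you lean on hardest (Lemma \ref{DNofproduct} and Lemma \ref{elliptic}) do not enter where you say they do.
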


\begin{proof}
From \cite{alazard}[Lemma 2.3.3], the following identity holds, 
\[
S B(h)\psi = B(h) (S\psi - (B(h)\psi)S h) + \frac{1}{1+(\p_{x}h)^{2}} \Big[ - 2(V(h)\psi)\p_{x}h   - 2G(h)\psi
\]
\begin{equation}\label{equation5732}
 + 2G(h)(h B(h)\psi) - 2\eta G(h)(B(h)\psi) + \big[ \p_{x}h \p_{x}(B(h)\psi) - \p_{x}(V(h)\psi)\big] S h   \Big].
\end{equation}
We remark that above identity is derived from comparing the $S\p_z \varphi|_{z=0}$ with the $\p_z \varphi_1\big|_{z=0}$, where $\varphi_1(0)=S\psi$. After studying the two harmonic equations in the $(x,z)$ coordinates formulation, one should derive (\ref{equation5732}) without any problem.

From the facts that $P \varphi =0$ and $B(\eta)\psi =\p_z\varphi|_{z=0}$, after evaluating $P\varphi(z)$ at the boundary $z=0$, we can derive the following identity,
\begin{equation}\label{equation6010}
 -\p_x [V(h)\psi]=G(h)(B(h)\psi).
\end{equation}
After combining the identities (\ref{equation5732}), (\ref{equation6010}) , and (\ref{eqn1003}), we have the following identity
\[
S [B(h)\psi] = B(h)(S\psi)  - \frac{2 \p_x h V(h)\psi + 2 G(h)\psi}{1+ (\p_x h)^2} - \frac{2\p_x h \p_xh B(h)\psi }{1+(\p_x h)^2} +
\]
\begin{equation}\label{equation8024}
2\big[B(h)(h B(h)\psi) - h B(h)(B(h)\psi)\big]  + \big[S h B(h)(B(h)\psi) - B(h)(S h B(h)\psi)\big]. 
 \end{equation}
Since terms inside (\ref{equation8024}) that do not depend on the scaling vector field ``$S$'' can be estimated very easily  by using estimates in Lemma \ref{L2Dirichlet}, we omit details for those terms. From Lemma \ref{DNofproduct}, the following estimate holds  for $i\in\{1,2,3\}$, 
\[
\| \Lambda_{\geq i } \big[B(h)(h B(h)\psi) - h B(h)(B(h)\psi)\big]\|_{H^k}+ \| \Lambda_{\geq i } \big[Sh B(h)(B(h)\psi) - B(h)(S h B(h)\psi)]\|_{H^k}\] 
\[\lesssim \| (Sh, h) \|_{H^{k+1, 1-\epsilon}} \| \Lambda_{\geq i-1}[B(h)\psi] \|_{\widetilde{W}^k}
\lesssim \| (Sh, h) \|_{H^{k+1, 1-\epsilon}} \|(\p_x h , \p_x \psi, \d\psi)\|_{\widetilde{W}^k}^{(i-1)}.
\]
From Lemma \ref{lemma2}, the following estimate holds, 
\[	
\|\Lambda_{\geq i }[B(h)S\psi]\|_{H^k} \lesssim \| \p_x h \|_{W^k}^{i-1}\| \d(S\psi)\|_{H^k}.
\]
To sum up, we can see our desired estimate (\ref{equation499}) holds.
With an extra estimate (\ref{eqn1310}) in Lemma \ref{lemma2}, one can derive the desired estimate (\ref{equation9745}) by redo the argument used in above. We omit details here.
\end{proof}

\begin{lemma}\label{vectorfield}
Under the smallness assumption \textup{(\ref{eqn13005})}, the following estimate holds for $1\leq k \leq \gamma' $, $p\in(0,1)$, and $i\in\{2,3\}$,
\[
\| \Lambda_{\geq i }[SF(\eta)\psi]\|_{H^{k,p}} \lesssim \big(\| S h \|_{H^{k, p}} +\| h \|_{H^{k+1, 1-p}} + \| \d \psi\|_{H^{k}} + \|\d S\psi \|_{H^{k-1}}\big)
\]
\begin{equation}\label{scalingparaerror}
\times \| (\p_x h, \d\psi)\|_{W^{k+1}}^{(i-1)}.
\end{equation}

\end{lemma}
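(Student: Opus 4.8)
\textbf{Proof proposal for Lemma \ref{vectorfield}.}
The plan is to run the very same fixed-point and paralinearization machinery used in Lemma \ref{paralinearizationDN1} and Lemma \ref{forwardparabolic}, but now applied to the profile with one scaling vector field attached. First I would recall that, by definition of $F(h)\psi$ in (\ref{equation836}), we have
\[
F(h)\psi = \big(T_{1+|\p_x h|^2}(\p_z -T_A) W + T_{|\p_x h|^2}T_{A-|\xi|} W - T_{|\p_x h|^2(A-|\xi|)}W\big)\big|_{z=0} + \tilde{\mathcal R}_1\big|_{z=0},
\]
so the analysis of $SF(h)\psi$ reduces to two pieces: (i) the genuinely lower-order paradifferential remainders (coming from $\tilde{\mathcal R}_1$ and from the commutator terms $T_{|\p_x h|^2}T_{A-|\xi|}W - T_{|\p_x h|^2(A-|\xi|)}W$), and (ii) the main term $T_{1+|\p_x h|^2}(\p_z-T_A)W$. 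For piece (i), I would apply $S$ using the Leibniz-type rule $S T_a f = T_{Sa}f + T_a Sf + \widetilde{T}_a f$ (the paradifferential analogue of the bilinear identity stated in Subsection \ref{mainsystemofequation}); each resulting term is a paraproduct in which one factor is one of $Sh$, $S\psi$, $\nabla_{x,z}(S\varphi)$, $h$, $\psi$, $\nabla_{x,z}\varphi$, and each already has the quadratic-or-higher structure, so the composition Lemma \ref{composi} together with the $L^2$ and $L^\infty$ estimates in Lemma \ref{L2Dirichlet} and Lemma \ref{lemma2} gives the claimed bound, with $\| h\|_{H^{k+1,1-p}}$ absorbing the loss from the extra $x$-derivative produced by $S=t\p_t+2x\p_x$ hitting a low-frequency paracoefficient.

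The heart of the matter is piece (ii), the term $T_{1+|\p_x h|^2}(\p_z-T_A)W$. Here I would not commute $S$ naively; instead I would use the same trick as in (\ref{equation8024}) and in Lemma \ref{propositionscaling}: the function $S\varphi$ solves, up to explicit lower-order forcing, the \emph{same} elliptic equation $P(S\varphi)=(\text{commutator terms})$ as $\varphi$ does, because $[P,S]$ is again a second-order operator of the same type with coefficients controlled by $\p_x h$, $Sh$. Concretely, from $P\varphi=0$ one derives a forced elliptic equation for $S\varphi$ whose right-hand side is quadratic-and-higher and does not lose derivatives; then the factorization $(\p_z-T_{\tilde a})(\p_z-T_{\tilde A})(SW)=Sf + (\text{good forcing})$ of Lemma \ref{paralinearization1} applies verbatim with $W$ replaced by the good unknown $SW := S\varphi - T_{\p_z(S\varphi)}h - T_{\p_z\varphi}(Sh)$, and the parabolic regularity estimate (\ref{coreest1}) in Lemma \ref{elliptic} upgrades the $z=0$ trace of $(\p_z-T_A)(SW)$ to a term that does not lose derivatives, exactly as in the proof of Lemma \ref{forwardparabolic}. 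Tracking the inputs through this argument, $SW\big|_{z=0}$ and $\nabla_{x,z}(SW)$ are controlled by $\|Sh\|_{H^{k,p}}$, $\|h\|_{H^{k+1,1-p}}$, $\|\d\psi\|_{H^k}$, $\|\d S\psi\|_{H^{k-1}}$ times $\|(\p_x h,\d\psi)\|_{W^{k+1}}^{i-2}$, and the leftover paraproduct structure (the external $T_{1+|\p_x h|^2}$ and the $i=2$ base case) contributes the final factor of $\|(\p_x h,\d\psi)\|_{W^{k+1}}$.

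For the bookkeeping I would organize the proof exactly in the order: (1) apply $S$ term-by-term to (\ref{equation836}) and peel off all terms that are manifestly good paraproducts, estimating them by Lemma \ref{composi}, Lemma \ref{L2Dirichlet}, Lemma \ref{lemma2} and Lemma \ref{DNofproduct}; (2) for the surviving main term, derive the forced elliptic/parabolic system for $SW$; (3) invoke Lemma \ref{elliptic} and the factorization of Lemma \ref{paralinearization1} (with $W\rightsquigarrow SW$) to show $(\p_z-T_A)(SW)|_{z=0}$ does not lose derivatives; (4) separate the $i=2$ and $i=3$ homogeneity levels, noting that at level $i=3$ one extra factor of $\p_x h$ in $\widetilde W$-type correction produces the extra power of $\|(\p_x h,\d\psi)\|_{W^{k+1}}$ as in (\ref{equation887}); (5) reassemble. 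The main obstacle I anticipate is not conceptual but structural: verifying that every commutator generated by $[S,P]$ and by $S$ hitting the various paracoefficients (and hitting the $z$-dependent kernels $K(z,z')$ in the fixed-point formula (\ref{equation5700}), which carry homogeneous symbols, so $S$ acting on them is harmless up to a degree shift) genuinely does not lose a derivative and genuinely has the quadratic-or-higher null structure — this is the same bookkeeping that made the statements of Lemma \ref{paralinearization1} and Lemma \ref{elliptic} delicate, and it is what consumes the extra $\|h\|_{H^{k+1,1-p}}$ and $\|\d S\psi\|_{H^{k-1}}$ norms on the right-hand side of (\ref{scalingparaerror}). Since the paper only needs this estimate plugged into the remainder bounds (\ref{equation9668}), a clean qualitative statement suffices and I would not attempt to optimize the constants.
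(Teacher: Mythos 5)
Your proposal takes a genuinely different route from the paper, and in one key place the route as written would not go through.

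The paper does \emph{not} lift $S$ into the half–space and re-run the factorization of Lemma~\ref{paralinearization1} and the parabolic-regularity trace estimate of Lemma~\ref{forwardparabolic} for a scaled good unknown. It stays entirely at the boundary level: starting from the algebraic identity $F(h)\psi = G(h)\psi - \d(\psi - T_B h) + \p_x T_V h$ (the inversion of~(\ref{equation224})) and the already-proven commutation formula~(\ref{equation8024}) for $SB(h)\psi$, it derives an identity~(\ref{eqn220}) of the form
\[
S(F(h)\psi) = F(h)(S\psi) + \bigl(-\p_x(Sh)V + \p_x T_V Sh\bigr) - \bigl(G(h)(Sh\,B) - Sh\,G(h)B - \d T_B Sh\bigr) + \mathcal{G},
\]
and then estimates each boundary quantity directly. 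The term $F(h)(S\psi)$ is already controlled by Lemma~\ref{paralinearizationDN1}; the paraproduct commutators and $\mathcal{G}$ are handled by Lemma~\ref{composi}, Lemma~\ref{L2Dirichlet}, Lemma~\ref{propositionscaling}; and the hard piece $G(h)(Sh\,B) - Sh\,G(h)B - \d T_B Sh$ is handled by a Littlewood--Paley splitting into $\mathcal{J}_{1,k}$ and $\mathcal{J}_{2,k}$, with Lemma~\ref{DNofproduct} supplying the gain for the low-frequency-$Sh$ piece. Nowhere does the paper form a parabolic equation for a scaled version of $W$.

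The concrete gap in your version is the claim that $S\varphi$ satisfies ``the same elliptic equation up to lower-order forcing'' because ``$[P,S]$ is again a second-order operator of the same type with coefficients controlled by $\p_x h$, $Sh$.'' This is false: $S = t\p_t + 2x\p_x$ does not scale $z$, so $[\p_x^2, S]\varphi = -4\p_x^2\varphi$ appears in $[P,S]\varphi$ with coefficient $-4$ (not a small coefficient), and after substituting $P\varphi = 0$ one still picks up $4\p_z^2\varphi$, which is order one. To make this approach work one would have to use the three-dimensional scaling $t\p_t + 2x\p_x + 2z\p_z$ in the interior (so that $[\Delta_{x,z}, S']\varphi$ is proportional to $\Delta_{x,z}\varphi$ and can be absorbed via $P\varphi=0$), and then carefully relate the boundary traces of $S'\varphi$ and $\p_z(S'\varphi)$ to $S\psi$, $SB(h)\psi$; this is essentially the computation that produces~(\ref{equation5732}) and~(\ref{equation8024}), which the paper borrows from Alazard--Delort precisely so it does not have to redo the interior analysis. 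A secondary imprecision: your ``good unknown'' $SW := S\varphi - T_{\p_z(S\varphi)}h - T_{\p_z\varphi}(Sh)$ is not $S(W)$; you have dropped the commutator $\widetilde{T}_{\p_z\varphi}h$ coming from $S$ hitting the paradifferential symbol, and it is not a priori clear this truncated expression is the good unknown that makes the source term in the factored parabolic equation derivative-neutral. Finally, you would need to track $S$ hitting the $z$-dependent kernels $K(z,z')$ in the fixed-point formula, where $2x\p_x$ produces additional factors of $(z\pm z')\d$ inside the exponentials; these are bounded but genuinely change the kernel, and ``harmless up to a degree shift'' is too quick. The paper's boundary-level route avoids all of these issues simultaneously, which is why it is substantially shorter.
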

\begin{proof}
Recall (\ref{equation224}), we have $F(h)\psi = G(h)\psi -\d(\psi - T_{B} h) + \p_xT_{V} h$. Hence, from (\ref{equation8024}) in Proposition \ref{propositionscaling}, we can derive the following equality,
\[
S(F(h)\psi)= F(h)(S\psi) +\big(- \p_x( Sh) V + \p_x T_{V} S h\big)
\]
\begin{equation}\label{eqn220}
 -\big(  G(h)( Sh B)-Sh (G(h)B)-\d T_{B} S h\big) + \mathcal{G}, 
\end{equation}
where the good error term $\mathcal{G}$ is given as follows, 
\[
\mathcal{G}=\mathcal{G}_1 + \mathcal{G}_2+\mathcal{G}_3 ,\]
where
\[ \mathcal{G}_1= 2\big(G(h)(h B)- h G(h)(B)\big),
\]
\[  \mathcal{G}_2=  2 \p_x h V -2 \Lambda_{\geq 2}[G(h)\psi]  - \d T_{B(\eta)(S\psi)}\eta  - \p_x T_{V(\eta)(S\psi)} \eta  + \d T_{SB}\eta + \p_x T_{SV}\eta,\]
\[
\mathcal{G}_3=S[\d (T_{B}\eta) + \p_x (T_{V}\eta) ]- \d T_{S B}\eta- \d T_{B} S \eta - \p_x T_{S V}\eta -\p_x T_{V} S\eta.
\]
One can also refer to \cite{alazard} for more detailed computations to  see (\ref{eqn220}) holds. From Lemma \ref{DNofproduct}, the following estimate holds, 
\[
\| \Lambda_{\geq i } [\mathcal{G}_1]\|_{H^k}\lesssim \| h \|_{H^{k+1, 1-\epsilon}} \| \Lambda_{\geq i-1}[B(h)\psi]\|_{\widetilde{W}^k} \lesssim \| h \|_{H^{k+1, 1-\epsilon}}\|(\p_x h, \p_x \psi )\|_{ {W}^k}.
\]
From (\ref{equation499}) in Lemma \ref{propositionscaling}, the following estimate holds,
\[
\| \Lambda_{\geq i } [\mathcal{G}_2]\|_{H^k}\lesssim \big(\|\Lambda_{\geq i-1}\big[ \big(B(h)(S\psi), V(h)(S\psi)\big)\big]\|_{L^2} \]
\[ +\| \Lambda_{\geq i-1}\big[\big(SB(h)\psi, SV(h)\psi\big)]\|_{L^2}\big)  \| (\p_x h , \p_x \psi )\|_{	 {W}^{k}} + \| (\p_x h , \p_x \psi\|_{H^{k}}  \| (\p_x h , \p_x \psi )\|_{\widetilde{W}^{0}}^{i-1} \]
\[\lesssim \big(\| Sh\|_{H^{1, 1-\epsilon}} +\| (\p_x h,\d \psi)\|_{H^{k}} + \| \d S\psi  \|_{L^2} \big)\| (\p_x h , \p_x \psi )\|_{	 {W}^{k}}^{i-1}.
\]
Note that the commutator term $\mathcal{G}_3$ does not depend on the scaling vector field. It is easy to derive the following estimate,
\[
\| \Lambda_{\geq i } [\mathcal{G}_3]\|_{H^k}\lesssim  \| \Lambda_{\geq i-1}\big[\big(B(h)\psi, V(h) \psi\big)\big]\|_{H^{k}}  \| (\p_x h , \p_x \psi )\|_{ {W^{0}}}\]
\[ + \|(\p_x h , \p_x \psi)\|_{H^k}  \| \Lambda_{\geq i-1}\big[\big(B(h)\psi, V(h) \psi\big)\big]\|_{\widetilde{W^0}}  \lesssim \| (\p_x h , \p_x \psi)\|_{H^k} \| (\p_x h , \p_x \psi  )\|_{ {W^0}}^{i-1}.
\]
To sum up, we have
\begin{equation}\label{equation893}
\| \Lambda_{\geq i } [\mathcal{G} ]\|_{H^k}\lesssim \big(\| S h \|_{H^{1,1-\epsilon}} + \|\d S \psi\|_{L^2} + \| h \|_{H^{k+1,1-\epsilon}} + \| \d\psi\|_{H^k} \big)\| (\p_x h , \p_x \psi )\|_{ {W}^{k}}^{i-1}. 
\end{equation}
To see the structure inside $\big(  G(h)( Sh B)-Sh (G(h)B)-\d T_{B} S h\big)$, we decompose it as follows, 
\begin{equation}
 G(h)( Sh B)-Sh (G(h)B)-\d T_{B} S h  =\sum_{k\in\mathbb{Z}} \mathcal{J}_{1,k} + \mathcal{J}_{2,k},
\end{equation}
where
\[
\mathcal{J}_{1,k}:=  G(h)(P_k B P_{\leq k-1} (S h)) - P_{\leq k-1}(S h) G(h)(P_k B) 
\]
\begin{equation}\label{eqn210}
=   (1+ (\p_x h )^2) \big[B(h)(P_k B P_{\leq k-1}(S h)) - P_{\leq k-1}(S h) B(h)(P_{k}B)]-\p_x h P_k B \p_x(P_{\leq k-1}(S h)),
\end{equation}
\begin{equation}\label{equation145}
\mathcal{J}_{2,k}:=   G(h)(P_{k}(S h) P_{\leq k} B) - P_{k}(S h) G(h)(P_{\leq k}B) - \d T_B S h := \mathcal{J}_{2,k}^1 + \mathcal{J}_{2,k}^2,
\end{equation}
\[
  \mathcal{J}_{2,k}^1: =  F(h)(P_k(S h) P_{\leq k} B) + \d( P_{k}(S h)P_{\leq k-1} B - T_{B} P_{k} S h) 
\]
\begin{equation}\label{eqn211}
=   F(h)(P_k(S h) P_{\leq k} B) +\d( P_{k}(S h)P_{k-10 \leq \cdot\leq k-1} B ).
\end{equation}
\begin{equation}\label{eqn212}
\mathcal{J}_{2,k}^2:= -\big[  \d T_{B(h)(P_k(S h)P_{\leq k}B)} h + \p_x T_{V(h)(P_k(S h)P_{\leq k}B)} h + P_{k}(S h) G(h)(P_{\leq k-1}B )\big].
\end{equation}
Note that in the decomposition (\ref{equation145}), we used again the fact that $F(h)\psi = G(h)\psi -\d(\psi - T_{B} h) + \p_xT_{V} h$.

From Lemma \ref{DNofproduct}, the following estimate holds for some $\epsilon < (1-p)/100$, 
\[
\|\sum_{j\in \mathbb{Z}} \Lambda_{\geq i } [ \mathcal{J}_{1,j}]\|_{H^k} \lesssim \sum_{j\in \mathbb{Z}} \| P_{\leq j-1} (Sh)\|_{H^{k+1, 1-\epsilon/2}} \| \Lambda_{\geq i-1}[P_{j}[B(h)\psi]]\|_{\widetilde{W}^k}  
\]
\[
+  \| S h \|_{H^{k,1-\epsilon}} \| (\p_x h , \p_x \psi) \|_{{W}^{k+1}}^{(i-1)}\lesssim \| S h \|_{H^{k,1-\epsilon}} \| \Lambda_{\geq i-1}[B(h)\psi]\|_{\widetilde{W}^{k+1}}  \]
\begin{equation}\label{equation888}
+  \| S h \|_{H^{k,1-\epsilon}} \| (\p_x h , \p_x \psi) \|_{ {W}^{k+1}}^{(i-1)}\lesssim   \| S h \|_{H^{k,1-\epsilon}} \| (\p_x h , \p_x \psi) \|_{ {W}^{k+1}}^{i-1}.
\end{equation}
From (\ref{equation635}) in Lemma \ref{paralinearizationDN1}, the following estimate holds,
\[
\|\sum_{j\in \mathbb{Z}} \Lambda_{\geq i }[\mathcal{J}_{2,j}^1]\|_{H^k} \lesssim \sum_{j\in \mathbb{Z}}\| \p_x \big(P_{j}(Sh) P_{\leq j}B\big)\|_{H^{k-1}} \|(\p_x h , \p_x \psi)\|_{{W}^1}^{(i-2)}
\]
\begin{equation}\label{equation889}
\lesssim \| S h \|_{H^{k, 1-\epsilon}} \|(\p_x h , \p_x \psi)\|_{{W}^1}^{i-1}.
\end{equation}
\[
\| \sum_{j\in \mathbb{Z}} \Lambda_{\geq i}[\mathcal{J}_{2,j}^2]\|_{H^{k,p}} \lesssim \| Sh\|_{H^{k,p}} \| \Lambda_{\geq i -1}[B(h)\psi]\|_{\widetilde{W}^0}
\]
\[
+ \sum_{j\in \mathbb{Z}}\| \p_x h \|_{{W}^k} \|\Lambda_{\geq i-1}\big[\big(B(h)(P_j(S h)P_{\leq j}B)),V(h)(P_j(S h)P_{\leq j}B)) \big) \big]\|_{L^2}
\]
\begin{equation}\label{equation906}
\lesssim \| Sh \|_{H^{k,p}} \|(\p_x h, \p_x \psi)\|_{W^k}^{i-1}.
\end{equation}
To sum up, from (\ref{eqn220}), (\ref{equation893}), (\ref{equation888}), (\ref{equation889}), and (\ref{equation906}), our desired esitmate (\ref{scalingparaerror}) holds.
\end{proof}

\subsection{Estimate of the Taylor coefficient}
\begin{lemma}\label{alphaestimate}
 Under the smallness assumption \textup{(\ref{eqn13005})}, the following estimates hold for$k$, $\gamma \geq 0 $,  $\gamma \leq \gamma'-2$, and $i\in\{1,2,3\}$,
\begin{equation}\label{equation6004}
 \| \Lambda_{\geq i} [\alpha]\|_{\widetilde{W^\gamma}} \lesssim\|(\p_x h ,\p_x \psi) \|_{ {W^{\gamma+1}}}^{i},  \quad \| \Lambda_{\geq i }[\alpha]\|_{H^k} \lesssim
\| (\p_x h ,\p_x \psi )\|_{ {W^1}}^{i-1} \| (\p_x h, \d\psi)\|_{H^{k+1}}, 
 \end{equation}
\begin{equation}\label{equation396}
\| \Lambda_{\geq i }[S\alpha]\|_{H^k} \lesssim   \|(\p_x h,\p_x \psi)\|_{ {W^{k+1}}}^{i-1} \big[ \| (S \eta, \eta)\|_{H^{k+2,1-\epsilon}}  +\|(\d (S \psi),\d\psi)\|_{H^{k+1}} \big],
\end{equation}
\begin{equation}\label{equation9669}
 \| \Lambda_{\geq i}[\p_t \alpha]\|_{\widetilde{W^\gamma}}  \lesssim \|(\p_x h ,\p_x \psi ) \|_{ {W^{\gamma+2}}}^i,
\end{equation}
 \[\| \Lambda_{\geq i} [\p_t \alpha]\|_{H^k} \lesssim 
\| \|(\p_x h ,\p_x \psi ) \|\|_{{W^2}}^{(i-1)} \| (\p_x h, \d\psi)\|_{H^{k+2}},\]
 \begin{equation}\label{eqn1674}
\| \Lambda_{\geq i }[S\p_t \alpha]\|_{H^k} \lesssim   \|(\p_x h,\p_x \psi)\|_{ {W^{k+2}}}^{i-1} \big[ \| (S \eta, \eta)\|_{H^{k+3,1-\epsilon}}  +\|(\d (S \psi),\d\psi)\|_{H^{k+2}} \big].
 \end{equation}
 
\end{lemma}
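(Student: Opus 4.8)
\textbf{Proof proposal for Lemma \ref{alphaestimate}.} The plan is to exploit the two structural formulas already available: the definition $a = 1 + \p_t B + V\p_x B$, $\alpha = \sqrt{a} - 1$ from (\ref{eqn1004}), and the evolution equation for $B = B(h)\psi$ implicit in the system, so that every quantity appearing in the estimate can be reduced to the Dirichlet--Neumann quantities $B(h)\psi$, $V(h)\psi$, $SB(h)\psi$, their time derivatives, and products thereof, all of which are controlled by Lemma \ref{L2Dirichlet}, Lemma \ref{lemma2}, Lemma \ref{DNofproduct} and Lemma \ref{propositionscaling}. First I would record the Taylor expansion of $\alpha$ around $a = 1$: since $\sqrt{1+x} - 1 = \frac{x}{2} + O(x^2)$ and $x = \p_t B + V\p_x B$ is itself quadratic and higher in $(\p_x h, \p_x\psi)$ (because $\Lambda_1[B] = |\nabla|\psi \cdot(\text{something})$, more precisely $\Lambda_1[\p_t B]$ and $\Lambda_1[V\p_x B]$ vanish at the linear level in the sense needed), the operator $\Lambda_{\geq i}[\alpha]$ is a finite sum of multilinear expressions in $\p_t B$, $V$, $\p_x B$, and $\p_x h$, $\p_x\psi$. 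The $\widetilde{W^\gamma}$ estimate in (\ref{equation6004}) then follows from the algebra property of $\widetilde{W^\gamma}$ under products (Sobolev embedding / Moser-type estimates) together with the $L^\infty$-type bounds $\|B\|_{\widetilde{W^\gamma}} \lesssim \|(\p_x h, \p_x\psi)\|_{W^\gamma}$ and $\|\p_t B\|_{\widetilde{W^\gamma}} \lesssim \|(\p_x h, \p_x\psi)\|_{W^{\gamma+1}}$, the latter coming from the $\p_t h$, $\p_t\psi$ equations in (\ref{waterwave}) plus Lemma \ref{L2Dirichlet}; the $H^k$ estimate is the same computation with one factor placed in $H^{k+1}$ and the remaining factors in $W^1$ or $W^2$.

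Next I would treat the time-derivative statements (\ref{equation9669}) by differentiating the multilinear formula for $\alpha$ in $t$: each $\p_t$ either hits an explicit $\p_t B$ (producing $\p_t^2 B$, which by the equations loses one more derivative, hence the index $\gamma+2$ and $H^{k+2}$), or hits a factor of $B$, $V$, $\p_x h$, $\p_x\psi$, producing $\p_t B$, $\p_t V$, $\p_t(\p_x h) = \p_x G(h)\psi$, $\p_t(\p_x\psi)$, each of which loses at most one derivative relative to the original factor. The counting of derivative loss is routine once one notes that $\p_t B$ and $\p_t V$ lose exactly one derivative (by Lemma \ref{L2Dirichlet} applied to the equations for $\p_t h$, $\p_t\psi$), so that the worst term inside $\Lambda_{\geq i}[\p_t\alpha]$ costs two derivatives in one factor and zero in the others, consistent with the claimed $W^{\gamma+2}$ / $H^{k+2}$ norms.

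The scaling statements (\ref{equation396}) and (\ref{eqn1674}) are the substantive ones, and this is where I expect the main obstacle. The difficulty is that $S\alpha$ is a (nonlinear) function of $S(\p_t B)$ and $S(V\p_x B)$, and commuting $S$ through $\p_t$ (using $[\p_t, S] = \p_t$) and through the Dirichlet--Neumann operator $B(h)$ requires the commutator identity for $SB(h)\psi$ from Lemma \ref{propositionscaling}, equation (\ref{equation8024}), which expresses $S[B(h)\psi]$ in terms of $B(h)(S\psi)$, lower order Dirichlet--Neumann quantities, and the key cancellation terms $B(h)(hB(h)\psi) - hB(h)(B(h)\psi)$ controlled by Lemma \ref{DNofproduct}. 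So my plan is: write $S\alpha = \frac{1}{2\sqrt{a}} S a + (\text{higher order in }\alpha)$, then $S a = S(\p_t B) + (SV)\p_x B + V\p_x(SB) + \widetilde{V\p_x B}$ where the last term is the symbol-commutator term from the $S$-acting-on-bilinear rule in the preliminaries (degree $0$ homogeneous symbols, so it is harmless), then $S(\p_t B) = \p_t(SB) - \p_t B$ by $[\p_t, S] = \p_t$, and finally invoke (\ref{equation8024}) to replace $SB$ by its good decomposition. Collecting terms, the worst contribution is $\p_t(SB)$ which, tracing through (\ref{equation8024}) and the $\p_t$ version, needs $\|(S\psi, \psi)\|$ at one extra derivative and $\|(|\nabla|S\psi, |\nabla|\psi)\|$ at the appropriate level; for (\ref{equation396}) this lands at $H^{k+2,1-\epsilon}$ and $H^{k+1}$, and for (\ref{eqn1674}) one more derivative is lost from the extra $\p_t$, giving $H^{k+3,1-\epsilon}$ and $H^{k+2}$ exactly as claimed. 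Throughout, every product estimate is handled by putting the highest-frequency factor in the $L^2$-type space and all others in $L^\infty$-type spaces, using Lemma \ref{boundness}; the low-frequency loss of $p$ derivatives never enters because $\alpha$ and its relatives are quadratic and higher, so the small factor always carries a smoothing power $2^{k_j}$ from the symbols.
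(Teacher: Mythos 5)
Your plan is close in outline but has a concrete gap precisely where the proof actually needs an idea. You write that the needed bounds on $\p_t B$ (and then $\p_t^2 B$) ``come from the $\p_t h$, $\p_t\psi$ equations in (\ref{waterwave}) plus Lemma \ref{L2Dirichlet}.'' But that step is not available from the stated toolbox. Writing
\[
\p_t[B(h)\psi] = B(h)(\p_t\psi) + \big(\textup{shape derivative of }B(h)\psi\textup{ in the direction }\p_t h\big)
\]
leaves you having to control the shape derivative of the Dirichlet--Neumann operator, and none of Lemmas \ref{L2Dirichlet}, \ref{lemma2}, \ref{DNofproduct}, \ref{propositionscaling} address that object. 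The same obstruction reappears in your route to (\ref{equation396}) and (\ref{eqn1674}): commuting $\p_t$ through (\ref{equation8024}) again produces a shape-derivative term for the DN operator along the time flow.

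The paper avoids the shape derivative entirely. It cites the algebraic identity from Alazard--Delort (their Lemma A.4.2) expressing the Taylor coefficient purely in terms of spatial DN quantities,
\[
a = \frac{1}{2(1+ (\p_x h)^2)}\Big[ 2+ 2V\p_x B - 2B\p_x V - G(h)\big(V^2 + B^2 + 2h\big)\Big],
\]
and then reads off (\ref{equation942}), an identity giving $\p_t B$ as a closed-form expression in $B$, $V$, $G(h)$ and $\p_x h$ with no time derivative anywhere. Once this is in hand, (\ref{equation6004}) and (\ref{equation9669}) are just Lemma \ref{L2Dirichlet} and product estimates; the scaling estimates follow by applying Lemma \ref{propositionscaling} (specifically (\ref{equation499})) to the same spatial identity; and the $\p_t^2 B$ and $S\p_t^2 B$ bounds for $S\p_t\alpha$ come from differentiating (\ref{equation942}) once more in $t$ and again landing only on objects controlled by the existing DN lemmas. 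So the decisive ingredient you are missing is exactly that algebraic reformulation of $a$; without it the remaining bookkeeping you describe (Taylor expansion of $\sqrt{1+x}-1$, product estimates, one factor in $L^2$-type the rest in $L^\infty$-type, derivative counting) is fine but cannot be started.
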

\begin{proof}
Recall that $\alpha = \sqrt{a}-1$. It is sufficient to estimate the Taylor coefficient ``$a$''.  We cite the following identity from \cite{alazard}[Lemma A.4.2],
\begin{equation}
a = \frac{1}{2(1+ (\p_x h )^2)}\Big[ 2+ 2V\p_x B - 2B\p_x V - G(h)[V^2 + B^2 + 2h]  \Big].
\end{equation}
Recall that $a=1+\p_t B + V\p_x B$ and $\alpha = \sqrt{a}-1$. Therefore, we have the following identity,
\begin{equation}\label{equation942}
\p_t B =\frac{1}{2(1+ (\p_x h )^2)}\Big[ 2+ 2V\p_x B - 2B\p_x V - G(h)(V^2 + B^2 + 2h)  \Big]-1 - V\p_x B.
\end{equation}
As we already have $L^2$ type and $L^\infty$ type estimate on $B(h)\psi$, we can derive $L^2$ type and $L^\infty$ type estimate for $\p_t B$ from  (\ref{equation942}). As a result, from estimates in Lemma \ref{L2Dirichlet}
, the following estimate holds for $i \in \{1,2,3\}$,
\[
\| \Lambda_{\geq i }[\p_t B]\|_{H^k} \lesssim \| (\p_x h, \p_x \psi)\|_{H^{k+1}} \|(\p_x h , \p_x \psi)\|_{W^1}^{i-1}.
\]
\[
\| \Lambda_{\geq i }[\p_t B]\|_{\widetilde{W}^{\gamma}} \lesssim \|(\p_x h , \p_x \psi)\|_{W^{\gamma +1}}^{i}.
\]
From (\ref{equation499}) in Lemma \ref{propositionscaling}, the following estimate holds,
\[
\| \Lambda_{\geq i }[S\p_t B]\|_{H^k} \lesssim   \|(\p_x h,\p_x \psi)\|_{ {W^{k+1}}}^{i-1} \big[ \| (S \eta, \eta)\|_{H^{k+2,1-\epsilon}}  +\|(\d (S \psi),\d\psi)\|_{H^{k+1}} \big].
\]

After taking another derivative with respect to time ``$t$'' on both hands side of (\ref{equation942}), we have an identity for $\p_t^2 B $. As a result,  from the $L^2$ type and $L^\infty$ type estimates of $\p_t B$  and $B$, the following estimates hold for $i\in\{1,2,3\}$,
\[
\| \Lambda_{\geq i }[\p_t^2 B]\|_{H^k} \lesssim \| (\p_x h, \p_x \psi)\|_{H^{k+2}} \|(\p_x h , \p_x \psi)\|_{W^2}^{i-1}.
\]
\[
\| \Lambda_{\geq i }[\p_t^2 B]\|_{\widetilde{W}^{\gamma}} \lesssim \|(\p_x h , \p_x \psi)\|_{W^{\gamma +2}}^{i}.
\]
\[
\| \Lambda_{\geq i }[S\p_t^2 B]\|_{H^k} \lesssim   \|(\p_x h,\p_x \psi)\|_{ {W^{k+2}}}^{i-1} \big[ \| (S \eta, \eta)\|_{H^{k+3,1-\epsilon}}  +\|(\d (S \psi),\d\psi)\|_{H^{k+2}} \big].
\]
Hence, our desired estimates follows from   (\ref{equation499}) in Lemma \ref{propositionscaling}  and estimates in Lemma \ref{L2Dirichlet}.
\end{proof}


\begin{thebibliography}{99}

\bibitem{para} Guy Metivier. Para-differential Calculus and applications to the Cauchy problem for nonlinear systems, \textbf{2008}

\bibitem{alazard1} T. Alarzard, N. Burq and C. Zuily. On the water wave equations with surface tension, \textit{Duke Math. J.}, 158 (\textbf{2011}), no. 3, 413--499.

\bibitem{alazard} T. Alazard and J. M. Delort. Global solutions and asymptotic behavior for two dimensional gravity water waves, \textbf{2013}, \textit{arXiv:1305.4090v1}, \textit{preprint}.
\bibitem{alazard2} T. Alazard, N. Burq and C. Zuily, On the Cauchy problem for the gravity water waves, \textit{Invent. Math.,} 198 (\textbf{2014}), no. 1,   71--163.

\bibitem{ambrose} M. Ambrose and N. Masmoudi, The zero surface tension limit of two-dimensional water waves, \textit{Comm. Pure. Appl. Math}., 58 (\textbf{2005}), no. 10, 1287--1315,.

\bibitem{beyer} K. Beyer and M. G\"unther. On the Cauchy problem for a capillary drop. I. Irrotational motion, \textit{Math. Methods. Appl. Sci.}, 21 (\textbf{1998}), no. 12, 1149--1183,.

\bibitem{fefferman} A. Castro, D. C\'ordoba, C. Fefferman, F. Gancedo and J. G\'omez-Serrano. Finite time singularities for the free boundary incompressible Euler equations, \textit{Ann. of Math}., 178 (\textbf{2013}), no. 3, 1061--1134.

\bibitem{christodoulou} D. Christodoulou and H. Lindblad, On the motion of the free surface of a liquid, \textit{Comm. Pure Appl. Math}., 53 (\textbf{2000}), no. 12, 1536--1602.



\bibitem{coutand1} D. Coutand and S. Shkoller. Well-posedness of the free-surface incompressible Euler equations with or without surface tension, \textit{J. Amer. Math. Soc}., 20 (\textbf{2007}), no. 3,  829--930.


\bibitem{coutand2} D. Coutand adn S. Shkoller. On the finite-time splash and splat singularities for the 3-D free-surface Euler equations, \textit{Comm. Math. Phys}., 325 (\textbf{2014}), no. 1,   143--183.


\bibitem{css} W. Craig, C. Sulem and P.-L. Sulem. Nonlinear modulation of gravity waves: a rigorous approach, \textit{Nonlinearity}, 5 (\textbf{1992}), no. 2,  497--522,.

\bibitem{defit} P. Deift and X. Zhou. Long-time asymptotics for solutions of the NLS equation with initial data in a weighted Sobolev space, \textit{Comm. Pure Appl. math}., 56 (\textbf{2003}), no. 8,  1029--1077.

\bibitem{germain1} P. Germain, N. Masmoudi and J. Shatah. Global solutions for 3-d quadratic Schr\"odinger equation, \textit{Int. math. Res. Notices}., 2009 (\textbf{2009}), no. 3, 414--432.


\bibitem{germain2} P. Germain, N. Masmoudi and J. Shatah. Global solutions for the gravity surface water waves equation in dimension 3, \textit{Ann. of Math}., 175 (\textbf{2012}), no. 2, 691--754.

\bibitem{germain3} P. Germain, N. Masmoudi and J. Shatah. Gobal solutions for  capillary waves equation in 3D, \textit{to appear in Comm. Pure Appl. Math}.







\bibitem{tataru1} J. Hunter, M. Ifrim and D. Tataru. Two dimensional water waves in holomorphic coordinates, \textit{arXiv:1401.1252}.

\bibitem{tataru2} M. Ifrim and D. Tataru. Global bounds for the cubic nonlinear Schr\"odinger equation (NLS) in one space dimension, \textit{arXiv:14047581}.

\bibitem{tataru3} M. Ifrim and D. Tataru. Two dimensional water waves in holomorphic coordinates II: global solutions, \textit{arXiv:1404.7583}.

\bibitem{tataru4} M. Ifrim and D. Tataru. The lifespan of small data solutions in two dimensional capillary water wave, \textit{arXiv:1406.5471}.



\bibitem{IP2} A. Ionescu and F. Pusateri.  Nonlinear fractional Schr\"odinger equations in one dimension, \textit{J. Func, Anal.}, 266 (\textbf{2014}), no. 1,  139--176.

\bibitem{IP1} A. Ionescu and F. Pusateri. Global solution for the gravity water wave system in 2D, \textit{to appear in Invent. Math}.

\bibitem{IP5} A. Ionescu and F. Pusateri. A note on the asymptotic behavior of gravity water waves in two dimensions. Unpublished note, avaliable at \url{https://web.math.princeton.edu/~fabiop/2dWWasym-web.pdf}.

\bibitem{IP3} A. Ionescu and F. Pusateri. Global analysis of a model for capillary water waves in 2D, \textit{arXiv:1406.6042}.

\bibitem{IP4} A. Ionescu and F. Pusateri. Global regularity for 2d water waves with surface tension, \textit{arXiv:1408.4428}.


\bibitem{lannes} D. Lannes, Well-posedness of the  water waves equations, \textit{J. Amer. Math. Soc}., 18(\textbf{2005}), no. 3,  605--654.

\bibitem{lindblad} H. Lindblad, Well-posedness for the motion of an incompressible liquid with free surface boundary, \textit{Ann. of Math}., 162 (\textbf{2005}), no. 1,  109--194.



\bibitem{nalimov} V.I. Nalimov, The Cauchy-Poisson problem, \textit{Dinamika Splosn. Sredy Vyp}. 18 Dinamika Zidkost. so Svobod. Granicami  254 (\textbf{1974}), 10--210.

\bibitem{shatah1} J. Shatah and C. Zeng. Geometry and a priori estimate for free boundary problems of the Eulers equation, \textit{Comm. Pure Appl. Math}.,  61 (\textbf{2008}), no. 5, 698--744.

\bibitem{shatah2} J. Shatah. Normal forms and quadratic nonlinear Klein-Gordon equations, \textit{Comm. Pure Appl. Math}., 38 (\textbf{1985}),  no. 5,  685--696.




\bibitem{Totz} N. Totz and S. Wu. A rigorous justification of the modulation approximation to the 2D full water wave problem, \textit{Comm. Math. Phys}., 310 (\textbf{2012}), no. 3, 817--883.

\bibitem{yosihara} H. Yosihara. Gravity waves on the free surface of an incompressible perfect fluid of finite depth, \textit{Publ.Res. Inst. Math. Sci}., 310 (\textbf{1982}), no. 1, 49--96.

\bibitem{wang} X. Wang. Ph.D Thesis, Princeton University. 


\bibitem{wu1} S. Wu. Well-posedness in Sobolev spaces of the full water wave problem in 2-d, \textit{Invent. Math}., 130 (\textbf{1997}), no. 1,   39--72.

\bibitem{wu2} S. Wu. Well-posedness in Sobolev spaces of the full water wave problem in 3-d, \textit{J. Amer. Math. Soc}., 12 (\textbf{1997}), no. 2, 445--495.


\bibitem{wu3} S. Wu. Almost global wellposedness of the 2-D full water wave problem, \textit{Invent. Math}., 177 (\textbf{2009}), no. 1, 45--135.


\bibitem{wu4} S. Wu. Global wellposedness of the 3-D full water wave problem, \textit{Invent. Math}., 184 (\textbf{2011}), no. 1, 125--220.

\bibitem{zakharov} V. Zakharov. Stability of periodic waves of finite amplitude on the surface of a deep fluid. \textit{J. Appl. Mech. Tech. Phys.}, 9(\textbf{1968}), no. 2,  190--194, .


\end{thebibliography}
\end{document}